\theoremstyle{definition}
\newtheorem{thm}{Theorem}[section]
\newtheorem{prop}[thm]{Proposition}
\newtheorem{obs}[thm]{Observation}
\newtheorem{lem}[thm]{Lemma}
\newtheorem{cor}[thm]{Corollary}
\newtheorem{defi}[thm]{Definition}
\newtheorem{conj}[thm]{Conjecture}
\newtheorem{hyp}[thm]{Hypothesis}
\newcommand{\bigslant}[2]{{\raisebox{.2em}{$#1$}\left/\raisebox{-.2em}{$#2$}\right.}}
\title{Classification of unstable circulants of square-free order}
\author{Bartłomiej Bychawski}
\date{}
\begin{document}

\maketitle

\begin{abstract}
    In this paper we prove that for circulants of squarefree orders Wilson's conjecture hold, that is each nontrivially unstable circulant of such order has Wilson type. We show that actually only criteria (C.1) and (C.4) are needed.
\end{abstract}

\section{Introduction} \label{SECTION: Introduction}

    
    
    In this paper we will study algebraic properties of graph. We define a graph to be a pair $(V,E)$ where $V$ is a finite set called \textit{vertex set} and $E$ is the symmetric subset of $X \times X$ such that for any $x \in X$, $(x,x) \notin E$. $E$ is called the \textit{edge set} and is sometimes identified with a binary relation on $V$. A \textit{neighbourhood of a vertex} $v \in V$ is defined as
    $$
    N(v) = \bigl\{ w \in V \mid (v,w) \in E \bigr\}.
    $$
    Graph $\Gamma = (V,E)$ is called \textit{connected} when between each pair of vertices there exists a path made of edges between them. It is called \textit{bipartite} when the set of vertices can be partitioned int two disjoint sets $V_1$ and $V_2$ such that there are no edges between vertices from the same subset $V_i$ for $i \in \{1,2\}$. A graph is called \textit{reduced} when for each pair of distinct verticies $v \neq w \in V$ it follows that $N(v) \neq N(w)$.
    
    Tensor product of graphs $\Gamma = (V,E)$ and $\Sigma = (W,F)$ is the graph $\Gamma \times \Sigma$ with vertex set $V \times W$ and edge set defined by
    $$
    \Bigl\{ \bigl( (v_1,w_1)(v_2,w_2) \bigr) \in (V \times W) \times (V \times W) \mid (v_1,v_2) \in E \text{ and } (w_1,w_2)\in F \Bigr\}.
    $$
    It is easy to see that $\text{Aut}(\Gamma) \times \text{Aut}(\Sigma) \leq \text{Aut}(\Gamma) \times \Sigma)$, however these groups does not have to be equal. In case when both $\Gamma$ and $\Sigma$ are connected, non-bipartite and reduced, full description of the group $\text{Aut}(\Gamma) \times \Sigma)$ was given by Dörfler \cite[Theorem 8.18]{HandOfProdGraphs}. Is is known, that this result cannot be extended to the case when at least one of them is bipartite. A lot of such complications can be explained by the fact that $\text{Aut}(\Gamma) \times \text{Aut}(K_2) \neq \text{Aut}(\Gamma \times K_2)$, where $K_2$ is the graph with vertex set $\{0,1\}$ and edge set $\{(0,1),(1,0)\}$. Graph $K_2$ is therefore just made of two vertices connected by an edge. For above reason we call a graph $\Gamma$ \textit{stable} if $\text{Aut}(\Gamma) \times \text{Aut}(K_2) = \text{Aut}(\Gamma \times K_2)$ and \textit{unstable} otherwise.

    It was early noticed, that if a graph $\Gamma= (V,E)$ is disconnected, bipartite and $\text{Aut}(\Gamma) \neq \{ id_V\}$ or is not reduced, then $\Gamma$ is unstable. Graphs $\Gamma$ which satisfy some of the above are called \textit{trivially unstable}. If $\Gamma$ is connected, non-bipartite, reduced and unstable, we call it \textit{non-trivially unstable} \cite{WILSON2008359}.

    A permutation $\tau \in \text{Aut}(\Gamma \times K_2)$ is called \textit{an unexpected symmetry of $\Gamma \times K_2$} if $\tau \notin \text{Aut}(\Gamma) \times S_2$. If $\Gamma$ is connected and non-bipartite, each symmetry of $\Gamma \times K_2$ either switches or fixes set-wise subsets $V \times \{0\}$ and $V \times \{1\}$. For that reason concept of \textit{two-fold automorphisms} was introduced. A pair $(\sigma_1,\sigma_2) \in \text{Sym}(V) \times \text{Sym}(V)$ is called a two-fold automorphism if permutation 
    $\sigma: V \times \{0,1\} \rightarrow V \times \{0,1\}$ given by the formula $(v,i) \mapsto (\sigma_{i+1}(v),i)$ is an automorphism of $\Gamma \times K_2$. The group of all two-fold automorphisms of a graph $\Gamma$ with coordinate-wise composition of functions is denoted $\text{Aut}^{\text{TF}}(\Gamma)$. Unexpected symmetries of $\Gamma \times K_2$ correspond to two-fold automorphisms of $\Gamma$ such that $\sigma_1 \neq \sigma_2$.

    For general graphs there is no simple classification of unstable graphs. For that reason most of the research focused on well structured families of graphs, such as \textit{Cayley graphs}. For a group $G$ and its subset $S \subseteq G$ such that $S^{-1} = S$ and $e_G \notin S$, by $\text{Cay}(G,S)$ we denote the graph with vertex set $G$ and edge set defined by $\bigl\{ (g,h) \in G \times G \mid h g^{-1} \in S \bigr\}$.

    For any $n\geq 1$ by $\mathbb{Z}_n$ we denote the quotient of $\mathbb{Z}$ by $n\mathbb{Z}$ with the action of addition. Cayley graphs over groups $\mathbb{Z}_n$ are called \textit{circulants}, since automorphism group of these graphs contains a cyclic subgroup which acts regularly on vertices.

    Wilson \cite[Apendix A]{WILSON2008359} conjectured that any non-trivially unstable circulant satisfies one conditions (C.1)-(C.4) he listed. Later it was found that conditions (C.2) and (C.3) contained a flaw, and were replaced by (C.2)' and (C.3)'. Repaired list can be found in \cite[Theorem 1.4]{HujdurovicMitrovicMorrisOverview}. In the same paper Hujdurović,  Mitrović and Morris found circulants which are non-trivially and does not satisfy any of (C.1), (C.2)', (C.3)' or (C.4) \cite[Example 3.9]{HujdurovicMitrovicMorrisOverview}, hence conjecture of Wilson fails for arbitrary $n$.

    It \cite{FERNANDEZ202249} Fernandez and Hujdurović proved that there are no non-trivially unstable circulants of odd order, which was later generalized by Morris in \cite{MorrisOddAbelianGroups} for Cayley graphs over arbitrary abelian group of odd order. Lately in \cite{StabilityAndSchurRings} Hujdurović and Kovács with use of Schur ring theory classified all unstable circulants of order $n=2p^e$, where $p$ is an odd prime and $e \geq 1$. In this paper we extend their methods to derive classification of unstable circulants of square-free order. \\
    
    In Section \ref{SECTION: Schur Rings and function alpha} we transform the problem into one involving the group $\text{Aut}^{\pi}(\Gamma)$ (cf. Definition \ref{Defi: TF-projections}) and function $\alpha$ (cf. Definition \ref{Defi: function alpha}) which superficially speaking measures the inexpediency of a given two-fold automorphism.

    Sections \ref{SECTION: Chain automorphisms} and \ref{SECTION: Replacement Property} are devoted to prove
    
    \begin{thm} \label{Thm: if n is squarefree, all pairs (Gamma,H) satisfy replacement property}
        Let $n$ be any integer, $\Gamma = \text{Cay}(\mathbb{Z}_n,S)$ be any Cayley graph and $H \leq \mathbb{Z}_n$ be such that $|H|$ is coprime to $\frac{n}{|H|}$. Then pair $(\Gamma,H)$ satisfy \textit{replacement property}.
    \end{thm}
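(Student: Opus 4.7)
The plan is to exploit the coprimality condition $\gcd(|H|, n/|H|) = 1$ through the Chinese Remainder Theorem, which yields an internal direct sum decomposition $\mathbb{Z}_n \cong H \oplus H'$, where $H'$ is the unique subgroup of $\mathbb{Z}_n$ of order $n/|H|$. Under this decomposition the connection set $S$ splits compatibly, and every element of $\mathbb{Z}_n$ is uniquely of the form $h + h'$ with $h \in H$, $h' \in H'$. Since $H$ and $H'$ are characteristic under multiplication by units, they are preserved by the projection structure $\mathrm{Aut}^{\pi}(\Gamma)$ introduced in Section \ref{SECTION: Schur Rings and function alpha}.

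I would next analyse an arbitrary element of $\mathrm{Aut}^{\pi}(\Gamma)$ through the Schur ring of the relevant transitivity module. The coprimality assumption makes the associated Schur ring split as a tensor product corresponding to the factors $H$ and $H'$, so idempotents supported on cosets of $H$ commute cleanly with those supported on cosets of $H'$. This algebraic splitting is the key mechanism: it shows that modifying the action of a two-fold automorphism along $H$-fibres is legitimate in the sense that it does not damage the part of the structure detected by $H'$, and vice versa.

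With the splitting in place, the replacement step should be carried out using the chain automorphisms of Section \ref{SECTION: Chain automorphisms}. Concretely, given a two-fold automorphism whose $H'$-component is already in the target form, one builds a chain automorphism supported on $H$-cosets so that post-composition lands in the subgroup specified by the replacement property. The coprimality of $|H|$ and $n/|H|$ is what allows independent choices made on different $H$-cosets to be assembled into a globally well-defined permutation of $\mathbb{Z}_n$, since the $H'$-label of each coset distinguishes it unambiguously.

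The hardest step will be controlling the function $\alpha$ during the replacement: one must show that the chain automorphism used to perform the replacement does not increase $\alpha$, or more precisely preserves whatever invariant of the original two-fold automorphism the replacement property demands. I expect this to reduce to a coset-by-coset computation showing that the contribution of $\alpha$ factors through the $H \oplus H'$ decomposition, so that changes confined to $H$-cosets influence only the $H$-part of $\alpha$. The coprimality $\gcd(|H|,n/|H|)=1$ should be precisely what prevents the cross-interference between the two factors that would otherwise obstruct this factorisation, making it the place where the hypothesis of the theorem is really used.
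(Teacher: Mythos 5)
Your setup is right — the paper's proof does begin with the Chinese Remainder decomposition $\mathbb{Z}_n \cong \mathbb{Z}_{\ell_1}\times\mathbb{Z}_{\ell_2}$ with $\ell_1=|H|$, takes $f$ to be the obvious section $\mathbb{Z}_{\ell_1}\times\{y\}\mapsto(0,y)$, and defines the replacement $\widetilde{\sigma}(x,y)=\sigma.(x,0)+(0,y)$ by copying $\sigma|_H$ to every coset. But two of your central claims do not hold up, and the actual engine of the proof is missing.

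First, the replacement property (Definition \ref{defi: replacement property of a pair (Gamma,H)}) is a statement purely about $\text{Aut}_{\mathcal{H}}(\Gamma)$, i.e.\ ordinary automorphisms fixing each $H$-coset setwise; the function $\alpha$ and the group $\text{Aut}^{\pi}(\Gamma)$ play no role in it. Your ``hardest step'' — controlling $\alpha$ during the replacement — is therefore not a step of this theorem at all; the only thing to verify is that $\widetilde{\sigma}$ is again an automorphism of $\Gamma$ fixing the cosets. Second, the claim that coprimality forces the transitivity module to split as a tensor product over $H$ and $H'$ is false in general (wedge/wreath-type Schur rings over $\mathbb{Z}_{pq}$ are not tensor products), and no such splitting is used in the paper.

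The real mechanism is the following. Write $T_i=S\cap(\mathbb{Z}_{\ell_1}\times\{i\})$ and reduce to showing $\widetilde{\sigma}$ preserves each edge-layer $T_i\cup T_{-i}$. For $i\neq 0$ one unrolls the $\ell_2$ cosets into the chain $\text{Ch}(\widetilde{\Gamma}_i)$ of a directed circulant $\widetilde{\Gamma}_i$ on $\mathbb{Z}_{\ell_1}$, so that $\sigma$ yields a chain automorphism ${\{\sigma_j\}}_{j\in\mathbb{Z}}$ that is periodic with period $\ell_2$. Passing to the reduced quotient by $\text{rad}(\widetilde{\Gamma}_i)$ and using Corollary \ref{COR: order of gamma for directed colored circulants}, the shift $\gamma$ satisfies $\gamma^{|\mathcal{R}|}=id$ with $|\mathcal{R}|$ dividing $\ell_1$, while periodicity gives $\gamma^{\ell_2}(\nu_0)=\nu_0$; Bezout for the coprime pair $(|\mathcal{R}|,\ell_2)$ then forces $\gamma(\nu_0)=\nu_0$, i.e.\ the constant sequence ${\{\sigma_0\}}_j$ is itself a chain automorphism, which is exactly the statement that $\widetilde{\sigma}$ preserves the layer. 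This Bezout step is where the coprimality hypothesis is genuinely consumed, and it is absent from your sketch.
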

    
    Replacement property is a way to regularize the action of a given symmetry of $\Gamma \times K_2$. Section \ref{SECTION: Chain automorphisms} develops a language of \textit{chain graphs} and \textit{chain automorphisms} which studies properties of a particular infinite digraph associated to a graph. This machinery is the main tool in the proof of Theorem \ref{Thm: if n is squarefree, all pairs (Gamma,H) satisfy replacement property}.

    Later in the proof of the main theorem we analyze certain primitive group actions. For that reason in Section \ref{SECTION: Group theoretical results} we prove two important group theoretic and cohomological results. Before we state them we have to give a couple definitions. Let $X$ be a finite set and $\varphi \in \text{Sym}(X)$. Then by $\iota(\varphi)$ we understand an element of $\text{Aut}(\text{Sym}(X))$ given by $\psi \mapsto \varphi \circ \psi \circ \varphi^{-1}$. Permutation $\mathfrak{i}: \mathbb{Z}_k \rightarrow \mathbb{Z}_k$ is given by the formula $x \mapsto -x$. Now we are ready to state

    \begin{thm} \label{Thm characterization of primitive group actions with regular cyclic subgroup and additional assumptions}
         Let standard action of $G \leq \text{Sym}(X)$ on $X = \mathbb{Z}_k$ be primitive. If moreover $k$ is odd, ${\left( \mathbb{Z}_k \right)}_r \leq G$ and ${\iota}(\mathfrak{i}) \in \text{Aut}(G)$, then up to an isomorphism of group actions one of the following holds:
     \begin{enumerate}[i.]
         \item $\mathbb{F}_p \leq G \leq \text{Aff}(\mathbb{F}_p)$, $X = \mathbb{F}_p$ where $k=p$ is an odd prime;
         \item $A_{k} \leq G \leq S_{k}$ with $k \geq 5$ and standard action of permutation groups on elements;
         \item $\text{PGL}_{2}(\mathbb{F}_{2^\ell}) \leq G \leq \text{P}\Gamma\text{L}_{2}(\mathbb{F}_{2^\ell})$, $X = \mathbb{P}^1\mathbb{F}_{2^\ell}$ for some positive $\ell \geq 2$ with standard action of projective group on lines.
     \end{enumerate}
    \end{thm}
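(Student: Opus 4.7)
I would start from the classical classification (due to Burnside, Jones, Li and others) of finite primitive permutation groups containing a regular cyclic subgroup: any such $G$ of degree $k$ falls into one of the families (A) $\mathbb{F}_p \leq G \leq \text{Aff}(\mathbb{F}_p)$ with $k=p$ prime; (B) $A_k \leq G \leq S_k$; (C) $\text{PGL}_d(\mathbb{F}_q) \leq G \leq \text{P}\Gamma\text{L}_d(\mathbb{F}_q)$ acting on $\mathbb{P}^{d-1}\mathbb{F}_q$ with $k=(q^d-1)/(q-1)$ and $d \geq 2$; or (D) one of the sporadic actions of $\text{PSL}_2(11)$, $M_{11}$ on $11$ points or $M_{23}$ on $23$ points. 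The theorem then reduces to checking, for each family, which $G$ are consistent with the two extra hypotheses ``$k$ odd'' and ``$\iota(\mathfrak{i}) \in \text{Aut}(G)$''.

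Families (A) and (B) need essentially no work. In (A) the inversion $\mathfrak{i}$ is multiplication by $-1$, which is central in the multiplicative part of $\text{Aff}(\mathbb{F}_p)$ and hence normalizes every intermediate $G$; oddness of $k=p$ is automatic. In (B), $G$ is normal in $\text{Sym}(X)$, so normalization is automatic. The small case $k=3$ is absorbed by (A) via $S_3 = \text{Aff}(\mathbb{F}_3)$.

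The technical core is family (C). For $d \geq 3$ the duality outer automorphism of $\text{PGL}_d(\mathbb{F}_q)$ is not realized by any permutation of $\mathbb{P}^{d-1}\mathbb{F}_q$, so $N_{\text{Sym}(X)}(G) \subseteq \text{P}\Gamma\text{L}_d(\mathbb{F}_q)$; thus $\mathfrak{i}$ itself must lie in $\text{P}\Gamma\text{L}_d(\mathbb{F}_q)$. The conjugation action of $N_{\text{P}\Gamma\text{L}_d(\mathbb{F}_q)}(\langle \sigma \rangle)$ on a Singer generator $\sigma$ is generated by the prime Frobenius $\sigma \mapsto \sigma^p$ (where $q=p^f$), so the identity $\mathfrak{i}\sigma\mathfrak{i}^{-1}=\sigma^{-1}$ forces $-1 \in \langle p \rangle \subseteq (\mathbb{Z}/k\mathbb{Z})^\times$. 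Setting $e := \text{ord}_k(p)$, this means $e$ is even and $k \mid p^{e/2}+1$, so $k \leq p^{e/2}+1$; combined with the lower bound $k > p^{f(d-1)}$ and the upper bound $e \leq fd$ this gives $p^{f(d-1)} < 2 p^{fd/2}$, forcing $d \leq 2$. So only $d=2$ survives; with $q$ odd one would have $k=q+1$ even, contradicting the hypothesis, so $q=2^\ell$, and for $\ell \geq 2$ the dihedral involution of $N_{\text{PGL}_2(\mathbb{F}_{2^\ell})}(\langle\sigma\rangle)$ (which exists because $q \equiv -1 \pmod{q+1}$) can be twisted by an element of $\langle\sigma\rangle$ to realize $\mathfrak{i}$ itself inside $G$. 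This recovers exactly case iii.

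The sporadic family (D) is excluded uniformly. Each of $\text{PSL}_2(11)$, $M_{11}$ and $M_{23}$ has no outer automorphism realized as a permutation of the relevant point set ($M_{11}$ and $M_{23}$ have trivial outer automorphism group, while the outer automorphism of $\text{PSL}_2(11)$ interchanges the two conjugacy classes of $A_5$-stabilizers defining the $11$-point action), so $N_{\text{Sym}(X)}(G) = G$. Inside $G$ the normalizer of the regular cyclic subgroup is a Frobenius group whose complement has order $5$, $5$ and $11$ respectively, none of which admits an involution; but $\mathfrak{i}$ has order $2$ and normalizes $\langle\sigma\rangle$, contradicting $\mathfrak{i} \in N_{\text{Sym}(X)}(G) = G$. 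The main obstacle I anticipate is making the size estimate in (C) fully airtight and rigorously justifying that the conjugation action of $N_{\text{P}\Gamma\text{L}_d(\mathbb{F}_q)}(\langle\sigma\rangle)$ on $\langle\sigma\rangle$ is generated precisely by multiplication by $p$ modulo $k$, since this uses the semidirect-product structure of $\text{P}\Gamma\text{L}_d(\mathbb{F}_q)$ and the identification of $\sigma$ with multiplication in $\mathbb{F}_{q^d}$.
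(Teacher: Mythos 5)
Your proposal follows the same top-level strategy as the paper (reduce to the classification of primitive groups containing a regular cyclic subgroup, then eliminate the sporadic family and force $d=2$, $q=2^\ell$ in the projective family), but the elimination arguments are genuinely different and in places cleaner. For the sporadic cases the paper checks, via maximal-subgroup orders, that $\text{PSL}_2(\mathbb{F}_{11})$, $M_{11}$ and $M_{23}$ (and, for the first, the overgroup $\langle G,\mathfrak{i}\rangle$) contain no copy of $D_{2k}$; your argument that $N_{\text{Sym}(X)}(G)=G$ while $N_G(\langle\sigma\rangle)$ is a Frobenius group of odd order $55$, $55$, $253$ is a tidier way to get the same contradiction, though the claim $N_{\text{Sym}(X)}(G)=G$ (in particular that the $11$-point action of $\text{PSL}_2(\mathbb{F}_{11})$ does not extend) still needs a citation or a short proof. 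For the projective case with $d\ge 3$, the paper does not prove the Singer-normalizer fact: it reduces to $\rho'=\rho^{r}\in\text{PGL}_d(\mathbb{F}_q)$ and invokes Dobson's lemma, which says exactly that $\rho'$ is not conjugate to its inverse in $\text{P}\Gamma\text{L}_d(\mathbb{F}_q)$; your plan is essentially to reprove that lemma via the congruence $-1\in\langle p\rangle\pmod k$ and a size estimate. The one substantive caveat is the point you yourself flag: your estimate is written for an element of order $k$ lying in $\text{PGL}_d(\mathbb{F}_q)$, but the regular cyclic subgroup of $G$ need only lie in $\text{P}\Gamma\text{L}_d(\mathbb{F}_q)$, so one must first pass to $\rho^{r}$ (of order $k/\gcd(r,k)$, semiregular), and the inequality then has to be run with $k'$ in place of $k$ — which is precisely why Dobson's lemma is a nontrivial statement rather than a two-line computation. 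If you either cite that lemma (as the paper does) or carry out the $k'$ version of the estimate, your route is complete; the remaining steps (families (A), (B), oddness of $k$ forcing $q$ even when $d=2$, and absorbing $\ell=1$ into the affine case) match the paper.
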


    \begin{thm} \label{Thm: action of a nontrivial cocycle from group G acting primitively on X with additional assumptions}
        Let standard action of $G \leq \text{Sym}(X)$ on $X = \mathbb{Z}_k$ be primitive. Moreover let $k$ be an odd integer, ${\left( \mathbb{Z}_k \right)}_r \leq G$ and ${\iota}(\mathfrak{i}) \in \text{Aut}(G)$. If $\omega : G \rightarrow \mathbb{F}_2[X]$ is a nonzero cocycle such that $\omega {|}_{{\left( \mathbb{Z}_n \right)}_r} \equiv \vec{0}$, then
        $$
        \omega(g) = \begin{cases} 
      \vec{0} & \text{ if } g \in G_0 \\
     \sum_{x\in X} \vec{e}_x & \text{ otherwise }
   \end{cases},
        $$
        where $G_0$ is the unique subgroup of $G$ of index $2$.
    \end{thm}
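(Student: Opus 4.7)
The plan is to translate the question into a first cohomology computation and then apply Theorem \ref{Thm characterization of primitive group actions with regular cyclic subgroup and additional assumptions} case-by-case. Throughout, write $\mathbf{1} := \sum_{x \in X} \vec{e}_x$, which is $G$-invariant since $G$ permutes the basis $\{\vec{e}_x\}$, and let $C := {(\mathbb{Z}_k)}_r$.

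First, note that whenever $G$ has an index-$2$ subgroup $G_0$, the composition $C \hookrightarrow G \twoheadrightarrow G/G_0 \cong \mathbb{Z}/2\mathbb{Z}$ is trivial (since $|C| = k$ is odd), so $C \subseteq G_0$; consequently the ``sign cocycle'' $\omega_0 : G \to \mathbb{F}_2[X]$ sending $G_0$ to $\vec{0}$ and $G \setminus G_0$ to $\mathbf{1}$ is well-defined, is a cocycle (because $g \cdot \mathbf{1} = \mathbf{1}$ reduces the cocycle identity to additivity of the projection $G \to \mathbb{Z}/2$), and satisfies $\omega_0|_C = 0$. Let $Z_C \subseteq Z^1(G, \mathbb{F}_2[X])$ denote the $\mathbb{F}_2$-subspace of cocycles vanishing on $C$. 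The natural map $Z_C \to H^1(G, \mathbb{F}_2[X])$ is injective: its kernel consists of coboundaries $\delta v$ with $v \in \mathbb{F}_2[X]^C$, and since $C$ acts regularly on $X$ one has $\mathbb{F}_2[X]^C = \{\vec{0}, \mathbf{1}\}$, both of which give the zero coboundary. It therefore suffices to show $\dim_{\mathbb{F}_2} H^1(G, \mathbb{F}_2[X]) \leq 1$, with equality precisely when $G_0$ exists.

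By Shapiro's lemma applied to $\mathbb{F}_2[X] \cong \text{Ind}_{G_x}^G \mathbb{F}_2$ for a point stabilizer $G_x$, this $H^1$ equals $\text{Hom}(G_x, \mathbb{F}_2)$, which I would compute in each case of Theorem \ref{Thm characterization of primitive group actions with regular cyclic subgroup and additional assumptions}. In (i), $G_x$ is a cyclic subgroup of $\mathbb{F}_p^*$, so $\text{Hom}(G_x, \mathbb{F}_2)$ has order $\leq 2$ and is nontrivial iff $|G_x|$ is even. In (ii), $G_x \in \{A_{k-1}, S_{k-1}\}$; simplicity of $A_{k-1}$ for $k-1 \geq 5$ together with $A_4^{\text{ab}} = \mathbb{Z}/3$ gives $\text{Hom}(A_{k-1}, \mathbb{F}_2) = 0$, while the sign provides $\text{Hom}(S_{k-1}, \mathbb{F}_2) \cong \mathbb{F}_2$. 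The most delicate case is (iii), where $G_x = B \rtimes H$ with $B$ the Borel subgroup of $\text{PGL}_2(\mathbb{F}_{2^\ell})$ stabilizing our point and $H = G \cap \text{Gal}(\mathbb{F}_{2^\ell}/\mathbb{F}_2)$; writing $B = U \rtimes T$ with $U \cong (\mathbb{F}_{2^\ell}, +)$ and $T \cong \mathbb{F}_{2^\ell}^*$, the faithful action of $T$ on $U$ forces $[B, B] = U$, so $B^{\text{ab}} \cong T$ has odd order and is killed by every map to $\mathbb{F}_2$. Thus $\text{Hom}(G_x, \mathbb{F}_2) \cong \text{Hom}(H, \mathbb{F}_2)$ is nontrivial iff $|H|$ is even iff $G$ has an index-$2$ subgroup (automatically unique, since $\text{PGL}_2(\mathbb{F}_{2^\ell})$ is simple for $\ell \geq 2$).

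Putting everything together, $\dim_{\mathbb{F}_2} Z_C \leq 1$ in all cases. The assumption $\omega \neq \vec{0}$ forces $Z_C \neq 0$, hence $G_0$ exists, $\omega_0 \in Z_C$ is nonzero, $Z_C = \mathbb{F}_2 \cdot \omega_0$, and so $\omega = \omega_0$ as claimed. I expect the main obstacle to be the structural analysis in case (iii) --- identifying $G_x$ as the Borel extended by the Galois part and carrying out the commutator calculation showing that $B^{\text{ab}}$ has odd order.
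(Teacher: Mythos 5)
Your proposal is correct, but it takes a genuinely different and substantially shorter route than the paper. The paper never invokes Shapiro's lemma: after establishing (as you do) that a cocycle class contains at most one representative vanishing on ${(\mathbb{Z}_k)}_r$, it computes $H^1(G,\mathbb{F}_2[X])$ in each of the three cases of Theorem \ref{Thm characterization of primitive group actions with regular cyclic subgroup and additional assumptions} by inflation--restriction along a normal subgroup with vanishing $H^1$ ($\mathbb{Z}_p$, $A_k$, $\text{PGL}_2(\mathbb{F}_{2^\ell})$ respectively), and the required vanishing is proved by hand: $H^1(A_k,\mathbb{F}_2[X])=0$ via an explicit induction on cycles (Lemma \ref{lem: H^1(A_k,F_2[X]) = 0}), and $H^1(\text{SL}_2(\mathbb{F}_{2^\ell}),\mathbb{F}_2[X])=0$ via a two-step argument through the Borel subgroup $U_\ell$ (an explicit computation of $H^1(T_\ell,\mathbb{F}_2[X])$ and its $U_\ell/T_\ell$-invariants, inflation--restriction, and finally restriction--corestriction along the odd index $2^\ell+1$). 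Your single application of Shapiro's lemma replaces all of this with the computation of $\text{Hom}(G_x,\mathbb{F}_2)$, collapsing the paper's two hardest lemmas into the elementary facts that $A_{k-1}^{\text{ab}}$ has odd order for $k-1\geq 4$ and that the Borel's abelianization is $\mathbb{F}_{2^\ell}^{*}$, of odd order. What the paper's route buys is explicitness (e.g.\ the intermediate isomorphism $H^1(T_\ell,\mathbb{F}_2[X])\cong\mathbb{F}_{2^\ell}$) and independence from Shapiro's lemma; what yours buys is brevity and a uniform mechanism across the three cases. Two small points to tighten when writing this up: in case (iii) the identification $G_x = B\rtimes H$ presumes $x$ is a Galois-fixed point of $\mathbb{P}^1\mathbb{F}_{2^\ell}$, which is harmless since all stabilizers are conjugate but should be said; and the equivalence ``$\text{Hom}(G_x,\mathbb{F}_2)\neq 0$ iff $G$ has an index-$2$ subgroup'' (together with uniqueness of $G_0$) needs the one-line verification in cases (i) and (ii) as well, which holds because $G^{\text{ab}}\cong G_x$ in case (i) and because $\text{Hom}(S_{k-1},\mathbb{F}_2)\neq 0$ exactly when $G=S_k$ in case (ii).
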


    In Section \ref{SECTION: Main results} we combine conclusions of Theorem \ref{Thm: if n is squarefree, all pairs (Gamma,H) satisfy replacement property} and Theorem \ref{Thm: action of a nontrivial cocycle from group G acting primitively on X with additional assumptions} to prove the main result of this paper.
    \begin{thm} \label{THM: characterization of unstable circulants of squarefree order}
       Let $n$ be an even square-free integer and let $\Gamma = \text{Cay}(\mathbb{Z}_{n},S)$ be a connected and nonbipartite graph. Then $\Gamma$ is unstable if and only if
        \begin{enumerate}[i.]
            \item there exists nonzero $h \in 2\mathbb{Z}_{n}$ such that $S \cap 2\mathbb{Z}_{n} + h = S \cap 2\mathbb{Z}_{n}$;
            \item or there exists positive integer $l$ coprime to $n$ such that $lS = S + \frac{n}{2}$.
        \end{enumerate}
    \end{thm}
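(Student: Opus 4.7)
The plan is to prove the sufficiency direction by explicit construction, and necessity via the Schur-ring/$\text{Aut}^\pi$ framework, reducing to a primitive quotient and then invoking Theorems~\ref{Thm characterization of primitive group actions with regular cyclic subgroup and additional assumptions} and \ref{Thm: action of a nontrivial cocycle from group G acting primitively on X with additional assumptions}.

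\textbf{Sufficiency.} I would exhibit explicit unexpected two-fold automorphisms for each condition. Write $n=2m$ with $m$ odd, so $\mathbb{Z}_n \cong \mathbb{Z}_2 \oplus \mathbb{Z}_m$ with $2\mathbb{Z}_n$ corresponding to $\{0\}\oplus \mathbb{Z}_m$. For (i), given $h\in 2\mathbb{Z}_n$ with $(S\cap 2\mathbb{Z}_n)+h = S\cap 2\mathbb{Z}_n$, set
\[
\sigma_1(v) = \begin{cases} v+h & \text{if } v\in 2\mathbb{Z}_n\\ v & \text{otherwise} \end{cases},\qquad
\sigma_2(v) = \begin{cases} v & \text{if } v\in 2\mathbb{Z}_n\\ v+h & \text{otherwise} \end{cases}.
\]
A four-case parity check on $\sigma_2(w)-\sigma_1(v)\in S \Leftrightarrow w-v\in S$ shows that the mixed-parity cases are tautological ($\sigma_2(w)-\sigma_1(v) = w-v$), and the same-parity cases reduce to the $\pm h$-periodicity of $S\cap 2\mathbb{Z}_n$. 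For (ii), set $\sigma_1(v)=lv$ and $\sigma_2(v)=lv+\tfrac{n}{2}$; then $\sigma_2(w)-\sigma_1(v) = l(w-v)+\tfrac{n}{2}$ lies in $S$ iff $w-v\in S$ by $lS=S+\tfrac{n}{2}$. Both constructions satisfy $\sigma_1\neq\sigma_2$, giving unexpected symmetries.

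\textbf{Necessity -- descent to a primitive quotient.} Suppose $\Gamma$ is non-trivially unstable, witnessed by $(\sigma_1,\sigma_2)\in \text{Aut}^{\text{TF}}(\Gamma)$ with $\alpha(\sigma_1,\sigma_2)\neq 0$. Since $n$ is squarefree, every subgroup $H\leq\mathbb{Z}_n$ satisfies $\gcd(|H|,n/|H|)=1$, so Theorem~\ref{Thm: if n is squarefree, all pairs (Gamma,H) satisfy replacement property} applies for any $H$. Using replacement first with $H=\{0,\tfrac{n}{2}\}$ produces a TF-automorphism descending to an action on $\mathbb{Z}_n/H\cong \mathbb{Z}_{n/2}$ of odd order; enlarging $H$ to a minimal $\tilde H \supseteq H$ for which the induced action on $\mathbb{Z}_n/\tilde H \cong \mathbb{Z}_k$ is primitive yields an odd $k$. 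The resulting primitive group $G$ contains $(\mathbb{Z}_k)_r$ (from $\mathbb{Z}_n$-translations) and satisfies $\iota(\mathfrak{i})\in\text{Aut}(G)$ (because $S=-S$ makes the whole $\text{Aut}^\pi$ structure inversion-stable). The hypotheses of Theorem~\ref{Thm characterization of primitive group actions with regular cyclic subgroup and additional assumptions} are therefore satisfied, placing $G$ in one of three shapes: affine on $\mathbb{F}_p$, alternating-symmetric on $k$ points, or projective $\text{PGL}_2(\mathbb{F}_{2^\ell})$-type on $\mathbb{P}^1\mathbb{F}_{2^\ell}$.

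\textbf{Applying the cocycle theorem.} The descended value of $\alpha$ is a cocycle $\omega:G \to \mathbb{F}_2[\mathbb{Z}_k]$ that is nonzero (since $(\sigma_1,\sigma_2)$ is nontrivial) and vanishes on $(\mathbb{Z}_k)_r$ (the replacement normalization kills the translation component). Theorem~\ref{Thm: action of a nontrivial cocycle from group G acting primitively on X with additional assumptions} then pins $\omega$ down: it vanishes on the unique index-$2$ subgroup $G_0$ and equals $\sum_{x\in X}\vec{e}_x$ outside. In the affine case a representative of $G\setminus G_0$ is a multiplication $x\mapsto l_0 x$ with $l_0\in\mathbb{F}_p^*$; lifting $l_0$ through $\mathbb{Z}_n^* \twoheadrightarrow \mathbb{F}_p^*$ to $l\in\mathbb{Z}_n^*$ and interpreting the all-ones value of $\omega$ as the shift by $\tfrac{n}{2}$ produces $lS=S+\tfrac{n}{2}$, which is condition~(ii). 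In the alternating-symmetric and $\text{PGL}_2$-type cases, the image of $\mathbb{Z}_n^*$ in $\text{Sym}(\mathbb{Z}_k)$ sits inside the small affine subgroup $\mathbb{Z}_k^*\ltimes \mathbb{Z}_k$, so no element of $G\setminus G_0$ lifts multiplicatively to $\mathbb{Z}_n^*$; the nontrivial cocycle must therefore have been generated already at the initial descent through $H=\{0,\tfrac{n}{2}\}$, giving a nonzero $h\in 2\mathbb{Z}_n$ periodizing $S\cap 2\mathbb{Z}_n$, i.e.\ condition~(i).

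\textbf{Main obstacle.} The delicate step is the final case analysis of the previous paragraph: for each of the three cases of Theorem~\ref{Thm characterization of primitive group actions with regular cyclic subgroup and additional assumptions} one must carefully track the lifts of $G_0$ and $G\setminus G_0$ back through the iterated replacements and verify the dichotomy -- only the affine case admits a consistent multiplicative lift to $\mathbb{Z}_n^*$, while the non-abelian cases force the obstruction to be absorbed by the order-$2$ subgroup that was collapsed at the start, thereby realizing condition~(i). The bookkeeping of cosets under the chain of descents, and the argument that large non-abelian symmetries cannot be realized multiplicatively, is the technical heart of the proof.
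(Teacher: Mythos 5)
Your sufficiency argument is fine (the paper simply cites the known result, but your explicit two-fold automorphisms check out), and your overall instinct — descend to a primitive quotient, apply Theorems~\ref{Thm characterization of primitive group actions with regular cyclic subgroup and additional assumptions} and \ref{Thm: action of a nontrivial cocycle from group G acting primitively on X with additional assumptions} — matches the paper's strategy. But the necessity direction has two genuine gaps. First, you never invoke the Schur-ring dichotomy (Theorem~\ref{Thm: Classification of schur rings over Z_2m x Z_2} via Lemma~\ref{Lem: reasons for instability in case 2m, m odd -- messy second condition}), which is the step that either yields condition~(i) outright or establishes that $\{a,m+a\}$ is a basic set of $\mathcal{A}(\Gamma\times K_2)$ (Hypothesis~\ref{HYPOTHESIS which states the hard case}). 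Only under that hypothesis does Corollary~\ref{cor: how alpha works on vertices in our case} force $\alpha(\sigma).x\in\{x,x+m\}$ for all $x$, which is what lets $\alpha$ be encoded as an $\mathbb{F}_2[\cdot]$-valued function at all. Without it, your assertion that ``the descended value of $\alpha$ is a cocycle $\omega:G\to\mathbb{F}_2[\mathbb{Z}_k]$'' has no content: a priori $\alpha(\sigma)$ can move vertices arbitrarily. Relatedly, the paper's primitive quotient is not $\mathbb{Z}_n/\tilde H$ but the sandwich $\widetilde B/B$, where $B$ comes from the thickest $\alpha$-homogeneous partition and $\widetilde B$ from a minimal strictly thicker block system; this specific choice is what guarantees $\omega$ is both well defined and nonzero on $G$, and your ``enlarge $H$ until primitive'' descent does not secure either property. (You also omit the non-reduced case, which the theorem's hypotheses allow and which the paper handles separately.)

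Second, your endgame case analysis is wrong. In the paper, all three primitive types (affine, alternating/symmetric, $\mathrm{PGL}_2$) are treated uniformly: Theorem~\ref{Thm: action of a nontrivial cocycle from group G acting primitively on X with additional assumptions} pins $\omega$ to the all-ones vector off an index-$2$ subgroup, Lemma~\ref{lem: 1 in image of omega implies conclusion of thm 6.1} converts this into some $\sigma$ with $\alpha(\sigma)=m_r$, Corollary~\ref{Cor: (+m)_r in im(alpha) iff. Cay(Z_2m,S) iso Cay(Z_2m,S+m)} turns that into an abstract isomorphism $\mathrm{Cay}(\mathbb{Z}_n,S)\cong\mathrm{Cay}(\mathbb{Z}_n,S+\tfrac n2)$, and only then does Muzychuk's solution of the circulant isomorphism problem produce the multiplier $l$ with $lS=S+\tfrac n2$. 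The permutation realizing the cocycle need not be multiplicative, so your attempt to ``lift $l_0$ through $\mathbb{Z}_n^*\twoheadrightarrow\mathbb{F}_p^*$'' in the affine case is not how $l$ arises, and your claim that the alternating and $\mathrm{PGL}_2$ cases cannot support a multiplicative lift and therefore fall back to condition~(i) is false: those cases also terminate in condition~(ii). The missing ingredient is precisely the citation of Muzychuk's theorem at the end, which decouples ``which condition holds'' from ``which primitive type occurred.''
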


    This result can be restated in the context of Wilson's Conjecture.

    \begin{cor} \label{Cor: non-trivially unstable of circulants of square-free order have Wilson type (C.1) or (C.4)}
       Let $n$ be any square-free integer and let $\Gamma = \text{Cay}(\mathbb{Z}_{n},S)$ be non-trivially unstable Cayley graph. Then $\Gamma$ has Wilson type (C.1) or (C.4).
    \end{cor}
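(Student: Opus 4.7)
The plan is to split on the parity of $n$ and then translate the two conditions in Theorem \ref{THM: characterization of unstable circulants of squarefree order} into Wilson's original criteria.

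First, suppose $n$ is odd. By the result of Fernandez and Hujdurović \cite{FERNANDEZ202249} (cited in the introduction), no circulant of odd order is non-trivially unstable, so the hypothesis is vacuous and there is nothing to prove. Thus we may assume $n$ is even and square-free, in which case Theorem \ref{THM: characterization of unstable circulants of squarefree order} applies directly: being non-trivially unstable is stronger than being unstable, connected, and non-bipartite, so one of the two alternatives (i), (ii) of that theorem must hold.

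The remaining task is to match these alternatives with Wilson's conditions (C.1) and (C.4) as stated in \cite[Theorem 1.4]{HujdurovicMitrovicMorrisOverview}. Condition (i) of the main theorem says there is a nonzero $h \in 2\mathbb{Z}_n$ fixing $S \cap 2\mathbb{Z}_n$ under translation; I would spell out that $2\mathbb{Z}_n$ is the unique subgroup of $\mathbb{Z}_n$ of index $2$ (here we use that $n$ is square-free and even, so $n/2$ is odd and $\gcd(2,n/2)=1$), and that the stabilizer condition on $S \cap 2\mathbb{Z}_n$ within this subgroup is precisely the content of Wilson's (C.1). Condition (ii) asserts the existence of an integer $l$ coprime to $n$ with $lS = S + n/2$, which is literally the statement of (C.4).

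I do not expect any serious obstacle here; the corollary is a bookkeeping translation from the ``intrinsic'' formulation of Theorem \ref{THM: characterization of unstable circulants of squarefree order} into Wilson's original language. The only mild care required is making sure the square-freeness of $n$ is invoked when identifying $2\mathbb{Z}_n$ with the unique index-$2$ subgroup, so that condition (i) is genuinely of the form demanded by (C.1) rather than a weaker variant. Once this identification is recorded the corollary follows immediately.
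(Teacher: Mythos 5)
Your proposal is correct and is exactly the intended argument: the paper states the corollary as an immediate restatement of Theorem \ref{THM: characterization of unstable circulants of squarefree order}, with the odd case dispatched by the Fernandez--Hujdurovi\'c result and the two alternatives matching (C.1) and (C.4) verbatim. (One tiny remark: the uniqueness of the index-$2$ subgroup $2\mathbb{Z}_n$ follows from $\mathbb{Z}_n$ being cyclic alone and does not need square-freeness, but this does not affect the argument.)
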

    
    Theorem \ref{THM: characterization of unstable circulants of squarefree order} and \cite[Theorem 1.5]{StabilityAndSchurRings} suggests that for arbitrary odd $m>1$ following holds.

    \begin{conj} \label{Conj: characterization of unstable circulants of order n=2m, m>1 and odd}
        Let $n=2m$ where $m>1$ is an odd integer and let $\Gamma = \text{Cay}(\mathbb{Z}_{n},S)$ be a connected and nonbipartite graph. Then $\Gamma$ is unstable if and only if 
        \begin{enumerate}[i.]
            \item there exists nonzero $h \in 2\mathbb{Z}_{2m}$ such that $S \cap 2\mathbb{Z}_{2m} + h = S \cap 2\mathbb{Z}_{2m}$;
            \item or $\text{Cay}(\mathbb{Z}_{2m},S) \cong \text{Cay}(\mathbb{Z}_{2m},S+m)$.
        \end{enumerate}
    \end{conj}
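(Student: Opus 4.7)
The sufficiency of (i) and (ii) should carry over essentially unchanged from the proof of Theorem \ref{THM: characterization of unstable circulants of squarefree order}, since neither of the two constructions uses square-freeness of $m$. Condition (i) yields the unexpected two-fold automorphism $(\mathrm{id}, v \mapsto v + h)$, which is of Wilson type (C.1); condition (ii) asserts that $\Gamma$ admits a graph isomorphism twisting its connection set by the unique involution $m = n/2$ of $\mathbb{Z}_{2m}$, and such an isomorphism directly produces an unexpected symmetry of $\Gamma \times K_2$. Observe that (ii) is phrased more generally than in Theorem \ref{THM: characterization of unstable circulants of squarefree order}: one does not demand the isomorphism be given by multiplication by some integer $l$ coprime to $n$, because when $n$ is not square-free the CI-property can fail and exotic circulant isomorphisms may be required.

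For necessity I would follow the template of the square-free proof. Fix a non-trivially unstable $\Gamma$ and pick a two-fold automorphism $(\sigma_1,\sigma_2) \in \text{Aut}^{\text{TF}}(\Gamma)$ with $\sigma_1 \neq \sigma_2$ minimising $\alpha$; analyse it through $\text{Aut}^{\pi}(\Gamma)$ and the Schur ring attached to $\Gamma$; localise the obstruction on a suitable subgroup $H \leq \mathbb{Z}_{2m}$; apply a replacement step to regularise the action on $H$; and invoke Theorems \ref{Thm characterization of primitive group actions with regular cyclic subgroup and additional assumptions} and \ref{Thm: action of a nontrivial cocycle from group G acting primitively on X with additional assumptions} to classify the primitive action and the surviving cocycle. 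The cocycle produced must then be shown to encode either a translation realising (i) or a Cayley isomorphism realising (ii).

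The main obstacle is that Theorem \ref{Thm: if n is squarefree, all pairs (Gamma,H) satisfy replacement property} requires $\gcd(|H|, n/|H|) = 1$, a hypothesis automatic for square-free $n$ but which fails as soon as $m$ carries a repeated prime factor. My plan for closing this gap would be an induction on the number of distinct prime divisors of $m$: for an odd prime $p$ with $p^e \| m$, quotient $\mathbb{Z}_{2m}$ by its unique subgroup of order $p^e$, apply the inductive hypothesis to the quotient circulant, and lift the resulting data back using the chain-graph and chain-automorphism formalism of Section \ref{SECTION: Chain automorphisms}. The lifting step forces one to prove a genuine extension of the replacement property to the case $\gcd(|H|, n/|H|) > 1$, most plausibly via a wedge-type decomposition of S-rings over $\mathbb{Z}_{p^e}$ glued to the $\mathbb{Z}_2$-factor; this is where the hardest new work lies. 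In parallel one must rule out sporadic non-trivially unstable circulants arising only at prime powers — \cite[Example 3.9]{HujdurovicMitrovicMorrisOverview} shows that such behaviour does occur for Wilson's original conjecture for general $n$ — and verify that the cocycle classification of Theorem \ref{Thm: action of a nontrivial cocycle from group G acting primitively on X with additional assumptions} continues to return only the two cocycles corresponding to (i) and (ii) at every level of the induction.
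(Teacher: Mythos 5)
The statement you are addressing is stated in the paper as Conjecture~\ref{Conj: characterization of unstable circulants of order n=2m, m>1 and odd}: the paper offers no proof of it, only the square-free case (Theorem~\ref{THM: characterization of unstable circulants of squarefree order}) and the case $n=2p^e$ from \cite{StabilityAndSchurRings} as supporting evidence. Your proposal does not close it either. You correctly identify that the necessity direction reduces to extending the replacement property to subgroups $H\leq\mathbb{Z}_{2m}$ with $\gcd(|H|,n/|H|)>1$, and you explicitly leave that extension as ``the hardest new work.'' That is precisely the open point: the proof of Theorem~\ref{Thm: if n is squarefree, all pairs (Gamma,H) satisfy replacement property} uses coprimality in an essential way --- both in the CRT splitting $\mathbb{Z}_n\cong\mathbb{Z}_{\ell_1}\times\mathbb{Z}_{\ell_2}$ that makes $\widetilde{\sigma}$ well defined coset by coset, and in the B\'ezout step $a|\mathcal{R}|+b\ell_2=1$ that forces $\gamma(\nu_0)=\nu_0$ --- and neither step survives when $|H|$ and $n/|H|$ share a prime. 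Since the block systems $\mathcal{B}\prec\widetilde{\mathcal{B}}$ fed into Theorem~\ref{thm: handeling the hard case for 2m with assumptions about replacement property} need not have coprime index data once $m$ has a repeated prime factor, the machinery cannot be invoked as it stands; your proposed induction on prime divisors and wedge decomposition of S-rings is a research plan, not an argument. What you have is an accurate diagnosis of the obstruction, which is exactly why the paper states this as a conjecture rather than a theorem.

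One smaller point on the sufficiency direction: the pair $(\mathrm{id},\,v\mapsto v+h)$ is a two-fold automorphism only when $S+h=S$, which is strictly stronger than condition (i). The correct unexpected symmetry for (C.1) translates by $h$ only on one parity class in each layer (e.g.\ $\sigma_1$ fixes even vertices and adds $h$ to odd ones, $\sigma_2$ does the opposite); the verification then uses only $S\cap 2\mathbb{Z}_{2m}+h=S\cap 2\mathbb{Z}_{2m}$. Sufficiency is in any case already known and is cited in the paper via \cite[Theorem 1.4]{HujdurovicMitrovicMorrisOverview}, so this does not affect the substance, but the explicit construction you wrote is wrong as stated.
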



    
\section{Schur Rings and function $\alpha$} \label{SECTION: Schur Rings and function alpha}

\subsection{Two fold projections and function $\alpha$} \label{SUBSECTION: Two fold projections and function alpha} \hfill \\
For any set $X$ we denote the full permutation group of $X$ with $\text{Sym}(X)$.

\begin{defi} \label{Defi: TF-projections}
    For a  graph $\Gamma=(V,E)$ we define the group of \textit{two-fold projections} by
    $$
    \text{Aut}^{\pi}(\Gamma) = \{ \sigma_1 \in \text{Sym}(V) \mid \exists  \sigma_2 \in \text{Sym}(V) \text{ such that } (\sigma_1,\sigma_2) \in \text{Aut}^{\text{TF}}(\Gamma) \}.
    $$
\end{defi}

\begin{obs} \label{Obs: Aut^pi = Aut^TF for reduced graphs}
    Let $\Gamma$ be a reduced graph. Then $\pi_1: \text{Aut}^{\text{TF}}(\Gamma) \rightarrow \text{Aut}^{\pi}(\Gamma)$ given by $(\sigma_1,\sigma_2) \mapsto \sigma_1$ is an isomorphism.
\end{obs}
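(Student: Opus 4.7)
The plan is to dispatch the three standard verifications separately. First, $\pi_1$ is a group homomorphism essentially by construction, since the group law on $\text{Aut}^{\text{TF}}(\Gamma)$ is coordinatewise composition of the entries; and surjectivity onto $\text{Aut}^{\pi}(\Gamma)$ is built directly into the definition of the latter. The only substantive content of the observation is therefore injectivity: when $\Gamma$ is reduced, the second coordinate $\sigma_2$ of any two-fold automorphism $(\sigma_1,\sigma_2)$ must be determined by the first.

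For injectivity I would first unwind the definition. The condition that the lift $\sigma:(v,i)\mapsto(\sigma_{i+1}(v),i)$ preserves the edges of $\Gamma\times K_2$ amounts, after writing down which pairs $((v_1,0),(v_2,1))$ are edges, to the adjacency equivalence $(v_1,v_2)\in E \iff (\sigma_1(v_1),\sigma_2(v_2))\in E$, equivalently $\sigma_2(N(v))=N(\sigma_1(v))$ for every $v\in V$. The key manipulation is to rewrite this with the roles of the two coordinates swapped: using symmetry of $E$, one has $u\in N(v)\iff \sigma_1(v)\in N(\sigma_2(u))$, while on the other hand $u\in N(v)\iff v\in N(u) \iff \sigma_1(v)\in\sigma_1(N(u))$ because $\sigma_1$ is a bijection on $V$. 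Letting $v$ range over $V$ then yields $N(\sigma_2(u))=\sigma_1(N(u))$ for every $u\in V$.

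Applying the same derivation to a hypothetical second preimage $(\sigma_1,\sigma_2')$ gives $N(\sigma_2'(u))=\sigma_1(N(u))$, so $\sigma_2(u)$ and $\sigma_2'(u)$ have identical neighborhoods, and reducedness of $\Gamma$ forces them to coincide vertex by vertex. The only obstacle worth mentioning is the bookkeeping step that moves $\sigma_2$ from ``inside $N$'' in the relation $\sigma_2(N(v))=N(\sigma_1(v))$ to ``outside $N$'' in the relation $N(\sigma_2(u))=\sigma_1(N(u))$; once that reformulation is in place, reducedness does its job immediately and no further subtleties arise.
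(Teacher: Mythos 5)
Your proof is correct and follows essentially the same route as the paper: homomorphism and surjectivity are immediate from the definitions, and injectivity comes down to reducedness forcing two vertices with equal neighbourhoods to coincide. The paper phrases this via triviality of the kernel (taking $\sigma_1=\mathrm{id}$ and observing $N(v)=N(\sigma_2(v))$), while you compare two preimages directly through the identity $N(\sigma_2(u))=\sigma_1(N(u))$; the difference is purely cosmetic.
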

\begin{proof}
    Function $\pi_1$ is obviously a homomorphism. It is subjective by definition of $\text{Aut}^{\pi}(\Gamma)$, hence we only have to verify that its kernel is trivial. Let $(\sigma_1,\sigma_2) \in \text{ker} \pi_1$, so $\sigma_1 = id$. If $\sigma_2 \neq id$ then there exist such $v \in V$ that $\sigma_2(v) \neq v$. Since $(\sigma_1,\sigma_2)$ is a two-fold automorphism of $\Gamma$, vertices $(v,1)$ and $(\sigma_2(v),1)$ have the same neighbourhood in $\Gamma \times K_2$, and hence $N(v) = N(\sigma_2(v))$ contradicting that $\Gamma$ is reduced.
\end{proof}
    Observation \ref{Obs: Aut^pi = Aut^TF for reduced graphs} shows that for non-trivially unstable graphs difference between $\text{Aut}^{\pi}(\Gamma)$ and $\text{Aut}^{\text{TF}}(\Gamma)$ is strictly formal in group theoretic terms, however group $\text{Aut}^{\pi}(\Gamma)$ acts on the set $V$ not on $V \times \{0,1\}$ which will come in handy in the future.

    \begin{defi} \label{Defi: widetilde{gamma} and gamma for undirected reduced graphs}
        For a given graph $\Gamma$ let $\widetilde{\gamma}$ be an automorphism of $\text{Aut}^{\text{TF}}(\Gamma)$ given by $(\sigma_1,\sigma_2) \mapsto (\sigma_2,\sigma_1)$. Moreover if $\Gamma$ is reduced we define $\gamma: \text{Aut}^{\pi}(\Gamma) \mapsto \text{Aut}^{\pi}(\Gamma)$ to be the unique function making bellow diagram commute.
\[\begin{tikzcd}
	{\text{Aut}^{\text{TF}}(\Gamma)} & {\text{Aut}^{\text{TF}}(\Gamma)} \\
	{\text{Aut}^{\pi}(\Gamma)} & {\text{Aut}^{\pi}(\Gamma)}
	\arrow["{\widetilde{\gamma}}", from=1-1, to=1-2]
	\arrow["{\pi_1}", from=1-1, to=2-1]
	\arrow["{\pi_1}", from=1-2, to=2-2]
	\arrow["{\exists! \text{ }\gamma}", dotted, from=2-1, to=2-2]
\end{tikzcd}\]
If it is not clear to which graph $\widetilde{\gamma}$ or $\gamma$ refers to, we write ${\widetilde{\gamma}}_{\Gamma}$ and $\gamma_{\Gamma}$ respectively when it refers to $\Gamma$.
    \end{defi}

For a given group $G$ and automorphism $\varphi: G \rightarrow G$, $G^{\varphi}$ is the set of fixed points of $\varphi$.

    \begin{obs} \label{Obs: basic properties of gamma}
        Let $\Gamma$ be a reduced graph. Then function $\gamma$ satisfies $\gamma \in \text{Aut}(\text{Aut}^{\pi}(\Gamma))$, $\gamma^2 = id$ and ${ \left( \text{Aut}^{\pi}(\Gamma) \right) }^{\gamma} = \text{Aut}(\Gamma)$.
    \end{obs}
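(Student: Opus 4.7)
The plan is to transport everything through the isomorphism $\pi_1$ from Observation 2.2, reducing all three claims to easy facts about $\widetilde{\gamma}$ on $\text{Aut}^{\text{TF}}(\Gamma)$.

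First I would verify that $\widetilde{\gamma}$ really is an involutive automorphism of $\text{Aut}^{\text{TF}}(\Gamma)$, since Definition 2.3 asserts this without proof. The only nontrivial point is closure: if $(\sigma_1,\sigma_2) \in \text{Aut}^{\text{TF}}(\Gamma)$, I must check $(\sigma_2,\sigma_1) \in \text{Aut}^{\text{TF}}(\Gamma)$. This follows from composing the associated automorphism $\sigma$ of $\Gamma \times K_2$ with the ``swap'' automorphism $(v,i) \mapsto (v,i+1)$: the composition is exactly the permutation associated to $(\sigma_2,\sigma_1)$. Being a group homomorphism with respect to coordinate-wise composition is immediate, and $\widetilde{\gamma}^2 = \text{id}$ is visible from the formula.

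Next, since $\pi_1$ is an isomorphism by Observation 2.2, the commuting square in Definition 2.3 forces $\gamma = \pi_1 \circ \widetilde{\gamma} \circ \pi_1^{-1}$. Being a conjugate of an automorphism by an isomorphism, $\gamma \in \text{Aut}(\text{Aut}^{\pi}(\Gamma))$; and $\gamma^2 = \pi_1 \circ \widetilde{\gamma}^2 \circ \pi_1^{-1} = \text{id}$.

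For the fixed-point claim, I would unwind the definition. Given $\sigma_1 \in \text{Aut}^{\pi}(\Gamma)$ there is a unique $\sigma_2 \in \text{Sym}(V)$ with $(\sigma_1,\sigma_2) \in \text{Aut}^{\text{TF}}(\Gamma)$ (the uniqueness is again Observation 2.2), and by construction $\gamma(\sigma_1) = \sigma_2$. Hence $\sigma_1$ is fixed by $\gamma$ iff $(\sigma_1,\sigma_1) \in \text{Aut}^{\text{TF}}(\Gamma)$. Directly from the definition of a two-fold automorphism, the permutation $(v,i) \mapsto (\sigma_1(v),i)$ of $V \times \{0,1\}$ is an automorphism of $\Gamma \times K_2$ if and only if $\sigma_1$ is an automorphism of $\Gamma$ (edges in $\Gamma \times K_2$ are exactly pairs $((v,0),(w,1))$ with $(v,w) \in E$, so preservation under this ``diagonal'' permutation is equivalent to preservation of $E$). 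This gives $(\text{Aut}^{\pi}(\Gamma))^{\gamma} = \text{Aut}(\Gamma)$.

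There is no real obstacle here; the only mild subtlety is to keep straight the distinction between $\text{Aut}^{\text{TF}}(\Gamma)$ and $\text{Aut}^{\pi}(\Gamma)$, and to use reducedness (via Observation 2.2) precisely at the point where one needs the ``partner'' $\sigma_2$ to be uniquely determined by $\sigma_1$.
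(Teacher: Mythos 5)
Your argument is correct and follows essentially the same route as the paper: both identify $\gamma = \pi_1 \circ \widetilde{\gamma} \circ \pi_1^{-1}$ to get the automorphism and involution claims, and both characterize the fixed points via the observation that $(\sigma_1,\sigma_1)$ being a two-fold automorphism is equivalent to $\sigma_1 \in \text{Aut}(\Gamma)$. You are in fact slightly more thorough than the paper in verifying that $\widetilde{\gamma}$ maps $\text{Aut}^{\text{TF}}(\Gamma)$ into itself (the paper asserts this without comment); just note that the swap $(v,i)\mapsto(v,i+1)$ must be used by \emph{conjugation}, not one-sided composition, to land back in the layer-preserving form.
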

    \begin{proof}
        At first let us notice that $\gamma = \pi_1 \circ \widetilde{\gamma} \circ \pi_1^{-1}$ and all of the components are isomorphisms, hence $\gamma$ also is. Now let $(\sigma_1,\sigma_2) \in \text{Aut}^{\text{TF}}(\Gamma)$. Then also $(\sigma_2,\sigma_1) \in \text{Aut}^{\text{TF}}(\Gamma)$ and $\gamma(\sigma_1) = \sigma_2$, $\gamma(\sigma_2) = \sigma_1$ so indeed $\gamma^2(\sigma_1) = \sigma_1$ for arbitraty $\sigma_1 \in \text{Aut}^{\pi}(\Gamma)$. \\
        The fact that $\text{Aut}(\Gamma) \subseteq { \left( \text{Aut}^{\pi}(\Gamma) \right) }^{\gamma} $ is obvious, hence to prove the last part of this observation we have to show ${ \left( \text{Aut}^{\pi}(\Gamma) \right) }^{\gamma} \subseteq \text{Aut}(\Gamma)$. Take $\sigma_1 \in { \left( \text{Aut}^{\pi}(\Gamma) \right) }^{\gamma}$. Then $(\sigma_1,\sigma_1) = (\sigma_1,\gamma(\sigma_1)) \in \text{Aut}^{\text{TF}}(\Gamma)$. Let $\Gamma = (V,E)$. Consider a pair of vertices $v,w \in V$ such that $(v,w) \in E$. Then pair $((v,0),(v,1))$ forms an edge in $\Gamma \times K_2$ so $((\sigma_1(v),0),(\sigma_1(v),1))$ also forms an edge in $\Gamma \times K_2$. By the definition of $\Gamma \times K_2$ this means that $(\sigma_1(v),\sigma_1(w)) \in E$ and since vertices $v$ and $w$ were arbitrary we finally obtain $\sigma_1 \in \text{Aut}(\Gamma)$.
    \end{proof}

Now we can introduce the function $\alpha$ which measures ,,how unstable'' is given element of the group $\text{Aut}^{\pi}(\Gamma)$ which encodes symmetries of $\Gamma \times K_2$.

\begin{defi} \label{Defi: function alpha}
    For a reduced graph $\Gamma$, by $\alpha: \text{Aut}^{\pi}(\Gamma) \rightarrow \text{Aut}^{\pi}(\Gamma)$ we understand a function given by the formula $\alpha( \tau ) = \tau^{-1} \circ \gamma(\tau)$. Moreover, if it is not clear to which graph $\alpha$ refers to, we write $\alpha_{\Gamma}$ when it refers to the graph $\Gamma$.
\end{defi}

\begin{obs} \label{Obs: basic properties of alpha}
    Let $\Gamma$ be a reduced graph. Then for every $\sigma,\tau \in \text{Aut}^{\pi}(\Gamma)$ we have $\gamma(\alpha(\sigma)) = {\alpha(\sigma)}^{-1}$ and $\alpha(\sigma \tau) = \tau^{-1} \alpha(\sigma) \tau \circ \alpha(\tau)$.
\end{obs}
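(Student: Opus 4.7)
The plan is to prove both identities by direct computation, using only the definition $\alpha(\tau) = \tau^{-1} \circ \gamma(\tau)$ together with the three properties established in the preceding observation: that $\gamma$ is a group automorphism of $\text{Aut}^{\pi}(\Gamma)$ (so it respects composition and inverses) and that $\gamma^2 = \text{id}$. No further machinery should be needed, and the main concern is only bookkeeping of the order of composition, since the group $\text{Aut}^{\pi}(\Gamma)$ need not be abelian.

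For the first identity, I would expand
\[
\gamma(\alpha(\sigma)) = \gamma(\sigma^{-1} \circ \gamma(\sigma)) = \gamma(\sigma^{-1}) \circ \gamma(\gamma(\sigma)) = \gamma(\sigma)^{-1} \circ \sigma,
\]
applying first that $\gamma$ is a homomorphism, then that $\gamma$ commutes with inversion (being an automorphism), and finally that $\gamma^2 = \text{id}$. On the other side, inverting the definition gives
\[
\alpha(\sigma)^{-1} = (\sigma^{-1} \circ \gamma(\sigma))^{-1} = \gamma(\sigma)^{-1} \circ \sigma,
\]
so the two expressions agree.

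For the second identity, I would simply expand both sides. The left-hand side reads
\[
\alpha(\sigma\tau) = (\sigma\tau)^{-1} \circ \gamma(\sigma\tau) = \tau^{-1} \sigma^{-1} \circ \gamma(\sigma) \gamma(\tau),
\]
using that $\gamma$ is a homomorphism. For the right-hand side,
\[
\tau^{-1} \alpha(\sigma) \tau \circ \alpha(\tau) = \tau^{-1} \sigma^{-1} \gamma(\sigma) \tau \circ \tau^{-1} \gamma(\tau) = \tau^{-1} \sigma^{-1} \gamma(\sigma) \gamma(\tau),
\]
after cancelling the adjacent $\tau$ and $\tau^{-1}$. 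The two sides match term by term.

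There is no genuine obstacle here; the only place where one has to be careful is to respect the non-commutativity of $\text{Aut}^{\pi}(\Gamma)$ when simplifying, and to invoke the three facts about $\gamma$ from Observation \ref{Obs: basic properties of gamma} in the correct order. The statement essentially records that $\alpha$ is a \emph{crossed homomorphism} (a cocycle) for the twisted conjugation action associated with $\gamma$, and the first identity asserts that its image lands in the $\gamma$-antifixed locus, both of which are standard features of such constructions.
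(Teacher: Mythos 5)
Your proof is correct and is exactly the argument the paper intends: the paper's own proof consists of the single line ``direct calculations and an application of the fact that $\gamma^2 = id$,'' and your computation carries out precisely those calculations, using that $\gamma$ is an automorphism with $\gamma^2 = id$ from Observation \ref{Obs: basic properties of gamma}. Nothing is missing.
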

\begin{proof}
    Direct calculations and an application of the fact that $\gamma^2=id$ (Observation \ref{Obs: basic properties of gamma}).
\end{proof}

\begin{lem} \label{Lem: connection between im(alpha) and isomorphisms of graphs}
    Let $\Gamma = (V,E)$ be a reduced graph. Let $\tau_0 \in \text{Sym}(V)$ be any permutation, $E' = \{ (v,w) \in V \times V \mid (v,\tau_0^{-1}(w)) \in E \}$ and $\Gamma' = (V,E')$. Then $\Gamma \cong \Gamma' \Leftrightarrow \tau_0 \in \text{im}(\alpha_{\Gamma})$.
\end{lem}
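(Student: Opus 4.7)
The plan is a pure unwinding of the definitions, pivoting on a single change of variables. The key translation is that a pair $(\sigma_1,\sigma_2)$ lies in $\text{Aut}^{\text{TF}}(\Gamma)$ if and only if $(v_1,v_2) \in E \iff (\sigma_1(v_1),\sigma_2(v_2)) \in E$ for every $v_1,v_2 \in V$, while by definition $(v_1,v_2) \in E' \iff (v_1,\tau_0^{-1}(v_2)) \in E$. These two edge-identities are linked by the substitution $v_2 \mapsto \tau_0^{-1}(v_2)$, which is essentially the whole content of the proof.

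For the direction $(\Leftarrow)$, I would assume $\tau_0 = \alpha(\tau) = \tau^{-1}\gamma(\tau)$ for some $\tau \in \text{Aut}^{\pi}(\Gamma)$, so that $(\tau,\tau\tau_0) = (\tau,\gamma(\tau)) \in \text{Aut}^{\text{TF}}(\Gamma)$. Writing out the TF-condition gives $(v_1,v_2) \in E \iff (\tau(v_1),\tau\tau_0(v_2)) \in E$, and substituting $v_2 \mapsto \tau_0^{-1}(v_2)$ immediately yields $(v_1,v_2) \in E' \iff (\tau(v_1),\tau(v_2)) \in E$, i.e.\ $\tau$ is a graph isomorphism from $\Gamma'$ to $\Gamma$. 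As a byproduct, symmetry of $E$ transports to $E'$ through $\tau$, so the statement ``$\Gamma \cong \Gamma'$'' is meaningful without any extra assumption on $\tau_0$.

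For the direction $(\Rightarrow)$, if $\tau : \Gamma' \to \Gamma$ is any isomorphism, the same substitution read backwards gives $(\tau,\tau\tau_0) \in \text{Aut}^{\text{TF}}(\Gamma)$, hence $\tau \in \text{Aut}^{\pi}(\Gamma)$. The hypothesis that $\Gamma$ is reduced is now crucial and is invoked through Observation \ref{Obs: Aut^pi = Aut^TF for reduced graphs}: the second coordinate of a two-fold automorphism is uniquely determined by the first, forcing $\gamma(\tau) = \tau\tau_0$ and therefore $\tau_0 = \tau^{-1}\gamma(\tau) = \alpha(\tau) \in \text{im}(\alpha_{\Gamma})$. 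There is no genuine obstacle beyond this careful use of reducedness to pin $\gamma(\tau)$ down on the nose rather than merely up to a permutation fixing all neighbourhoods.
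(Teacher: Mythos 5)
Your proof is correct and follows essentially the same route as the paper's: both directions are the same unwinding of the two-fold automorphism condition via the substitution $v_2 \mapsto \tau_0^{-1}(v_2)$, with reducedness used (through the uniqueness of the second coordinate) to identify $\gamma(\tau)$ with $\tau\tau_0$. The only cosmetic difference is that you orient the isomorphism as $\Gamma' \to \Gamma$ where the paper uses $\Gamma \to \Gamma'$ and then inverts, which costs you nothing.
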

\begin{proof}
    We will start by proving $(\Leftarrow)$, since proof of $(\Rightarrow)$ will be mimicking the first one. 
    
    $(\Leftarrow)$: Let $\sigma \in \text{Aut}^{\pi}(\Gamma)$ be such that $\alpha(\sigma) = \tau_0$. Then $\gamma(\sigma) = \sigma \circ \tau_0$. By the definition of $\gamma$ we get that $E = \{ (\sigma(v),\sigma \circ \tau_0 (w)) \mid (v,w) \in E \}$. Then $\sigma^{-1}:V \rightarrow V$ is a desired isomorphism from $\Gamma$ to $\Gamma'$.
    
    $(\Rightarrow)$: Let us suppose that $\Gamma \cong \Gamma'$. Let $\sigma: V \rightarrow V$ be the isomorphism from $\Gamma$ to $\Gamma'$. Then we have following sequence of equivalences:
    $$
    (v,w) \in E \Leftrightarrow (\sigma(v),\sigma(w)) \in E' \Leftrightarrow (\sigma(v),\tau_0^{-1} \circ \sigma(w)) \in E
    $$
    Above proves that $(\sigma,\tau_0^{-1} \circ \sigma) \in \text{Aut}^{\text{TF}}(\Gamma)$ hence $(\sigma^{-1}, \sigma^{-1} \circ \tau_0) \in \text{Aut}^{\text{TF}}(\Gamma)$. By Definition \ref{Defi: function alpha} this gives us $\alpha(\sigma^{-1}) = \tau_0$ and finally $\tau_0 \in \text{im}(\alpha_{\Gamma})$.
\end{proof}

For any group $G$ and any element $g \in G$ by ${g}_r: G \rightarrow G$ we understand a function given by the formula $h \mapsto h\cdot g$.

\begin{cor} \label{Cor: (+m)_r in im(alpha) iff. Cay(Z_2m,S) iso Cay(Z_2m,S+m)}
    Let $m$ be a positive integer and let $\text{Cay}(\mathbb{Z}_{2m},S)$ be a Cayley graph. Following conditions are equivalent:
    \begin{enumerate}[i.]
        \item $\text{there exists } \sigma \in \text{Aut}^{\pi}(\text{Cay}(\mathbb{Z}_{2m},S)) \text{ such that } \alpha(\sigma) = {m}_{r}$;
        \item $\text{Cay}(\mathbb{Z}_{2m},S) \cong \text{Cay}(\mathbb{Z}_{2m},S+m)$.
    \end{enumerate}
\end{cor}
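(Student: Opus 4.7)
The proof is a direct specialisation of Lemma \ref{Lem: connection between im(alpha) and isomorphisms of graphs} to $\Gamma = \text{Cay}(\mathbb{Z}_{2m}, S)$ with the choice $\tau_0 = m_r$, i.e.\ the translation $x \mapsto x + m$. The entire task reduces to identifying the auxiliary graph $\Gamma' = (V, E')$ produced by the lemma with $\text{Cay}(\mathbb{Z}_{2m}, S + m)$.

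First I would note that Lemma \ref{Lem: connection between im(alpha) and isomorphisms of graphs} is stated for reduced graphs, so I would either invoke or verify reducedness of $\Gamma$. For a Cayley graph this amounts to the translation-stabiliser $\{ h \in \mathbb{Z}_{2m} : S + h = S \}$ being trivial, which in the context of the paper's main theorem is part of the standing hypotheses on $\Gamma$.

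The key computation is then routine: since $\tau_0^{-1}$ is the map $w \mapsto w - m$, one has
$$
(v, w) \in E' \iff (v, w - m) \in E \iff (w - m) - v \in S \iff w - v \in S + m,
$$
so $E'$ is exactly the edge set of $\text{Cay}(\mathbb{Z}_{2m}, S + m)$. Using $-m \equiv m \pmod{2m}$ together with $S = -S$ one checks $-(S+m) = S + m$, so $S+m$ is a symmetric connection set and $\text{Cay}(\mathbb{Z}_{2m}, S+m)$ is a well-defined Cayley graph (whenever $m \notin S$, so that $0 \notin S+m$). Plugging into Lemma \ref{Lem: connection between im(alpha) and isomorphisms of graphs} then gives
$$
\text{Cay}(\mathbb{Z}_{2m}, S) \cong \text{Cay}(\mathbb{Z}_{2m}, S + m) \iff m_r \in \text{im}(\alpha_\Gamma),
$$
which is exactly the equivalence (i)$\iff$(ii).

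The only mildly delicate point is the borderline case $m \in S$, where $\text{Cay}(\mathbb{Z}_{2m}, S + m)$ literally has a loop at every vertex and so is not a Cayley graph in the paper's strict sense. One deals with this either by interpreting $\text{Cay}$ in the loop-allowing sense throughout the corollary, or by observing that in this case $m_r$ already stabilises $S$ and therefore lies in $\text{Aut}(\Gamma) \subseteq (\text{Aut}^\pi(\Gamma))^\gamma$, which by Observation \ref{Obs: basic properties of gamma} combined with the definition of $\alpha$ lets one settle both sides by direct inspection. This is the step that requires the most care, but it is a verification rather than a genuine obstacle.
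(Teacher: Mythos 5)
Your proposal is correct and is essentially the paper's own proof: the paper simply applies Lemma \ref{Lem: connection between im(alpha) and isomorphisms of graphs} with $\Gamma = \text{Cay}(\mathbb{Z}_{2m},S)$ and $\tau_0 = m_r$, and your computation identifying the auxiliary graph $\Gamma'$ with $\text{Cay}(\mathbb{Z}_{2m},S+m)$ is exactly the verification the paper leaves implicit. Your extra remarks on reducedness and the borderline case $m \in S$ are sound and, if anything, more careful than the paper's one-line argument.
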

\begin{proof}
    Apply Lemma \ref{Lem: connection between im(alpha) and isomorphisms of graphs} with $\Gamma = \text{Cay}(\mathbb{Z}_{2m},S)$ and $\tau_0 = {m}_{r}$.
\end{proof}

\subsection{Schur rings} \label{SUBSECTION: Schur rings} \hfill \\
Schur rings play are the main tool in the proof of classification of unstable circulants of order $2p^e$ for any odd prime $p$ and arbitrary $e \geq 1$ \cite{StabilityAndSchurRings}. We will apply an important partial result from this paper to obtain more classifications of unstable circulants. \\
We will denote the identity element of group $G$ by $e_G$. Moreover for a given commutative ring $R$, subring $P$ and the set $S$ we denote the smallest subset of $R$ containing $S$ and closed under addition and multiplication by elements from $P$ by $\text{Span}_P S$. For a group $G$ and commutative ring $\mathbb{K}$, by $\mathbb{K}G$ we understand the ring which consists of formal sums of elements from $G$ with coefficients from $\mathbb{K}$ and multiplication given by group action of $G$. This ring is called a group ring. Also, for a given subset $X$ of a group $G$ by $\underline{X}$ we understand an element $\sum_{x \in X} x \in \mathbb{Z}G$.
%
%
%
%
%
%

\begin{defi} \label{Defi: Schur ring} (\cite[Chapter IV]{wielandt2014finite})
    A subring $\mathcal{A}$ of the integer group ring $\mathbb{Z}G$ is called a \textit{Schur ring over $G$} if there exists a partition $\mathcal{S}(\mathcal{A})$ of $G$ such that
    \begin{enumerate}[(1)]
        \item $\{e_G\} \in \mathcal{S}(\mathcal{A})$;
        \item if $X \in \mathcal{S}(\mathcal{A})$;
        \item $\mathcal{A} = \text{Span}_{\mathbb{Z}}\{ \underline{X} \mid X \in \mathcal{S}(\mathcal{A}) \}$.
    \end{enumerate}
Elements of $\mathcal{S}(\mathcal{A})$ are called the basic sets of $\mathcal{A}$.
\end{defi}

Now let us recall, that for a given group $G$ and any $g \in G$ function $g_r : G \rightarrow G$ is given by $h \mapsto h\cdot g$. The right multiplication representation of $G$ is $G_r = \{ g_r \mid g \in G \}$. Moreover if we have a group $A$ acting on $X$, for any $x \in X$ by $A_x$ we understand the subgroup made of elements $a \in A$ which satisfy $a.x = x$. This subgroup is often called the stabilizer of $x$. By $\text{Orb}(A,X)$ we understand the set of orbits of elements of $X$ under action of the group $A$. 

\begin{prop} \label{Thm: tranzitivity modules are Schur rings} (\cite{Schur1933})
    Let $A \leq \text{Sym}(G)$. If $G_r \leq A$, then the linear subspace of 
$\mathbb{Z}G$ given by $\text{Span}_{\mathbb{Z}}\{ \underline{X} \mid X \in \text{Orb}(A_{e_G},G) \}$ forms a subring.
\end{prop}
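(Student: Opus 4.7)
The plan is to exhibit the span $\mathcal{A}:=\text{Span}_{\mathbb{Z}}\{ \underline{X} \mid X \in \text{Orb}(A_{e_G},G) \}$ as the image, under a canonical identification $\mathbb{Z}G \cong \mathcal{C}(G_r)$, of the centralizer ring of $A$ inside $\text{Mat}_G(\mathbb{Z})$. Because a centralizer is patently a subring, closure of $\mathcal{A}$ under multiplication will follow for free.

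Writing $H := A_{e_G}$ and letting $P_a$ denote the permutation matrix of $a \in A$, the first step is to observe that $M \in \mathcal{C}(A) := \{M : MP_a = P_a M \text{ for all } a \in A\}$ iff $M(x,y) = M(a(x), a(y))$ for every $a \in A$ and $x,y \in G$, so $M$ is constant on $A$-orbits on $G \times G$. The indicator matrices of these orbitals therefore form a $\mathbb{Z}$-basis of $\mathcal{C}(A)$.

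The second step identifies $\mathcal{C}(G_r)$ with $\mathbb{Z}G$. For $g \in G$ define $\lambda_g \in \text{Mat}_G(\mathbb{Z})$ by $\lambda_g(x,y) = 1$ iff $y = gx$; associativity gives $\lambda_g P_{h_r} = P_{h_r} \lambda_g$, and the condition $P_{h_r}^{-1} M P_{h_r} = M$ forces $M(x,y)$ to depend only on $y x^{-1}$, so the $\lambda_g$'s form a $\mathbb{Z}$-basis of $\mathcal{C}(G_r)$. The multiplication rule $\lambda_g \lambda_h = \lambda_{hg}$ identifies this with $(\mathbb{Z}G)^{op}$, but since $\mathbb{Z}G$ and $(\mathbb{Z}G)^{op}$ share the same additive structure, a subset of $\mathcal{C}(G_r)$ is closed under multiplication iff its preimage in $\mathbb{Z}G$ is.

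The third step matches orbital indicators with orbit sums. Because $G_r \leq A$ is regular, every $a \in A$ factors uniquely as $a = g'_r \sigma$ with $g' \in G$ and $\sigma \in H$; a direct computation gives $a \cdot (e_G, g) = (g', \sigma(g) g')$, hence the $A$-orbital of $(e_G, g)$ equals $\{(x,y) \in G \times G : yx^{-1} \text{ lies in the } H\text{-orbit of } g\}$, whose indicator matrix corresponds under $\mathcal{C}(G_r) \cong \mathbb{Z}G$ to the orbit sum $\underline{X}$ for $X \in \text{Orb}(A_{e_G}, G)$ the $H$-orbit of $g$. Chaining these identifications exhibits $\mathcal{C}(A) \subseteq \mathcal{C}(G_r)$ as $\mathcal{A} \subseteq \mathbb{Z}G$, and $\mathcal{A}$ inherits the subring structure. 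The main obstacle is purely bookkeeping: the paper's convention $g_r(h) = hg$ renders $g \mapsto g_r$ an anti-homomorphism, so one must verify that the identification between $\mathcal{C}(G_r)$ and $\mathbb{Z}G$ really delivers the orbit sums themselves, rather than a twisted version indexed by inverted orbits.
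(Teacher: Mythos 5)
The paper does not prove this proposition at all; it is quoted from Schur's 1933 work without argument, so there is no in-paper proof to compare against. Your proposal is the classical centralizer-ring (Hecke algebra) proof of Schur's theorem and it is correct: the identification of $\mathcal{C}(A)$ with the span of orbital indicator matrices, the computation $a\cdot(e_G,g)=(g',\sigma(g)g')$ showing that orbitals of the transitive group $A$ correspond exactly to $A_{e_G}$-orbits on $G$ via $(x,y)\mapsto yx^{-1}$, and the observation that passing through $(\mathbb{Z}G)^{op}$ costs nothing for closure under multiplication are all sound, and you correctly flag and resolve the only delicate point (that the paper's right-regular convention does not introduce an inversion twist on the orbit sums). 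The argument is complete.
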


A Schur ring described in above theorem is usually called \textit{transitivity module over $G$ induced by $A$} and denoted $V(A_{e_G},G)$. If $\Gamma \cong \text{Cay}(H,S)$ for $S \subseteq H$ such that $S^{-1}=S$, then $\Gamma \times K_2 \cong \text{Cay}(H \times \langle a \rangle, Sa)$ where $\langle a \rangle \cong \mathbb{Z}_2$. 

For any Cayley graph $\Sigma \cong \text{Cay}(H,S)$ by $\mathcal{A}(\Sigma)$ we understand the transitivity module of ${\text{Aut}(\Sigma)}_{e_H}$, that is the ring $V({\text{Aut}(\Sigma)}_{e_H},H)$. To understand the reason for instability of a circulant $\Gamma = \text{Cay}(\mathbb{Z}_n,S)$ we will use Schur ring $\mathcal{A}(\Gamma \times K_2)$. From now on we will often associate vertices of $K_2$ with elements of the group $\langle a \rangle \cong \mathbb{Z}_2$ since $K_2 \cong \text{Cay}(\langle a \rangle,\{a\})$. 

Next observation shows how being unstable relates to properties of an associated Schur ring. Similar criterion was stated in \cite[Theorem~1.1]{StabilityAndSchurRings}.

\begin{obs} \label{Obs: unstable cayley graphs means {a} is not in the associated schur ring} 
    Let $\Gamma = \text{Cay}(H,S)$ be a connected nonbipartite Cayley graph. Then $\Gamma$ is unstable if and only if $\underline{ \{ a \} } \notin \mathcal{A}(\Gamma \times K_2)$.
\end{obs}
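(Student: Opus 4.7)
The plan is to reinterpret $\underline{\{a\}} \in \mathcal{A}(\Gamma \times K_2)$ in terms of point-stabilizers and then match it to the definition of stability. Writing $e$ for the identity of $\langle a\rangle$, by Proposition~\ref{Thm: tranzitivity modules are Schur rings} the basic sets of $\mathcal{A}(\Gamma \times K_2) = V(\text{Aut}(\Gamma \times K_2)_{(e_H, e)}, H \times \langle a\rangle)$ are the orbits of $\text{Aut}(\Gamma \times K_2)_{(e_H, e)}$ on $H \times \langle a\rangle$. Since these orbits partition the group and $\{a\} = \{(e_H, a)\}$ is a singleton, the condition $\underline{\{a\}} \in \mathcal{A}(\Gamma \times K_2)$ is equivalent to $(e_H, a)$ being fixed by every element of $\text{Aut}(\Gamma \times K_2)_{(e_H, e)}$. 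Hence the observation reduces to showing
$$
\Gamma \text{ is stable} \iff \text{Aut}(\Gamma \times K_2)_{(e_H, e)} \subseteq \text{Aut}(\Gamma \times K_2)_{(e_H, a)}.
$$

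The forward implication is immediate: if $\Gamma$ is stable, then $\text{Aut}(\Gamma \times K_2) = \text{Aut}(\Gamma) \times S_2$, so $\text{Aut}(\Gamma \times K_2)_{(e_H, e)} = \text{Aut}(\Gamma)_{e_H} \times \{id\}$, and every such element trivially fixes $(e_H, a)$.

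For the converse I take an arbitrary $\tau \in \text{Aut}(\Gamma \times K_2)$ and aim to show $\tau \in \text{Aut}(\Gamma) \times S_2$. Pre-composing $\tau$ with a right translation (which lies in both groups by regularity of $(H \times \langle a\rangle)_r$), I may assume $\tau$ fixes $(e_H, e)$. Since $\Gamma$ is connected and non-bipartite, $\Gamma \times K_2$ is connected and bipartite with parts $H \times \{e\}$ and $H \times \{a\}$, so fixing a vertex forces $\tau$ to preserve this bipartition. Consequently $\tau$ corresponds to a two-fold automorphism $(\sigma_1, \sigma_2)$ with $\sigma_1(e_H) = e_H$, and by the assumed inclusion, also $\sigma_2(e_H) = e_H$.

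The main obstacle --- and the crux of the proof --- is upgrading this coincidence at a single vertex to the global identity $\sigma_1 = \sigma_2$. For each $v \in H$, I plan to consider the conjugate $\tau_v := {(\sigma_1(v), e)}_r^{-1} \circ \tau \circ {(v, e)}_r$. A direct check shows $\tau_v$ still stabilizes $(e_H, e)$, hence by hypothesis fixes $(e_H, a)$ as well; evaluating its action at $(e_H, a)$ yields precisely $\sigma_2(v) = \sigma_1(v)$. Since $v$ was arbitrary, $\sigma_1 = \sigma_2 =: \sigma$, the two-fold automorphism identity collapses to $\sigma \in \text{Aut}(\Gamma)$, and therefore $\tau \in \text{Aut}(\Gamma) \times \{id\} \subseteq \text{Aut}(\Gamma) \times S_2$ as required. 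The Cayley structure is what makes this propagation possible: right translations provide enough automorphisms to shift the ``test vertex'' $(e_H, a)$ across all of $H \times \{a\}$, converting the single-vertex hypothesis into a uniform condition over $H$.
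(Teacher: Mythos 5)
Your proof is correct and follows essentially the same route as the paper's: both reduce membership of $\underline{\{a\}}$ in the transitivity module to whether the stabilizer of the identity fixes $(e_H,a)$, and both use conjugation by right translations to transfer the single-point condition at $(e_H,a)$ into the global statement $\sigma_1=\sigma_2$ (your computation with $\tau_v$ is exactly the contrapositive of the paper's construction of a stabilizer element moving $a$). The only difference is organizational — you phrase both implications via stability rather than instability — which does not change the substance.
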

\begin{proof}
    ($\Rightarrow$): Since $\Gamma$ is connected and nonbipartie, every automorphism of $\Gamma \times K_2$ either preserves partition $\{H \times \{e_{\langle a \rangle}\}, H \times \{a\} \}$ of vertices setwise, or permutes its elements. Since the map $h a^i \mapsto h a^{i+1}$ is an automorphism of $\Gamma \times K_2$, we conclude that there exists a two-fold automorphism $(\sigma_1,\sigma_2) \in \text{Aut}^{\text{TF}}(\Gamma)$ such that $\sigma_1 \neq \sigma_2$. Therefore there exists some $g \in H$ such that $\sigma_1(g) \neq \sigma_2(g)$. If we now consider a two-fold automorphism $( g^{-1}_r {\left({ \sigma_1(g)} \right) }^{-1}_r \sigma_1 {g}_r, g^{-1}_r {\left({ \sigma_1(g)} \right) }^{-1}_r \sigma_2 {g}_r )$, then corresponding automorphism $\tau \in \text{Aut}(\Gamma \times K_2)$ is also an element of ${\text{Aut}(\Gamma \times K_2)}_{e_{H \times \langle a \rangle}}$ and $\tau(a) \neq a$. This shows us that indeed $\underline{\{ a \}} \notin \mathcal{A}(\Gamma \times K_2)$ by definition.

    ($\Leftarrow$): Since $\underline{ \{ a \} } \notin \mathcal{A}(\Gamma \times K_2)$, the basic set of $\mathcal{A}(\Gamma \times K_2)$ which contains $a$ have at least one different element, call it $a'$. This shows that there exist $\tau \in {\text{Aut}(\Gamma \times K_2)}_{e_{H \times \langle a \rangle}}$ such that $\tau(a) = a' \neq a$. Since by definition $\tau(e_{H \times \langle a \rangle}) = e_{H \times \langle a \rangle}$, $\tau$ is an unexpected symmetry which indicates instability of $\Gamma$.
\end{proof}

Before we state the lemma showing us the potential structure of $\mathcal{A}(\Gamma \times K_2)$ we have to state following definition.

\begin{defi}
    For any subset $X$ of $G$ we define a radical of $X$ by the formula 
    $$
    \text{rad}(X) = \{ g \in G \mid gX = Xg = X \}.
    $$ 
\end{defi}

Now we are ready to state the lemma which is of our interest.

\begin{thm} \label{Thm: Classification of schur rings over Z_2m x Z_2} (\cite[Theorem~1.4]{StabilityAndSchurRings})
    Let $G = H \times \langle a \rangle$, where $H \cong \mathbb{Z}_{2m}$ for an odd number $m>1$ and $\langle a \rangle \cong \mathbb{Z}_2$. If $\mathcal{A}$ is a Schur ring over $G$ with $\underline{H} \in \mathcal{A}$ and $\underline{\{a\}} \notin \mathcal{A}$, then $\{a,ab\}$ is a basic set of $\mathcal{A}$ or
    $$
    \bigcap_{ \substack{X \in \mathcal{S}(\mathcal{A}) \\
    X \cap H_0a \neq \varnothing} } \text{rad}(X \cap H_0a) \neq \{ e_G \},
    $$
    where $b$ is the unique involution of $H$ and $H_0$ is the unique subgroup of $H$ of order $m$.
\end{thm}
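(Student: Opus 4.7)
The plan is to analyze the basic set $T \in \mathcal{S}(\mathcal{A})$ containing $a$. Since $\underline{\{a\}} \notin \mathcal{A}$ we have $|T| \ge 2$, and the assumption $\underline{H} \in \mathcal{A}$ forces every basic set to lie entirely in $H$ or in $Ha$, so $T \subseteq Ha = H_0 a \sqcup H_0 b a$. A preparatory step is to show that the indicators of the two $H_0$-cosets inside $Ha$ both belong to $\mathcal{A}$. Since $H_0$ is the unique subgroup of $H$ of the odd order $m$, it is characteristic in $H$, and standard structure results for Schur rings over cyclic groups (applied to the Schur subring $\mathcal{A} \cap \mathbb{Z}H$, which is well-defined thanks to $\underline{H} \in \mathcal{A}$) give $\underline{H_0} \in \mathcal{A}$. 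Expanding a product like $\underline{H_0} \cdot \underline{T}$ and separating coefficients via the Schur--Wielandt principle then extracts $\underline{H_0 a}, \underline{H_0 b a} \in \mathcal{A}$, after which every basic set $X \subseteq Ha$ splits as $X = X_1 \sqcup X_2$ with each $X_i$ a basic set of $\mathcal{A}$ lying in one of the two subcosets.

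The heart of the proof is a dichotomy on $T$. If $T = \{a, ab\}$ we are in the first case of the conclusion (using commutativity, $ab \in H_0 b a$, so this configuration puts exactly one vertex in each subcoset). Otherwise, without loss of generality $T_1 := T \cap H_0 a$ has size at least two (the symmetric case is analogous). I would then expand $\underline{T} \cdot \underline{T} \in \mathbb{Z}H \cap \mathcal{A}$ and exploit that $a$ is an involution together with $\gcd(m, 2) = 1$ to pull structural information about basic sets contained in $H$ back to $T_1 \subseteq H_0 a$. Since $H_0 \cong \mathbb{Z}_m$ is cyclic of odd order, a block analysis of the induced Schur subring on $H_0$ should force $\text{rad}(T_1)$ to be a nontrivial subgroup of $H_0$.

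The main difficulty is upgrading this single-basic-set statement to the uniform claim that a common nontrivial element of $H_0$ belongs to $\text{rad}(X \cap H_0 a)$ for \emph{every} basic set $X$ meeting $H_0 a$. My plan is to propagate the translation symmetry by convolution: for any basic set $Y \subseteq H$, the product $\underline{Y} \cdot \underline{T}$ is an $\mathcal{A}$-linear combination of basic sets in $Ha$, and the $H_0 a$-parts of the summands must inherit the translation symmetry of $\text{rad}(T_1)$. Because $\underline{H} \in \mathcal{A}$ and the basic sets inside $H$ span $\mathcal{A} \cap \mathbb{Z}H$, this propagation ought to reach every basic set meeting $H_0 a$ and yield the desired common nontrivial element in the intersection of radicals. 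The technical heart, and the step I expect to be the main obstacle, is controlling the convolution coefficients carefully enough to rule out the possibility that some basic set's $H_0 a$-part fails to inherit a radical from any product with $T$; here the arithmetic of $\mathbb{Z}_m \times \mathbb{Z}_2 \times \mathbb{Z}_2$ and the involutory nature of $a$ must combine in a delicate counting argument.
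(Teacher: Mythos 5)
First, a point of context: the paper does not prove this statement at all --- it is quoted verbatim from Hujdurovi\'c--Kov\'acs \cite[Theorem~1.4]{StabilityAndSchurRings}, where it is the main technical result and its proof occupies a substantial part of that paper (relying on the structure theory of Schur rings over cyclic groups, wedge decompositions, etc.). So there is no in-paper proof to match your proposal against; it has to stand on its own, and as written it does not.

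There are two genuine gaps. The first is your preparatory step: the claim that $\underline{H_0}\in\mathcal{A}$ because $H_0$ is characteristic in $H$ is unjustified and false in general. A subgroup being characteristic does not make it an $\mathcal{A}$-subgroup: the Schur subring $\mathcal{A}\cap\mathbb{Z}H$ could, for instance, be the trivial Schur ring over $H$ (basic sets $\{e_G\}$ and $H\setminus\{e_G\}$), which contains $\underline{H}$ but not $\underline{H_0}$. Consequently $\underline{H_0a}\in\mathcal{A}$ is not available either, and basic sets in $Ha$ need not split along $H_0a\sqcup H_0ba$ --- this is exactly why the theorem is phrased in terms of $\mathrm{rad}(X\cap H_0a)$ for basic sets $X$ merely \emph{meeting} $H_0a$, rather than for basic sets contained in it. The second and more serious gap is that the two central assertions --- that $T\neq\{a,ab\}$ forces $\mathrm{rad}(T\cap H_0a)$ to be a nontrivial subgroup, and that a common nontrivial radical element propagates to \emph{every} basic set meeting $H_0a$ --- are precisely the content of the theorem, and your proposal only gestures at them (``should force'', ``ought to reach''). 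Note in particular that nontriviality of the radical does not follow from $|T\cap H_0a|\geq 2$ by any soft counting: a two-element subset $\{xa,hxa\}$ of $H_0a$ has trivial radical whenever $h$ is a nonidentity element of the odd-order group $H_0$, since $\mathrm{rad}$ would have to contain an involution. Ruling such configurations out is where the heavy Schur-ring machinery of \cite{StabilityAndSchurRings} enters, and no elementary convolution argument of the kind you sketch is known to replace it.
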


Now we can apply above lemma to the case relating to instability. We should also note that for abelian group we usually denote the action as ,,$+$'' instead of ,,$\cdot$''.

\begin{lem} \label{Lem: reasons for instability in case 2m, m odd -- messy second condition}
    Let $m>1$ be an odd integer and let $\Gamma = \text{Cay}(\mathbb{Z}_{2m},S)$ be connected nonbipartite unstable circulant. Then either
    \begin{enumerate}[i.]
        \item there exists nonzero $h \in 2\mathbb{Z}_{2m}$ such that $S \cap 2\mathbb{Z}_{2m} + h = S \cap 2\mathbb{Z}_{2m}$;
        \item or $\{a,m+a\}$ is a basic set of $\mathcal{A}(\Gamma \times K_2)$.
    \end{enumerate}
\end{lem}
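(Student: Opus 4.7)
The plan is to apply Theorem~\ref{Thm: Classification of schur rings over Z_2m x Z_2} to the Schur ring $\mathcal{A}(\Gamma \times K_2)$. Write $G = H \times \langle a \rangle$ with $H = \mathbb{Z}_{2m}$, so $H_0 = 2\mathbb{Z}_{2m}$ is the unique subgroup of $H$ of order $m$ and the unique involution of $H$ is $b = m$. Before invoking the theorem I verify its two hypotheses. Because $\Gamma$ is connected and non-bipartite, every element of $\text{Aut}(\Gamma \times K_2)$ either fixes or swaps the two parts $H \times \{e_{\langle a \rangle}\}$ and $H \times \{a\}$; the stabiliser of $e_G$ fixes a vertex in the first part and hence preserves both parts setwise. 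Thus $H \times \{e_{\langle a \rangle}\}$ is a union of $\text{Aut}(\Gamma \times K_2)_{e_G}$-orbits, giving $\underline{H} \in \mathcal{A}(\Gamma \times K_2)$. Observation~\ref{Obs: unstable cayley graphs means {a} is not in the associated schur ring}, combined with the instability of $\Gamma$, gives $\underline{\{a\}} \notin \mathcal{A}(\Gamma \times K_2)$.

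Theorem~\ref{Thm: Classification of schur rings over Z_2m x Z_2} then produces two cases. If $\{a, ab\}$ is a basic set of $\mathcal{A}(\Gamma \times K_2)$, then since $b = m$ this is precisely case (ii) of the lemma. Otherwise the alternative conclusion yields a nonidentity $g \in G$ lying in $\text{rad}(X \cap H_0 a)$ for every basic set $X$ with $X \cap H_0 a \neq \varnothing$.

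I then convert this into condition (i). Since $g$ preserves a nonempty subset of the coset $H_0 a = H_0 \times \{a\}$ of $H_0 \times \{e_{\langle a \rangle}\}$ in $G$, the element $g$ itself must lie in $H_0 \times \{e_{\langle a \rangle}\}$; writing $g = (h, e_{\langle a \rangle})$, I obtain a nonzero $h \in 2\mathbb{Z}_{2m}$. Next, $Sa$ is the set of out-neighbours of $e_G$ in $\Gamma \times K_2$, hence a union of basic sets, so $\underline{Sa} \in \mathcal{A}(\Gamma \times K_2)$ and $Sa \cap H_0 a = (S \cap 2\mathbb{Z}_{2m}) + a$ is the union of the intersections $X \cap H_0 a$ over basic sets $X \subseteq Sa$ meeting $H_0 a$. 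Each such intersection is fixed by translation by $g$, so summing them gives $h + (S \cap 2\mathbb{Z}_{2m}) = S \cap 2\mathbb{Z}_{2m}$, which is case (i).

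The heavy lifting is already packaged in the cited Theorem~\ref{Thm: Classification of schur rings over Z_2m x Z_2}; the remaining work is the hypothesis check at the start and the bookkeeping in the last paragraph. I expect the mildly delicate point to be the verification that the $\langle a \rangle$-component of $g$ vanishes, since this is what forces the shift $h$ to lie in $2\mathbb{Z}_{2m}$ rather than merely in $\mathbb{Z}_{2m}$; but it follows immediately from the fact that $g$ stabilises a subset of a single $H_0$-coset.
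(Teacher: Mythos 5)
Your proposal is correct and follows essentially the same route as the paper: Observation \ref{Obs: unstable cayley graphs means {a} is not in the associated schur ring} plus Theorem \ref{Thm: Classification of schur rings over Z_2m x Z_2}, then translating the nontrivial radical into the shift condition via the fact that $\underline{Sa}$ is a union of basic sets. If anything, yours is slightly more complete, since you explicitly verify the hypothesis $\underline{H} \in \mathcal{A}(\Gamma \times K_2)$ and justify why the radical element lies in $H_0 \times \{e_{\langle a \rangle}\}$, both of which the paper only asserts.
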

\begin{proof}
    By Observation \ref{Obs: unstable cayley graphs means {a} is not in the associated schur ring}, $\underline{\{a\}} \notin \mathcal{A}(\Gamma \times K_2)$. Now if we put $H = \mathbb{Z}_{2m}$ and apply Theorem \ref{Thm: Classification of schur rings over Z_2m x Z_2} we get that either $\{a,m+a\}$ is a basic set of $\mathcal{A}(\Gamma \times K_2)$, which is our second condition in the statement of the lemma, or 
    $$
    V = \bigcap_{ \substack{X \in \mathcal{S}(\mathcal{A}(\Gamma \times K_2)) \\
    X \cap (2\mathbb{Z}_{2m} + a) \neq \varnothing} } \text{rad}(X \cap (2\mathbb{Z}_{2m} + a) ) \neq \{ e_{\mathbb{Z}_{2m} \times \langle a \rangle } \}.
    $$
    Note that $\Gamma \times K_2 = \text{Cay}(\mathbb{Z}_{2m} \times \langle a \rangle, S + a )$ and hence $\underline{S+a} \in \mathcal{A}(\Gamma \times K_2)$. This shows that $S+a = \bigcup_{  \substack{X \in \mathcal{S}(\mathcal{A}(\Gamma \times K_2)) \\
    X \subseteq S+a }  } X$. Notice that $V \leq 2\mathbb{Z}_{2m}$ and take some nonzero $h \in V$.
    
    We will show that $S \cap 2\mathbb{Z}_{2m} + h = S \cap 2\mathbb{Z}_{2m}$. Take arbitrary $s \in S \cap 2\mathbb{Z}_{2m}$. Then $s+a \in (S \cap 2\mathbb{Z}_{2m}) + a \subseteq S + a$, and hence there exist a basic set $X$ of $\mathcal{A}(\Gamma \times K_2)$ such that $s + a \in X \subseteq S + a$. Notice that $s+a \in X \cap (2\mathbb{Z}_{2m} + a)$ because $s \in S \cap 2\mathbb{Z}_{2m} \subseteq 2\mathbb{Z}_{2m}$. This shows that indeed $X \cap (2\mathbb{Z}_{2m} + a) ) \neq \{ e_{\mathbb{Z}_{2m} \times \langle a \rangle } \}$ and allows us to conclude that $X + h = X$.  Finally we get $s+h+a \in X + h = X \subseteq S+a$ which leads us to conclude that $s+h \in S$. Since $s,h \in 2\mathbb{Z}_{2m}$ we obviously have $s+h \in s \cap 2\mathbb{Z}_{2m}$ as therefore $S \cap 2\mathbb{Z}_{2m} + h \subseteq S \cap 2\mathbb{Z}_{2m}$. Since both sets have the same cardinality we get desired equality.
\end{proof}

\subsection{Connection between Schur rings and function $\alpha$ for Cayley graphs} \label{SUBSECTION: Connection between Schur rings and function alpha for Cayley graphs} \hfill \\

From now on we will mainly consider connected nonbipartite graphs $\Gamma = (V,E)$. In such a case all automorphism of $\Gamma \times K_2$ permute elements of a partition $\{V \times \{0\}, V \times \{1\} \}$ of its vertices. Because there is always an expected automorphism swithcing these parts - which can be constructed from the nontrivial automorphism of $K_2$, it is easy to spot that $\Gamma$ is unstable if and only if there exists a two-fold automorphism $(\sigma_1,\sigma_2) \in \text{Aut}^{\text{TF}}(\Gamma)$ such that $\sigma_1 \neq \sigma_2$.

Cayley graphs are the one of the main focus. Will now establish a connection between action of function $\alpha$ defined for reduced graphs in Definition \ref{Defi: function alpha} and the basic set containing $a$ in the Schur ring $\mathcal{A}(\Gamma \times K_2)$ when $\Gamma$ is a Cayley graph.

It is worth noting that for some results in this subsection one could use Schur ring theory instead of applying properties of $\alpha$, which would result in proofs of similar complexity.

\begin{prop} \label{prop: X_a and action of alpha}
    If $\Gamma = \text{Cay}(G,S)$ is connected, nonbipartite and reduced graph and $X_a$ is the basic set of $\mathcal{A}(\Gamma \times K_2)$ containing $a$ then for any $g \in G$ following conditions are equivalent:
    \begin{enumerate}[i.]
        \item $ga \in X_a$;
        \item there exists $\sigma \in \text{Aut}^{\pi}(\Gamma)$ such that $\alpha(\sigma).0 = g$;
        \item there exists $g_0 \in G$ and $\sigma \in \text{Aut}^{\pi}(\Gamma)$ such that $\alpha(\sigma).g_0 = g g_0$.
        \item for any $g_0 \in G$ there exists $\sigma \in \text{Aut}^{\pi}(\Gamma)$ such that $\alpha(\sigma).g_0 = g g_0$.
    \end{enumerate}
\end{prop}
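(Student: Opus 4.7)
My plan is to handle the four conditions in two stages. The equivalences (ii) $\Leftrightarrow$ (iii) $\Leftrightarrow$ (iv) will follow from a single cocycle identity that reduces each of them to the statement $g \in T_0 := \{\alpha(\sigma)(0) : \sigma \in \text{Aut}^{\pi}(\Gamma)\}$. The equivalence (i) $\Leftrightarrow$ (ii) is the substantive part, requiring a comparison between an orbit in the Schur ring $\mathcal{A}(\Gamma \times K_2)$ and the set $T_0$.

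For the first stage, the key input is that $g_r \in \text{Aut}(\Gamma) \subseteq \text{Aut}^{\pi}(\Gamma)^{\gamma}$ by Observation~\ref{Obs: basic properties of gamma}, so $\gamma(g_r) = g_r$ and hence $\alpha(g_r) = g_r^{-1}\gamma(g_r) = \mathrm{id}$. Substituting $\tau = g_r$ into the cocycle formula $\alpha(\sigma \tau) = \tau^{-1} \alpha(\sigma) \tau \circ \alpha(\tau)$ of Observation~\ref{Obs: basic properties of alpha} yields $\alpha(\sigma g_r) = g_r^{-1} \alpha(\sigma) g_r$. Evaluating at $0$ gives
\[
\alpha(\sigma g_r)(0) = \alpha(\sigma)(g) \cdot g^{-1},
\]
so as $\sigma$ ranges over $\text{Aut}^{\pi}(\Gamma)$, the map $\sigma \mapsto \alpha(\sigma)(g_0) \cdot g_0^{-1}$ has the same image $T_0$ for every choice of $g_0$. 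Conditions (ii), (iii), (iv) therefore all reduce to $g \in T_0$.

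For (i) $\Leftrightarrow$ (ii), I first translate (i) into the two-fold language. Since $\Gamma$ is connected and non-bipartite, $\Gamma \times K_2$ is connected and bipartite, so the stabilizer $\text{Aut}(\Gamma \times K_2)_{0}$ preserves the bipartition. Via Observation~\ref{Obs: Aut^pi = Aut^TF for reduced graphs} this stabilizer is identified with the subgroup $K := \{\sigma \in \text{Aut}^{\pi}(\Gamma) : \sigma(0) = 0\}$, acting on the second part of $V(\Gamma \times K_2)$ through $\gamma$. Hence $ga \in X_a$ iff there exists $k \in K$ with $\gamma(k)(0) = g$. On the other hand, the decomposition $\text{Aut}^{\pi}(\Gamma) = G_r \cdot K$ (from the regularity of $G_r \leq \text{Aut}(\Gamma)$) together with the cocycle-derived identity $\alpha(h_r k) = \alpha(k)$ rewrites condition (ii) as: $\exists k \in K$ with $k^{-1}(\gamma(k)(0)) = g$.

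The main obstacle is thus showing that the sets $Y := \{\gamma(k)(0) : k \in K\}$ and $T_0 = \{k^{-1}(\gamma(k)(0)) : k \in K\}$ coincide. The two descriptions use superficially different formulas ($\gamma$ versus the $\alpha$-twist), and in general $k \in K$ need not preserve $Y$ setwise. My strategy is to exploit two external pieces of structure: first, the inversion $\iota : v \mapsto v^{-1}$ lies in $K \cap \text{Aut}(\Gamma)$, so $Y$ is closed under $v \mapsto v^{-1}$, and more generally elements of $K \cap \text{Aut}(\Gamma)$ act on $Y$; second, $\underline{X_a} \cdot \underline{X_a^{-1}} \in \mathcal{A}(\Gamma \times K_2)$ forces $YY^{-1}$ to be a union of $K$-orbits on $G$. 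Combining these with direct manipulation of pairs $(\sigma_1, \sigma_2) \in \text{Aut}^{\text{TF}}(\Gamma)$ normalized via $G_r$ to sit inside $K$ should produce, for each $k \in K$, a $k' \in K$ realizing $\gamma(k')(0) = k^{-1}(\gamma(k)(0))$, and conversely. This final matching step is where I expect the bulk of the technical work to go.
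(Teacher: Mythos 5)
Your reduction of (ii), (iii), (iv) to the single condition $g \in T_0 = \{\alpha(\sigma)(0) : \sigma \in \text{Aut}^{\pi}(\Gamma)\}$ via the cocycle identity is correct and is exactly what the paper does, and your translation of (i) into $g \in Y = \{\gamma(k)(0) : k \in K\}$ with $K$ the stabilizer of $0$ is also sound. The gap is that you never actually prove $Y = T_0$: you only describe a strategy and explicitly defer ``the bulk of the technical work.'' Moreover, the strategy you sketch rests on ingredients that are either invalid at this level of generality or unnecessary. The inversion $v \mapsto v^{-1}$ is an automorphism of $\text{Cay}(G,S)$ only for abelian $G$: it sends the edge condition $hg^{-1} \in S$ to $h^{-1}g \in S$, and $h^{-1}g$ is merely a conjugate of $gh^{-1}$, so for non-abelian $G$ (the proposition is stated for arbitrary $G$) this map need not preserve edges. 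The appeal to $\underline{X_a}\cdot\underline{X_a^{-1}} \in \mathcal{A}(\Gamma \times K_2)$ is not needed at all.

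The missing step is in fact a two-line normalization with $\gamma$, which is how the paper closes the argument. For $Y \subseteq T_0$: if $k \in K$ and $\gamma(k)(0) = g$, put $\sigma = \gamma(k^{-1})$; then $\alpha(\sigma) = \gamma(k)\circ k^{-1}$, so $\alpha(\sigma)(0) = \gamma(k)(k^{-1}(0)) = \gamma(k)(0) = g$. For $T_0 \subseteq Y$: if $\alpha(\sigma)(0) = g$, put $k = \gamma(\sigma^{-1}) \circ \bigl(\gamma(\sigma)(0)\bigr)_r$; then $k(0) = \gamma(\sigma)^{-1}\bigl(\gamma(\sigma)(0)\bigr) = 0$, so $k \in K$, and since $\gamma^2 = id$ and $\gamma$ fixes translations (Observation \ref{Obs: basic properties of gamma}), $\gamma(k) = \sigma^{-1}\circ\bigl(\gamma(\sigma)(0)\bigr)_r$, whence $\gamma(k)(0) = \sigma^{-1}\bigl(\gamma(\sigma)(0)\bigr) = \alpha(\sigma)(0) = g$. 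Replacing your speculative matching argument by this computation completes the proof; as written, the proposal is incomplete at precisely the step you flagged as hardest.
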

\begin{proof}
    We will start by showing equivalence of conditions i., ii., then demonstrate equivalence between ii. and iii. and finish by showing that ii. is equivalent to iv.

    (i. $\Rightarrow$ ii.): Let $(\sigma_1,\sigma_2)$ be a two-fold automorphism which corresonds to an element of ${\text{Aut}(\Gamma \times K_2)}_{e_{G \times \langle a \rangle }}$ which maps $a$ to $ga$. Then $\alpha(\sigma_2^{-1}) = \sigma_1 \sigma_2^{-1}$, so $\alpha(\sigma_1).0 = \sigma_2 \sigma_1^{-1}.0 = \sigma_2.0 = g$ which ends the proof.
    
    (ii. $\Rightarrow$ i.): Consider $\tau = {\gamma(\sigma^{-1})} \circ { (\gamma(\sigma^{-1}).0)}_{r}$. Then
    $$
    \gamma(\tau) \tau^{-1} = \sigma^{-1} \circ \gamma({ (\gamma(\sigma^{-1}).0)}_{r}) \circ { (\gamma(\sigma^{-1}).0)}_{r}^{-1} \circ \gamma(\sigma) = \sigma^{-1} \circ \gamma(\sigma) = \alpha(\sigma).
    $$
    Now let us check that $\tau.0 = {\gamma(\sigma)}^{-1} \circ { (\gamma(\sigma^{-1}).0)}_{r}.0 = {\gamma(\sigma)}^{-1}.(\gamma(\sigma^{-1}).0) = 0$ as wanted, hence 
    $g = \alpha(\sigma).0 = \gamma(\tau) \tau^{-1}.0 = \gamma(\tau).0$ which means that indeed vertices $a$ and $ga$ are in the same orbit of ${\text{Aut}(\Gamma \times K_2)}_{e_{G \times \langle a \rangle }}$.

    (ii. $\Rightarrow$ iii.): Obvious.

    (iii. $\Rightarrow$ ii.): Put $\tau = \sigma {(g_0)}_r$ Then $\alpha(\tau) = {(g_0)}_r^{-1} \sigma^{-1} \gamma(\sigma) {(g_0)}_r = {(g_0)}_r^{-1} \alpha(\sigma) {(g_0)}_r$, hence $\alpha(\tau).0 = {(g_0)}_r^{-1} \alpha(\sigma) {(g_0)}_r.0 = {(g_0)}_r^{-1} \alpha(\sigma).g_0 = {(g_0)}_r^{-1}.(g g_0) = g$ as wanted.

    (iv. $\Rightarrow$ ii.): Obvious.
    
    (ii. $\Rightarrow$ iv.): Assume $\sigma$ is such that $\alpha(\sigma).0 = g$ and take arbitrary $g_0 \in G$. Put $\tau = {(g_0)}_r^{-1} \circ \sigma$. Then $\alpha(\tau) = {(g_0)}_r \circ \alpha(\sigma) \circ {(g_0)}_r^{-1}$, hence $\alpha(\tau).g_0 = g g_0$.
\end{proof}

From Schur ring theory we already have a lot of information about $\Gamma$ when it is an connected, nonbipartite and unstable circulant of order $2m$ where $m>1$ is an odd integer. We will now focus on the circulants for which Lemma \ref{Lem: reasons for instability in case 2m, m odd -- messy second condition} does not provide explicit information. For that reason from now on we will work under the following hypothesis.


\begin{hyp} \label{HYPOTHESIS which states the hard case}
    Let $n=2m$ for some odd integer $m>1$.  Also assume $\Gamma = \text{Cay}(\mathbb{Z}_{n},S)$ is connected, nonbipartite, reduced, unstable, and $ \{a,m + a\} $ is a basic set of $\mathcal{A}(\Gamma \times K_2)$.
\end{hyp}

Our final goal is to show that in above case one have $\text{Cay}(\mathbb{Z}_{n},S) \cong \text{Cay}(\mathbb{Z}_{n},S+\frac{n}{2})$ under certain assumptions. By Corollary \ref{Cor: (+m)_r in im(alpha) iff. Cay(Z_2m,S) iso Cay(Z_2m,S+m)} to achieve this we have to show that there exists $\sigma \in \text{Aut}^{\pi}(\text{Cay}(\mathbb{Z}_{2m},S))$ such that $\alpha(\sigma) = {m}_{r}$.


\begin{cor} \label{cor: how alpha works on vertices in our case}
    Under Hypothesis \ref{HYPOTHESIS which states the hard case} for every $\sigma \in \text{Aut}^{\pi}(\Gamma)$ and every $x \in \mathbb{Z}_n$ one get $\alpha(\sigma).x \in \{x,x+m\}$.
\end{cor}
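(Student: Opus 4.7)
The plan is to read the statement directly off Proposition \ref{prop: X_a and action of alpha} once we identify, under Hypothesis \ref{HYPOTHESIS which states the hard case}, the basic set $X_a$ of $\mathcal{A}(\Gamma \times K_2)$ that contains $a$: by assumption $X_a = \{a, m+a\}$. Recall that the proposition says, for every $g \in \mathbb{Z}_n$, that '$ga \in X_a$' is equivalent to the existence, at any prescribed base point $g_0 \in \mathbb{Z}_n$, of some $\sigma \in \text{Aut}^{\pi}(\Gamma)$ with $\alpha(\sigma).g_0 = g + g_0$ (additive notation in $\mathbb{Z}_n$). So, up to relabelling, the possible ``displacements'' $\alpha(\sigma).x - x$ realised by any two-fold projection at any vertex are exactly the elements $g$ with $g+a \in X_a$.

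Now, given $\sigma \in \text{Aut}^{\pi}(\Gamma)$ and $x \in \mathbb{Z}_n$, set
\[
g := \alpha(\sigma).x - x \in \mathbb{Z}_n.
\]
By construction the pair $(g_0,\sigma) = (x,\sigma)$ witnesses condition (iii.) of Proposition \ref{prop: X_a and action of alpha}, so the implication (iii.) $\Rightarrow$ (i.) gives $g + a \in X_a$. Using the hypothesis $X_a = \{a, m+a\}$ we deduce $g \in \{0, m\}$, and hence
\[
\alpha(\sigma).x = x + g \in \{x, x+m\},
\]
which is what was to be proved. There is essentially no obstacle: the corollary is just the translation of the hypothesis on the single basic set $X_a$ through the dictionary furnished by Proposition \ref{prop: X_a and action of alpha}.
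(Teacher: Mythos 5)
Your proof is correct and follows exactly the paper's route: the paper likewise deduces the corollary from the equivalence of conditions (i.) and (iii.) in Proposition \ref{prop: X_a and action of alpha} combined with the hypothesis that $\{a, m+a\}$ is the basic set containing $a$. You have merely spelled out the implication (iii.) $\Rightarrow$ (i.) more explicitly than the paper does.
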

\begin{proof}
    By Hypothesis \ref{HYPOTHESIS which states the hard case} we know that $\{a,m+a\}$ is a basic set of $\mathcal{A}(\Gamma \times K_2)$. To end the proof we use equivalence between statements i. and iii. from Proposition \ref{prop: X_a and action of alpha}.
\end{proof}

To proceed further we have to define the concept of invariant partitions and block systems.

\begin{defi} \label{defi: invariant partition block system}
    Let $G$ be a group acting on set $X$. \textit{Invariant partition of $X$ with respect to the action of $G$} is a partition $\mathcal{P}$ such that for every $P \in \mathcal{P}$ and every $g \in G$ one has $g[P] = P$. If moreover $G$ acts transitively on $X$, invariant partitions are called \textit{block systems} and their elements are called \textit{blocks}.
\end{defi}

\begin{obs} \label{obs: cosets of L and widetile{L} form block systems}
    Assume Hypothesis \ref{HYPOTHESIS which states the hard case}. Then cosets of the group $L = \{0,m\}$ form a block system of $\mathbb{Z}_n$ with respect to $\text{Aut}^{\pi}(\Gamma)$ called $\mathcal{L}$. Moreover, cosets of $\widetilde{L} = L\langle a \rangle$ form a block system with respect to $\text{Aut}(\Gamma \times K_2)$ called $\widetilde{\mathcal{L}}$.
\end{obs}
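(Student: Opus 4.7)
The plan is to derive both statements from a single Schur-ring computation, exploiting that $\{a, m+a\}$ is a basic set of $\mathcal{A}(\Gamma \times K_2)$ under Hypothesis~\ref{HYPOTHESIS which states the hard case}. The engine is the square of $\underline{\{a, m+a\}}$ in the group ring of $\mathbb{Z}_n \times \langle a \rangle$: using $2a = e_{\langle a \rangle}$ and $2m = 0$ in $\mathbb{Z}_n$, each of the four cross terms collapses to either $e_G$ or $m$, yielding
\[
\underline{\{a, m+a\}}^2 \;=\; 2\,e_G + 2\,m \;=\; 2\,\underline{L}.
\]

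Since $\underline{\{a, m+a\}} \in \mathcal{A}(\Gamma \times K_2)$, so is $2\underline{L}$. Every element of a Schur ring decomposes uniquely as a $\mathbb{Z}$-linear combination of basic sets, with coefficient constant on each basic set; because $2\underline{L}$ is supported on $\{e_G, m\}$ and $\{e_G\}$ is a basic set, the basic set containing $m$ is forced to be the singleton $\{m\}$. Hence $\underline{L}$ and $\underline{\widetilde{L}} = \underline{L} + \underline{\{a, m+a\}}$ both lie in $\mathcal{A}(\Gamma \times K_2)$. The second statement of the observation then follows from a standard Schur-ring principle: whenever the indicator of a subgroup $H \leq G$ lies in $V(A_{e_G}, G)$ for some transitive overgroup $A \geq G_r$, the cosets of $H$ form a block system for $A$ on $G$. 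Apply this with $G = \mathbb{Z}_n \times \langle a \rangle$, $H = \widetilde{L}$ and $A = \text{Aut}(\Gamma \times K_2)$.

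For the first statement, transitivity of $\text{Aut}^{\pi}(\Gamma)$ on $\mathbb{Z}_n$, which holds because $(\mathbb{Z}_n)_r \leq \text{Aut}(\Gamma) \subseteq \text{Aut}^{\pi}(\Gamma)$, reduces the claim---after pre-composing with the translation $(-\sigma(0))_r$---to showing $\sigma(m) = m$ whenever $\sigma \in \text{Aut}^{\pi}(\Gamma)$ fixes $0$. For such $\sigma$, the pair $(\sigma, \gamma(\sigma))$ is a two-fold automorphism, so the corresponding permutation of $\mathbb{Z}_n \times \langle a \rangle$ lies in $\text{Aut}(\Gamma \times K_2)_{(0,\, e_{\langle a \rangle})}$. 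Since $\{m\}$ is a basic set and therefore an orbit of this stabilizer, the permutation fixes $(m, e_{\langle a \rangle})$, giving $\sigma(m) = m$ and thus $\sigma(L) = \sigma(0) + L$ for every $\sigma \in \text{Aut}^{\pi}(\Gamma)$.

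The main obstacle I anticipate is the deduction in the second paragraph that $\{m\}$ must be a singleton basic set: once this structural fact has been extracted from the group-ring identity, both block-system conclusions are essentially formal. The real content of the observation is therefore this bridge from Schur-ring data about $\Gamma \times K_2$ back to block-system data about the action of $\text{Aut}^{\pi}(\Gamma)$ on $\mathbb{Z}_n$.
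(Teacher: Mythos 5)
Your proof is correct, but it takes a genuinely different route from the paper's. The paper stays entirely inside the $\alpha$-formalism: it invokes the equivalence (i)$\Leftrightarrow$(iv) of Proposition \ref{prop: X_a and action of alpha} to produce, for each $x$, some $\sigma$ with $\alpha(\sigma).x = x+m$, then uses the identity $\alpha(\sigma\tau) = \tau^{-1}\alpha(\sigma)\tau \circ \alpha(\tau)$ together with Corollary \ref{cor: how alpha works on vertices in our case} to force $\tau.(x+m)=\tau.x+m$; for $\widetilde{\mathcal{L}}$ it decomposes the automorphism induced by $(\sigma,\sigma\circ\alpha(\sigma))$ into a coset-preserving piece and a uniformly acting piece. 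You instead go back to the Schur ring: the computation $\underline{\{a,m+a\}}^2 = 2\underline{L}$ (which is correct, since $2a=e$ and $2m=0$) forces the basic set containing $m$ to be the singleton $\{m\}$, hence $\underline{L},\underline{\widetilde{L}} \in \mathcal{A}(\Gamma\times K_2)$, and the block statements follow from the standard correspondence between $\mathcal{A}$-subgroups of a transitivity module and blocks through the identity (valid here because both $\text{Aut}(\Gamma\times K_2)$ and $\text{Aut}^{\pi}(\Gamma)$ contain the relevant regular subgroup; your reduction to stabilizer elements of $0$, extended by the same conjugation trick to arbitrary cosets $L+x$, is the standard proof of that correspondence). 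Your approach buys a strictly stronger structural fact as a byproduct --- $\{m\}$ is itself a basic set, i.e. $\underline{L}\in\mathcal{A}(\Gamma\times K_2)$ --- and is arguably more self-contained on the Schur-ring side, at the cost of invoking the block/subgroup correspondence that the paper never formally states; the paper's version has the advantage of exercising exactly the $\alpha$-machinery ($\alpha$ as a near-cocycle, Corollary \ref{cor: how alpha works on vertices in our case}) that the rest of Section \ref{SECTION: Schur Rings and function alpha} and Section \ref{SECTION: Main results} depend on.
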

\begin{proof}
Choose arbitrary $x \in \mathbb{Z}_n$. Since $\{a,m+a\}$ is a basic set of $\mathcal{A}(\Gamma \times K_2)$, by equivalence of criterion i. and iv. in Proposition \ref{prop: X_a and action of alpha} there exists such $\sigma \in \text{Aut}^{\pi}(\Gamma)$ that $\alpha(\sigma).x = x + m$.

Let $\tau$ be an arbitrary element of $\text{Aut}^{\pi}(\Gamma)$. Putting $\tau^{-1}$ instead of $\tau$ in the equation from Observation \ref{Obs: basic properties of alpha} shows that $\alpha(\sigma \tau^{-1}) {\alpha(\tau^{-1})}^{-1} = \tau \alpha(\sigma) \tau^{-1}$. Therefore we know that
$$
\tau.(x+m) = \tau \alpha(\sigma) \tau^{-1}.(\tau.x) = \alpha(\sigma \tau^{-1}) {\alpha(\tau^{-1})}^{-1}.(\tau.x) \in \{\tau.x, \tau.x + m\}.
$$
Last inclusion is due to Corollary \ref{cor: how alpha works on vertices in our case}. Since $\tau.(x+m) \neq \tau.x$, $\tau.(x+m) = \tau.x + m$ which proves that $\mathcal{L}$ indeed forms a block system.

To end the proof we have to moreover prove that $\widetilde{\mathcal{L}}$ is a block system with respect to $\text{Aut}(\Gamma \times K_2)$. First of all let us notice that cosets of subgroup $\mathbb{Z}_n$ of the group $\mathbb{Z}_n \times \langle a \rangle$ form a block system by Hypothesis \ref{HYPOTHESIS which states the hard case}. One can easily see that a function $a_r$ preserves cosets of $\widetilde{L}$ setwise, hence we can only have to check if automorphism induced by two-fold automorphism also preserve partition $\widetilde{\mathcal{L}}$.

By definition, for every $\sigma \in \text{Aut}^{\pi}(\Gamma)$, $(\sigma, \sigma \circ \alpha(\sigma))$ is a two-fold automorphism. Let us define permutations
$$
    \tau_1.x = \begin{cases} 
      x & \text{ if } x \in \mathbb{Z}_n \\
    \alpha(\sigma).(xa) a & \text{ if } x \in \mathbb{Z}_n a
   \end{cases} \quad \text{and} \quad
    \tau_2.x = \begin{cases} 
      \sigma.x & \text{ if } x \in \mathbb{Z}_n \\
    \sigma.(xa) a & \text{ if } x \in \mathbb{Z}_n a
   \end{cases}.
$$
Notice that automorphism created by $(\sigma, \sigma \circ \alpha(\sigma))$ is exactly $\tau_2 \circ \tau_1$. $\tau_1$ preserves all of the cosets of $L$ by Corollary \ref{cor: how alpha works on vertices in our case}, hence it also preserves cosets of $\widetilde{L} = L\langle a \rangle$ setwise. Since we know that cosets of $L$ form a block system with respect to $\text{Aut}^{\pi}(\Gamma)$ and $\tau_2$ acts uniformly on both $\mathbb{Z}_n$ and $\mathbb{Z}_n a$, we conclude that $\tau_2$ permutes cosets of $L\langle a \rangle$. Combination of above facts shows that indeed cosets of $\widetilde{L}$ form a block system with respect to $\text{Aut}(\Gamma \times K_2)$.
\end{proof}



Now we can divide edges of $\Gamma = \text{Cay}(\mathbb{Z}_n,S)$ into two disjoint parts, which cannot be mixed by any automorphism of $\Gamma \times K_2$. Let us define the set $S_r = \{s \in S \mid s +m \in S \}$ of \textit{reflective connection set} and the complementary set $S_a = S \backslash S_r$ of \textit{anti-reflective connection set}. We call an edge $(x,y)$ of $\Gamma$ \textit{reflective} if $y-x \in S_r$ and \textit{anti-reflective} otherwise. Before stating next observation we need a couple more definitions.

\begin{defi} \label{defi: reflective and areflective edges and colored quotient by L}
Assume $\Gamma$ satisfies Hypothesis\ref{HYPOTHESIS which states the hard case}. We define a colored graph $\bigslant{\Gamma}{\mathcal{L}}$ to have vertex set $\mathcal{L}$ and edges in the first (reflective) color between vertices $L+x$ and $L+y$ when $y-x \in S_r$ and edges in second (anti-reflective) color between vertices $L+x$ and $L+y$ when $\{ y-x, y-x + m\} \cap S_a \neq \varnothing$. We will often refer to the edges in the first color as \textit{reflective} and to the ones in the second color as \textit{anti-reflective}.
\end{defi}

\begin{defi} \label{defi: induced action on the block system}
    For any set $X$, $\sigma \in \text{Sym}(X)$ and a partition $\mathcal{P}$ which is invariant with respect to $\langle \sigma \rangle$, we define $\text{ind}_{\mathcal{P}} (\sigma)$ to be a permutation of $\mathcal{P}$ given by the formula $\mathfrak{p} \mapsto \sigma[\mathfrak{p}]$. We call it \textit{the permutation of $\mathcal{P}$ induced by $\sigma$}.
\end{defi}

\begin{obs} \label{obs: tf-projections induce automorphisms of Gamma / mathcal{L}}
    Assume Hypothesis \ref{HYPOTHESIS which states the hard case}. For any element $\sigma \in \text{Aut}^{\pi}(\Gamma)$ we obtain that $\text{ind}_{\mathcal{L}} (\sigma) \in \text{Aut}(\bigslant{\Gamma}{\mathcal{L}})$.
\end{obs}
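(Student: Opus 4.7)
The plan is to exhibit $\sigma$ as the first coordinate of a two-fold automorphism of $\Gamma$ and to transfer each of the two edge-color conditions through it. I would set $\sigma_2 := \gamma(\sigma) = \sigma \circ \alpha(\sigma)$, so that $(\sigma, \sigma_2) \in \text{Aut}^{\text{TF}}(\Gamma)$; since $(\sigma_2, \sigma)$ is then also a two-fold automorphism, Observation \ref{Obs: basic properties of gamma} gives $\sigma_2 \in \text{Aut}^{\pi}(\Gamma)$, and so both $\sigma$ and $\sigma_2$ preserve the block system $\mathcal{L}$ by Observation \ref{obs: cosets of L and widetile{L} form block systems}. The preparatory facts I would then extract are (a) $\sigma_2(x) \in \{\sigma(x), \sigma(x)+m\}$ for every $x$, i.e.\ $\sigma$ and $\sigma_2$ induce the same permutation on $\mathcal{L}$, which follows from Corollary \ref{cor: how alpha works on vertices in our case}, and (b) the identity $\sigma_2(x+m) = \sigma_2(x) + m$, which follows from the injectivity of $\sigma_2$ combined with its preservation of the block $L + \sigma(x)$.

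With these two facts in hand the two color cases become essentially parallel applications of the two-fold automorphism relation. For a reflective edge $L+x \sim L+y$ I have $y - x, (y+m) - x \in S$; applying the two-fold condition at the pairs $(x,y)$ and $(x, y+m)$ yields $\sigma_2(y) - \sigma(x) \in S$ and $\sigma_2(y) + m - \sigma(x) = \sigma_2(y+m) - \sigma(x) \in S$, so $\sigma_2(y) - \sigma(x) \in S_r$. Because $S_r$ is invariant under $+m$ and $\sigma_2(y) \equiv \sigma(y) \pmod{L}$, I conclude $\sigma(y) - \sigma(x) \in S_r$, giving a reflective edge between $\sigma[L+x] = L + \sigma(x)$ and $\sigma[L+y] = L + \sigma(y)$. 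For an anti-reflective edge I may assume $y - x \in S_a$; then $y - x \in S$ and $(y+m) - x \notin S$, so the same two applications of the two-fold condition give $\sigma_2(y) - \sigma(x) \in S$ and $\sigma_2(y) + m - \sigma(x) \notin S$, hence $\sigma_2(y) - \sigma(x) \in S_a$. Passing modulo $L$ once more shows that one of $\sigma(y) - \sigma(x)$, $\sigma(y) - \sigma(x) + m$ lies in $S_a$, which is exactly the condition for an anti-reflective edge between $\sigma[L+x]$ and $\sigma[L+y]$.

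To upgrade the resulting color-preserving map to a genuine graph automorphism I would run the same argument on $\sigma^{-1} \in \text{Aut}^{\pi}(\Gamma)$ to obtain the converse implications for each color, or equivalently use that $\text{ind}_{\mathcal{L}}$ is a group homomorphism. The hardest part I anticipate is purely bookkeeping: keeping $\sigma$ and $\sigma_2$ distinct within the two-fold edge condition while exploiting their coincidence modulo $L$. The conceptual reason the argument works cleanly is that both color classes --- $S_r$ on one hand and $\{s, s+m\} \cap S_a$ on the other --- are $L$-invariant, so every relevant relation descends unambiguously to the quotient $\bigslant{\Gamma}{\mathcal{L}}$ once the two-fold condition has been applied in each direction needed.
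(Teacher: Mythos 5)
Your argument is correct. It does, however, take a somewhat different route from the paper's: the paper lifts the problem to $\Gamma \times K_2$ and observes that a reflective (resp.\ anti-reflective, resp.\ absent) edge of $\bigslant{\Gamma}{\mathcal{L}}$ corresponds to a fixed number of edges between the corresponding cosets of $\widetilde{L} = L\langle a\rangle$, with the three counts pairwise distinct; since $(\sigma,\gamma(\sigma))$ induces an automorphism of $\Gamma \times K_2$ permuting the $\widetilde{L}$-cosets (Observation \ref{obs: cosets of L and widetile{L} form block systems}), these counts are preserved and both directions of the equivalence come for free. You instead chase the two-fold edge condition elementwise, using $\gamma(\sigma) = \sigma\circ\alpha(\sigma)$, the coincidence $\gamma(\sigma)\equiv\sigma \pmod L$ from Corollary \ref{cor: how alpha works on vertices in our case}, and the $L$-invariance of $S_r$ and of $\{s,s+m\}\cap S_a$, then close the loop by applying the same argument to $\sigma^{-1}$. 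Your version is more hands-on and makes explicit exactly which invariances of $S_r$ and $S_a$ are being used (including, correctly, that an automorphism of $\Gamma\times K_2$ sends non-edges to non-edges, needed in the anti-reflective case); the paper's counting argument is shorter and handles the forward and backward implications, and the mutual exclusivity of the two colors, in a single stroke.
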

\begin{proof}
    Assume that vertices $L+x$ and $L+y$ are connected by the reflective edge in $\bigslant{\Gamma}{\mathcal{L}}$. Then there are exactly $4$ edges between sets $\widetilde{L}+x$ and $\widetilde{L}+y$ in the graph $\Gamma \times K_2$. Similarly when vertices $L+x$ and $L+y$ are connected by the anti-reflective edge in $\bigslant{\Gamma}{\mathcal{L}}$, there are exactly $2$ edges between sets $\widetilde{L}+x$ and $\widetilde{L}+y$ in the graph $\Gamma \times K_2$. Now note that the function $L+x \mapsto \widetilde{L}+x$ is a bijection between $\mathcal{L}$ and $\widetilde{\mathcal{L}}$ such that reflective edges of $\bigslant{\Gamma}{\mathcal{L}}$ are mapped onto pairs with exactly $4$ edges between them and anti-reflective edges are mapped onto pairs with exactly $2$ edges between them. We also know that $(\sigma,\gamma(\sigma))$ corresponds to the automorphism of $\Gamma \times K_2$ which by Observation \ref{obs: cosets of L and widetile{L} form block systems} permutes cosets of $\widetilde{L}$. Therefore this automorphism preserves number of edges between cosets of $\widetilde{L}$ and we indeed get $\text{ind}_{\mathcal{L}} (\sigma) \in \text{Aut}(\bigslant{\Gamma}{\mathcal{L}})$.
\end{proof}


\begin{lem}
    Assume Hypothesis \ref{HYPOTHESIS which states the hard case} and let $C = L \langle S_a \rangle$. Then cosets of $C$ form a block system of $\mathbb{Z}_n$ with respect to $\text{Aut}^{\pi}(\Gamma)$ called $\mathcal{C}$.
\end{lem}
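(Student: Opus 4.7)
The plan is to reduce the statement to a claim about the quotient coloured graph $\bigslant{\Gamma}{\mathcal{L}}$ and then read off the block structure from the connected components of the anti-reflective colour class.

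First I would observe that since $L \leq C$, every coset of $C$ is a union of cosets of $L$; so it is enough to show that any $\sigma \in \text{Aut}^{\pi}(\Gamma)$, viewed through $\text{ind}_{\mathcal{L}}(\sigma)$ as a permutation of $\mathbb{Z}_{n}/L$, permutes the cosets of the subgroup $C/L \leq \mathbb{Z}_{n}/L$. The coset structure on $\mathbb{Z}_{n}$ then descends to a block system by collecting the $L$-cosets in each $C/L$-coset.

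Next I would identify $C/L$ group-theoretically inside the quotient. Since $C = L\langle S_a\rangle$, the subgroup $C/L$ of $\mathbb{Z}_{n}/L$ is generated by the image of $S_a$. In Definition \ref{defi: reflective and areflective edges and colored quotient by L} the anti-reflective edges of $\bigslant{\Gamma}{\mathcal{L}}$ from the vertex $L+x$ run to precisely the vertices $L + x + t$ where $t$ ranges over the image of $S_a$ in $\mathbb{Z}_{n}/L$ (because $s$ and $s+m$ collapse to the same element modulo $L$). Consequently the anti-reflective colour class of $\bigslant{\Gamma}{\mathcal{L}}$ is the undirected Cayley graph $\text{Cay}(\mathbb{Z}_{n}/L, (S_a+L)/L)$, and its connected components are exactly the cosets of the subgroup generated by its symmetric connection set, namely $C/L$.

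Finally I would apply Observation \ref{obs: tf-projections induce automorphisms of Gamma / mathcal{L}}: $\text{ind}_{\mathcal{L}}(\sigma)$ is an automorphism of the coloured graph $\bigslant{\Gamma}{\mathcal{L}}$, and in particular it is an automorphism of the anti-reflective colour class. Any graph automorphism permutes connected components, so $\text{ind}_{\mathcal{L}}(\sigma)$ permutes the cosets of $C/L$. Lifting back, $\sigma$ sends each $C$-coset onto a $C$-coset, which gives the desired block system $\mathcal{C}$.

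The only non-routine step is the identification of the anti-reflective components with cosets of $C/L$; the subtlety is that the image of $S_a$ in $\mathbb{Z}_{n}/L$ must genuinely generate $C/L$ and be symmetric, which is immediate from $S = -S$ together with $C = L\langle S_a\rangle$, but it is the point where one must be careful that no reflective element of $S$ sneaks into the anti-reflective connection set after passing to the quotient.
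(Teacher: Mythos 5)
Your proposal is correct and follows essentially the same route as the paper: identify the cosets of $C/L$ with the connected components of the anti-reflective colour class of $\bigslant{\Gamma}{\mathcal{L}}$, then invoke Observation \ref{obs: tf-projections induce automorphisms of Gamma / mathcal{L}} to conclude that $\text{ind}_{\mathcal{L}}(\sigma)$ permutes those components. The extra care you take in checking that the image of $S_a$ in $\mathbb{Z}_n/L$ is symmetric and generates $C/L$ is a worthwhile detail the paper leaves implicit.
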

\begin{proof}
    By definition $L \leq C$. Since elements of the set $S_a$ give rise to anti-reflective edges in $\Gamma$ and later to anti-reflective edges of $\bigslant{\Gamma}{\mathcal{L}}$, partition of $\mathcal{L}$ into cosets of $\bigslant{C}{L}$ is just the partition of vertices of $\bigslant{\Gamma}{\mathcal{L}}$ into connected components with respect to edges of the second color (anti-reflective ones). Take arbitrary $\sigma \in \text{Aut}^{\pi}(\Gamma)$. By Observation \ref{obs: tf-projections induce automorphisms of Gamma / mathcal{L}} we know that $\text{ind}_{\mathcal{L}} (\sigma)$ permutes cosets of $\bigslant{C}{L}$, hence $\sigma$ permutes cosets of $C$ as wanted.
\end{proof}

Another crucial concept in our proof are so called $\alpha$-homogeneous partitions.

\begin{defi}
    For a graph $\Gamma$ satisfying Hypothesis \ref{HYPOTHESIS which states the hard case} we call a partition $\mathcal{P}$ of the set
$\mathbb{Z}_n$ \textit{$\alpha$-homogeneous} when for each $P \in \mathcal{P}$ and each $\sigma \in \text{Aut}^{\pi}(\Gamma)$ it holds that $\alpha(\sigma).x - x$ is constant over all $x \in P$.
\end{defi}


\begin{lem}
    Under Hypothesis \ref{HYPOTHESIS which states the hard case} block system $\mathcal{C}$ is an $\alpha$-homogeneous partition.
\end{lem}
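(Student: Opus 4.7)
The plan is to fix $\sigma \in \text{Aut}^{\pi}(\Gamma)$ and study the permutation $\tau := \alpha(\sigma)$. By Corollary \ref{cor: how alpha works on vertices in our case} the assignment $f_{\sigma}(x) := \tau.x - x$ defines a function $f_{\sigma} : \mathbb{Z}_n \to \{0, m\}$, and what must be shown reduces to the claim that $f_{\sigma}$ is constant on each coset of $C = L \langle S_a \rangle$. Since $C$ is generated as a subgroup by $L \cup S_a$, it suffices to check $f_{\sigma}(x + c) = f_{\sigma}(x)$ for every $c \in L \cup S_a$.

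The decisive preliminary observation is that $\tau$ is an involution. Indeed $\tau$ permutes each pair $\{x, x+m\}$ within itself, and bijectivity forces it either to fix both points or to swap them; either way $\tau^2 = \mathrm{id}$, which also settles invariance along $L$ for free. Combined with $\gamma(\tau) = \tau^{-1}$ from Observation \ref{Obs: basic properties of alpha}, being an involution yields $\gamma(\tau) = \tau$, and Observation \ref{Obs: basic properties of gamma} then promotes this to $\tau \in \text{Aut}(\Gamma)$.

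Having $\tau \in \text{Aut}(\Gamma)$ in hand, I would extract invariance along $S_a$ by a short edge computation. For $s \in S$ and $x \in \mathbb{Z}_n$, preservation of the edge $\{x, x+s\}$ gives
$$
s + f_{\sigma}(x+s) - f_{\sigma}(x) \in S.
$$
Since $f_{\sigma}$ takes values in $\{0, m\}$ and $-m \equiv m$ in $\mathbb{Z}_{2m}$, the correction term $f_{\sigma}(x+s) - f_{\sigma}(x)$ itself lies in $\{0, m\}$; hence the expression equals $s$ when the two values agree and $s + m$ when they differ. For $s \in S_a$ the element $s + m$ is not in $S$ by definition, so the second alternative is excluded and $f_{\sigma}(x+s) = f_{\sigma}(x)$. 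Combined with invariance along $L$, this yields invariance on all of $C$.

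The step I expect to be the main obstacle is the identification $\tau \in \text{Aut}(\Gamma)$, because it is what converts the otherwise opaque object $\alpha(\sigma)$ into a concrete edge-preserving permutation amenable to local analysis. The trick of forcing $\tau$ to be an involution using the pair $\{x, x+m\}$ and then invoking the $\gamma$-fixed-point characterization of $\text{Aut}(\Gamma)$ inside $\text{Aut}^{\pi}(\Gamma)$ is the crucial insight; once it is in place, the passage through $S_a$ is a one-line edge check.
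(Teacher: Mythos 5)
Your proposal is correct and follows essentially the same route as the paper: both arguments establish that $\alpha(\sigma)$ is an involution via Corollary \ref{cor: how alpha works on vertices in our case}, combine this with $\gamma(\alpha(\sigma)) = \alpha(\sigma)^{-1}$ to conclude $\alpha(\sigma) \in \text{Aut}(\Gamma)$, and then use $s+m \notin S$ for $s \in S_a$ to propagate homogeneity from $L$-cosets across anti-reflective edges. Your edge computation at the end is merely a more explicit rendering of the paper's final sentence.
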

\begin{proof}
    Observe that a partition into cosets of $L$ is $\alpha$-homogeneous, since for any $\sigma \in \text{Aut}^{\pi}(\Gamma)$ we obtain $\alpha(\sigma).x , \alpha(\sigma).(x+m) \in \{x,x+m\}$, hence either $\alpha(\sigma).x =x$ and $\alpha(\sigma).(x+m)=x+m$ or $\alpha(\sigma).x =x+m$ and $\alpha(\sigma).(x+m)=x$. To end the proof it is enough to show that for any $s \in S_a$ cosets $L+x$ and $L+x+s$ act alike by $\alpha(\sigma)$ for arbitrary $\sigma \in \text{Aut}^{\pi}(\Gamma)$.

    Recall, that by Observation \ref{Obs: basic properties of alpha} $\gamma(\alpha(\sigma)) = {\alpha(\sigma)}^{-1}$, and by Corollary \ref{cor: how alpha works on vertices in our case} ${\alpha(\sigma)}^{-1} = \alpha(\sigma)$, hence $\gamma(\alpha(\sigma)) = {\alpha(\sigma)}$. Now applying Observation \ref{Obs: basic properties of gamma} yields that $\alpha(\sigma) \in \text{Aut}(\Gamma)$. Now since $s \in S$ and $s+m \notin S$ by definition of $S_a$, the fact that $\alpha(\sigma)$ is an automorphism of $\Gamma$ forces the fact that it acts alike on both $L+x$ and $L+x+s$.
\end{proof}


Next observation alerts us that value of function $\alpha$ does not necessarily depend on the full permutation $\sigma \in \text{Aut}^{\pi}(\Gamma)$ but only on the potentially small part of it.

\begin{lem} \label{lem: alpha |C depends only on action on C}
    \textbf{(Local behavior of function $\alpha$)} Assume Hypothesis \ref{HYPOTHESIS which states the hard case}. Take any $\mathfrak{c} \in \mathcal{C}$. If $\sigma_1,\sigma_2 \in \text{Aut}^{\pi}(\Gamma)$ are such that $\sigma_1 {|}_{\mathfrak{c}} = \sigma_2 {|}_{\mathfrak{c}}$, then $\alpha(\sigma_1) {|}_{\mathfrak{c}} = \alpha(\sigma_2) {|}_{\mathfrak{c}}$.
\end{lem}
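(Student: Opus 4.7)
The plan is to reduce the lemma to the case where $\tau := \sigma_2 \circ \sigma_1^{-1}$ fixes a block of $\mathcal{C}$ pointwise, and then exploit the edge-preservation of the associated two-fold automorphism of $\Gamma \times K_2$.

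Set $\tau := \sigma_2 \circ \sigma_1^{-1}$. Because $\sigma_1|_{\mathfrak{c}} = \sigma_2|_{\mathfrak{c}}$, the permutation $\tau$ fixes the block $\mathfrak{c}' := \sigma_1[\mathfrak{c}] \in \mathcal{C}$ pointwise. Applying the cocycle identity of Observation \ref{Obs: basic properties of alpha} to $\sigma_2 = \tau \circ \sigma_1$ gives
\[ \alpha(\sigma_2) = \sigma_1^{-1} \circ \alpha(\tau) \circ \sigma_1 \circ \alpha(\sigma_1). \]
For any $x \in \mathfrak{c}$, Corollary \ref{cor: how alpha works on vertices in our case} forces $\alpha(\sigma_1).x \in \mathfrak{c}$ (using $L \leq C$), hence $\sigma_1(\alpha(\sigma_1).x) \in \mathfrak{c}'$. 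Thus, once we establish $\alpha(\tau)|_{\mathfrak{c}'} = \text{id}$, the displayed composition collapses to $\alpha(\sigma_2).x = \alpha(\sigma_1).x$ on $\mathfrak{c}$, which is the content of the lemma.

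The remaining task is to show: any $\tau \in \text{Aut}^{\pi}(\Gamma)$ fixing a block $\mathfrak{c}' \in \mathcal{C}$ pointwise satisfies $\alpha(\tau)|_{\mathfrak{c}'} = \text{id}$. The key observation is that $S_a \neq \varnothing$: otherwise $S + m = S$, so $N(0) = S = N(m)$, contradicting the reducedness of $\Gamma$ in Hypothesis \ref{HYPOTHESIS which states the hard case}. Now pick any $z \in \mathfrak{c}'$ and any $s \in S_a$, and let $x := z - s$; then $x \in \mathfrak{c}'$ because $s \in S_a \subseteq C$. The automorphism of $\Gamma \times K_2$ associated with the two-fold automorphism $(\tau, \gamma(\tau))$ sends the edge between $x$ and $z + a$ (an edge because $z - x = s \in S$) to the edge between $x$ and $\gamma(\tau).z + a$, so $\gamma(\tau).z - x \in S$. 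Writing $\gamma(\tau).z = \tau(\alpha(\tau).z)$, Corollary \ref{cor: how alpha works on vertices in our case} together with $\tau|_{\mathfrak{c}'} = \text{id}$ gives $\gamma(\tau).z \in \{z, z+m\}$, whence $\gamma(\tau).z - x \in \{s, s+m\} \cap S$. Because $s + m \notin S$ (as $s \in S_a$), we conclude $\gamma(\tau).z = z$. Since $z$ was arbitrary, $\gamma(\tau)|_{\mathfrak{c}'} = \text{id}$, and hence $\alpha(\tau).z = \tau^{-1}(\gamma(\tau).z) = z$ for every $z \in \mathfrak{c}'$.

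The most delicate point is ruling out $S_a = \varnothing$: for reflective elements $s \in S_r$ one has $s + m \in S$ as well, so the edge condition alone cannot distinguish $\gamma(\tau).z = z$ from $\gamma(\tau).z = z + m$, and the pinning-down step would fail. Reducedness of $\Gamma$, which is built into Hypothesis \ref{HYPOTHESIS which states the hard case}, is precisely what guarantees at least one anti-reflective element and therefore the decisive step.
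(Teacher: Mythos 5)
Your proposal is correct. Both your argument and the paper's hinge on the same decisive fact: reducedness of $\Gamma$ forces $S_a \neq \varnothing$, and an anti-reflective element $s \in S_a \subseteq C$ supplies an edge inside a block of $\mathcal{C}$ that distinguishes a vertex $z$ from $z+m$, which is exactly what is needed to kill the only possible discrepancy allowed by Corollary \ref{cor: how alpha works on vertices in our case}. The packaging differs: the paper normalizes $\sigma_1,\sigma_2$ to permutations $\tau_i$ fixing a common point of $\mathfrak{c}$, passes to the induced subgraph $\Sigma \cong \text{Cay}(C, S\cap C)$, and derives that a nontrivial difference would make $(id_{\mathfrak{c}}, \alpha(\tau_1)|_{\mathfrak{c}}\circ\alpha(\tau_2)|_{\mathfrak{c}}^{-1})$ a two-fold automorphism of $\Sigma$ forcing $S\cap C + m = S\cap C$, contradicting $S_a \subseteq S\cap C$. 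You instead quotient out by setting $\tau = \sigma_2\sigma_1^{-1}$, which fixes the block $\sigma_1[\mathfrak{c}]$ pointwise, reduce via the cocycle identity of Observation \ref{Obs: basic properties of alpha}, and then argue edge-by-edge directly in $\Gamma \times K_2$ rather than in an induced subgraph. Your route is slightly more elementary in that it avoids justifying that restrictions of two-fold automorphisms to $\mathfrak{c}$ yield two-fold automorphisms of the induced subgraph, at the cost of the (correctly handled) bookkeeping of transporting the statement from $\mathfrak{c}$ to $\mathfrak{c}' = \sigma_1[\mathfrak{c}]$ via conjugation.
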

\begin{proof}
    First of all let us take some $x \in \mathfrak{c}$. Now put $\tau_i = {(\sigma_i(x) - x)}_r^{-1} \circ \sigma_i$ for $i \in \{1,2\}$.
    Now calculate that $\alpha(\tau_i) = \sigma_i^{-1} \circ {(\sigma_i(x) - x)}_r^{-1} \circ {(\sigma_i(x) - x)}_r^{-1} \circ \gamma(\sigma_i) = \sigma_i^{-1} \circ \gamma(\sigma_i) = \alpha(\sigma_i)$ for $i \in \{1,2\}$. Since $\sigma_1(x) = \sigma_2(x)$, we see that it is enough to show $\alpha(\tau_1) {|}_{\mathfrak{c}} = \alpha(\tau_2) {|}_{\mathfrak{c}}$.
    
    Let $\Sigma$ be the subgraph of $\Gamma$ induced on $\mathfrak{c}$. Since $\mathfrak{c} = C + x$, $\Sigma$ is isomorphic to $\text{Cay}(C,S \cap C)$ and this isomorphism is given by $y \mapsto y-x$ for any $y \in C + x$.
    
    Now notice that $\tau_i[\mathfrak{c}] = \mathfrak{c}$, and since $L \leq C$, we know that indeed symmetries of $\Gamma \times K_2$ induced by two-fold automorphisms $(\tau_i,\gamma(\tau_i))$ also give a symmetry on the induced subgraph $\Sigma \times K_2$ on vertex set $\mathfrak{c} +\{0,a\}$. This leads us to conclude that $( \tau_i {|}_{\mathfrak{c}},\gamma(\tau_i) {|}_{\mathfrak{c}} )$ are two-fold automorphisms of $\Sigma$ for $i \in \{1,2\}$. Since $\tau_1 {|}_{\mathfrak{c}} = \tau_2 {|}_{\mathfrak{c}}$ and $\tau_1[\mathfrak{c}] = \mathfrak{c}$, we obtain $\gamma(\tau_i) {|}_{\mathfrak{c}} = \alpha(\tau_i) {|}_{\mathfrak{c}} \circ \tau_1 {|}_{\mathfrak{c}}$. We can therefore conclude, that
    $$
    (id_{\mathfrak{c}},\alpha(\tau_1) {|}_{\mathfrak{c}} \circ  \alpha(\tau_2) {|}_{\mathfrak{c}}^{-1} ) = ( \tau_1 {|}_{\mathfrak{c}} \circ \tau_2 {|}_{\mathfrak{c}}^{-1}, \alpha(\tau_1) {|}_{\mathfrak{c}} \circ \tau_1 {|}_{\mathfrak{c}}  \circ \tau_2 {|}_{\mathfrak{c}}^{-1} \circ \alpha(\tau_2) {|}_{\mathfrak{c}}^{-1} ) \in \text{Aut}^{\text{TF}}(\Sigma).
    $$ 
    Now assume by contradiction that $\alpha(\tau_1) {|}_{\mathfrak{c}} \neq  \alpha(\tau_2) {|}_{\mathfrak{c}}$. Then there exists such $y \in \mathfrak{c}$ that $\alpha(\tau_1) {|}_{\mathfrak{c}} \circ  \alpha(\tau_2) {|}_{\mathfrak{c}}^{-1}.y \neq y$. By Corollary \ref{cor: how alpha works on vertices in our case} this means that $\alpha(\tau_1) {|}_{\mathfrak{c}} \circ  \alpha(\tau_2) {|}_{\mathfrak{c}}^{-1}.y = y+m$. Since $(id_{\mathfrak{c}},\alpha(\tau_1) {|}_{\mathfrak{c}} \circ  \alpha(\tau_2) {|}_{\mathfrak{c}}^{-1} ) \in \text{Aut}^{\text{TF}}(\Sigma)$ that would mean that vertices $y$ and $y+m$ have the same neighbourhoods in $\Sigma$. This yields a contradiction, since that would mean $S \cap C + m = S \cap C$. On the other hand we know that $S_a \subseteq S \cap C$ and $S_a \neq \varnothing$ because by Hypothesis \ref{HYPOTHESIS which states the hard case} $\Gamma$ is reduced hence there is an element $s \in S_a \subseteq S \cap C$ such that $s+m \notin S$, which proves that actually $s+m \notin S \cap C$ and demonstrates that $S \cap C + m \neq S \cap C$.
\end{proof}

We end this section with definition of a partition $\mathcal{B}$ which will turn out to be a block system. This block system will play a central role in the proof of the main theorem of this paper. Before introducing it we have to define certain partial order on the family of partitions of a given set.

\begin{defi}
    For a given set $X$ and its partitions $\mathcal{P}$, $\mathcal{Q}$ we say that \textit{partition $\mathcal{P}$ is a fragmentation of $\mathcal{Q}$} or equivalently \textit{partition $\mathcal{Q}$ is a thickening of $\mathcal{P}$} when for each $\mathfrak{p} \in \mathcal{P}$ there exists $\mathfrak{q} \in \mathcal{Q}$ such that $\mathfrak{p} \subseteq \mathfrak{q}$. We denote this partial order by $\mathcal{P} \prec \mathcal{Q}$.
\end{defi}

\begin{lem} \label{lem: mathcal{B} forms a block system made of cosets of B}
    Assume Hypothesis \ref{HYPOTHESIS which states the hard case}. Define $\mathcal{B}$ to be the thickest $\alpha$-homogeneous partition (maximal one with respect to $\prec$ among $\alpha$-homogeneous partitions). Then $\mathcal{B}$ is a block system and its elements are cosets of a certain subgroup $B \leq \mathbb{Z}_n$.
\end{lem}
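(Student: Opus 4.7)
The plan is to realize $\mathcal{B}$ concretely as the class partition of an equivalence relation, to establish invariance under the regular action $(\mathbb{Z}_n)_r$ first (which forces the subgroup–coset structure), and then to upgrade to invariance under the whole group $\text{Aut}^\pi(\Gamma)$ by a maximality argument. The upgrade step is the main obstacle.

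Setting $c_\mu(x) := \alpha(\mu)(x) - x$ for $\mu \in \text{Aut}^\pi(\Gamma)$, Corollary \ref{cor: how alpha works on vertices in our case} gives $c_\mu(x) \in \{0,m\}$. I would declare $x \sim y$ iff $c_\sigma(x) = c_\sigma(y)$ for every $\sigma \in \text{Aut}^\pi(\Gamma)$. This is an equivalence relation whose classes form an $\alpha$-homogeneous partition, and every $\alpha$-homogeneous partition is a fragmentation of it; hence the $\sim$-classes are the unique thickest $\alpha$-homogeneous partition, which is $\mathcal{B}$. For translation invariance, note that $g_r \in \text{Aut}(\Gamma)$ corresponds to the two-fold automorphism $(g_r,g_r)$, so $\gamma(g_r) = g_r$ and $\alpha(g_r) = \text{id}$. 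Plugging $\tau = g_r$ into the product formula of Observation \ref{Obs: basic properties of alpha} yields $c_{\sigma g_r}(x) = c_\sigma(x+g)$, so $x \sim y$ implies $x+g \sim y+g$. This $(\mathbb{Z}_n)_r$-invariance forces the block $B$ containing $0$ to be closed under addition and inverses, hence a subgroup, and every other block to be a coset of $B$.

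For the main step, fix $\tau \in \text{Aut}^\pi(\Gamma)$; I would show that $\tau[\mathcal{B}]$ is $\alpha$-homogeneous, so maximality of $\mathcal{B}$ gives $\tau[\mathcal{B}] \prec \mathcal{B}$, and applying this to $\tau^{-1}$ gives the reverse refinement, forcing equality. From Observation \ref{Obs: basic properties of alpha}, $\alpha(\tau)^{-1} = \gamma(\alpha(\tau)) = \alpha(\tau)$ (the last equality by Corollary \ref{cor: how alpha works on vertices in our case}), so rearranging the product formula produces
$$
\alpha(\sigma)(\tau(x)) = \tau\bigl(\alpha(\sigma\tau)(\alpha(\tau)(x))\bigr).
$$
Two ingredients convert this into an expression constant on blocks. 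First, $\mathcal{L}$ is itself $\alpha$-homogeneous (each $\alpha(\sigma)$ is an involution fixing $L$-cosets setwise), so by maximality $\mathcal{L} \prec \mathcal{B}$ and every block of $\mathcal{B}$ is a union of $L$-cosets; in particular $\alpha(\tau)(x) \in L+x \subseteq \mathfrak{b}$ whenever $x \in \mathfrak{b} \in \mathcal{B}$. Second, $\tau$ permutes $L$-cosets (Observation \ref{obs: cosets of L and widetile{L} form block systems}) and in $\mathbb{Z}_{2m}$ one has $-m = m$, so $\tau(z+m) = \tau(z) + m$ for every $z$, and consequently $\tau(z + c_\mu(z)) = \tau(z) + c_\mu(z)$ for any $\mu$. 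Substituting these into the displayed identity and peeling off the two applications of $\tau$ rearranges it to
$$
\alpha(\sigma)(\tau(x)) - \tau(x) = c_\tau(x) + c_{\sigma\tau}(\alpha(\tau)(x)).
$$
As $x$ varies over a single block $\mathfrak{b} \in \mathcal{B}$, both $c_\tau(x)$ and $c_{\sigma\tau}(\alpha(\tau)(x))$ (the latter evaluated at a point still inside $\mathfrak{b}$) are constant by $\alpha$-homogeneity of $\mathcal{B}$, so the right-hand side is constant on $\mathfrak{b}$; hence $\tau[\mathcal{B}]$ is $\alpha$-homogeneous, completing the argument.
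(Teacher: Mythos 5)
Your proof is correct and follows essentially the same route as the paper: both realize $\mathcal{B}$ as the classes of the relation $x\sim y \Leftrightarrow \alpha(\sigma).x-x=\alpha(\sigma).y-y$ for all $\sigma$, and both hinge on the identity $\tau^{-1}\alpha(\sigma)\tau=\alpha(\sigma\tau)\circ\alpha(\tau)^{-1}$ from Observation \ref{Obs: basic properties of alpha} to transport $\alpha$-homogeneity along $\tau$. The only cosmetic difference is packaging: the paper argues by contradiction that $\tau$ preserves $\sim$ and then gets the coset structure from $(\mathbb{Z}_n)_r\leq\text{Aut}^{\pi}(\Gamma)$, whereas you show $\tau[\mathcal{B}]$ is $\alpha$-homogeneous and invoke maximality for $\tau$ and $\tau^{-1}$, having established translation invariance separately.
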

\begin{proof}
    Let $\sim$ be a relation on $\mathbb{Z}_n$ such that $x \sim y$ if and only if for every $\sigma \in \text{Aut}^{\pi}(\Gamma)$, $\alpha(\sigma).x - x = \alpha(\sigma).y - y$. Once can easily see that $\sim$ is an equivalence relation, its equivalence classes form an $\alpha$-homogeneous partition called $\mathcal{B}$. Now notice, that for any $\alpha$-homogeneous partition $\mathcal{P}$ and any $\mathfrak{p} \in \mathcal{P}$, any pair ov vertices $x,y \in \mathfrak{p}$ satisfies $x \sim y$ by definition of $\alpha$-homogeneous partitions. This proves that indeed $\mathcal{P} \prec \mathcal{B}$ so $\mathcal{B}$ is the thickest among $\alpha$-homogeneous partitions.

    Now we will prove that $\mathcal{B}$ is a block system. Assume otherwise, that is there are vertices $x \sim y$ and $\tau \in \text{Aut}^{\pi}(\Gamma)$ such that $\tau.x \nsim \tau.y$. This means that there exist $\sigma \in \text{Aut}^{\pi}(\Gamma)$ such that $\alpha(\sigma).(\tau.x) - \tau.x \neq \alpha(\sigma).(\tau.y) - \tau.y$. Without loss of generality, because of Corollary \ref{cor: how alpha works on vertices in our case} we can assume that $\alpha(\sigma).(\tau.x) = \tau.x$ and $\alpha(\sigma).(\tau.y) = \tau.y + m$.
    
    By Observation \ref{Obs: basic properties of alpha} we get that $\alpha(\sigma \tau) \circ {\alpha(\tau)}^{-1} = \tau^{-1} \alpha(\sigma) \tau$. Therefore, on one hand $\left( \alpha(\sigma \tau) \circ {\alpha(\tau)}^{-1} \right).x - x = \left( \alpha(\sigma \tau) \circ {\alpha(\tau)}^{-1} \right).y - y$ and on the other hand $\tau^{-1} \alpha(\sigma) \tau.x = \tau^{-1} \alpha(\sigma).(\tau.x) = \tau^{-1}.(\tau.x) = x$ and $\tau^{-1} \alpha(\sigma) \tau.y = \tau^{-1} \alpha(\sigma).(\tau.y) = \tau^{-1}.(\tau.(y) + m) = y + m$, where last equality follows from Observation \ref{obs: cosets of L and widetile{L} form block systems}. Now we verify that $\left( \alpha(\sigma \tau) \circ {\alpha(\tau)}^{-1} \right).x - x = \tau^{-1} \alpha(\sigma) \tau.x - x = 0$ and $\left( \alpha(\sigma \tau) \circ {\alpha(\tau)}^{-1} \right).y - y = \tau^{-1} \alpha(\sigma) \tau.y - y = m$ which yields a desired contradiction, proving that $\mathcal{B}$ indeed is a block system of $\mathbb{Z}_n$ with respect to the group $\text{Aut}^{\pi}(\Gamma)$.

    Since ${(\mathbb{Z}_n)}_{r} \leq \text{Aut}^{\pi}(\Gamma)$, subset $\mathfrak{b} \in \mathcal{B}$ such that $0 \in \mathfrak{b}$ has to be a subgroup and other elements of $\mathcal{B}$ need to be its cosets. From now on we will refer to this subgroup as $B$.
\end{proof}

\section{Chain automorphisms} \label{SECTION: Chain automorphisms}


\begin{defi}
    A \textit{colored directed graph} is a pair $(V,(E_1,\ldots E_k))$ where $V$ is a given finite set, and each of $E_i$-s are binary relations on $V$. Unlike in simple graph, we do not add any restrictions on these relations. We say that there is a directed edge in color $i$ between vertices $v$ and $w$ when $v E_i w$.

    Moreover an \textit{in-neighbourhood in color $j$ of vertex $v$} is a set $N_{\text{in},j}v = \{ w \in V \mid w E_j v \}$ and \textit{out-neighbourhood in color $j$ of vertex $v$} is a set $N_{\text{out},j}v = \{ w \in V \mid v E_j w \}$. We will usually refer to the \textit{in-neighbourhood of vertex $v$}, which is a $k$-tuple $N_{\text{in}}v = (N_{\text{in},1}v,\ldots,N_{\text{in},k}v)$ usually understood as a colored set. Similarly we define the \textit{out-neighbourhood of vertex $v$} as $N_{\text{out}}v = (N_{\text{in},1}v,\ldots,N_{\text{in},k}v)$.

    We also call a colored digraph \textit{reduced} when each pair of different vertices have different both colored sets of in- and out-neighbourhoods.
\end{defi}

\begin{defi}
    For a colored directed graph $\Gamma = (V,(E_1,\ldots,E_k))$, \textit{chain of $\Gamma$} is an infinite colored digraph with vertex set $V \times \mathbb{Z}$ and edge relations $E_{j,\text{Ch}} = \{ ((v,i),(w,i+1)) \mid (v,w) \in E_j \text{ and } i \in \mathbb{Z} \}$. This infinite colored digraph is denoted $\text{Ch}(\Gamma)$.
\end{defi}

\begin{defi}
    Let define a group $\text{Aut}_{\text{Ch}}(\Gamma)$ to be the group of sequences ${\{\sigma_i\}}_{i \in \mathbb{Z}}$ made of permutations of $V$ such that a function $\widetilde{\sigma}:V \times \mathbb{Z} \rightarrow V \times \mathbb{Z}$ given by formula $(v,i) \mapsto (\sigma_i(v),i)$ is an automorphism of $\text{Ch}(\Gamma)$. Moreover we define $\text{Aut}^{\text{Ch}}(\Gamma) = \{\sigma \in \text{Sym}(V) \mid \exists \text{ } {\{\sigma_i\}}_{i \in \mathbb{Z}} \in \text{Aut}_{\text{Ch}}(\Gamma) \text{ such that } \sigma = \sigma_0 \}$ and by $\pi_j:\text{Aut}_{\text{Ch}}(\Gamma) \rightarrow \text{Aut}^{\text{Ch}}(\Gamma)$ we define a function given by ${\{\sigma_i\}}_{i \in \mathbb{Z}} \mapsto \sigma_j$. If the graph to which $\pi_j$ refers to is not clear from the context we write $\pi_{j,\Gamma}$. We call elements of $\text{Aut}_{\text{Ch}}(\Gamma)$ \textit{chain automorphisms of $\Gamma$} and elements of $\text{Aut}^{\text{Ch}}(\Gamma)$ \textit{chain projections of $\Gamma$}.
\end{defi}

\begin{defi}
    We call a (directed and colored) graph reduced iff. each pair of different vertices have different sets of both colored in-neighbourhoods and colored out-neighbourhoods.
\end{defi}

\begin{obs} \label{obs: Gamma reduced implies Ch(Gamma) reduced}
    Let $\Gamma$ be reduced. Then for every $j \in \mathbb{Z}$, each pair of vertices in the set $V \times \{j\}$ have distinct in- and out-neighbourhoods in $\text{Ch}(\Gamma)$.
\end{obs}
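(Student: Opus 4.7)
The plan is to reduce directly to the definition of reducedness of $\Gamma$ by unpacking what the in- and out-neighbourhoods in $\text{Ch}(\Gamma)$ look like for vertices sitting in a fixed slice $V \times \{j\}$. By construction the only edges in $\text{Ch}(\Gamma)$ incident to a vertex $(v,j)$ go either to $V \times \{j+1\}$ (out-edges) or come from $V \times \{j-1\}$ (in-edges), one copy per color. So for every color $k$,
$$
N_{\text{out},k}\bigl((v,j)\bigr) = N_{\text{out},k}(v) \times \{j+1\}, \qquad N_{\text{in},k}\bigl((v,j)\bigr) = N_{\text{in},k}(v) \times \{j-1\}.
$$

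First I would fix two distinct vertices $(v_1,j), (v_2,j) \in V \times \{j\}$, so that $v_1 \neq v_2$ in $V$. Since $\Gamma$ is reduced in the colored-digraph sense, either the tuples of colored in-neighbourhoods of $v_1$ and $v_2$ differ, or the tuples of colored out-neighbourhoods differ. Say without loss of generality that $N_{\text{out},k}(v_1) \neq N_{\text{out},k}(v_2)$ for some color $k$.

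Applying the formula from the first paragraph with this $k$, the sets $N_{\text{out},k}(v_1) \times \{j+1\}$ and $N_{\text{out},k}(v_2) \times \{j+1\}$ must differ as subsets of $V \times \{j+1\}$, so the colored out-neighbourhoods of $(v_1,j)$ and $(v_2,j)$ differ in $\text{Ch}(\Gamma)$. The symmetric argument works if instead the in-neighbourhoods of $v_1,v_2$ were the ones distinguishing them. Either way $(v_1,j)$ and $(v_2,j)$ have distinct in- or out-neighbourhoods in $\text{Ch}(\Gamma)$, which is the claim.

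There is no real obstacle here: the observation is essentially a sanity check that the chain construction preserves reducedness on each slice, and the only thing to be careful about is simply to write down what $N_{\text{in},k}$ and $N_{\text{out},k}$ mean inside $\text{Ch}(\Gamma)$ using the relations $E_{k,\text{Ch}}$ from the definition of the chain, then quote the reducedness hypothesis on $\Gamma$.
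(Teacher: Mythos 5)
Your computation of the neighbourhoods in $\text{Ch}(\Gamma)$ --- namely $N_{\text{out},k}\bigl((v,j)\bigr) = N_{\text{out},k}(v)\times\{j+1\}$ and $N_{\text{in},k}\bigl((v,j)\bigr) = N_{\text{in},k}(v)\times\{j-1\}$ --- is exactly the paper's argument, so the approach is the same. The one thing to correct is the logical form: you read reducedness of a colored digraph as ``the in-neighbourhood tuples differ \emph{or} the out-neighbourhood tuples differ,'' and accordingly you conclude only that $(v_1,j)$ and $(v_2,j)$ have distinct in- \emph{or} out-neighbourhoods. The paper's definition of reduced for colored digraphs requires that \emph{both} the in- and the out-neighbourhood tuples differ for every pair of distinct vertices, and the observation is needed in that conjunctive form: in the proof that $\pi_j$ is an isomorphism one uses distinctness of out-neighbourhoods on the slice $V\times\{i-1\}$ and distinctness of in-neighbourhoods on $V\times\{i+1\}$, so a disjunction would not suffice. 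With the correct hypothesis your ``without loss of generality'' is unnecessary --- both cases hold simultaneously, and the same one-line computation yields both conclusions. No other change is needed.
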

\begin{proof}
    Let $\Gamma$ be the colored digraph on vertex set $V$. For a vertex $(v,j)$ of $\text{Ch}(\Gamma)$ we have $N_{\text{in}}(v,j) = N_{\text{in}}v \times \{j-1\}$. Therefore if vertices $(v,j)$ and $(w,j)$ have the same in-neighbourhoods then $N_{\text{in}}v = N_{\text{in}}w$ which contradicts the fact that $\Gamma$ was reduced.
\end{proof}

\begin{defi}
    A \textit{colored directed Cayley graph over the group $G$ with colored connection sets $S_1,\ldots,S_k$} is a colored digraph with vertex set $V=G$ and colored edges given by formulas $E_i = \{  (x,y) \in G \times G \mid yx^{-1} \in S_i \}$ for each $i \in [k]$. We will refer to this colored digraph by $\text{DiCay}(G,S_1,\ldots,S_k)$.
\end{defi}

\begin{obs}
    Let $\Gamma = \text{DiCay}(\mathbb{Z}_m,S_1\ldots S_k)$ for some $m \in \mathbb{Z}_{>0}$ and $S_1\ldots S_k \subseteq \mathbb{Z}_m$. Then $\Gamma$ is reduced if and only if there does not exist nonzero $h \in \mathbb{Z}_m$ such that for all colors $i \in [k]$ one have $S_i + h = S_i$.
\end{obs}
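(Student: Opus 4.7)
The plan is to unpack the definitions directly, since both sides of the equivalence translate into the same condition on translates of the connection sets $S_j$. The key computation is that for any vertex $v \in \mathbb{Z}_m$ and any color $j$, the definition of $\text{DiCay}$ immediately gives $N_{\text{out},j}(v) = v + S_j$ and $N_{\text{in},j}(v) = v - S_j$.

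Consequently, for distinct vertices $v, w$, setting $h := v - w \neq 0$, the equalities $N_{\text{out},j}(v) = N_{\text{out},j}(w)$ for all $j$ and $N_{\text{in},j}(v) = N_{\text{in},j}(w)$ for all $j$ are each equivalent to the single condition $h + S_j = S_j$ for every $j$. For the out-neighbourhoods this is immediate after translating both sides by $-w$; for the in-neighbourhoods one uses that $\text{rad}(S_j) \leq \mathbb{Z}_m$ is a subgroup and hence closed under negation, so $h + S_j = S_j$ is equivalent to $(-h) + S_j = S_j$, which in turn rearranges to $h - S_j = -S_j$.

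With this observation the proof is immediate and splits into two contrapositives. If $\Gamma$ is not reduced, pick distinct vertices $v, w$ with coincident colored in- and out-neighbourhoods and take $h = v - w \neq 0$; the preceding paragraph then yields $h + S_j = S_j$ for every $j$. Conversely, given such a nonzero $h$, the pair of vertices $0$ and $h$ have matching colored in- and out-neighbourhoods by the same translation, so $\Gamma$ fails to be reduced. There is no real obstacle in this argument; the only mild subtlety is the symmetric handling of in- versus out-neighbourhoods, which is resolved by the subgroup structure of $\text{rad}(S_j)$ and which in particular shows that for abelian Cayley digraphs the in- and out-versions of reducedness collapse into the single constraint on $\bigcap_j \text{rad}(S_j)$.
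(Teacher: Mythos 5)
Your proof is correct and follows essentially the same route as the paper: both arguments reduce the statement to the observation that $N_{\text{out},j}(v)=v+S_j$ and $N_{\text{in},j}(v)=v-S_j$, so that coincidence of either kind of neighbourhood for $v\neq w$ is equivalent to $S_j+(v-w)=S_j$ for all $j$. Your detour through the subgroup structure of $\text{rad}(S_j)$ to handle the in-neighbourhoods is a harmless variant of the paper's direct negation of both sides of $v-S_j=w-S_j$.
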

\begin{proof}
    We will prove both directions by contradiction.
    
    $(\Rightarrow)$: Assume $h \in \mathbb{Z}_m$ is a nonzero element satisfying $S_i + h = S_i$ for all colors $i \in [k]$. Then $N_{\text{out}} (-h) = N_{\text{out}} (0)$, hence contradiction.

    $(\Leftarrow)$: Assume at first that vertices $x$ and $y$ have the same out-neighbourhoods. By definition of a directed Cayley graph this means that for every $i \in [k]$ we have $S_i + x = S_i + y$, hence $S_i + (x-y) = S_i$. Since $x \neq y$ putting $h=x-y$ ends the proof. 

    Second case to consider is that vertices $x$ and $y$ have the same in-neighbourhoods. Then for every $i \in [k]$ we have $x - S_i = y - S_i$, hence $S_i - x = S_i - y$ and finally $S_i = S_i + (x-y)$ so we can once again put $h = x-y$.
\end{proof}

\begin{obs} \label{OBS: Chain automorphisms are determined by action on V x {0} for reduced graphs}
    Let $\Gamma$ be a directed colored and reduced graph. Then for every $j \in \mathbb{Z}$ homomorphism $\pi_j$ is an isomorphism.
\end{obs}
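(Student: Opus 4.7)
The plan is to show that $\pi_j$ is a group homomorphism (which is essentially automatic from pointwise composition of chain automorphisms), then verify surjectivity via a time-shift trick, and finally verify injectivity by a propagation argument driven by the reducedness of $\Gamma$. The only step that uses more than unpacking definitions is injectivity, and this is where the hypothesis that $\Gamma$ is reduced enters in an essential way.

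For surjectivity, I would observe that the chain $\text{Ch}(\Gamma)$ is translation-invariant in its $\mathbb{Z}$-coordinate: the shift $s_j : (v,i) \mapsto (v, i+j)$ is an automorphism of $\text{Ch}(\Gamma)$. Hence if $\widetilde{\tau}$ realizes $\{\tau_i\}_{i \in \mathbb{Z}} \in \text{Aut}_{\text{Ch}}(\Gamma)$, then $s_{-j} \circ \widetilde{\tau} \circ s_j$ realizes the chain automorphism $\{\tau_{i+j}\}_{i \in \mathbb{Z}}$. So for any $\sigma \in \text{Aut}^{\text{Ch}}(\Gamma)$, witnessed by some $\{\tau_i\}$ with $\tau_0 = \sigma$, the sequence $\{\tau_{i-j}\}_{i \in \mathbb{Z}}$ is a chain automorphism with $\pi_j$-image equal to $\tau_0 = \sigma$.

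For injectivity, suppose $\{\sigma_i\}_{i \in \mathbb{Z}} \in \text{Aut}_{\text{Ch}}(\Gamma)$ satisfies $\sigma_j = \mathrm{id}_V$; I claim $\sigma_i = \mathrm{id}_V$ for every $i \in \mathbb{Z}$, which I prove by induction on $|i - j|$. For the step from $j$ to $j+1$, pick any $u \in V$ and set $u' = \sigma_{j+1}(u)$. Because $\widetilde{\sigma}$ is an automorphism, for every color $k$ it sends $N_{\mathrm{in},k}(u, j+1)$ onto $N_{\mathrm{in},k}(u', j+1)$, which when combined with $\sigma_j = \mathrm{id}_V$ gives
\[
N_{\mathrm{in},k} u \;=\; \sigma_j\bigl[N_{\mathrm{in},k} u\bigr] \;=\; N_{\mathrm{in},k} u'
\]
for every color $k$. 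Since $\Gamma$ is reduced, this forces $u = u'$, so $\sigma_{j+1} = \mathrm{id}_V$. The inductive step from $j$ to $j-1$ is symmetric, using out-neighborhoods instead of in-neighborhoods. Propagating in both directions yields $\sigma_i = \mathrm{id}_V$ for every $i \in \mathbb{Z}$, so $\ker \pi_j$ is trivial.

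The main obstacle, if any, is bookkeeping: one must keep straight which coordinate (the $i$ in $\mathbb{Z}$ or the vertex $v \in V$) is being shifted or compared, and one must appeal to \emph{both} in- and out-neighborhoods (the paper's notion of reducedness asks for both), since a single inductive direction only sees one of them. Once that is clear, everything follows directly from Observation \ref{obs: Gamma reduced implies Ch(Gamma) reduced} and the definitions.
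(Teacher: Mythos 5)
Your proof is correct and follows essentially the same route as the paper: surjectivity comes from the shift-invariance of $\text{Ch}(\Gamma)$ together with the definition of $\text{Aut}^{\text{Ch}}(\Gamma)$, and triviality of the kernel is obtained by propagating $\sigma_j = \mathrm{id}$ to neighbouring levels via Observation \ref{obs: Gamma reduced implies Ch(Gamma) reduced}. You merely spell out more explicitly what the paper compresses (the time-shift witness for surjectivity, and the use of in-neighbourhoods for the step $j\to j+1$ versus out-neighbourhoods for $j\to j-1$), which is consistent with the paper's notion of reducedness.
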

\begin{proof}
    Let study the kernel of $\pi_j$. Assume $\sigma_i = id$ for arbitraty $i \in \mathbb{Z}$. Then if $\widetilde{\sigma}$ was not identity on $V \times \{i-1\}$ or on $V \times \{i+1\}$, it would contradict Observation \ref{obs: Gamma reduced implies Ch(Gamma) reduced}. Therefore if $\sigma_j = id$, it follows that $\forall \text{ } i \in \mathbb{Z} \text{ } \sigma_i = id$, hence kernel is trivial an our thesis follows as $\pi_j$ is surjective by definition.
\end{proof}

\begin{defi} \label{Defi: widetilde{gamma} and gamma for colored directed reduced graphs}
     Let $\Gamma$ be a directed colored graph. Then $\widetilde{\gamma}: \text{Aut}_{\text{Ch}}(\Gamma) \rightarrow \text{Aut}_{\text{Ch}}(\Gamma)$ is a function given by a formula ${\{\sigma_i\}}_{i \in \mathbb{Z}} \mapsto {\{\sigma_{i+1}\}}_{i \in \mathbb{Z}}$. Moreover, if this graph is reduced, then $\gamma: \text{Aut}^{\text{Ch}}(\Gamma) \rightarrow \text{Aut}^{\text{Ch}}(\Gamma)$ is the unique function satisfying $\gamma \circ \pi_j = \pi_j \circ \widetilde{\gamma}$ for all $j \in \mathbb{Z}$, that is making the bellow diagram commute.
\[\begin{tikzcd}
	{\text{Aut}_{\text{Ch}}(\Gamma)} & {\text{Aut}_{\text{Ch}}(\Gamma)} \\
	{\text{Aut}^{\text{Ch}}(\Gamma)} & {\text{Aut}^{\text{Ch}}(\Gamma)}
	\arrow["{\widetilde{\gamma}}", from=1-1, to=1-2]
	\arrow["{\pi_j}", from=1-1, to=2-1]
	\arrow["{\pi_j}", from=1-2, to=2-2]
	\arrow["{ \exists ! \text{ }\gamma}", dotted, from=2-1, to=2-2]
\end{tikzcd}\]
    If it is not clear to which graph $\widetilde{\gamma}$ or $\gamma$ refers to, we write ${\widetilde{\gamma}}_{\Gamma}$ and $\gamma_{\Gamma}$ respectively when it refers to $\Gamma$.
\end{defi}

\begin{prop}
    Automorphism $\gamma$ is well defined for reduced directed colored graphs.
\end{prop}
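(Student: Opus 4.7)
Uniqueness is immediate once one fixes any single $j$: since $\pi_j$ is a bijection, the equation $\gamma \circ \pi_j = \pi_j \circ \widetilde{\gamma}$ already pins $\gamma$ down, so the real content is to produce a single function that simultaneously satisfies this equation for \emph{every} $j \in \mathbb{Z}$. The plan is to define $\gamma := \pi_0 \circ \widetilde{\gamma} \circ \pi_0^{-1}$, which is legal because, by the previous observation, $\pi_0$ is an isomorphism for any reduced $\Gamma$, and which automatically makes the square commute for $j=0$. It then remains to verify compatibility for arbitrary $j$.

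For $\sigma \in \text{Aut}^{\text{Ch}}(\Gamma)$, let $\{\sigma_i\}_{i \in \mathbb{Z}}$ be the unique chain automorphism with $\sigma_0 = \sigma$; by construction $\gamma(\sigma) = \sigma_1$. The compatibility statement $\gamma \circ \pi_j = \pi_j \circ \widetilde{\gamma}$ amounts to the assertion that for any chain automorphism $\{\tau_i\}_{i \in \mathbb{Z}}$ with $\tau_j = \sigma$ one must have $\tau_{j+1} = \sigma_1$. The trick is to examine the shifted sequence $\{\sigma_{i-j}\}_{i \in \mathbb{Z}}$: once we know it is itself a chain automorphism, it agrees with $\{\tau_i\}$ at position $j$, so injectivity of $\pi_j$ forces $\tau_i = \sigma_{i-j}$ for every $i$, and in particular $\tau_{j+1} = \sigma_1$, as required.

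The only nontrivial step is the shift-invariance claim: if $\{\sigma_i\}_{i \in \mathbb{Z}} \in \text{Aut}_{\text{Ch}}(\Gamma)$, then $\{\sigma_{i+c}\}_{i \in \mathbb{Z}} \in \text{Aut}_{\text{Ch}}(\Gamma)$ for every $c \in \mathbb{Z}$. I expect this to be the main (albeit minor) obstacle; it is really more a matter of unpacking the definition of $\widetilde{\gamma}$ than a substantive argument. The reason is that the layer-shift $T_c: (v,i) \mapsto (v, i+c)$ is itself an automorphism of $\text{Ch}(\Gamma)$ --- the edge relations $E_{j,\text{Ch}}$ depend only on the difference of the two levels --- so $T_c \circ \widetilde{\sigma} \circ T_c^{-1}$ is precisely the automorphism of $\text{Ch}(\Gamma)$ associated to the shifted sequence. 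Once this is in hand, the compatibility becomes a one-line diagram chase as sketched above; and the fact that $\gamma$ is a group automorphism then comes for free, since $\widetilde{\gamma}$ visibly is one (shifting respects componentwise multiplication) and $\gamma$ is simply its conjugate by the isomorphism $\pi_0$.
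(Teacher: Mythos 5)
Your proof is correct and takes essentially the same route as the paper, whose entire proof is ``follows easily from Observation \ref{OBS: Chain automorphisms are determined by action on V x {0} for reduced graphs}'' (i.e.\ that each $\pi_j$ is an isomorphism); your write-up merely makes explicit the shift-invariance of $\text{Aut}_{\text{Ch}}(\Gamma)$ and the diagram chase that the paper leaves implicit. (One harmless slip: with $T_c(v,i)=(v,i+c)$ the conjugate $T_c\circ\widetilde{\sigma}\circ T_c^{-1}$ corresponds to $\{\sigma_{i-c}\}_{i\in\mathbb{Z}}$ rather than $\{\sigma_{i+c}\}_{i\in\mathbb{Z}}$, but since $c$ ranges over all of $\mathbb{Z}$ this is immaterial.)
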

\begin{proof}
    Follows easily from Observation \ref{OBS: Chain automorphisms are determined by action on V x {0} for reduced graphs}.
\end{proof}

\begin{prop} \label{prop: equivalence of both definitions of gamma}
    Let $\Gamma$ be a reduced graph. Then function $(\pi_0,\pi_1): \text{Aut}_{\text{Ch}}(\Gamma) \mapsto \text{Aut}^{\text{TF}}(\Gamma)$ given by formula ${\{ \sigma_i \}}_{i \in \mathbb{Z}} \mapsto (\sigma_0, \sigma_1)$ is an isomorphism which makes both definitions of $\widetilde{\gamma}$ (Definition \ref{Defi: widetilde{gamma} and gamma for undirected reduced graphs} and Definition \ref{Defi: widetilde{gamma} and gamma for colored directed reduced graphs}) equivalent. Moreover $\text{Aut}^{\text{Ch}}(\Gamma) = \text{Aut}^{\pi}(\Gamma)$ and both definitions of $\gamma$ are also equivalent.
\end{prop}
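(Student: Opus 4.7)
The approach is to observe that a chain automorphism of $\text{Ch}(\Gamma)$ is exactly a bi-infinite sequence of consecutive two-fold automorphisms, and that for undirected reduced $\Gamma$ this forces the sequence to be $2$-periodic, so that the pair $(\sigma_0, \sigma_1)$ determines everything.

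First, one verifies that $(\pi_0, \pi_1)$ is a well defined group homomorphism into $\text{Aut}^{\text{TF}}(\Gamma)$. The chain condition on $\{\sigma_i\}$ reads ``$(v,w) \in E \Leftrightarrow (\sigma_i(v), \sigma_{i+1}(w)) \in E$ for every $i \in \mathbb{Z}$'', which at $i = 0$ is exactly the TF condition for $(\sigma_0, \sigma_1)$; since composition is componentwise on both sides, the map is a homomorphism. Injectivity then follows immediately from Observation \ref{OBS: Chain automorphisms are determined by action on V x {0} for reduced graphs}, because $\pi_0$ alone is already an isomorphism when $\Gamma$ is reduced.

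For surjectivity, the plan is to take any $(\mu_0, \mu_1) \in \text{Aut}^{\text{TF}}(\Gamma)$ and build the $2$-periodic extension $\sigma_{2k} := \mu_0$, $\sigma_{2k+1} := \mu_1$. The consecutive pairs appearing are $(\mu_0, \mu_1)$ and $(\mu_1, \mu_0)$, so it is enough to show that both are TF. The first holds by assumption; for the second, swapping the roles of $v$ and $w$ in the TF condition for $(\mu_0, \mu_1)$ and invoking the symmetry of the edge relation of the undirected graph $\Gamma$ yields the TF condition for $(\mu_1, \mu_0)$. This verifies that $\{\sigma_i\}$ is a chain automorphism and $(\pi_0, \pi_1)(\{\sigma_i\}) = (\mu_0, \mu_1)$.

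Finally, $\widetilde{\gamma}$-compatibility can be read off the same $2$-periodic structure: under the isomorphism, the chain shift $\{\sigma_i\} \mapsto \{\sigma_{i+1}\}$ corresponds to $(\sigma_0, \sigma_1) \mapsto (\sigma_1, \sigma_2) = (\mu_1, \mu_0)$, which matches Definition \ref{Defi: widetilde{gamma} and gamma for undirected reduced graphs}. The equality $\text{Aut}^{\text{Ch}}(\Gamma) = \text{Aut}^{\pi}(\Gamma)$ is then immediate since each is the image of the corresponding first coordinate projection, and the coincidence of the two $\gamma$'s drops out of a diagram chase through the two commutative squares defining $\gamma$. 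The main (mild) obstacle is the surjectivity step, which depends on the symmetry of the undirected edge relation to guarantee that the TF condition is preserved by swapping components.
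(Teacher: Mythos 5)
Your proposal is correct and follows essentially the same route as the paper: both identify consecutive pairs $(\sigma_j,\sigma_{j+1})$ of a chain automorphism with two-fold automorphisms, use the symmetry of the undirected edge relation to swap components (giving $2$-periodicity and surjectivity), and use reducedness (via the triviality of the kernel of the coordinate projections) for injectivity. Your write-up is somewhat more explicit about the surjectivity step, which the paper leaves implicit, but there is no substantive difference in approach.
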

\begin{proof}
    Let $\Gamma = (V,E)$. First of all note that if ${\{ \sigma_i \}}_{i \in \mathbb{Z}}$ gives an automorphism of $\text{Ch}(\Gamma)$, then $(\sigma_{j},\sigma_{j+1})$ gives an automorphism of a subgraph of $\text{Ch}(\Gamma)$ induced by $V \times \{j,j+1\}$ and moreover since sets $V \times \{i\}$ are always fixed setwise, it also gives an automorphism of the graph which forgets orientation of edges, which is isomorphic to $\Gamma \times K_2$. This proves that indeed $(\sigma_j,\sigma_{j+1}) \in \text{Aut}^{\text{TF}}(\Gamma)$. It is easily visible now, that since $(\sigma_j,\sigma_{j+1})$, $(\sigma_{j+1},\sigma_{j+2})$ and hence $(\sigma_{j+2},\sigma_{j+1})$ are two-fold automorphisms of a reduced graph $\Gamma$, we can conclude $\sigma_j = \sigma_{j+2}$ for any $j \in \mathbb{Z}$. This proves that $(\pi_0,\pi_1)$ indeed is an isomorphism and the rest of the proposition follows by definitions.
\end{proof}

    We denote that two natural numbers $x$ and $y$ are coprime by  writing $x \perp y$. Now we will establish basic facts about $\gamma$ when $\Gamma = \text{DiCay}(\mathbb{Z}_m,S_1\ldots S_k)$. For that we will need the following definitions. For any $l \perp m$ let $\psi_l: \mathbb{Z}_m \rightarrow \mathbb{Z}_m$ be a function defined by the formula $x \mapsto lx$. Also take any $\psi \in \text{Sym}(X)$. By $\iota(\psi): \text{Sym}(X) \rightarrow \text{Sym}(X)$ we understand a function given by the formula $\varphi \mapsto \psi \circ \varphi \circ \psi^{-1}$. Also for any positive integers $n,l$ and any set $S \subseteq \mathbb{Z}_n$ by $l S$ we understand the set $\{l s \mid s\in S\}$. To make notation easier, for any colored directed circulant $\Gamma = \text{DiCay}(\mathbb{Z}_m,S_1\ldots S_k) $, we denote digraph $\text{DiCay}(\mathbb{Z}_m,lS_1\ldots lS_k) $ by $\Gamma^{(l)}$.

    Next proposition and its proof is just \cite[Lemma 2.2]{MorrisOddAbelianGroups} adapted to the language of graph chains.

\begin{prop} \label{Prop: { sigma_{li} } is in Aut_Ch(Gamma^{(l)}) }
    Let ${\{\sigma_i\}}_{i \in \mathbb{Z}} \in \text{Aut}^{\text{Ch}}( \text{DiCay}(\mathbb{Z}_m,S_1\ldots S_k) )$ and let $l \perp m$. Then ${\{\sigma_{l i}\}}_{i \in \mathbb{Z}} \in \text{Aut}^{\text{Ch}}( \text{DiCay}(\mathbb{Z}_m,l S_1\ldots l S_k) )$.
\end{prop}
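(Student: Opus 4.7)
My plan is to check that $\{\sigma_{li}\}_{i \in \mathbb{Z}}$ satisfies the defining chain automorphism property for $\Gamma^{(l)} = \text{DiCay}(\mathbb{Z}_m, lS_1, \ldots, lS_k)$: namely, that the map $(v, i) \mapsto (\sigma_{li}(v), i)$ is an automorphism of $\text{Ch}(\Gamma^{(l)})$. Concretely, for every color $j$, every $i \in \mathbb{Z}$, and every $v, w \in \mathbb{Z}_m$, I must establish the equivalence $w - v \in lS_j \Leftrightarrow \sigma_{l(i+1)}(w) - \sigma_{li}(v) \in lS_j$. The reverse implication will follow by applying the forward direction to $\{\sigma_i^{-1}\}_{i \in \mathbb{Z}}$, which also lies in $\text{Aut}_{\text{Ch}}(\Gamma)$ since the chain automorphism group is closed under inversion, so only the forward direction requires a genuine argument.

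The key technique is a path-lifting argument in $\text{Ch}(\Gamma)$. Given $w - v \in lS_j$, I write $w = v + ls$ for the uniquely determined $s \in S_j$ (uniqueness holds because $l \perp m$) and lift the edge $((v,i),(w,i+1))$ of $\text{Ch}(\Gamma^{(l)})$ to the length-$l$ path
\[
(v, li) \to (v + s, li + 1) \to (v + 2s, li + 2) \to \cdots \to (v + ls, l(i+1))
\]
in $\text{Ch}(\Gamma)$, all of whose edges have color $j$ and spatial step $s$. Since $\widetilde{\sigma}$ is an automorphism of $\text{Ch}(\Gamma)$, applying it produces a length-$l$ color-$j$ path from $(\sigma_{li}(v), li)$ to $(\sigma_{l(i+1)}(w), l(i+1))$, so the increments $a_t := \sigma_{li+t+1}(v + (t+1)s) - \sigma_{li+t}(v + ts)$ all lie in $S_j$ for $t = 0, \ldots, l-1$, and telescoping gives $\sigma_{l(i+1)}(w) - \sigma_{li}(v) = a_0 + \cdots + a_{l-1}$.

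The main obstacle is to upgrade the observation \emph{``the displacement is a sum of $l$ elements of $S_j$''} to the stronger conclusion \emph{``the displacement lies in $lS_j$''}, as these differ in general by the whole of the $l$-fold sumset of $S_j$. This is precisely the content of \cite[Lemma~2.2]{MorrisOddAbelianGroups} transposed to the chain graph language. My approach would be induction on the number of prime factors of $l$, using the identity $\Gamma^{(l_1 l_2)} = (\Gamma^{(l_1)})^{(l_2)}$ to reduce to the case of a prime $l$ coprime to $m$. In the inductive step the invertibility of $l$ modulo $m$ together with a comparison of alternative length-$l$ color-$j$ paths in $\text{Ch}(\Gamma)$ between the same spatial endpoints forces the increments $a_t$ to coincide with a single $s' \in S_j$, giving $\sum_t a_t = ls' \in lS_j$; this algebraic coincidence step is where I expect the adaptation of Morris's argument to do the real work.
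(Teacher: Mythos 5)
Your setup is right up to the point you yourself flag as the crux, but the mechanism you propose for that crux is not the one that works, and as stated it is false. After applying $\widetilde{\sigma}$ to the straight path $(v,li) \to (v+s,li+1) \to \cdots \to (v+ls,l(i+1))$ you obtain a color-$j$ path whose increments $a_0,\ldots,a_{l-1}$ lie in $S_j$, and you claim a ``comparison of alternative paths'' will force these increments to coincide with a single $s' \in S_j$. Nothing forces this: a chain automorphism has no reason to map a straight path to a straight path, and in general the image path is crooked. The proposition only asserts that the \emph{total displacement} $\sum_t a_t$ lies in $lS_j$, which is strictly weaker than the image path being straight, and the correct argument establishes exactly this weaker fact by a counting device you have not supplied.

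The paper's proof (an adaptation of Morris's Lemma~2.2, as you guessed) reduces to prime $l=p$ as you do, but then counts \emph{all} proper color-$j$ paths of length $p$ between two spatial endpoints, not just the image of one distinguished path. The automorphism $\widetilde{\sigma}$ bijects the set $\mathcal{P}^j_{x,y}$ of such paths with $\mathcal{P}^j_{\sigma_{pi}(x),\sigma_{p(i+1)}(y)}$, so the cardinality is an invariant. Identifying paths with their difference sequences in $(S_j)^p$ summing to $y-x$ and letting $\mathbb{Z}_p$ act by cyclic shift, every non-constant sequence lies in a free orbit of size $p$, while a constant sequence exists iff $y-x = ps$ for some $s \in S_j$ (unique since $p \perp m$). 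Hence $\#\mathcal{P}^j_{x,y} \equiv 1 \pmod{p}$ if $y-x \in pS_j$ and $\equiv 0$ otherwise, and transporting this congruence through the bijection yields $\sigma_{p(i+1)}(y)-\sigma_{pi}(x) \in pS_j$. This orbit-counting step modulo $p$ is the missing idea; without it your argument stalls at ``the displacement is a sum of $l$ elements of $S_j$,'' which, as you correctly observe, is not enough.
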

\begin{proof}
    Let $l = p_1 \cdot \ldots \cdot p_r$ where $p_i$-s are all prime numbers. It is enough to prove that the thesis holds for each prime number $p$ and then our thesis easily follows by applying result for primes for each $p_i$-s in order. 

    We will now show that for any prime number $p$, ${\{\sigma_{p i}\}}_{i \in \mathbb{Z}} \in \text{Aut}^{\text{Ch}}( \text{DiCay}(\mathbb{Z}_m,p S_1\ldots p S_k) )$. \textit{A proper path between $x$ and $y$ in color $j$} is a sequence ${\{x_r\}}_{0 \leq r \leq p} \subset \mathbb{Z}_m$ such that $x_0=x$, $x_p=y$ and for all $0 \leq r \leq p-1$ we have $x_{r+1} - x_r \in S_j$. By $\mathcal{P}_{x,y}^j$ we denote the set of all proper paths between $x$ and $y$ in color $j$.
    
    From now on we will use $\Gamma = \text{DiCay}(\mathbb{Z}_m, S_1\ldots S_k)$ and $\Gamma^{(p)} = \text{DiCay}(\mathbb{Z}_m,p S_1\ldots p S_k)$. Take any $i \in \mathbb{Z}$. If ${\{x_r\}}_{0 \leq r \leq p}$ is a proper sequence in color $j$ then the sequence ${\{(x_r,pi+r)\}}_{0 \leq r \leq p}$ of vertices of $\text{Ch}(\Gamma)$ forms a path made of edges in color $j$. Since $\widetilde{\sigma}$ is an automorphism of $\text{Ch}(\Gamma)$, ${\{(\sigma_{pi+r}(x_r),pi+r)\}}_{0 \leq r \leq p}$ also forms such a path, and hence ${\{ \sigma_{i+r}(x_r) \}}_{0 \leq r \leq p}$ is a proper sequence in color $j$. This transformation gives a bijection from $\mathcal{P}_{x,y}^j$ to $\mathcal{P}_{ \sigma_{pi}(x),\sigma_{p(i+1)}(y) }^j$, so for any $i \in \mathbb{Z}$ we obtain equality $$\# \mathcal{P}_{x,y}^j = \# \mathcal{P}_{ \sigma_{pi}(x),\sigma_{p(i+1)}(y) }^j.$$

    Now let us determine $\# \mathcal{P}_{x,y}^j \text{ mod } p$ depending on $x$ and $y$. To achieve this let first define $\mathcal{S}_{d}^j = \{ {\{d_r\}}_{1 \leq r \leq p} \mid d_r \in S_j \text{ for all } 1 \leq r \leq p \text{ and } \sum_{r=1}^{p} d_r = d \}$. Note that each element ${\{x_r\}}_{0 \leq r \leq p} \in \mathcal{P}_{x,y}^j$ can be mapped to the sequence ${\{x_r - x_{r-1}\}}_{1 \leq r \leq p} \in \mathcal{S}_{y-x}^j$ therefore bijecting these sets and showing that $\# \mathcal{P}_{x,y}^j = \# \mathcal{S}_{y-x}^j$. Now consider the action of $\mathbb{Z}_p$ on $\mathcal{S}_{y-x}^j$ defined by $t.{\{d_r\}}_{1 \leq r \leq p} = {\{ d_{r+t \text{ mod }p} \}}_{1 \leq r \leq p}$. Now take any ${\{d_r\}}_{1 \leq r \leq p} \in \mathcal{S}_{y-x}^j$ and assume that there exists nonzero $t \in \mathbb{Z}$ such that $t.{\{d_r\}}_{1 \leq r \leq p} = {\{d_r\}}_{1 \leq r \leq p} $. This shows that for any $1 \leq r \leq p$ we have $d_r = d_{r+t \text{ mod }p}$, hence all $d_r$-s are equal to some $s \in S_j$, as $t \perp p$. Then $y-x = p s \in \mathbb{Z}_m$. Moreover note that if $p  s_1 = p s_2$ then $s_1 = s_2$ as elements of $\mathbb{Z}_m$ as $p \perp m$. Above reasoning shows that
    $$
    \# \mathcal{P}_{x,y}^j \text{ mod } p = \# \mathcal{S}_{y-x}^j \text{ mod }p = 
    \begin{cases} 
      1 & \text{ if } y-x = p s \text{ for some } s \in S_j, \\
      0 & \text{otherwise}.
   \end{cases}
    $$
    Now let consider a graph $\Gamma^{(p)}$. Take its vertices $(x,i)$ and $(y,i+1)$ which are connected by an edge in color $j$. This means that $y-x \in pS_j$, so $\# \mathcal{P}_{x,y}^j \text{ mod } p = \# \mathcal{S}_{y-x}^j \text{ mod }p = 1$. Then since $\# \mathcal{P}_{x,y}^j = \# \mathcal{P}_{ \sigma_{pi}(x),\sigma_{p(i+1)}(y) }^j$, we get $\# \mathcal{P}_{\sigma_{pi}(x),\sigma_{p(i+1)}(y)}^j \text{ mod } p = 1$ so $\sigma_{p(i+1)}(y) - \sigma_{pi}(x) \in p S_j$. This demonstrates that if $\sigma_{(p)} = {\{\sigma_{p i}\}}_{i \in \mathbb{Z}}$ then $\widetilde{\sigma}_{(p)}$ is an automorphism of $\text{Ch}(\Gamma^{(p)})$, so indeed ${\{\sigma_{p i}\}}_{i \in \mathbb{Z}} \in \text{Aut}_{\text{Ch}}(\Gamma^{(p)})$.
\end{proof}

\begin{lem} \label{Lem: Aut^Ch(Gamma) =  Aut^Ch(Gamma)^{(l)} and iota(pi_l) is an automorphism of it}
    For every reduced colored direcrted circulant $\Gamma = \text{DiCay}(\mathbb{Z}_m,S_1\ldots S_k)$ and every $l \perp m$, $\text{Aut}^{\text{Ch}}(\Gamma) = \text{Aut}^{\text{Ch}}( \Gamma^{(l)} )$ and $\iota(\psi_l) \in \text{Aut}(\text{Aut}^{\text{Ch}}(\Gamma))$.
\end{lem}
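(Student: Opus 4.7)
My plan is to deduce both statements from Proposition \ref{Prop: { sigma_{li} } is in Aut_Ch(Gamma^{(l)}) } together with the observation that $\psi_l$ is itself a color-preserving digraph isomorphism $\Gamma \to \Gamma^{(l)}$. A quick preliminary is to check that $\Gamma^{(l)}$ is reduced: if $lS_i + h = lS_i$ for all $i \in [k]$, then multiplying by an inverse $l^{-1}$ of $l$ modulo $m$ (which exists because $l \perp m$) gives $S_i + l^{-1}h = S_i$ for all $i$, forcing $h = 0$ since $\Gamma$ is reduced. In particular $\text{Aut}^{\text{Ch}}(\Gamma^{(l)})$ is well-defined and Observation \ref{OBS: Chain automorphisms are determined by action on V x {0} for reduced graphs} applies to it.

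For the equality $\text{Aut}^{\text{Ch}}(\Gamma) = \text{Aut}^{\text{Ch}}(\Gamma^{(l)})$ I will argue by mutual inclusion. Given $\sigma \in \text{Aut}^{\text{Ch}}(\Gamma)$, I lift it to a chain sequence ${\{\sigma_i\}}_{i \in \mathbb{Z}} \in \text{Aut}_{\text{Ch}}(\Gamma)$ with $\sigma_0 = \sigma$; by Proposition \ref{Prop: { sigma_{li} } is in Aut_Ch(Gamma^{(l)}) } the sequence ${\{\sigma_{li}\}}_{i \in \mathbb{Z}}$ lies in $\text{Aut}_{\text{Ch}}(\Gamma^{(l)})$, and its zeroth term is still $\sigma$, so $\sigma \in \text{Aut}^{\text{Ch}}(\Gamma^{(l)})$. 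The reverse inclusion is symmetric: I apply the same proposition to $\Gamma^{(l)}$ with multiplier $l^{-1}$, noting ${(\Gamma^{(l)})}^{(l^{-1})} = \Gamma$.

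For the second assertion I observe that $\psi_l : \mathbb{Z}_m \to \mathbb{Z}_m$ is a color-preserving isomorphism $\Gamma \to \Gamma^{(l)}$ because $y - x \in S_j$ if and only if $l(y - x) \in lS_j$. Consequently the map $(v,i) \mapsto (\psi_l(v),i)$ is an isomorphism $\text{Ch}(\Gamma) \to \text{Ch}(\Gamma^{(l)})$, and conjugation by $\psi_l$ carries $\text{Aut}_{\text{Ch}}(\Gamma)$ onto $\text{Aut}_{\text{Ch}}(\Gamma^{(l)})$ levelwise. Projecting to the zeroth coordinate, $\iota(\psi_l)$ becomes a group isomorphism $\text{Aut}^{\text{Ch}}(\Gamma) \xrightarrow{\sim} \text{Aut}^{\text{Ch}}(\Gamma^{(l)})$, and by the first part the target equals $\text{Aut}^{\text{Ch}}(\Gamma)$, yielding $\iota(\psi_l) \in \text{Aut}(\text{Aut}^{\text{Ch}}(\Gamma))$. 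The whole argument is quite short once Proposition \ref{Prop: { sigma_{li} } is in Aut_Ch(Gamma^{(l)}) } is in hand; the only real subtlety is the initial reducedness check for $\Gamma^{(l)}$, without which the chain formalism would not even be available on the right-hand side.
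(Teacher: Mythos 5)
Your proof is correct and follows essentially the same route as the paper: the inclusion $\text{Aut}^{\text{Ch}}(\Gamma) \subseteq \text{Aut}^{\text{Ch}}(\Gamma^{(l)})$ comes from Proposition \ref{Prop: { sigma_{li} } is in Aut_Ch(Gamma^{(l)}) }, and the automorphism claim comes from the graph isomorphism $\psi_l : \Gamma \rightarrow \Gamma^{(l)}$ conjugating chain automorphisms levelwise. The only (harmless) divergence is in the reverse inclusion, where you reapply the proposition to $\Gamma^{(l)}$ with the multiplier $l^{-1} \bmod m$, whereas the paper instead compares cardinalities via the chain of isomorphisms $\text{Aut}^{\text{Ch}}(\Gamma) \cong \text{Aut}_{\text{Ch}}(\Gamma) \cong \text{Aut}_{\text{Ch}}(\Gamma^{(l)}) \cong \text{Aut}^{\text{Ch}}(\Gamma^{(l)})$; your explicit check that $\Gamma^{(l)}$ is reduced is a detail the paper leaves implicit but which is indeed needed for $\pi_{0,\Gamma^{(l)}}$ to be an isomorphism.
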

\begin{proof}
    Let us take any $\sigma \in \text{Aut}^{\text{Ch}}(\Gamma)$. Then there exists some ${\{\sigma_i\}}_{i \in \mathbb{Z}} \in \text{Aut}_{\text{Ch}}(\Gamma)$ such that $\sigma = \sigma_0$. Then by Proposition \ref{Prop: { sigma_{li} } is in Aut_Ch(Gamma^{(l)}) } we get that ${\{\sigma_{l i}\}}_{i \in \mathbb{Z}} \in \text{Aut}_{\text{Ch}}( \Gamma^{(l)} )$, so $\sigma = \sigma_0 \in \text{Aut}^{\text{Ch}}(\Gamma^{(l)})$ and hence $\text{Aut}^{\text{Ch}}(\Gamma) \subseteq \text{Aut}^{\text{Ch}}(\Gamma^{(l)})$. Moreover, the function $\psi_{l}: \mathbb{Z}_m \rightarrow \mathbb{Z}_m$ gives an isomorphism between $\Gamma$ and $\text{DiCay}(\mathbb{Z}_m,l S_1\ldots l S_k) )$, hence function $\widetilde{\iota(\psi_{l})}: \text{Aut}^{\text{Ch}}( \text{DiCay}(\Gamma) \rightarrow \text{Aut}_{\text{Ch}}( \text{DiCay}(\Gamma^{(l)} ) $ given by a formula ${\{\sigma_i\}}_{i \in \mathbb{Z}} \mapsto {\{ \iota(\psi_{l}).\sigma_i \}}_{i \in \mathbb{Z}}$ is an isomorphism. Since $\Gamma$ is reduced, we have 
    $$
    \text{Aut}^{\text{Ch}}( \Gamma ) \overset{\pi_{0,\Gamma}^{-1}}{\cong} \text{Aut}_{\text{Ch}}( \Gamma ) \overset{ \widetilde{\iota(\psi_{l})} }{\cong} \text{Aut}_{\text{Ch}}( \Gamma^{(l)} )\overset{ \pi_{0,\Gamma^{(l)} }}{\cong} \text{Aut}^{\text{Ch}}( \Gamma^{(l)} ), 
    $$
    hence $|\text{Aut}^{\text{Ch}}( \Gamma )| = |\text{Aut}^{\text{Ch}}( \Gamma^{(l)} )|$, so indeed $\text{Aut}^{\text{Ch}}( \Gamma ) = \text{Aut}^{\text{Ch}}( \Gamma^{(l)} )$. Also above sequence of isomoprhisms shows that $\pi_{0,\Gamma^{(l)}} \circ \widetilde{\iota(\psi_{l})} \circ \pi_{0,\Gamma}^{-1} = \iota(\psi_{l})$ is an isomorphism between $\text{Aut}^{\text{Ch}}( \Gamma )$ and $\text{Aut}^{\text{Ch}}( \Gamma^{(l)} ) = \text{Aut}^{\text{Ch}}( \Gamma )$, hence an automorphism of $\text{Aut}^{\text{Ch}}( \Gamma )$.
\end{proof}

\begin{thm} \label{THM: how multiplication by l change gamma in contect of Chains}
    For every reduced colored direcrted circulant $\Gamma = \text{DiCay}(\mathbb{Z}_m,S_1\ldots S_k)$ and every $l \perp m$, $\iota(\psi_l) \circ \gamma \circ {\iota(\psi_l)}^{-1} = \gamma^l$.
\end{thm}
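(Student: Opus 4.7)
The plan is to factor the identity through the intermediate automorphism $\gamma_{\Gamma^{(l)}}$, exploiting that by Lemma \ref{Lem: Aut^Ch(Gamma) =  Aut^Ch(Gamma)^{(l)} and iota(pi_l) is an automorphism of it} the two chain-projection groups $\text{Aut}^{\text{Ch}}(\Gamma)$ and $\text{Aut}^{\text{Ch}}(\Gamma^{(l)})$ are literally equal as subgroups of $\text{Sym}(\mathbb{Z}_m)$, so both $\gamma_\Gamma$ and $\gamma_{\Gamma^{(l)}}$ live on the same underlying set. Then the target equality splits into (a) an intertwining statement $\iota(\psi_l)\circ\gamma_\Gamma=\gamma_{\Gamma^{(l)}}\circ\iota(\psi_l)$, which is a purely categorical fact, and (b) the combinatorial identity $\gamma_{\Gamma^{(l)}}=\gamma_\Gamma^{\,l}$, which is where Proposition \ref{Prop: { sigma_{li} } is in Aut_Ch(Gamma^{(l)}) } does the work.

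For (a) I would lift everything to the level of full chains. The map $\widetilde{\iota(\psi_l)}:\text{Aut}_{\text{Ch}}(\Gamma)\to\text{Aut}_{\text{Ch}}(\Gamma^{(l)})$ sends $\{\sigma_i\}_{i\in\mathbb{Z}}$ to $\{\psi_l\,\sigma_i\,\psi_l^{-1}\}_{i\in\mathbb{Z}}$ and is an isomorphism because $\psi_l$ is a graph isomorphism $\Gamma\to\Gamma^{(l)}$. The maps $\widetilde{\gamma}_\Gamma$ and $\widetilde{\gamma}_{\Gamma^{(l)}}$ are both the pure reindexing $\{\tau_i\}\mapsto\{\tau_{i+1}\}$, which commutes with pointwise conjugation, so $\widetilde{\iota(\psi_l)}\circ\widetilde{\gamma}_\Gamma=\widetilde{\gamma}_{\Gamma^{(l)}}\circ\widetilde{\iota(\psi_l)}$. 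Descending via $\pi_0$ through the two commutative squares in Definition \ref{Defi: widetilde{gamma} and gamma for colored directed reduced graphs} yields the desired intertwining on the projection groups (note that $\Gamma^{(l)}$ is reduced whenever $\Gamma$ is, since multiplication by $l$ is a bijection of $\mathbb{Z}_m$, so $\gamma_{\Gamma^{(l)}}$ is well defined).

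For (b), take any $\sigma\in\text{Aut}^{\text{Ch}}(\Gamma)$ and write $\sigma=\sigma_0$ for the unique chain extension $\{\sigma_i\}_{i\in\mathbb{Z}}\in\text{Aut}_{\text{Ch}}(\Gamma)$ guaranteed by Observation \ref{OBS: Chain automorphisms are determined by action on V x {0} for reduced graphs}. By Proposition \ref{Prop: { sigma_{li} } is in Aut_Ch(Gamma^{(l)}) } the subsequence $\{\sigma_{li}\}_{i\in\mathbb{Z}}$ sits in $\text{Aut}_{\text{Ch}}(\Gamma^{(l)})$; since $\Gamma^{(l)}$ is reduced, this is the unique chain extension in $\text{Aut}_{\text{Ch}}(\Gamma^{(l)})$ whose zeroth coordinate is $\sigma_0$. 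Reading the first coordinate gives $\gamma_{\Gamma^{(l)}}(\sigma_0)=\sigma_l$, and iterating $\widetilde{\gamma}_\Gamma$ gives $\gamma_\Gamma^{\,l}(\sigma_0)=\sigma_l$, so the two functions coincide on $\text{Aut}^{\text{Ch}}(\Gamma)=\text{Aut}^{\text{Ch}}(\Gamma^{(l)})$. Combining (a) and (b) produces $\iota(\psi_l)\circ\gamma\circ\iota(\psi_l)^{-1}=\gamma_{\Gamma^{(l)}}=\gamma^{\,l}$. The only genuinely substantive ingredient is (b), which is exactly where Proposition \ref{Prop: { sigma_{li} } is in Aut_Ch(Gamma^{(l)}) } is used; step (a) is formal bookkeeping about shift operators.
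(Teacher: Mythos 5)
Your proof is correct and follows essentially the same route as the paper: the paper's single chain of equalities $\gamma_\Gamma^l(\sigma_0)=\sigma_l=\tau_1=\iota(\psi_l).\bigl(\gamma_\Gamma(\iota(\psi_l)^{-1}.\tau_0)\bigr)$ is exactly your decomposition into the intertwining (a) via $\widetilde{\iota(\psi_l)}$ and the subsequence identification (b) via Proposition \ref{Prop: { sigma_{li} } is in Aut_Ch(Gamma^{(l)}) } together with uniqueness of chain extensions over the reduced graph $\Gamma^{(l)}$. Your explicit remark that $\Gamma^{(l)}$ is reduced is a point the paper uses only implicitly, so making it is a small improvement in presentation rather than a different argument.
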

\begin{proof}
    Let us take any $\tau \in \text{Aut}^{\text{Ch}}(\Gamma) = \text{Aut}^{\text{Ch}}(\Gamma^{(l)})$ and let ${\{\tau_i\}}_{i \in \mathbb{Z}} \in \text{Aut}_{\text{Ch}}(\Gamma^{(l)})$ be such that $\tau = \tau_0$. By Lemma \ref{Lem: Aut^Ch(Gamma) =  Aut^Ch(Gamma)^{(l)} and iota(pi_l) is an automorphism of it} there exists ${\{\sigma_i\}}_{i \in \mathbb{Z}} \in \text{Aut}_{\text{Ch}}(\Gamma)$ such that $\tau = \sigma_0$. Proposition \ref{Prop: { sigma_{li} } is in Aut_Ch(Gamma^{(l)}) } tells us that ${\{\sigma_{li}\}}_{i \in \mathbb{Z}} \in \text{Aut}_{\text{Ch}}(\Gamma^{(l)})$ hence by Observation \ref{OBS: Chain automorphisms are determined by action on V x {0} for reduced graphs} we get ${\{\tau_i\}}_{i \in \mathbb{Z}} = {\{\sigma_{li}\}}_{i \in \mathbb{Z}}$.
    
    On the other hand, since $\psi_l$ is an isomorphism between $\Gamma$ and $\Gamma^{(l)}$, function $\widetilde{\iota(\psi_{l})}: \text{Aut}_{\text{Ch}}(\Gamma) \rightarrow \text{Aut}_{\text{Ch}}(\Gamma^{(l)})$, defined during the proof of Lemma \ref{Lem: Aut^Ch(Gamma) =  Aut^Ch(Gamma)^{(l)} and iota(pi_l) is an automorphism of it}, is an isomorphism. This enables us to say that ${ \{ {\iota(\psi_l)}^{-1}.\tau_i \} }_{i \in \mathbb{Z}} \in \text{Aut}_{\text{Ch}}(\Gamma)$. Combining all acquired information leads us to conclude that for any $\tau \in \text{Aut}^{\text{Ch}}(\Gamma)$ we get
    $$
    \gamma_{\Gamma}^{l}(\tau) = \gamma_{\Gamma}^{l}(\sigma_{0}) = \sigma_{l} = \tau_1 =
    \left( {\iota(\psi_l)} \circ {\iota(\psi_l)}^{-1} \right).\tau_1 =
    {\iota(\psi_l)}.{\left( 
    {\iota(\psi_l)}^{-1}.\tau_1
    \right)}
    $$
    $$
    = {\iota(\psi_l)}.{ \left(
    \gamma_{\Gamma}
    \left(
    {\iota(\psi_l)}^{-1}.\tau_0 
    \right) 
    \right) } 
    =
    \left({\iota(\psi_l)} \circ \gamma_{\Gamma} \circ {\iota(\psi_l)}^{-1} \right)(\tau_0) = \left({\iota(\psi_l)} \circ \gamma_{\Gamma} \circ {\iota(\psi_l)}^{-1} \right)(\tau).
    $$
    These calculations can be visualized by a commuting diagram:
\[\begin{tikzcd}
	{\text{Aut}^{\text{Ch}}(\Gamma)} & {\text{Aut}^{\text{Ch}}(\Gamma^{(l)})} & {\text{Aut}^{\text{Ch}}(\Gamma)} \\
	{\text{Aut}^{\text{Ch}}(\Gamma)} & {\text{Aut}^{\text{Ch}}(\Gamma^{(l)})} & {\text{Aut}^{\text{Ch}}(\Gamma)}
	\arrow["{\iota(\psi_l)}", from=1-1, to=1-2]
	\arrow["{\gamma_{\Gamma}}", from=1-1, to=2-1]
	\arrow["id", from=1-2, to=1-3]
	\arrow["{\gamma_{\Gamma^{(l)}}}", from=1-2, to=2-2]
	\arrow["{\gamma_{\Gamma}^{l}}", from=1-3, to=2-3]
	\arrow["{\iota(\psi_l)}", from=2-1, to=2-2]
	\arrow["id", from=2-2, to=2-3]
\end{tikzcd}\]
    Above calculations show that $\iota(\psi_l) \circ \gamma \circ {\iota(\psi_l)}^{-1} = \gamma^l$ for any colored directed circulant.
\end{proof}

\begin{cor} \label{COR: order of gamma for directed colored circulants}
    If $\text{DiCay}(\mathbb{Z}_m,S_1\ldots S_k)$ is a reduced digraph, then $\gamma^m = id$.
\end{cor}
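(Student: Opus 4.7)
The plan is to extract the corollary from Theorem \ref{THM: how multiplication by l change gamma in contect of Chains} by choosing an $l \perp m$ which induces the trivial conjugation. The key observation is that although the multiplier $l$ must be coprime to $m$ for the theorem to apply, the associated map $\psi_l : \mathbb{Z}_m \to \mathbb{Z}_m$ depends only on $l \bmod m$. Hence picking any $l \perp m$ with $l \equiv 1 \pmod m$ will collapse the conjugation by $\iota(\psi_l)$ to the identity, and the theorem will then read $\gamma = \gamma^l$. Taking $l = m+1$ works: $\gcd(m+1,m)=1$, yet $\psi_{m+1}$ is the identity on $\mathbb{Z}_m$, so $\iota(\psi_{m+1})$ is the identity of $\text{Aut}(\text{Aut}^{\text{Ch}}(\Gamma))$.

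Applying Theorem \ref{THM: how multiplication by l change gamma in contect of Chains} with this value of $l$ yields
\[
\gamma \;=\; \iota(\psi_{m+1}) \circ \gamma \circ \iota(\psi_{m+1})^{-1} \;=\; \gamma^{m+1},
\]
and since $\gamma \in \text{Aut}(\text{Aut}^{\text{Ch}}(\Gamma))$ is invertible, cancellation gives $\gamma^m = id$. There is no substantive obstacle here: all the technical content is already packaged in Theorem \ref{THM: how multiplication by l change gamma in contect of Chains}, and the only thing to notice is the elementary fact that $m+1$ is a witness to $1$ in the coprime residues modulo $m$.
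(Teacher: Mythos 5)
Your proposal is correct and follows exactly the paper's own argument: the paper likewise substitutes $l = m+1$ into Theorem \ref{THM: how multiplication by l change gamma in contect of Chains}, observes that $\psi_{m+1} = \psi_1$ is the identity on $\mathbb{Z}_m$ so the conjugation is trivial, and cancels $\gamma$ from $\gamma = \gamma^{m+1}$. No differences worth noting.
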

\begin{proof}
    It is enough to put $l = m+1$ in Theorem \ref{THM: how multiplication by l change gamma in contect of Chains} to get $\gamma = \iota(\psi_{1}) \circ \gamma \circ {\iota(\psi_{1})}^{-1} = \iota(\psi_{m+1}) \circ \gamma \circ {\iota(\psi_{m+1})}^{-1} = \gamma^{m+1}$. Since automorphisms of the group $\text{Aut}^{\text{Ch}}(\text{DiCay}(\mathbb{Z}_m,S_1\ldots S_k))$ form a group, multiplying by $\gamma^{-1}$ gives us desired conclusion.
\end{proof}

Bellow we demonstrate how above methods reprove a known classification of unstable circulants of odd order (cf. \cite{FERNANDEZ202249} and \cite{MorrisOddAbelianGroups}).

\begin{cor}
    Let $m$ be an odd integer. If $\Gamma = \text{DiCay}(\mathbb{Z}_m,S_1\ldots S_k)$ is not directed, connected and reduced then $\Gamma$ is stable.
\end{cor}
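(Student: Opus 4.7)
The plan is to combine two independent constraints on the order of $\gamma$, obtained from two different ways of viewing $\Gamma$. Since the hypothesis ``not directed'' means each $S_i$ is symmetric under negation, $\Gamma$ can simultaneously be treated as an undirected reduced graph (to which the theory of Section \ref{SECTION: Schur Rings and function alpha} applies) and as a reduced colored directed circulant (to which the chain machinery of Section \ref{SECTION: Chain automorphisms} applies). By Proposition \ref{prop: equivalence of both definitions of gamma}, the $\gamma$ coming from these two perspectives coincide on $\text{Aut}^{\pi}(\Gamma) = \text{Aut}^{\text{Ch}}(\Gamma)$.

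Once this identification is in hand, the two constraints combine to give everything. From the undirected side, Observation \ref{Obs: basic properties of gamma} yields $\gamma^2 = id$. From the circulant side, Corollary \ref{COR: order of gamma for directed colored circulants} yields $\gamma^m = id$. Since $m$ is odd, $\gcd(2,m) = 1$, and therefore $\gamma$ must be the identity automorphism of $\text{Aut}^{\pi}(\Gamma)$.

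Now the second half of Observation \ref{Obs: basic properties of gamma} identifies the fixed set of $\gamma$ with $\text{Aut}(\Gamma)$. Since every element of $\text{Aut}^{\pi}(\Gamma)$ is fixed, $\text{Aut}^{\pi}(\Gamma) = \text{Aut}(\Gamma)$. Equivalently, for every $\sigma \in \text{Aut}^{\pi}(\Gamma)$ we have $\alpha(\sigma) = \sigma^{-1} \circ \gamma(\sigma) = id$, so every two-fold automorphism $(\sigma_1, \sigma_2)$ of $\Gamma$ satisfies $\sigma_2 = \gamma(\sigma_1) = \sigma_1$.

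Finally, because $m$ is odd, $\mathbb{Z}_m$ has no subgroup of index $2$, so $\Gamma$ is automatically non-bipartite. For a connected, non-bipartite, reduced graph, instability is equivalent to the existence of a two-fold automorphism with $\sigma_1 \neq \sigma_2$, which we have just ruled out, and hence $\Gamma$ is stable. The only mildly delicate step is invoking Proposition \ref{prop: equivalence of both definitions of gamma} to guarantee that the undirected and chain definitions of $\gamma$ agree; once that is granted, the odd-order hypothesis makes the whole thing collapse almost mechanically.
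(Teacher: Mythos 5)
Your proof is correct and follows essentially the same route as the paper: identify the two definitions of $\gamma$ via Proposition \ref{prop: equivalence of both definitions of gamma}, combine $\gamma^2 = id$ from Observation \ref{Obs: basic properties of gamma} with $\gamma^m = id$ from Corollary \ref{COR: order of gamma for directed colored circulants}, and use $m \perp 2$ to conclude $\gamma = id$, hence stability. The extra detail you supply on why $\gamma = id$ rules out unexpected two-fold automorphisms is exactly what the paper leaves implicit.
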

\begin{proof}
    Since $\Gamma$ is connected and is a circulant on odd number ov verticies, it is also nonbipartite, hence $\Gamma \times K_2$ is a connected and bipartite graph. Since it also is reduced, it is unstable if and only if $\gamma \neq id$.

    By Corollary \ref{COR: order of gamma for directed colored circulants} $\gamma^m = id$. By Proposition \ref{prop: equivalence of both definitions of gamma} and Observation \ref{Obs: basic properties of gamma} we also get $\gamma^2 = id$. Since $m \perp 2$, combining these statements yields $\gamma = id$, hence $\Gamma$ indeed is stable.
\end{proof}

\section{Replacement Property} \label{SECTION: Replacement Property}

Before we define what replacement property is, we have to first define what is \textit{the kernel of the partition $\mathcal{P}$ in a group $G$}. 

\begin{defi}
    Let $X$ be a set and let $G \leq \text{Sym}(X)$ be a group. If $\mathcal{P}$ is a partition of $X$, by $G_{\mathcal{P}}$ we understand the subgroup $\{ g \in G \mid \forall \mathfrak{p} \in \mathcal{P} \text{ } g[\mathfrak{p}]=\mathfrak{p} \}$. We will refer to this subgroup as \textit{the kernel of the partition $\mathcal{P}$ in a group $G$}.
\end{defi}

When it comes to automorphism group of the graph $\Gamma$, we usually denote the kernel of the partition $\mathcal{P}$ by $\text{Aut}_{\mathcal{P}}(\Gamma)$. Now we will define a radical of a Cayley graph and the quotient of the graph by its radical. This definition will become usefull in the proof of Theorem \ref{Thm: if n is squarefree, all pairs (Gamma,H) satisfy replacement property}.

\begin{defi}
    Let $\Gamma = \text{Cay}(G,S_1,\ldots,S_k)$ be a colored Cayley digraph over abelian group $G$. Then we define \textit{the radical of $\Gamma$} as $\bigcap\limits_{i = 1}^{k} \text{rad}(S_i)$ and denote it $\text{rad}(\Gamma)$. We also define $\bigslant{\Gamma}{\text{rad}(\Gamma)}$ to be the colored Cayley digraph $\text{Cay}\left( \bigslant{G}{\text{rad}(\Gamma)},\bigslant{S_1}{\text{rad}(\Gamma)},\ldots,\bigslant{S_k}{\text{rad}(\Gamma)} \right)$.
\end{defi}

\begin{defi} \label{defi: replacement property of a pair (Gamma,H)}
        Let $\Gamma = \text{Cay}(G,S)$ be a Cayley graph and let $H \leq G$. Now define a partition $\mathcal{H} = \{ Hg \mid g \in G\}$ of the set $G$ of verticies of $\Gamma$. We say that a pair $(\Gamma,H)$ satisfy \textit{replacement property} when 
        $$
        \exists f: \mathcal{H} \rightarrow G \quad \forall \sigma \in \text{Aut}_{\mathcal{H}}(\Gamma) \quad \exists \widetilde{\sigma} \in \text{Aut}_{\mathcal{H}}(\Gamma) \quad \left( \text{ }
        \forall \mathfrak{h} \in \mathcal{H} \quad \widetilde{\sigma} {|}_{\mathfrak{h}} = {f(\mathfrak{h})}_r \circ \sigma {|}_{H} \circ {f(\mathfrak{h})}_r^{-1} 
        \text{ } \right)
        $$
        Function $f$, existence of which is claimed, additionally needs to satisfy $f(H)=e_G$.
\end{defi}

    We now defined all concepts and gathered all tools needed to prove Theorem \ref{Thm: if n is squarefree, all pairs (Gamma,H) satisfy replacement property}. \\

\textit{Proof of Theorem \ref{Thm: if n is squarefree, all pairs (Gamma,H) satisfy replacement property}:} \\
Put $|H| = \ell_1$ and $\ell_2 = \frac{n}{|H|}$. By our assumption $\ell_1 \perp \ell_2$ and by Chinese remainder theorem we have an isomorphism $\mathbb{Z}_{\ell_1} \times \mathbb{Z}_{\ell_2} \cong \mathbb{Z}_n$ which is given by $\varphi: (x,y) \mapsto x \ell_2 + y \ell_1$. From now on we will use this isomorphism frequently. Subgroup $H$ is mapped by it to $\mathbb{Z}_{\ell_1} \times \{0\}$. We will show that if one defines $f: \mathcal{H} \rightarrow \mathbb{Z}_n$ by the formula $\mathbb{Z}_{\ell_1} \times \{y\} \mapsto (0,y)$ then it works. Formally speaking one should conjugate $f$ by isomorphism $\varphi$ to get the function required in Definition \ref{defi: replacement property of a pair (Gamma,H)}, however as mentioned above, from now one we will interchange $\mathbb{Z}_n$ with $\mathbb{Z}_{\ell_1} \times \mathbb{Z}_{\ell_2}$ freely.

    Take arbitrary $\sigma \in \text{Aut}_{\mathcal{H}}(\Gamma)$. Define $\widetilde{\sigma}: \mathbb{Z}_n \rightarrow \mathbb{Z}_n$ by the formula $(x,y) \mapsto 
\sigma.(x,0) + (0,y)$. Function $\widetilde{\sigma}$ defined as above satisfies
        $$
        \forall \mathfrak{h} \in \mathcal{H} \quad \widetilde{\sigma} {|}_{\mathfrak{h}} = {f(\mathfrak{h})}_r \circ \sigma {|}_{H} \circ {f(\mathfrak{h})}_r^{-1}.
        $$
    To end the proof we have to show that $\widetilde{\sigma} \in \text{Aut}_{\mathcal{H}}(\Gamma)$. It is obvious that $\widetilde{\sigma}$ fixes elements of $\mathcal{H}$ setwise, hence we only have to show that $\widetilde{\sigma} \in \text{Aut}(\Gamma)$. Let $T = \varphi^{-1}[S]$. Then $\varphi$ gives an isomorphism $\text{Cay}(\mathbb{Z}_{\ell_1} \times \mathbb{Z}_{\ell_2}, T) \cong \Gamma$. Define $T_i = T \cap (\mathbb{Z}_{\ell_1} \times \{i\})$. We will show that $\widetilde{\sigma}$ is an automorphism of ${\Gamma}_i = \text{Cay}(\mathbb{Z}_{\ell_1} \times \mathbb{Z}_{\ell_2}, T_i \cup T_{(-i)})$ for each $i \in [\ell_2]$.

    Observe that since $\sigma \in \text{Aut}_{\mathcal{H}}(\Gamma)$, it is an automorphism of ${\widetilde{\Gamma}}_i$ for each $i \in \mathbb{Z}_{\ell_2}$. If $i = 0$, then $\Gamma_0$ consists of $\ell_2$ copies of the graph $\text{Cay}(\mathbb{Z}_{\ell_1} \times \{0\},T_0)$. Since $\sigma {|}_{\mathbb{Z}_{\ell_1} \times \{0\}} = \widetilde{\sigma} {|}_{\mathbb{Z}_{\ell_1} \times \{0\}}$, we know that $\widetilde{\sigma} {|}_{\mathbb{Z}_{\ell_1} \times \{0\}}$ gives an automorphism of $\text{Cay}(\mathbb{Z}_{\ell_1} \times \{0\},T_0)$. Since $\widetilde{\sigma}$ acts in the same way on each coset of $\mathbb{Z}_{\ell_1} \times \{0\}$, we deduce that it is an automorphism of $\Gamma_0$.

    Now choose arbitrary nonzero $i \in \mathbb{Z}_{\ell_2}$. Define $\widetilde{T}_i = T_i + (0,-i) \subseteq \mathbb{Z}_{\ell_1} \times \{0\}$ and $\widetilde{\Gamma}_i = \text{DiCay}(\mathbb{Z}_{\ell_1},\widetilde{T}_i)$. We will now define a function from vertices of the infinite digraph $\text{Ch}(\widetilde{\Gamma}_i) = \text{DiCay}(\mathbb{Z}_{\ell_1} \times \mathbb{Z}, \widetilde{T}_i \times \{1\})$ to vertices of $\Gamma_i$. Let $\eta: \mathbb{Z}_{\ell_1} \times \mathbb{Z} \rightarrow \mathbb{Z}_{\ell_1} \times \mathbb{Z}_{\ell_2}$ by the function given by the formula $(x,j) \mapsto (x,j \cdot i)$. Notice that for any $j\in \mathbb{Z}$ and any $x,y \in \mathbb{Z}_{\ell_1}$,  $((x,j),(y,j+1))$ is a directed edge in $\text{Ch}(\widetilde{\Gamma}_i)$ if and only if $(\eta.(x,j),\eta.(y,j+1))$ is an edge in $\Gamma_i$.

    We will create a certain chain automorphism of $\widetilde{\Gamma}_i$ based on $\sigma$. Let us start by putting $\pi_{\mathbb{Z}_{\ell_1}}:\mathbb{Z}_{\ell_1} \times \mathbb{Z}_{\ell_2} \rightarrow \mathbb{Z}_{\ell_1}$ to be a function defined by $(x,y) \mapsto x$. For any $j \in \mathbb{Z}$ let function $\sigma_j: \mathbb{Z}_{\ell_1} \rightarrow \mathbb{Z}_{\ell_1}$ be given by the formula $x \mapsto \pi_{\mathbb{Z}_{\ell_1}}(\sigma.(x,j\cdot i\text{ }(\text{mod } \ell_2) ))$. We claim that ${\{ \sigma_{j} \}}_{j \in \mathbb{Z}}$ belongs to $\text{Aut}_{\text{Ch}}(\widetilde{\Gamma}_i)$, which can be visualized on the bellow diagram.
\[\begin{tikzcd}
	{\text{Ch}(\widetilde{\Gamma}_i)} & {(x,j)} && {\text{Aut}_{\text{Ch}}(\widetilde{\Gamma}_i)} & {{\{ \sigma_{j} \}}_{j \in \mathbb{Z}}} \\
	{\Gamma_i} & {(x,j\cdot i)} && {\text{Aut}(\Gamma_i)} & \sigma
	\arrow["\eta", from=1-1, to=2-1]
	\arrow[maps to, from=1-2, to=2-2]
	\arrow[dashed, from=2-4, to=1-4]
	\arrow[dashed, maps to, from=2-5, to=1-5]
\end{tikzcd}\]

\begin{lem} \label{lem: help in the proof of Theorem 1.1 -- Lemma 1}
    Sequence ${\{ \sigma_{j} \}}_{j \in \mathbb{Z}}$ belongs to the group $\text{Aut}_{\text{Ch}}(\widetilde{\Gamma}_i)$.
\end{lem}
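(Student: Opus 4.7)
The plan is to verify the chain-automorphism property edge by edge, using the map $\eta\colon \mathbb{Z}_{\ell_1} \times \mathbb{Z} \to \mathbb{Z}_{\ell_1} \times \mathbb{Z}_{\ell_2}$ introduced just before the lemma to translate between edges of $\text{Ch}(\widetilde{\Gamma}_i)$ and edges of $\Gamma_i$, and then to invoke the already-established fact that $\sigma$ is an automorphism of $\Gamma_i$. Concretely, a pair $((x,j),(y,j+1))$ is an edge of $\text{Ch}(\widetilde{\Gamma}_i)$ precisely when $y - x \in \widetilde{T}_i$, equivalently when $(y - x, i) \in T_i$, and this is exactly the condition that $(\eta(x,j), \eta(y,j+1)) = ((x, ji),(y,(j+1)i))$ is an edge of $\Gamma_i$, as noted in the paragraph preceding the lemma.

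Next, because $\sigma \in \text{Aut}_{\mathcal{H}}(\Gamma)$ fixes the coset $\mathbb{Z}_{\ell_1} \times \{ji \bmod \ell_2\}$ setwise, the definition $\sigma_j(x) = \pi_{\mathbb{Z}_{\ell_1}}(\sigma.(x, ji \bmod \ell_2))$ gives $\sigma.(x, ji) = (\sigma_j(x), ji)$, and $\sigma_j$ is a permutation of $\mathbb{Z}_{\ell_1}$. Applying $\sigma \in \text{Aut}(\Gamma_i)$ to the edge identified in the first paragraph produces an edge from $(\sigma_j(x), ji)$ to $(\sigma_{j+1}(y), (j+1)i)$ in $\Gamma_i$, so $(\sigma_{j+1}(y) - \sigma_j(x), i) \in T_i \cup T_{-i}$; since the second coordinate is exactly $i$, this element lies in $T_i$, hence $\sigma_{j+1}(y) - \sigma_j(x) \in \widetilde{T}_i$, i.e.\ $((\sigma_j(x), j), (\sigma_{j+1}(y), j+1))$ is an edge of $\text{Ch}(\widetilde{\Gamma}_i)$. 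The reverse implication is the same argument applied to $\sigma^{-1}$, which also belongs to $\text{Aut}_{\mathcal{H}}(\Gamma)$.

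I do not anticipate a serious obstacle: the lemma is essentially a bookkeeping translation between two incarnations of the same connection data via $\eta$, and once the edge correspondence is stated, the chain-automorphism property reduces directly to $\sigma \in \text{Aut}(\Gamma_i)$. The only minor point worth explicit comment is the step from $T_i \cup T_{-i}$ to $T_i$, which is harmless because an element of $T$ whose second coordinate equals $i$ can lie in $T_{-i}$ only when $i \equiv -i \pmod{\ell_2}$, in which case $T_i = T_{-i}$ and the conclusion is the same.
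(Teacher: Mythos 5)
Your proof is correct and follows essentially the same route as the paper's: both reduce the chain-automorphism condition to the fact that $\sigma \in \text{Aut}(\Gamma_i)$ preserves the cosets $\mathbb{Z}_{\ell_1} \times \{j\cdot i\}$ setwise, using $\eta$ to match edges of $\text{Ch}(\widetilde{\Gamma}_i)$ with edges of $\Gamma_i$. The paper packages this as an isomorphism $\eta_j$ between the induced bipartite slices $\text{B}_j\widetilde{\Gamma}_i$ and $\Gamma_{i,j}$, whereas you unfold it into an explicit connection-set computation; your explicit handling of the $T_i$ versus $T_{(-i)}$ ambiguity is a detail the paper leaves implicit in the claim that $\eta_j$ is an isomorphism.
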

\textit{Proof of the lemma.}
Let $j \in \mathbb{Z}$ be arbitrary and let $\widetilde{\text{B}}_j\widetilde{\Gamma}_i$ be the directed subgraph of $\text{Ch}(\widetilde{\Gamma}_i)$ induced by the subset of vertices $\mathbb{Z}_{\ell_1} \times \{j,j+1\}$. To prove the lemma it is sufficient to check that for every $j \in \mathbb{Z}$ function $\tau_j: \mathbb{Z}_{\ell_1} \times \{j,j+1\} \rightarrow \mathbb{Z}_{\ell_1} \times \{j,j+1\}$ given by formulas
$$
(x,j) \mapsto (\sigma_{j}.x,j ) \quad \text{ } \quad (x,j+1) \mapsto (\sigma_{j+1}.x,j+1)
$$
is an automorphism of $\widetilde{\text{B}}_j\widetilde{\Gamma}_i$. 

By $\text{B}_j\widetilde{\Gamma}_i$ we understand a graph with vertex set $\mathbb{Z}_{\ell_1} \times \{j,j+1\}$ and an edge between $(x,j)$ and $(y,j+1)$ if and only if there was a directed edge from $(x,j)$ to $(y,j+1)$ in $\widetilde{\text{B}}_j\widetilde{\Gamma}_i$. Since all directed edges in $\widetilde{\text{B}}_j\widetilde{\Gamma}_i$ were going from $\mathbb{Z}_{\ell_1} \times \{j\}$ to $\mathbb{Z}_{\ell_1} \times \{j+1\}$ and $\tau_j$ fixes both of these setwise, we only have to show that $\tau_j$ is an automorphism of $\text{B}_j\widetilde{\Gamma}_i$.

Define $\Gamma_{i,j}$ to be the subgraph of $\Gamma_i$ induced by subset of vertices $V_j$ given by the formula
$$
V_j = \mathbb{Z}_{\ell_1} \times \{j\cdot i,(j+1)\cdot i\}.
$$

To show that indeed $\tau_j$ is an automorphism of $\text{B}_j\widetilde{\Gamma}_i$ let us first notice that function 
$$
 \eta_j = \eta {|}_{\mathbb{Z}_{\ell_1}  \times \{j,j+1\}}: \mathbb{Z}_{\ell_1}  \times \{j,j+1\} \rightarrow V_j
$$
gives an isomorphism of graphs $\text{B}_j\widetilde{\Gamma}_i \overset{\eta_j}{\cong} \Gamma_{i,j}$. 
\[\begin{tikzcd}
	{\text{B}_j\widetilde{\Gamma}_i} & {\text{B}_j\widetilde{\Gamma}_i} \\
	{\Gamma_{i,j}} & {\Gamma_{i,j}}
	\arrow["{\tau_j}", dashed, from=1-1, to=1-2]
	\arrow["{\eta_j}", from=1-1, to=2-1]
	\arrow["{\eta_j}", from=1-2, to=2-2]
	\arrow["{\sigma { | }_{V_j}}", from=2-1, to=2-2]
\end{tikzcd}\]
Since $\sigma$ was an automorphism of $\Gamma_i$, $\sigma {|}_{V_j}$ is an automoprhism of $\Gamma_{i,j}$. Commutativity of the above diagram shows that $\tau_j$ indeed is an automorphism of $\text{B}_j\widetilde{\Gamma}_i$.
\hfill \qedsymbol{}
\newline

Note that two vertices $x$ and $y$ of $\widetilde{\Gamma}_i = \text{DiCay}(\mathbb{Z}_{\ell_1},\widetilde{T}_i)$ have the same in-neighbourhoods if and only if $x-y \in \text{rad}(\widetilde{T}_i)$ and similarly have the same in-neighbourhoods if and only if $x-y \in \text{rad}(\widetilde{T}_i)$. Therefore for any $j \in \mathbb{Z}$, $\sigma_{j}$ permutes cosets of $\text{rad}(\widetilde{\Gamma}_i)= \text{rad}(\widetilde{T}_i)$. Let $\mathcal{R}$ be the partition of $\mathbb{Z}_{\ell_1}$ into cosets of $\text{rad}(\widetilde{\Gamma}_i)$. Define $\nu_j = \text{ind}_{\mathcal{R}}(\sigma_{j})$ for any $j \in \mathbb{Z}$. Then ${\{\nu_j\}}_{j \in \mathbb{Z}} \in \text{Aut}_{\text{Ch}}\left( \bigslant{\widetilde{\Gamma}_i}{\text{rad}(\widetilde{\Gamma}_i)} \right)$. Note that by definition digraph $\bigslant{\widetilde{\Gamma}_i}{\text{rad}(\widetilde{\Gamma}_i)}$ is reduced. Since $\sigma_{0} = \sigma_{\ell_2}$, we get $\nu_{0} = \nu_{\ell_2}$ and finally $\gamma^{\ell_2}(\nu_0) = \nu_0$, where $\gamma$ is the automorphism of $\text{Aut}^{\text{Ch}}\left( \bigslant{\widetilde{\Gamma}_i}{\text{rad}(\widetilde{\Gamma}_i)} \right)$ described in Definition \ref{Defi: widetilde{gamma} and gamma for colored directed reduced graphs}. On the other hand, by Corollary \ref{COR: order of gamma for directed colored circulants} $\gamma^{|\mathcal{R}|}(\nu_0) = \nu_0$. Note that $|\mathcal{R}|$ is a divisor of $\ell_1$ which is coprime to $\ell_2$, hence there exist such integers $a,b \in \mathbb{Z}$ that $a |\mathcal{R}| + b \ell_2 = 1$. This leads us to the fact that
$$
\gamma(\nu_0) = \gamma^{a |\mathcal{R}| + b \ell_2}(\nu_0) = \gamma^{a |\mathcal{R}|}(\nu_0) = \nu_0.
$$
Above equality shows that for each $j \in \mathbb{Z}$ we have $\nu_j = \nu_0$. Since $\nu_0 = \text{ind}_{\mathcal{R}}(\sigma_{0})$, we get 
$$
{\{ \sigma_{0} \}}_{j \in \mathbb{Z}} \in \text{Aut}_{\text{Ch}}(\widetilde{\Gamma}_i).
$$
To end the proof of the whole theorem we only need to show the following lemma.

\begin{lem} \label{lem: help in the proof of Theorem 1.1 -- Lemma 2}
    Permutation $\widetilde{\sigma}$ is an automorphism of $\Gamma_i$.
\end{lem}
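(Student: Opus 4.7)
The plan is to read off $\widetilde{\sigma}$ as a single permutation $\sigma_0$ of $\mathbb{Z}_{\ell_1}$ applied uniformly in each slab $\mathbb{Z}_{\ell_1} \times \{y\}$, and then use the chain-automorphism conclusion just derived together with the undirectedness of $\Gamma$ to check that edges of $\Gamma_i$ are preserved.

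First, since $\sigma \in \text{Aut}_{\mathcal{H}}(\Gamma)$ fixes the coset $H = \mathbb{Z}_{\ell_1} \times \{0\}$ setwise, the formula $\sigma_0(x) := \pi_{\mathbb{Z}_{\ell_1}}(\sigma.(x,0))$ defines a permutation of $\mathbb{Z}_{\ell_1}$, and by the definition of $\widetilde{\sigma}$ we have $\widetilde{\sigma}.(x, y) = (\sigma_0(x), y)$. The chain-automorphism work immediately preceding the lemma has already established that the constant sequence ${\{\sigma_{0}\}}_{j \in \mathbb{Z}}$ lies in $\text{Aut}_{\text{Ch}}(\widetilde{\Gamma}_i)$, which unpacks directly to the statement $\sigma_0 \in \text{Aut}(\widetilde{\Gamma}_i)$, i.e., $x' - x \in \widetilde{T}_i \Leftrightarrow \sigma_0(x') - \sigma_0(x) \in \widetilde{T}_i$ for all $x, x' \in \mathbb{Z}_{\ell_1}$.

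Next I would invoke the undirectedness of $\Gamma$ to handle the $T_{-i}$ half of the connection set of $\Gamma_i$. Because $S = -S$, the CRT-transported set satisfies $T = -T$, and so $\widetilde{T}_{-i} = -\widetilde{T}_i$. A general observation (which I would record as a short intermediate claim) is that any automorphism $\tau$ of a directed Cayley graph $\text{DiCay}(G, A)$ is automatically an automorphism of $\text{DiCay}(G, -A)$, since reversing source and target of an edge replaces $A$ by $-A$. Applying this to $\tau = \sigma_0$ and $A = \widetilde{T}_i$ yields $\sigma_0 \in \text{Aut}\left(\text{DiCay}(\mathbb{Z}_{\ell_1}, \widetilde{T}_{-i})\right)$ as well.

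With these two facts in hand the edge check is mechanical. For any edge $((x,y), (x', y'))$ of $\Gamma_i$ the difference $(x' - x, y' - y)$ lies in $T_i \cup T_{-i}$, so either $y' - y = i$ with $x' - x \in \widetilde{T}_i$, or $y' - y = -i$ with $x' - x \in \widetilde{T}_{-i}$. Applying $\widetilde{\sigma}$ preserves the second coordinate and sends $x' - x$ to $\sigma_0(x') - \sigma_0(x)$, which remains in the corresponding $\widetilde{T}_{\pm i}$ by the preceding two paragraphs; since $\widetilde{\sigma}$ is visibly a bijection, it is an automorphism of $\Gamma_i$. The real difficulty of the theorem lives in producing the constant chain automorphism ${\{\sigma_0\}}_{j \in \mathbb{Z}}$ already completed above, so this final lemma is essentially bookkeeping; the one genuinely substantive ingredient that must not be missed is the undirectedness of $\Gamma$, which is what allows $\sigma_0$ to simultaneously respect both $\widetilde{T}_i$ and $\widetilde{T}_{-i}$ — without it one would only control half the edges of $\Gamma_i$.
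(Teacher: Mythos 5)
Your proof is correct, and it reaches the conclusion by a somewhat more direct route than the paper. Both arguments hinge on the same key input, namely that the constant sequence ${\{\sigma_0\}}_{j\in\mathbb{Z}}$ lies in $\text{Aut}_{\text{Ch}}(\widetilde{\Gamma}_i)$, which was established immediately before the lemma. The paper uses it by running the slab construction of Lemma \ref{lem: help in the proof of Theorem 1.1 -- Lemma 1} backwards: it transfers $\widetilde{\tau}_j$ through the isomorphisms $\eta_j$ to get $\widetilde{\sigma}\,{|}_{V_j} \in \text{Aut}(\Gamma_{i,j})$ for each $j$, assembles these into an automorphism of $\Gamma_{i,K_i}$ with $K_i = \mathbb{Z}_{\ell_1}\times i\mathbb{Z}_{\ell_2}$, and then extends to all of $\Gamma_i$ by noting that $\Gamma_{i,K_i}$ is a union of connected components and $\widetilde{\sigma}$ acts uniformly on cosets of $K_i$. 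You instead unpack the constant chain automorphism into the single statement $\sigma_0 \in \text{Aut}\bigl(\text{DiCay}(\mathbb{Z}_{\ell_1},\widetilde{T}_i)\bigr)$, observe that $\widetilde{\sigma} = \sigma_0 \times \text{id}$, and verify edges of $\Gamma_i$ globally, using $S=-S$ (hence $\widetilde{T}_{(-i)} = -\widetilde{T}_i$ and the fact that an automorphism of $\text{DiCay}(G,A)$ is one of $\text{DiCay}(G,-A)$) to cover the $T_{(-i)}$ half of the connection set. This buys you a shorter argument that avoids the $\eta_j$ and connected-component bookkeeping; the paper's version keeps the two halves of the replacement-property proof visibly symmetric (one lemma encodes $\sigma$ into the chain, the other decodes), at the cost of leaving the role of $S=-S$ implicit in the identification $\text{B}_j\widetilde{\Gamma}_i \cong \Gamma_{i,j}$, a point you rightly make explicit. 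Both are sound.
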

\textit{Proof of the lemma.} We will mainly reverse the proces from the proof of Lemma \ref{lem: help in the proof of Theorem 1.1 -- Lemma 1}. Take arbitrary $j \in \mathbb{Z}$ and let $\widetilde{\tau}_j: \mathbb{Z}_{\ell_1} \times \{j,j+1\} \rightarrow \mathbb{Z}_{\ell_1} \times \{j,j+1\}$ be given by formulas
$$
(x,j) \mapsto (\sigma_{0}.x,j ) \quad \text{ } \quad (x,j+1) \mapsto (\sigma_{0}.x,j+1).
$$
Define graphs $\text{B}_j\widetilde{\Gamma}_i$ and $\Gamma_{i,j}$ as in the proof of Lemma \ref{lem: help in the proof of Theorem 1.1 -- Lemma 1} and do the same for $\eta_j$ and $V_j$. Similarly to the situation before, following diagram commutes.
\[\begin{tikzcd}
	{\text{B}\widetilde{\Gamma}_i} & {\text{B}\widetilde{\Gamma}_i} \\
	{\Gamma_{i,j}} & {\Gamma_{i,j}}
	\arrow["{\widetilde{\tau}_j}", from=1-1, to=1-2]
	\arrow["{\eta_j}", from=1-1, to=2-1]
	\arrow["{\eta_j}", from=1-2, to=2-2]
	\arrow["{\widetilde{\sigma} { | }_{V_j}}", dashed, from=2-1, to=2-2]
\end{tikzcd}\]

From the fact that ${\{ \sigma_{0} \}}_{j \in \mathbb{Z}} \in \text{Aut}_{\text{Ch}}(\widetilde{\Gamma}_i)$ it follows that $\widetilde{\tau}_j \in \text{Aut}(\text{B}\widetilde{\Gamma}_i)$ and we finally get $\widetilde{\sigma} { | }_{V_j} \in \text{Aut}(\Gamma_{i,j})$. Now put $K_i = \mathbb{Z}_{\ell_1} \times i\mathbb{Z}_{\ell_2}$. We will refer to the subgraph of $\Gamma_i$ induced on vertices from the set $K_i$ as $\Gamma_{i,K_i}$. Since $j \in \mathbb{Z}$ was arbitrary, we obtain the information that $\widetilde{\sigma} {|}_{K_i}$ is an automorphism of $\Gamma_{i,K_i}$. Note that $T_i \cup T_{(-i)} \subseteq K_i$, hence $\Gamma_{i,K_i}$ is the union of connected components of $\Gamma_i$. Since $\widetilde{\sigma}$ acts uniformly on cosets of $K_i$, we get $\widetilde{\sigma} \in \text{Aut}(\Gamma_i)$.
\hfill \qedsymbol{}
\newline

Since $i \in \mathbb{Z}_{\ell_2}$ was an arbitrary nonzero element, above argument shows that indeed $\widetilde{\sigma}$ maps all edges of $\Gamma$ onto edges of $\Gamma$, hence $\widetilde{\sigma} \in \text{Aut}_{\mathcal{H}}(\Gamma)$ as wanted.
\hfill \qedsymbol{}





\section{Group theoretical results} \label{SECTION: Group theoretical results}

\subsection{Proof of Theorem \ref{Thm characterization of primitive group actions with regular cyclic subgroup and additional assumptions}} \hfill \\

Before we prove Theorem \ref{Thm characterization of primitive group actions with regular cyclic subgroup and additional assumptions} we need to state some definitions and cite some results.

\begin{defi}
    Let $G$ be a group acting transitively on $X$. If only block systems (cf. Definition \ref{defi: invariant partition block system}) of this action are partition of $X$ into singletons and partition containing one element, then action of $G$ on $X$ is called \textit{primitive}. Group action of $G$ on $X$ is called \textit{regular} if for each pair $x,y \in X$ there exists unique $g \in G$ such that $g.x = y$.
\end{defi}

\begin{defi}
    we say that group action of $G$ on $X$ is isomorphic to the group action of $H$ on $Y$ if there exists an isomorphism $\varphi: G \rightarrow H$ and a bijection $f: X \rightarrow Y$ such that for any $g \in G$ following diagram commutes.
\[\begin{tikzcd}
	X & X \\
	Y & Y
	\arrow["{g.}", from=1-1, to=1-2]
	\arrow["f", from=1-1, to=2-1]
	\arrow["f", from=1-2, to=2-2]
	\arrow["{\varphi(g).}", from=2-1, to=2-2]
\end{tikzcd}\]
\end{defi}


Before we cite next important theorem, let us recall notation for certain groups. For the start, for any given $k$ the dihedral group is defined by $D_{2k} = \langle x,y \mid x^n = y^2 = xyxy =e \rangle$. $\text{Aff}(\mathbb{F}_p)$ is the set of affine functions from $\mathbb{F}_p$ onto itself, that is functions given by formulas $x \mapsto ax + b$ for some $a\neq0, b \in \mathbb{F}_p$. $A_k$ and $S_k$ denote alternating and symmetric groups on $k$ elements respectively. For given $d>1$ and $q$ which is a power of a prime $p$, by $\Gamma\text{L}_{d}(\mathbb{F}_{q})$ we understand the semi-simple product $\text{GL}_{d}(\mathbb{F}_{q}) \rtimes_{\theta} \text{Gal}\left( {\mathbb{F}_q} \text{/} {\mathbb{F}_p} \right)$, where for any element $\phi \in \text{Gal}\left( {\mathbb{F}_q} \text{/} {\mathbb{F}_p} \right)$, $\theta(\phi)$ is an automorphism of $\text{GL}_{d}(\mathbb{F}_{q})$ which sends any matrix $M = {\{m_{i,j}\}}_{i,j \in [d]}$ onto ${\{ \phi(m_{i,j}) \}}_{i,j \in [d]}$. By $\text{PSL}_{d}(\mathbb{F}_{q})$ we denote the quotient of $\text{SL}_{d}(\mathbb{F}_{q})$ by subgroup made from multiplicities of identity matrix. Similarly we define $\text{PGL}_{d}(\mathbb{F}_{q})$ and $\text{P}\Gamma\text{L}_{d}(\mathbb{F}_{q})$ as quotients of $\text{GL}_{d}(\mathbb{F}_{q})$ and $\Gamma\text{L}_{d}(\mathbb{F}_{q})$ respectively. Finally, by $M_{11}$ and $M_{23}$ we denote Mathieu groups with corresponding indexes.

\begin{thm} \label{thm: characterization of prim perm groups (citation)}
(\cite[Corollary 1.2]{FinPrimPermGroups}) Let $k$ be any positive integer and let  $X=\mathbb{Z}_k$. If $G \leq \text{Sym}(X)$ acts primitively on $X$ and ${\left(\mathbb{Z}_k\right)}_{r} \leq G$, then up to an isomorphism of group action one of the following holds:
    \begin{enumerate}[i.]
         \item $\mathbb{F}_p \leq G \leq \text{Aff}(\mathbb{F}_p)$, $X = \mathbb{F}_p$ where $k=p$ is a prime;
         \item $A_{k} \leq G \leq S_{k}$ with $k \geq 4$ and standard action of permutation groups on elements;
         \item $\text{PGL}_{d}(\mathbb{F}_{q}) \leq G \leq \text{P}\Gamma\text{L}_{d}(\mathbb{F}_{q})$, $X = \mathbb{P}^{d-1}\mathbb{F}_{q}$ where $d>1$ is an arbitrary integer, $q$ is a power of a prime and action of $G$ on $X$ is the action of projective group on lines;
         \item $(G,k) \in \{ (\text{PSL}_2(\mathbb{F}_{11}), 11), (M_{11}, 11), (M_{23}, 23) \}$.
    \end{enumerate}
\end{thm}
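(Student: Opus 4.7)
The plan is to invoke the cited classification (Theorem \ref{thm: characterization of prim perm groups (citation)}), which reduces $G \le \mathrm{Sym}(X)$ to one of four families, and then to use the extra hypotheses---that $k$ is odd and that $\iota(\mathfrak{i})$ is an automorphism of $G$ (equivalently, $\mathfrak{i}$ lies in the normalizer of $G$ in $\mathrm{Sym}(X)$)---to trim the list down to the three conclusions stated. Cases (i) and (ii) of the cited theorem correspond directly to conclusions (i) and (ii) here, case (iii) collapses onto conclusion (iii) under both extra hypotheses, and case (iv) is eliminated entirely.

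For case (i), $k=p$ odd is immediate from the parity hypothesis. The map $\mathfrak{i}: x \mapsto -x$ is itself the affine map $x \mapsto (-1)x + 0 \in \mathrm{Aff}(\mathbb{F}_p)$, and a direct computation (conjugation sends the affine map $(a,b)$ to $(a,-b)$) shows $\iota(\mathfrak{i})$ preserves every intermediate subgroup of $\mathrm{Aff}(\mathbb{F}_p)$ containing the translations, so this hypothesis is automatic. In case (ii), $k$ odd together with $k \ge 4$ forces $k \ge 5$; since $\mathfrak{i} \in S_k$ and both $A_k, S_k$ are normal in $S_k$, the normalization hypothesis is again automatic.

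Case (iii) is the principal work. I would identify the regular cyclic subgroup $(\mathbb{Z}_k)_r \le G$ with a Singer cycle $C$ inside $\mathrm{PGL}_d(\mathbb{F}_q)$, so that $k = (q^d-1)/(q-1)$. An elementary parity check shows $k$ is odd precisely when either $q$ is even, or $q$ is odd and $d$ is odd. The key structural fact I would use is that the normalizer of $C$ inside $\mathrm{P\Gamma L}_d(\mathbb{F}_q)$ acts on $C$ via $\langle p \rangle \le (\mathbb{Z}/n)^*$ with $n = (q^d - 1)/(q-1)$, coming from Frobenius on $\mathbb{F}_{q^d}/\mathbb{F}_p$. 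Since $\mathfrak{i}$ must normalize $C$ and act on it by inversion, the hypothesis $\iota(\mathfrak{i}) \in \mathrm{Aut}(G)$ forces $-1 \in \langle p \rangle \pmod{n}$. A short number-theoretic argument (using $q^d \equiv 1 \pmod{n}$ and tracking the order of $p$) shows this is impossible when $d \ge 3$, or when $d=2$ with $q$ odd. The only surviving subcase is $d = 2$ and $q = 2^\ell$ even; here $n = q+1$ and indeed $q = p^\ell \equiv -1 \pmod{n}$. Excluding $\ell = 1$ (for which $k=3$ is prime and subsumed by case (i)) leaves exactly conclusion (iii). The delicate point I would need to address is the possibility that $\mathfrak{i}$ normalizes $G$ without lying in $\mathrm{P\Gamma L}_d$, inducing an outer (e.g.\ graph) automorphism; this can be ruled out by invoking that $C_{\mathrm{Sym}(X)}(G)$ is trivial (as $G$ is primitive with a regular cyclic subgroup) and using the known $\mathrm{Out}(\mathrm{PGL}_d(\mathbb{F}_q))$.

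For case (iv), I would handle each of the three exceptional pairs directly from known subgroup data. In each instance, the normalizer of the unique Sylow-$k$ subgroup inside $G$ has odd index over $k$ (of order $55 = 5\cdot 11$ in $\mathrm{PSL}_2(\mathbb{F}_{11})$ and $M_{11}$, and $253 = 11 \cdot 23$ in $M_{23}$), so the automorphism of $C$ induced by $N_G(C)$ has odd order and in particular does not contain inversion. Coupled with the fact that the outer automorphism of $\mathrm{PSL}_2(\mathbb{F}_{11})$ swaps the two classes of index-$11$ subgroups (so does not preserve the chosen action) and that $\mathrm{Out}(M_{11}) = \mathrm{Out}(M_{23}) = 1$, we conclude $N_{\mathrm{Sym}(X)}(G) = G$ in each case and hence $\mathfrak{i} \notin N_{\mathrm{Sym}(X)}(G)$, eliminating (iv). The main obstacle in the whole argument will be case (iii) for $d \ge 3$: getting a clean proof requires combining the Singer-normalizer computation with an argument (via trivial centralizer and control of outer automorphisms) that $\mathfrak{i}$ cannot normalize $G$ while lying outside $\mathrm{P\Gamma L}_d(\mathbb{F}_q)$.
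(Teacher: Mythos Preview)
You have misidentified the target. The statement you were asked to prove is the \emph{cited} classification theorem (Corollary~1.2 of \cite{FinPrimPermGroups}): every primitive permutation group containing a regular cyclic subgroup falls into one of the four listed families. The paper does not prove this result at all; it is quoted as a known deep theorem (its proof ultimately rests on CFSG). Your proposal, by contrast, \emph{assumes} this classification in its very first sentence and proceeds to prove something else entirely, namely Theorem~\ref{Thm characterization of primitive group actions with regular cyclic subgroup and additional assumptions} of the paper, which trims the cited list under the additional hypotheses that $k$ is odd and $\iota(\mathfrak{i})\in\mathrm{Aut}(G)$. So as a proof of the stated theorem your proposal is circular.

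That said, if we interpret your write-up as an attempt at Theorem~\ref{Thm characterization of primitive group actions with regular cyclic subgroup and additional assumptions}, it is worth comparing with the paper's argument. The treatments of cases~(i), (ii), and $d=2$ in~(iii) are essentially the same. For case~(iv) the paper argues via the nonexistence of a dihedral subgroup $D_{2k}\le G$ (checked against maximal-subgroup data) together with $\mathrm{Aut}(M_i)\cong M_i$; your Sylow-normalizer formulation is equivalent in content. The genuine divergence is in case~(iii) with $d\ge 3$: the paper first shows $\langle G,\mathfrak{i}\rangle\le\mathrm{P\Gamma L}_d(\mathbb{F}_q)$ via Lemmas~\ref{lem: S < G < Aut(G) < Aut(S)} and~\ref{lem: FinPrimPermGroups lemma about action of groups with socle PSL(d,q)}, then invokes Dobson's Lemma~\ref{lem: from Dobson paper} that a semiregular element $\rho'$ of the relevant order is not conjugate to its inverse in $\mathrm{P\Gamma L}_d$. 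You instead compute the normalizer of the Singer cycle directly and reduce to the arithmetic condition $-1\in\langle p\rangle\pmod{n}$. Your route is more self-contained but, as you yourself flag, the step ruling out that $\mathfrak{i}$ induces a graph automorphism of $\mathrm{PSL}_d$ needs care; the paper sidesteps this by citing the two lemmas above.
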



\begin{obs} \label{obs: no copy of D_22 in PSL(2,11)}
    Group $\text{PSL}_2(\mathbb{F}_{11})$ has order $660$ and does not contain any isomorphic copy of the group $D_{22}$.
\end{obs}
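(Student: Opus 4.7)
The first claim follows by applying the standard order formula $|\text{PSL}_2(\mathbb{F}_q)| = q(q^2-1)/\gcd(2,q-1)$ with $q=11$, which gives $|\text{PSL}_2(\mathbb{F}_{11})| = 11 \cdot 120 / 2 = 660$.

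For the second claim, I would argue by contradiction. Suppose $H \leq \text{PSL}_2(\mathbb{F}_{11})$ with $H \cong D_{22}$, and let $P \leq H$ be the cyclic subgroup of order $11$. Since $660 = 2^2 \cdot 3 \cdot 5 \cdot 11$, the subgroup $P$ is a Sylow $11$-subgroup of $\text{PSL}_2(\mathbb{F}_{11})$. In $D_{22}$ every reflection inverts the generator of the rotation subgroup, so every involution of $H$ normalizes $P$; in particular, $N_{\text{PSL}_2(\mathbb{F}_{11})}(P)$ contains an element of order $2$.

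The crux of the proof is then to show $|N_{\text{PSL}_2(\mathbb{F}_{11})}(P)|=55$, which, being odd, cannot contain an involution. For this I would invoke the standard fact that a Sylow $11$-subgroup of $\text{PSL}_2(\mathbb{F}_{11})$ is conjugate to the image of the unipotent upper-triangular subgroup $\left\{ \begin{pmatrix} 1 & a \\ 0 & 1 \end{pmatrix} : a \in \mathbb{F}_{11} \right\}$, whose normalizer is the Borel subgroup (the image of the full upper-triangular subgroup) of order $q(q-1)/\gcd(2,q-1) = 11 \cdot 10 /2 = 55$. As a sanity check, this is consistent with the Sylow count $n_{11} = 660/55 = 12 \equiv 1 \pmod{11}$. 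Since $55$ is odd, this normalizer contains no involution, contradicting the conclusion of the previous paragraph. The only nontrivial input is the Borel-subgroup computation; the remainder is pure bookkeeping with Sylow's theorem and the internal structure of $D_{22}$.
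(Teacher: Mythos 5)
Your proof is correct, but it takes a genuinely different route from the paper. The paper simply cites the list of maximal subgroups of $\text{PSL}_2(\mathbb{F}_{11})$ (orders $12$, $55$, $60$) from a database and observes that $22$ divides none of them, so Lagrange rules out a subgroup of order $22$. You instead run a Sylow argument: the rotation subgroup $P$ of a hypothetical $D_{22}$ is a Sylow $11$-subgroup, every reflection normalizes it (since $P$ is the unique index-$2$ cyclic subgroup of $D_{22}$, hence normal), and $N_{\text{PSL}_2(\mathbb{F}_{11})}(P)$ is the Borel subgroup of odd order $55$, which therefore contains no involution. The identification $N(P)=B$ is legitimate (any element normalizing the unipotent subgroup must fix its unique fixed point on $\mathbb{P}^1$), and the consistency check $n_{11}=12\equiv 1 \pmod{11}$ is a nice touch. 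What your approach buys is self-containedness: it needs only the order formula and the Borel-subgroup computation rather than the full maximal-subgroup classification of $\text{PSL}_2(\mathbb{F}_{11})$; what the paper's approach buys is brevity and uniformity with the analogous observations for $M_{11}$ and $M_{23}$, where a Borel-style argument is not available and one must fall back on maximal-subgroup data anyway. (Incidentally, your version avoids the paper's slip of saying the order of the putative $D_{22}$ ``should be divisible by'' the maximal subgroup orders, where divisibility is stated in the wrong direction.)
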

\begin{proof}
    The fact that $|\text{PSL}_2(\mathbb{F}_{11})|= 660$ can be checked in \cite{WebsitePSL211}, just as the fact that any maximal subgroup of $\text{PSL}_2(\mathbb{F}_{11})$ is of order $12$, $55$ or $60$. Since $D_{22}$ has order $22$, if there was a subgroup $E \leq \text{PSL}_2(\mathbb{F}_{11})$ isomorphic to this dihedral group, it would have to be contained in some maximal subgroup, hence its order should be divisible by $12$, $55$ or $60$. Clearly $22$ is not divisible by any of those which ends the proof. The fact that
\end{proof}


\begin{obs} \label{obs: no copy of D_22 in M11}
    Group $M_{11}$ has order $7920$ and does not contain any isomorphic copy of the group $D_{22}$.
\end{obs}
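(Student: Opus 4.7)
The plan is to mimic the strategy used in Observation \ref{obs: no copy of D_22 in PSL(2,11)}: use the classification of maximal subgroups of $M_{11}$ together with Lagrange's theorem to reduce the question to one already handled. First I would cite (from the same source as for $\text{PSL}_2(\mathbb{F}_{11})$, or from an ATLAS-type reference) the order $|M_{11}|=7920=2^4\cdot 3^2\cdot 5\cdot 11$ and the list of orders of the maximal subgroups of $M_{11}$, which are $720$, $660$, $144$, $120$ and $48$, corresponding to $M_{10}$, $\text{PSL}_2(\mathbb{F}_{11})$, $M_9{:}2$, $S_5$ and $2\cdot S_4$ respectively.

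Next, I would argue by contradiction: suppose $E\leq M_{11}$ is isomorphic to $D_{22}$, so $|E|=22$. Since $E$ is proper in $M_{11}$, it is contained in some maximal subgroup $N\leq M_{11}$, and by Lagrange's theorem $22\mid |N|$. A direct inspection of the five orders above shows that $22$ divides only $660$, so necessarily $N\cong \text{PSL}_2(\mathbb{F}_{11})$. But then $E$ would be a copy of $D_{22}$ inside $\text{PSL}_2(\mathbb{F}_{11})$, contradicting Observation \ref{obs: no copy of D_22 in PSL(2,11)}.

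The only genuine obstacle is having a citable list of maximal subgroups of $M_{11}$ with their orders; once that is in place the argument is a one-line divisibility check plus an appeal to the previous observation. I expect no computational difficulty beyond this.
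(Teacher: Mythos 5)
Your proposal is correct and follows essentially the same route as the paper: reduce to the maximal subgroups of $M_{11}$, note that $22$ divides only the order of the maximal subgroup isomorphic to $\text{PSL}_2(\mathbb{F}_{11})$, and then invoke Observation \ref{obs: no copy of D_22 in PSL(2,11)} to rule that case out. No gaps.
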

\begin{proof}
    The fact that $|M_{11}|= 7920$ can be checked in \cite{WebsiteM11}, just as the fact that each maximal subgroup of $M_{11}$ either have one of orders $48$, $120$, $144$, $720$ or is isomorphic to $\text{PSL}_2(\mathbb{F}_{11})$. If there was a subgroup $E \leq M_{11}$ isomorphic to $D_{22}$, it would be contained in a maximal subgroup and hence $22$ would divide its order. $22$ does not divide any of $48$, $120$, $144$, $720$, hence if there is an isomorphic copy of $D_{22}$ in $M_{11}$, so is one in $\text{PSL}_2(\mathbb{F}_{11})$, but by Lemma \ref{obs: no copy of D_22 in PSL(2,11)} the later does not hold.
\end{proof}


\begin{obs} \label{obs: no copy of D_46 in M23}
    Group $M_{23}$ does not contain any isomorphic copy of the group $D_{46}$.
\end{obs}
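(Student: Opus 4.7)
The plan is to mirror the arguments of Observations \ref{obs: no copy of D_22 in PSL(2,11)} and \ref{obs: no copy of D_22 in M11}, exploiting the classification of maximal subgroups of $M_{23}$. The required data is routine ATLAS information, analogous to what was cited via \cite{WebsiteM11} for $M_{11}$, so I would introduce a parallel citation for $M_{23}$.

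First, I would record that $|M_{23}| = 10200960 = 2^7 \cdot 3^2 \cdot 5 \cdot 7 \cdot 11 \cdot 23$, and that (up to conjugacy) the maximal subgroups of $M_{23}$ have orders $443520$, $40320$, $40320$, $20160$, $7920$, $5760$, and $253$. If a subgroup $E \leq M_{23}$ isomorphic to $D_{46}$ existed, it would be contained in some maximal subgroup, and hence $46 = 2 \cdot 23$ would have to divide the order of that maximal subgroup.

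Next, since $23$ is prime, divisibility by $46$ forces divisibility by $23$. Scanning the list of maximal subgroup orders, the only one divisible by $23$ is the Frobenius group of order $253 = 11 \cdot 23$. But $253$ is odd, so $46 \nmid 253$, yielding the desired contradiction.

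There is no serious obstacle here; the proof is a one-line Lagrange argument once the list of maximal subgroups is granted. If one wished to avoid invoking the full list, a slightly more self-contained variant would observe that the cyclic subgroup of order $23$ in any putative $D_{46}$ is a Sylow $23$-subgroup $P$ of $M_{23}$, normal in $D_{46}$, so $D_{46} \leq N_{M_{23}}(P)$; the normalizer $N_{M_{23}}(P) \cong 23{:}11$ has order $253$, and the same parity obstruction $46 \nmid 253$ finishes the argument. Either formulation is entirely parallel to the preceding two observations.
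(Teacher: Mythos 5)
Your proof is correct and follows essentially the same route as the paper: cite the list of maximal subgroup orders of $M_{23}$ and observe that none is divisible by $46$, so no copy of $D_{46}$ can exist. The Sylow-normalizer variant you sketch is a nice self-contained alternative, but the main argument is identical to the paper's.
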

\begin{proof}
    By \cite{WebsiteM23} each maximal subgroup of $M_{23}$ have one of orders $443520$, $40320$, $20160$, $7920$, $5760$ or $253$. If there was a subgroup $E \leq M_{23}$ isomorphic to $D_{46}$, it would be contained in a maximal subgroup and hence $46$ would divide its order, but none of the numbers listed above is divisible by $46$.
\end{proof}


Now we will list a couple of lemmas which will be helpful during the proof of Theorem \ref{Thm characterization of primitive group actions with regular cyclic subgroup and additional assumptions}.

\begin{lem} \label{lem: Aut(Mi) = Mi for i 11 or 23}
(\cite[Chapter XII, Remark 1.15]{FinGroupsIII}) For $i \in \{11,23\}$ $\text{Aut}(M_i) \cong M_i$.
\end{lem}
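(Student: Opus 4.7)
The plan is to reduce the claim $\text{Aut}(M_i) \cong M_i$ to showing $\text{Out}(M_i) = 1$. Indeed, each $M_i$ for $i \in \{11,23\}$ is a finite nonabelian simple group, and in particular centerless, so the inner automorphism map $M_i \to \text{Aut}(M_i)$ given by $g \mapsto \iota(g)$ is an injection whose image $\text{Inn}(M_i)$ is a normal subgroup of $\text{Aut}(M_i)$. Hence $\text{Aut}(M_i) \cong M_i$ precisely when $\text{Out}(M_i) = \text{Aut}(M_i)/\text{Inn}(M_i)$ is trivial.

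To verify $\text{Out}(M_i) = 1$, I would exploit the natural $4$-transitive action of $M_i$ on a set $\Omega_i$ with $|\Omega_i| = i$. Up to conjugacy, $M_i$ has a unique maximal subgroup of index $i$, namely the point stabilizer $H_i$; this uniqueness can be read off from the maximal-subgroup lists cited in \cite{WebsiteM11} and \cite{WebsiteM23} (and indeed from the observations made earlier to prove Observation \ref{obs: no copy of D_22 in M11} and Observation \ref{obs: no copy of D_46 in M23}). Any $\phi \in \text{Aut}(M_i)$ must send $H_i$ to a subgroup of the same index, hence to some $g H_i g^{-1}$; composing $\phi$ with $\iota(g^{-1})$, we may assume $\phi(H_i) = H_i$. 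Then $\phi$ descends to a bijection $\widetilde{\phi}$ of the coset space $M_i/H_i \cong \Omega_i$ satisfying $\widetilde{\phi}(h.x) = \phi(h).\widetilde{\phi}(x)$ for all $h \in M_i$ and $x \in \Omega_i$. In other words, $\widetilde{\phi} \in N_{S_{\Omega_i}}(M_i)$ and $\phi$ is the automorphism of $M_i$ induced by conjugation by $\widetilde{\phi}$.

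The final step is to show $N_{S_{\Omega_i}}(M_i) = M_i$. For $M_{11}$ on $11$ points and $M_{23}$ on $23$ points, this is a classical self-normalizing statement which follows from the identification of $M_i$ with the full automorphism group of the associated Steiner system $S(4,5,11)$ (resp.\ $S(4,7,23)$): any element of $N_{S_{\Omega_i}}(M_i)$ permutes the orbits of $M_i$ on $5$-subsets (resp.\ $7$-subsets) of $\Omega_i$, hence preserves the block structure of the Steiner system, and therefore already lies in $M_i$. Combined with the reduction of the previous paragraph, this yields $\phi \in \text{Inn}(M_i)$, and so $\text{Out}(M_i) = 1$.

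The main obstacle is the self-normalizing step, which in a fully self-contained presentation requires the Steiner-system realization of the Mathieu groups and some care with orbit counts on small subsets of $\Omega_i$. Since our use of this lemma is auxiliary and the result is standard, the cleanest route in the paper is to simply quote \cite[Chapter~XII, Remark~1.15]{FinGroupsIII}, where the full argument is carried out using the character-theoretic and subgroup-lattice data of the Mathieu groups.
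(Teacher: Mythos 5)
The paper gives no proof of this lemma at all: it is stated purely as a citation of \cite[Chapter~XII, Remark~1.15]{FinGroupsIII}, which is also where your proposal ultimately lands, so on the bottom line you and the paper agree. The argument you sketch before retreating to the citation is nevertheless a correct and standard one, and it is worth noting that it could be run almost entirely off data the paper already uses: since $M_{11}$ and $M_{23}$ are nonabelian simple, hence centerless, the claim reduces to $\text{Out}(M_i)=1$; the maximal-subgroup orders invoked in Observations \ref{obs: no copy of D_22 in M11} and \ref{obs: no copy of D_46 in M23} show that any subgroup of index $i$ (order $720$, resp.\ $443520$) must be contained in, hence equal to, a maximal subgroup of that order, and these form a single conjugacy class of point stabilizers; the coset-space argument then reduces everything to the self-normalization $N_{S_i}(M_i)=M_i$. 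That last step is the only place with genuine content, and your Steiner-system argument for it elides two points: one must check that the block orbit is the unique $M_i$-orbit of its size on $5$-subsets (resp.\ $7$-subsets) --- e.g.\ $66$ versus $396$ for $M_{11}$ --- so that an element of the normalizer cannot exchange it with another orbit, and one needs the nontrivial fact that $M_i$ is the \emph{full} automorphism group of the Steiner system, not merely contained in it. Since these inputs are of essentially the same depth as the cited remark, quoting \cite{FinGroupsIII} as the paper does is the sensible choice; your reduction simply makes explicit that the lemma is not independent of the subgroup-lattice facts the paper already relies on.
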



\begin{lem} \label{lem: S < G < Aut(G) < Aut(S)}
    Let $S$ be a non-abelian simple group which is a socle of a group $G$. If moreover group $G$ satisfies $S \leq G \leq \text{Aut}(S)$, then $G \leq \text{Aut}(G) \leq \text{Aut}(S)$.
\end{lem}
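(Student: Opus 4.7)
The plan is to first show $C_G(S) = \{e\}$, deduce from this that $G$ embeds into $\mathrm{Aut}(G)$ via inner automorphisms, and then use that the socle is characteristic in $G$ to obtain an injection $\mathrm{Aut}(G) \hookrightarrow \mathrm{Aut}(S)$ via restriction. The two embeddings will be seen to compose to the given inclusion $G \leq \mathrm{Aut}(S)$.

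First I would argue that $C_G(S)$ is trivial. Since $S$ is non-abelian simple, $Z(S) = \{e\}$, so $S \cap C_G(S) = Z(S) = \{e\}$. The subgroup $C_G(S)$ is normal in $G$; if it were nontrivial, it would contain some minimal normal subgroup of $G$, but every minimal normal subgroup of $G$ is contained in the socle $S$, contradicting $S \cap C_G(S) = \{e\}$. Since $Z(G) \leq C_G(S) = \{e\}$, the map $g \mapsto \mathrm{inn}_g$ embeds $G$ into $\mathrm{Aut}(G)$, giving the first inclusion.

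Next I would exploit that $S$, being the socle, is a characteristic subgroup of $G$. Hence restriction defines a homomorphism $r : \mathrm{Aut}(G) \to \mathrm{Aut}(S)$. To check that $r$ is injective, suppose $\varphi \in \mathrm{Aut}(G)$ restricts to the identity on $S$. For any $g \in G$ and $s \in S$ we have $gsg^{-1} \in S$, and applying $\varphi$ yields $\varphi(g) s \varphi(g)^{-1} = g s g^{-1}$, so $g^{-1}\varphi(g) \in C_G(S) = \{e\}$. Thus $\varphi(g) = g$ for all $g$, i.e.\ $\varphi = \mathrm{id}_G$, and $r$ is injective, giving the second inclusion $\mathrm{Aut}(G) \leq \mathrm{Aut}(S)$.

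Finally I would verify the compatibility: the composition $G \hookrightarrow \mathrm{Aut}(G) \xrightarrow{r} \mathrm{Aut}(S)$ sends $g$ to the restriction of $\mathrm{inn}_g$ to $S$, i.e.\ to conjugation by $g$ on $S \triangleleft G$, which is precisely the element of $\mathrm{Aut}(S)$ corresponding to $g$ under the given inclusion $G \leq \mathrm{Aut}(S)$. This yields the desired chain $G \leq \mathrm{Aut}(G) \leq \mathrm{Aut}(S)$. There is no real obstacle here: the argument is an assembly of standard facts about socles of finite groups, and the only point requiring attention is the trivialization of $C_G(S)$, for which the key input is that minimal normal subgroups of $G$ are absorbed by $\mathrm{socle}(G) = S$.
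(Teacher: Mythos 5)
Your proof is correct and follows essentially the same route as the paper: both arguments rest on the socle being characteristic, so that restriction gives a map $\mathrm{Aut}(G)\to\mathrm{Aut}(S)$, whose injectivity (and the triviality of $Z(G)$) comes down to the triviality of the centralizer of $S$. Your direct computation showing $g^{-1}\varphi(g)\in C_G(S)=\{e\}$ is a streamlined version of the paper's argument, which reaches the same conclusion by explicitly realizing each $\psi\in\mathrm{Aut}(G)$ as conjugation by an element of $\mathrm{Aut}(S)$; your derivation of $C_G(S)=\{e\}$ from minimal normal subgroups is a clean way to supply a fact the paper leaves implicit.
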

\begin{proof}
    Let us define a function $\iota: S \rightarrow \text{Aut}(S)$ such that $\iota(s).x = sxs^{-1}$ for any $s,x \in S$. It is easy to notice that such a function is a monomorfism since $S$ is non-abelian. Define $\widetilde{S} = \text{im}(\iota) \leq \text{Aut}(S)$. We will consider $G$ as the subgroup of $\text{Aut}(S)$ such that $\widetilde{S} \leq G \leq \text{Aut}(S)$. We will show that for any $\psi \in \text{Aut}(G)$ there exists $\varphi \in \text{Aut}(S)$ such that for any $g \in G$ it holds that $\psi(g) = \varphi \circ g \circ \varphi^{-1}$.

    Since $\widetilde{S}$ is a socle of $G$, $\psi[\widetilde{S}] = \widetilde{S}$. This shows that $\psi {|}_{\widetilde{S}} \in \text{Aut}(\widetilde{S})$. Let $\varphi \in \text{Aut}(S)$ be such that the below diagram commutes.
\[\begin{tikzcd}
	S & S \\
	{\widetilde{S}} & {\widetilde{S}}
	\arrow["\varphi", dotted, from=1-1, to=1-2]
	\arrow["\iota", from=1-1, to=2-1]
	\arrow["\iota", from=1-2, to=2-2]
	\arrow["{\psi {|}_{\widetilde{S}}}", from=2-1, to=2-2]
\end{tikzcd}\]
Now let us define $\widetilde{\varphi}: \text{Aut}(S) \rightarrow \text{Aut}(S)$ by the formula $\nu \mapsto \varphi \circ \nu \circ \varphi^{-1}$. It is obvious by definition of $\varphi$ that $\widetilde{\varphi} {|}_{\widetilde{S}} = \psi {|}_{\widetilde{S}}$. We will now demonstrate that indeed $\widetilde{\varphi} {|}_{G} = \psi$. Take some $g\in G$. Let $\psi(g) = \widetilde{g} \circ \widetilde{\varphi}(g)$. Then for any $s \in S$ it holds that 
$$
\iota(\varphi \circ g(s)) = \widetilde{\varphi}(g\iota(s) g^{-1}) = \psi(g\iota(s) g^{-1}) = \psi(g) \circ \psi(\iota(s)) \circ \psi(g)^{-1} = 
$$
$$
= \widetilde{g} \circ \widetilde{\varphi}(g) \circ \widetilde{\varphi}(\iota(s))  \circ \widetilde{\varphi}(g)^{-1} \circ \widetilde{g}^{-1} = \widetilde{g} \circ \widetilde{\varphi}(g\iota(s) g^{-1}) \circ \widetilde{g}^{-1} =
$$
$$
= \widetilde{g} \circ \iota(\varphi \circ g(s)) \circ \widetilde{g}^{-1} = \iota(\widetilde{g}(\varphi \circ g(s))).
$$
Since $s \in S$ was arbitrary, on can put $\widetilde{s} = \varphi \circ g(s)$ and $\widetilde{s}$ also can be choose to be an arbitrary element of $S$. Since $\iota$ is a monomorphism we get that $\widetilde{g}(\widetilde{s}) = \widetilde{s}$ for any $\widetilde{s} \in S$, hence $\widetilde{g} = e_{\text{Aut}(S)}$, hence indeed $\psi(g) = \widetilde{\varphi}(g)$. Since $g \in G$ was arbitrary we get $\widetilde{\varphi} {|}_{G} = \psi$, which shows that homomorphism $\text{res}_{S}: \text{Aut}(G) \rightarrow \text{Aut}(S)$ given by formula $\psi \mapsto \psi {|}_{\widetilde{S}}$ indeed is a monomorphism, hence $\text{Aut}(G)$ can be understood as the subgroup of $\text{Aut}(S)$. 


To end the proof notice that since $G \leq \text{Aut}(S)$, for any nontrivial $g \in G$ function $\iota(g): G \mapsto G$ given by the formula $\nu \mapsto g \circ \nu \circ g^{-1}$ is not identity, hence $\text{Aut}(G)$ naturally contains a copy of $G$.
\end{proof}


\begin{lem} \label{lem: FinPrimPermGroups lemma about action of groups with socle PSL(d,q)}
    (\cite[Lemma 2.3]{FinPrimPermGroups}) Let $G$ be a group such that $\text{PSL}_{d}(\mathbb{F}_{q}) \leq G \leq \text{Aut}(\text{PSL}_{d}(\mathbb{F}_{q}))$ for some $d>1$ and $q$ being a prime power. If $G$ acts on the set $X$ of the size $(q^d -1) \text{/}(q-1)$ and $G$ contains a cyclic subgroup acting transitively on $X$, then $\text{PGL}_{d}(\mathbb{F}_{q}) \leq G \leq \text{P}\Gamma\text{L}_{d}(\mathbb{F}_{q})$.
\end{lem}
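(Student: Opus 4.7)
The plan is to identify the $G$-action on $X$ with the natural action of a projective group on $\mathbb{P}^{d-1}\mathbb{F}_{q}$, and then read off the bounds on $G$ from the structure of $\text{Aut}(\text{PSL}_{d}(\mathbb{F}_{q}))$. Write $S = \text{PSL}_{d}(\mathbb{F}_{q})$ and let $C \leq G$ be the cyclic subgroup acting transitively on $X$. Replacing $C$ by a suitable subgroup if necessary, we may assume $|C| = |X|$ so that $C$ acts regularly on $X$. First I would argue that the $G$-action on $X$ is faithful: its kernel $N$ intersects the simple group $S$ in $1$ or in $S$, and the latter would force $C \cap S$ to act trivially on $X$, which is inconsistent with the transitivity of $C$ given the bounded index $[G : S] \leq |\text{Out}(S)|$. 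Hence $N \leq C_{G}(S) = 1$ since $Z(S) = 1$.

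Next I would identify the $S$-action on $X$ with the standard action on $\mathbb{P}^{d-1}\mathbb{F}_{q}$. The stabilizer $S_{x}$ has index $(q^{d}-1)/(q-1)$ in $S$, and by the previous step the image of $C$ in $G/S$ has index in $C$ dividing $|\text{Out}(S)|$, which forces $C \cap S$ to contain the image of a genuine Singer element. The core input at this point is Kantor's classification of subgroups of $\text{PSL}_{d}(\mathbb{F}_{q})$ containing a Singer cycle; combined with the Aschbacher-style description of maximal subgroups of $S$, this pins down $S_{x}$ as (a conjugate of) the stabilizer of a $1$-dimensional subspace of $\mathbb{F}_{q}^{d}$. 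Consequently the bijection $S/S_{x} \leftrightarrow \mathbb{P}^{d-1}\mathbb{F}_{q}$ intertwines the two $S$-actions, and by faithfulness the whole $G$-action embeds equivariantly into $\text{Sym}(\mathbb{P}^{d-1}\mathbb{F}_{q})$.

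Under this identification $G$ becomes a subgroup of $\text{Aut}(S)$ that preserves the conjugacy class of point stabilizers. For $d = 2$ one has $\text{Aut}(S) = \text{P}\Gamma\text{L}_{2}(\mathbb{F}_{q})$ and the upper bound is immediate; for $d \geq 3$, $\text{Aut}(S)$ is generated by $\text{P}\Gamma\text{L}_{d}(\mathbb{F}_{q})$ together with the graph (duality) automorphism, which swaps stabilizers of $1$-spaces with stabilizers of $(d-1)$-spaces. Since $G$ fixes the class of $1$-space stabilizers setwise, duality cannot contribute to $G$, giving $G \leq \text{P}\Gamma\text{L}_{d}(\mathbb{F}_{q})$. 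For the lower bound $\text{PGL}_{d}(\mathbb{F}_{q}) \leq G$, I would use that the Singer cycle of $\text{PGL}_{d}(\mathbb{F}_{q})$ has order exactly $(q^{d}-1)/(q-1)$, while its intersection with $S$ has order $(q^{d}-1)/\bigl[(q-1)\gcd(d, q-1)\bigr]$; the hypothesis on $C$ then forces $G$ to strictly extend $S$ by the missing factor, and a short generation argument shows $\langle S, C \rangle = \text{PGL}_{d}(\mathbb{F}_{q})$.

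The main obstacle is Kantor's theorem on Singer-containing subgroups of $\text{PSL}_{d}(\mathbb{F}_{q})$. Its proof is the technical heart of the whole argument and relies on a careful case analysis of the maximal subgroups of classical groups, informed by the Classification of Finite Simple Groups. Once that ingredient is available, the remaining steps are essentially structural bookkeeping about $\text{Aut}(\text{PSL}_{d}(\mathbb{F}_{q}))$ and its induced action on $\mathbb{P}^{d-1}\mathbb{F}_{q}$.
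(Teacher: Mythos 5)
The paper does not actually prove this statement: it is imported wholesale as Lemma~2.3 of the cited reference on primitive permutation groups with regular cyclic subgroups, so there is no in-paper argument to compare yours against. Judged on its own terms, your outline has the standard architecture for such a result (faithfulness of the action, identification of the point stabilizer, exclusion of the graph automorphism for the upper bound, a Singer-cycle order count for the lower bound), but two of its load-bearing steps do not work as stated.

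First, you invoke ``Kantor's classification of subgroups of $\text{PSL}_{d}(\mathbb{F}_{q})$ containing a Singer cycle'' to pin down $S_{x}$ as a point stabilizer, but $S_{x}$ contains no Singer cycle, so that theorem says nothing about it. What this step actually needs is that $S$ be transitive on $X$ (you assert this implicitly by speaking of ``the index of $S_{x}$''; it follows from the minimal faithful degree of $\text{PSL}_{d}(\mathbb{F}_q)$ together with faithfulness, but must be said), combined with the classification of subgroups of $\text{PSL}_{d}(\mathbb{F}_q)$ of index exactly $(q^{d}-1)/(q-1)$, including the exceptional cases $(d,q) \in \{(2,5),(2,7),(2,9),(2,11),(4,2)\}$ where $\text{PSL}_{d}(\mathbb{F}_q)$ has proper subgroups of smaller index. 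Second, and more seriously, your lower-bound paragraph tacitly assumes that the regular cyclic subgroup $C$ is a Singer subgroup, i.e.\ that $C \leq \text{PGL}_{d}(\mathbb{F}_{q})$. A priori $C$ may have nontrivial image in $\text{P}\Gamma\text{L}_{d}(\mathbb{F}_{q})/\text{PGL}_{d}(\mathbb{F}_{q}) \cong \text{Gal}\left( \mathbb{F}_{q} / \mathbb{F}_{p} \right)$, in which case your order computation for $C \cap S$ changes; and knowing only that $C \not\leq S$ does not determine which extension of $S$ the group $CS$ is. The case $(d,q)=(2,9)$ makes the gap concrete: $\text{PGL}_{2}(\mathbb{F}_{9})$, $S_{6}$ and $M_{10}$ all lie strictly between $\text{PSL}_{2}(\mathbb{F}_{9})$ and its automorphism group and all act transitively on the $10$ points of $\mathbb{P}^{1}\mathbb{F}_{9}$; your ``missing factor'' count only rules out $G = \text{PSL}_{2}(\mathbb{F}_{9})$, whereas the conclusion requires showing that a regular element of order $10$ can only live in the $\text{PGL}_{2}(\mathbb{F}_{9})$-coset. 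Ruling out such twisted regular cyclic subgroups is precisely where the substance of the cited lemma lies (and where Singer-cycle classification results genuinely enter); the ``short generation argument'' you defer to is essentially the whole theorem.
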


\begin{lem} \label{lem: from Dobson paper}
    (\cite[Lemma 19]{DOBSON200579}) Let $k = (q^d -1) \text{/}(q-1)$ with $d \geq 3$, write $q = p^r$ with $p$ prime, and let $k' = k \text{/} \text{gcd}(r,k)$. Let $\rho'$ be an element of order $k'$ in $\text{PGL}_{d}(\mathbb{F}_{q})$ that acts semi-regularly on $\mathbb{P}^{d-1}\mathbb{F}_q$. Then $\rho'$ is not conjugate to ${(\rho')}^{-1}$ in $\text{P}\Gamma\text{L}_{d}(\mathbb{F}_{q})$.
\end{lem}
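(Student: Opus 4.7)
The plan is to realize $\rho'$ inside a standard Singer cyclic subgroup of $\text{PGL}_d(\mathbb{F}_q)$, pin down the normalizer of that Singer cycle inside $\text{P}\Gamma\text{L}_d(\mathbb{F}_q)$, and ultimately reduce the non-conjugacy of $\rho'$ and $(\rho')^{-1}$ to the arithmetic assertion that $-1$ is not a power of $p$ modulo $k'$.

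First I would identify $\mathbb{P}^{d-1}\mathbb{F}_q$ with $\mathbb{F}_{q^d}^{*}/\mathbb{F}_q^{*}$ by choosing a primitive element $\theta$ of $\mathbb{F}_{q^d}$. Multiplication by $\theta$, viewed as an $\mathbb{F}_q$-linear map on $\mathbb{F}_{q^d}$, descends to a Singer element $\rho\in\text{PGL}_d(\mathbb{F}_q)$ of order $k=(q^d-1)/(q-1)$ that acts regularly on $\mathbb{P}^{d-1}\mathbb{F}_q$. A standard maximal-torus / Sylow-type argument then shows that any element of order $k'$ acting semi-regularly is $\text{P}\Gamma\text{L}_d(\mathbb{F}_q)$-conjugate to an element of $\langle\rho\rangle$; since $\langle\rho\rangle$ has a unique cyclic subgroup of each order, one may assume $\rho'=\rho^{\gcd(r,k)}$. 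The next ingredient is the classical description of the normalizer $N:=N_{\text{P}\Gamma\text{L}_d(\mathbb{F}_q)}(\langle\rho\rangle)=\langle\rho\rangle\rtimes\langle\phi\rangle$, where $\phi$ is induced by the Frobenius $x\mapsto x^p$ on $\mathbb{F}_{q^d}$, has order $rd$, and satisfies $\phi\rho\phi^{-1}=\rho^{p}$ (Huppert, Endliche Gruppen II.7.3). Consequently the automorphisms of $\langle\rho\rangle$ realised by conjugation in $\text{P}\Gamma\text{L}_d(\mathbb{F}_q)$ are exactly $\rho\mapsto\rho^{p^{i}}$.

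I would then prove the key intermediate step $N_{\text{P}\Gamma\text{L}_d(\mathbb{F}_q)}(\langle\rho'\rangle)=N$: for any $\sigma$ normalising $\langle\rho'\rangle$, the conjugate $\sigma\langle\rho\rangle\sigma^{-1}$ is another Singer cycle containing $\langle\rho'\rangle$, and using $d\geq 3$ one shows that such a Singer cycle is uniquely determined by $\langle\rho'\rangle$ (the centraliser of $\rho'$ in $\text{PGL}_d(\mathbb{F}_q)$ is cyclic, and $\langle\rho\rangle$ is its unique maximal-order cyclic subgroup acting semi-regularly). Hence if $\sigma\rho'\sigma^{-1}=(\rho')^{-1}$, then $\sigma\in N$, so $\sigma$ acts on $\rho$ by $\rho\mapsto\rho^{p^j}$ for some $j\in\{0,\ldots,rd-1\}$; restricting to $\rho'=\rho^{\gcd(r,k)}$ forces the congruence $p^{j}\equiv -1\pmod{k'}$.

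Finally, I would rule out this congruence. The plan is to apply Zsygmondy's theorem: for $d\geq 3$ there exists a primitive prime divisor $\ell$ of $q^d-1$, i.e.\ a prime $\ell\mid q^d-1$ with $\ell\nmid q^{j}-1$ for $0<j<d$ (the small exceptional cases can be handled by hand). Such an $\ell$ divides $k$ but not $\gcd(r,k)$, hence divides $k'$, and the multiplicative order of $q=p^{r}$ modulo $\ell$ is exactly $d$. Then one checks that the order of $p$ modulo $\ell$ divides $rd$ but its quotient by $r$ is exactly $d$, and uses this to deduce that $-1\not\in\langle p\rangle\pmod{\ell}$, whence $p^{j}\not\equiv -1\pmod{k'}$.

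The main obstacle I expect is the uniqueness statement in the middle paragraph, namely that the Singer cycle containing $\rho'$ is unique for $d\geq 3$; one must show the centralizer of $\rho'$ in $\text{PGL}_d(\mathbb{F}_q)$ is still cyclic of order $k$, which relies on $k'$ being large enough that $\rho'$ does not admit spurious commuting elements coming from decomposing the underlying $\mathbb{F}_q$-vector space. A secondary subtlety is handling the small parameter values where Zsygmondy's theorem fails (essentially $(d,q)=(3,2)$ and similar), which must be checked by direct computation in the arithmetic step.
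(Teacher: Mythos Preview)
The paper does not prove this lemma at all; it is simply quoted from Dobson's paper \cite[Lemma~19]{DOBSON200579} and used as a black box in the proof of Theorem~\ref{Thm characterization of primitive group actions with regular cyclic subgroup and additional assumptions}. So there is no ``paper's own proof'' to compare against, and your outline is essentially a reconstruction of Dobson's argument rather than a comparison target.

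That said, your strategy is the standard one and the first three steps are correctly identified (including the obstacle you flag about the uniqueness of the Singer cycle containing $\langle\rho'\rangle$). However, your step~4 has a genuine gap. You claim that for a Zsigmondy prime $\ell$ of $p^{rd}-1$ one can deduce $-1\notin\langle p\rangle\pmod{\ell}$; this is false whenever $rd$ is even, since then $p^{rd/2}$ is the unique element of order~$2$ in $(\mathbb{Z}/\ell\mathbb{Z})^{*}$ and hence equals $-1$. For a concrete counterexample take $p=3$, $r=2$, $d=3$: the Zsigmondy prime is $\ell=7$, and $3^{3}=27\equiv -1\pmod 7$. What saves the argument is that you actually need $p^{j}\equiv -1\pmod{k'}$, not merely modulo $\ell$, and the only candidate is $j=rd/2$. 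One must then show directly that $k'\nmid p^{rd/2}+1$, which comes down to checking that the cofactor $k/(p^{rd/2}+1)$ (essentially $(q^{d/2}-1)/(q-1)$ when $d$ is even, or the analogous expression when $r$ is even and $d$ is odd) is too large to divide $\gcd(r,k)\le r$. This size estimate is elementary once written out, but it is not the single-prime Zsigmondy argument you sketched.
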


Let us recall the situation presented in assumptions of Theorem \ref{Thm characterization of primitive group actions with regular cyclic subgroup and additional assumptions}. We consider $G$ to be the group of permutations of the set $X = \mathbb{Z}_k$ with odd number of elements. Moreover it contains a cyclic regular subgroup ${(\mathbb{Z}_k)}_r \leq G$ and its automorphism group contains $\iota(\mathfrak{i})$, where $\mathfrak{i}: \mathbb{Z}_k \rightarrow \mathbb{Z}_k$ is given by the formula $x \mapsto (-x)$. Note that $\langle {(\mathbb{Z}_k)}_r , \mathfrak{i} \rangle$ is the standard permutation presentation of the dihedral group $D_{2k}$. We are now ready for the proof of Theorem \ref{Thm characterization of primitive group actions with regular cyclic subgroup and additional assumptions}. \\

\textit{Proof of Theorem \ref{Thm characterization of primitive group actions with regular cyclic subgroup and additional assumptions}:}\\
Recall Theorem \ref{thm: characterization of prim perm groups (citation)}. Since action of $G$ on $X$ satisfies its assumptions, to prove our theorem we only have to eliminate option (iv) and restrict parameters $d$ and $q$ in option (iii) to $d=2$ and $q=2^{\ell}$ for some $\ell \geq 2$. Let us start by eliminating all possibilities form option (iv).\\

\textit{Case 1.1:} Assume that $G \cong \text{PSL}_2(\mathbb{F}_{11})$ and $k = 11$. Assume that $\mathfrak{i} \in G$. Then since $\langle {(\mathbb{Z}_11)}_r , \mathfrak{i} \rangle \cong D_{22}$, $G$ contains an isomorphic copy of $D_{22}$, which contradicts the conclusion of Observation \ref{obs: no copy of D_22 in PSL(2,11)}. This shows that $\mathfrak{i} \notin G$, so $G \neq \langle G, \mathfrak{i} \rangle$. Notice that $H = \langle G, \mathfrak{i} \rangle$ also acts primitively on the set $X$ and ${(\mathbb{Z}_{11})}_r \leq G \leq H$, hence Theorem \ref{thm: characterization of prim perm groups (citation)} applies, and to show a contradiction we will now eliminate each of the possibles i. - iv.

Before we do that notice that $\iota(\mathfrak{i}) \in \text{Aut}(G)$, hence $|H| = 2|G| = 2 \cdot 660 = 1320$ (cf. Observation \ref{obs: no copy of D_22 in PSL(2,11)}). Condition i. cannot hold because then $|H| \leq |\text{Aff}(\mathbb{F}_{11})| = 110 < 1320 = |H|$. Condition ii. cannot hold because $|H| = 1320 < 19 958 400 = |A_{11}| \leq |H|$. Condition iii. cannot hold because there does not exist such $d>1$ and $q$ being a prime power for which $|\mathbb{P}^{d-1}\mathbb{F}_{q}|=11$. We are now left with condition iv. Since $k=11$ and $H$ contains a subgroup of index $2$ isomorphic to $\text{PSL}_2(\mathbb{F}_{11})$, only other possibility is that $H \cong M_{11}$, however $|H| = 1320 \neq 7920 = |M_{11}|$ (cf. Observation \ref{obs: no copy of D_22 in M11}).

Above shows that indeed $G \cong \text{PSL}_2(\mathbb{F}_{11})$ and $k = 11$ cannot hold. \\

\textit{Case 1.2:} Assume that $G \cong M_{11}$ and $k = 11$. By Lemma \ref{lem: Aut(Mi) = Mi for i 11 or 23} $\text{Aut}(M_{11}) \cong M_{11}$. Since $D_{22} \cong \langle {(\mathbb{Z}_{11})}_r , \mathfrak{i} \rangle \leq \text{Aut}(G)$, it would mean that $M_{11}$ contains some isomorphic copy of $D_{22}$. Observation \ref{obs: no copy of D_22 in M11} shows that later of the above is false, hence $G \cong M_{11}$ and $k = 11$ cannot hold. \\

\textit{Case 1.3:} Assume that $G \cong M_{23}$ and $k = 23$. By Lemma \ref{lem: Aut(Mi) = Mi for i 11 or 23} $\text{Aut}(M_{23}) \cong M_{23}$. Since $D_{46} \cong \langle {(\mathbb{Z}_{23})}_r , \mathfrak{i} \rangle \leq \text{Aut}(G)$, it would mean that $M_{23}$ contains some isomorphic copy of $D_{46}$. Observation \ref{obs: no copy of D_46 in M23} shows that later of the above is false, hence $G \cong M_{23}$ and $k = 23$ cannot hold. \\

We now succeeded in showing that indeed iv. does not hold. Now we will show that in iii. one needs $d=2$ and later that $q$ is indeed a power of $2$. \\

\textit{Case 2.1:} Assume that up to an isomorphism of group actions $\text{PGL}_{d}(\mathbb{F}_{q}) \leq G \leq \text{P}\Gamma\text{L}_{d}(\mathbb{F}_{q})$ and $X = \mathbb{P}^{d-1}\mathbb{F}_{q}$. Moreover $d>2$, $q=p^r$ for some prime $p$ and action of $G$ on $X$ is the action of projective group on lines. 

In above case $S = \text{PSL}_{d}(\mathbb{F}_{q})$ is a socle of $G$ and it is a simple non-abelian group. Moreover $\text{PSL}_{d}(\mathbb{F}_{q}) \leq \text{PGL}_{d}(\mathbb{F}_{q})$ and $\text{P}\Gamma\text{L}_{d}(\mathbb{F}_{q}) \leq \text{Aut}(\text{PSL}_{d}(\mathbb{F}_{q}))$, hence assumptions of Lemma \ref{lem: S < G < Aut(G) < Aut(S)} are fulfilled. Applying this lemma shows that $G \leq \text{Aut}(G) \leq \text{Aut}(\text{PSL}_{d}(\mathbb{F}_{q}))$. Now define $H = \langle G,\mathfrak{i} \rangle$. Since $\mathfrak{i} \in \text{Aut}(G)$, $H \leq \text{Aut}(G)$. 

Notice that $H$ acts on the set $X = \mathbb{P}^{d-1}\mathbb{F}_{q}$ of size $(q^d -1) \text{/}(q-1)$, since $G$ contained a cyclic subgroup acting regularly on $X$, so does $H$ and by above reasoning $\text{PSL}_{d}(\mathbb{F}_{q}) \leq H \leq \text{Aut}(\text{PSL}_{d}(\mathbb{F}_{q}))$. Combining all of the above allows us to apply Lemma \ref{lem: FinPrimPermGroups lemma about action of groups with socle PSL(d,q)} which tells us that $H \leq \text{P}\Gamma\text{L}_{d}(\mathbb{F}_{q})$, hence $\mathfrak{i} \in \text{P}\Gamma\text{L}_{d}(\mathbb{F}_{q})$.

Before the isomorphism of group actions we had $X = \mathbb{Z}_k$ and ${(\mathbb{Z}_k)}_r \leq G$. Let $\rho: \mathbb{Z}_k \rightarrow \mathbb{Z}_k$ be given by the formula $x \mapsto x+1$. Put $\rho' = \rho^{r}$. Since $\rho \in G \leq \text{P}\Gamma\text{L}_{d}(\mathbb{F}_{q})$, $\text{P}\Gamma\text{L}_{d}(\mathbb{F}_{q}) = \text{PGL}_{d}(\mathbb{F}_{q}) \rtimes \text{Gal}\left( {\mathbb{F}_q} \text{/} {\mathbb{F}_p}\right)$ and $\text{Gal}\left( {\mathbb{F}_q} \text{/} {\mathbb{F}_p}\right) \cong \mathbb{Z}_r$, we conclude that $\rho' = \rho^{r} \in \text{PGL}_{d}(\mathbb{F}_{q})$. It is also easy to notice that since $\rho$ was an element of order $k$ which acts regularly on $X$, $\rho'$ is an element of order $k' = k \text{/} \text{gcd}(r,k)$ which acts semi-regularly on $X=\mathbb{P}^{d-1}\mathbb{F}_q$. 

Since assumptions of Lemma \ref{lem: from Dobson paper} are fulfiled, it implies that $\rho'$ is not conjugate to ${(\rho')}^{-1}$ in $\text{P}\Gamma\text{L}_{d}(\mathbb{F}_{q})$. However based on previously proved fact that $\mathfrak{i} \in \text{P}\Gamma\text{L}_{d}(\mathbb{F}_{q})$ and equality $\mathfrak{i} \text{ } \rho' \text{ }\mathfrak{i}^{-1} = (\rho')^{-1}$ we deduce that the contrary is true, which yields a contradiction, hence shows that considered case is impossible. \\

\textit{Case 2.2:} Assume that up to an isomorphism of group actions $\text{PGL}_{2}(\mathbb{F}_{q}) \leq G \leq \text{P}\Gamma\text{L}_{2}(\mathbb{F}_{q})$ and $X = \mathbb{P}^{1}\mathbb{F}_{q}$ for some prime power $q$. Now just compute $k = |X| = |\mathbb{P}^{1}\mathbb{F}_{q}| = q+1$ and notice that we assume $k$ to be odd, hence $q$ has to be even. Since $q$ is a power of a prime, it has to be a power of $2$. \\

To summarize: we eliminated the case iv. completely and reduced the range of possible parameters $d$ and $q$ in case iii. to $d=2$ and $q = 2^{\ell}$ for some $\ell \geq 1$. 

To end the proof we shall show that one can additionally restrict to $\ell \geq 2$ in the case iii. Notice however, that $\text{PGL}_{2}(\mathbb{F}_{2}) = \text{P}\Gamma\text{L}_{2}(\mathbb{F}_{2}) \cong S_3$ and $|\mathbb{P}^{1}\mathbb{F}_{2}|=3$, hence the case $\ell = 1$ is covered by the first family from Theorem \ref{thm: characterization of prim perm groups (citation)} with $k = p = 3$ and $G \cong \text{Aff}(\mathbb{F}_3) \cong S_3$.
\hfill \qedsymbol{}

\subsection{Cohomology of group modules} \hfill \\

In this section we will introduce concepts from cohomology theory of group modules. Objects introduced in this subsection will be the main tool in the proof of Theorem \ref{Thm: action of a nontrivial cocycle from group G acting primitively on X with additional assumptions}.

\begin{defi}
    Let $G$ be a group acting on the abelian group $M$. Then $M$ is called \textit{a group module}. If one wants to address role of the group $G$, they can say that \textit{$M$ is a $G$-module}. Let us also define the submodule of invariants as $M^G = \{m\in M \mid \forall g \in G \text{ } g.m = m \}$.
\end{defi}

In the next chapter we will mainly consider group modules of the following type.
\begin{defi} \label{defi: G-module A[X]}
    Let $G$ be a group acting on the set $X$ and let $\mathbb{A}$ by an abelian group. Then $\mathbb{A}[X]$ is a $G$-module made of a group $\bigoplus_{x \in X} \mathbb{A} \vec{e}_x$. Action of an element $g \in G$ is given on the basis ${\{\vec{e}_x\}}_{x \in X}$ by $\vec{e}_x \mapsto \vec{e}_{g.x}$ and extended in a unique way which forms an isomorphism of the group $\bigoplus_{x \in X} \mathbb{A} \vec{e}_x$.
\end{defi}

\begin{defi}
    Let $G$ be a group and let $M$ be a $G$-module. Then function $\omega: G \rightarrow M$ is called a \textit{co-cycle} if for every $g,h \in G$ it satisfies $\omega(gh) = g.\omega(h) + \omega(g)$. Function $\omega: G \rightarrow M$ is called a \textit{co-boundary} if there exists $m \in M$ such that $\omega(g) = g.m - m$. We will denote the group of co-cycles with point-wise addition as $\text{Cocyc}(G,M)$ and the group of co-boundaries by $\text{Cobund}(G,M)$. \textit{First cohomology group of the pair $(G,M)$} is defined as 
    $$
    H^1(G,M) = \bigslant{\text{Cocyc}(G,M)}{\text{Cobund}(G,M)}.
    $$
    If $\omega$ is a co-cycle, by $[\omega]$ we denote an element $\omega + \text{Cobund}(G,M)$ of $H^1(G,M)$.
\end{defi}

It is worth noting that usually one does not define just the first cohomology group, but the whole sequence of them, which gives a broader view but is unnecessary for our purposes. Usually one does not define $H^1(G,M)$ as above, but rather do it more abstractly and only later show that indeed above definition is the correct one up to an isomorphism. Definitions stated bellow will also vary from the usual ones, but these are most explicit and useful for our purposes.

Before we continue let us stress two trivial, yet important observations. Firstly, each co-boundary is a co-cycle, hence definition of the group $H^1(G,M)$ makes sense. Second one is that if $\omega:G \rightarrow M$ is a cocycle, then $\omega(e_G) = e_M$ and if $G$ acts trivially on $M$, that is if $M = M^G$, co-cycles are just regular homomorphisms from $G$ to $M$.

%
%

%
%
\begin{defi}
    Let $G$ be a finite group, $H \leq G$, $K \unlhd G$ and let $M$ be a $G$-module. We will now define maps called \textit{restriction}, \textit{corestriction} and \textit{inflation} denoted $\text{res}$, $\text{cores}$ and $\text{inf}$ respectively.
    $$
    \text{res}: H^1(G,M) \rightarrow H^1(H,M)
    $$
    is given by the formula $[\omega] \mapsto [\omega {|}_{H}]$.
    $$
    \text{cores}: H^1(H,M) \rightarrow H^1(G,M)
    $$
    is defined in a following way. Let $\chi:G \rightarrow G$ be such that for any $g \in G$, $\chi(g) \in gH$ and if $g_1 H = g_2 H$, then $\chi(g_1) = \chi(g_2)$. Define also $X = \text{im}(\chi)$. Natural way to explain it is to say that function $\chi$ chooses a representatives of the left cosets of $H$ in $G$ and $X$ is this set of representatives. For any cocycle $\omega:H \rightarrow M$ define $\omega_{\text{cores}}$ by the formula $g \mapsto \sum_{x \in X} {\left( \chi(g.x) \right)}.\omega({\chi(g.x)}^{-1} \cdot g.x)$ for any $g \in G$. Corestriction can now be defined by the formula $[\omega] \mapsto [\omega_{\text{cores}}]$.
    $$
    \text{inf}: H^1(G/K,M^{K}) \rightarrow H^1(G,M)
    $$
    is define as follows. Let $\varphi:G \rightarrow G/K$ be the natural quotient map, that is the one given by $g \mapsto gK$. Then inflation can be defined by $[\omega] \mapsto [\omega \circ \varphi]$.
\end{defi}
%
We are now ready to state following known results.

\begin{prop} \label{prop: res cor = multiplication by [G:H]}
    (\cite[Chapter VII, Proposition 6]{LocalFields}) (Restriction-corestriction sequence) Let $G$ be a finite group, $H \leq G$ and let $M$ be a $G$-module. Then following diagram commutes.
\[\begin{tikzcd}
	{H^{1}(G,M)} && {H^{1}(G,M)} \\
	& {H^{1}(H,M)}
	\arrow["{\cdot [G:H]}", from=1-1, to=1-3]
	\arrow["{\text{res}}"', from=1-1, to=2-2]
	\arrow["{\text{cores}}"', from=2-2, to=1-3]
\end{tikzcd}\]
\end{prop}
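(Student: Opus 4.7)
My plan is to establish the identity by explicit computation on a representative cocycle $\omega : G \to M$, using the formulas given in the paper for $\text{res}$ and $\text{cores}$. Specifically, I would fix a transversal $\chi$ with image $X$ and show directly that the cocycle $(\omega|_H)_{\text{cores}} - [G:H]\,\omega$ is a coboundary.

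The first step is to note that, since $\chi(g) \in gH$ by definition, for any $g \in G$ and $x \in X$ the element $h_x := \chi(gx)^{-1} \cdot gx$ automatically lies in $H$, so the formula reads $(\omega|_H)_{\text{cores}}(g) = \sum_{x \in X} \chi(gx).\omega(h_x)$. The key algebraic step is to rewrite each summand using the cocycle identity twice: first as $\chi(gx).\omega(h_x) = \omega(\chi(gx)\cdot h_x) - \omega(\chi(gx)) = \omega(gx) - \omega(\chi(gx))$, and then expanding $\omega(gx) = g.\omega(x) + \omega(g)$ via the cocycle relation applied the other way. Summing these manipulations over $x \in X$ should give
$$
(\omega|_H)_{\text{cores}}(g) \;=\; g.\sum_{x \in X} \omega(x) \;+\; [G:H]\,\omega(g) \;-\; \sum_{x \in X} \omega(\chi(gx)).
$$

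The crucial combinatorial observation is then that left multiplication by $g$ permutes the left cosets $\{xH : x \in X\}$, which forces the assignment $x \mapsto \chi(gx)$ to be a permutation of the transversal $X$. Consequently the two sums over $X$ telescope and what remains is $g.m - m$ where $m := \sum_{x \in X} \omega(x) \in M$. This is exactly the coboundary attached to $m$, so we conclude that $[(\omega|_H)_{\text{cores}}] = [G:H] \cdot [\omega]$ in $H^1(G,M)$, which is the commutativity claim.

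The main technical point that needs care is verifying that $\text{cores}$ is well defined on $H^1$, i.e. independent of the choice of transversal $\chi$ --- without this the diagram itself depends on data not mentioned in its statement. I would handle this by comparing two transversals $\chi, \chi'$ and showing, via an analogous manipulation, that the resulting cocycles differ by the coboundary attached to $\sum_{x \in X}(\chi'(x).\omega(\chi(x)^{-1}\chi'(x)))$ or a similar explicit element. Once this well-definedness is in place, the rest of the argument is purely mechanical application of the cocycle identity, and the proposition reduces to the bookkeeping sketched above.
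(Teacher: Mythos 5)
Your computation is correct, and it is the standard textbook argument for the restriction--corestriction identity. Note that the paper itself offers no proof of this proposition --- it is quoted from Serre (\emph{Local Fields}, Ch.~VII, Prop.~6) --- so your proposal supplies an argument where the paper relies on a citation. The chain of identities checks out against the paper's explicit conventions: with $h_x = \chi(gx)^{-1}gx \in H$ one has $\chi(gx).\omega(h_x) = \omega(gx) - \omega(\chi(gx))$ and $\omega(gx) = g.\omega(x) + \omega(g)$, and since left multiplication by $g$ permutes the left cosets, $x \mapsto \chi(gx)$ permutes $X$, so the two sums over $X$ cancel up to the coboundary of $m = \sum_{x\in X}\omega(x)$; this yields $(\omega|_H)_{\mathrm{cores}} - [G:H]\,\omega = (g \mapsto g.m - m)$ exactly as claimed. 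Your flagged concern about well-definedness is the right one to raise; for completeness you should also record the two equally routine facts that $\omega_{\mathrm{cores}}$ is again a cocycle and that $\mathrm{cores}$ sends coboundaries to coboundaries (if $\omega(h)=h.m-m$ then $\omega_{\mathrm{cores}}(g)=g.n-n$ with $n=\sum_{x\in X}x.m$, by the same permutation trick), since without these the map on $H^1$ is not defined in the first place. With those bookkeeping items included, the argument is complete.
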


\begin{prop} \label{prop: multiplication by |G| zeros H^1(G,M)}
    Let $G$ be a finite group and let $M$ be a $G$-module $M$. If one put $n = |G|$, then the function $\cdot n: H^{1}(G,M) \rightarrow H^{1}(G,M)$ is the zero map.
\end{prop}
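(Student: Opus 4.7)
The plan is to derive this as an immediate consequence of the restriction--corestriction sequence (Proposition \ref{prop: res cor = multiplication by [G:H]}) applied to the trivial subgroup $H = \{e_G\} \leq G$. With this choice we have $[G : H] = |G| = n$, so the commutative triangle from Proposition \ref{prop: res cor = multiplication by [G:H]} factors the multiplication-by-$n$ map on $H^1(G,M)$ through the group $H^1(\{e_G\},M)$.

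The key remaining step is then to verify $H^1(\{e_G\},M) = 0$. This is essentially immediate: as noted in the observation preceding the definitions of $\text{res}$, $\text{cores}$ and $\text{inf}$, every cocycle $\omega$ satisfies $\omega(e_G) = e_M$. Since the trivial group has $e_G$ as its only element, the only cocycle $\{e_G\} \to M$ is the zero map, so already $\text{Cocyc}(\{e_G\}, M) = 0$ and hence $H^1(\{e_G\}, M) = 0$.

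Combining the two observations, in the diagram
\[
\begin{tikzcd}
{H^{1}(G,M)} \ar[rr, "\cdot n"] \ar[dr, "\text{res}"'] && {H^{1}(G,M)} \\
& {H^{1}(\{e_G\},M) = 0} \ar[ur, "\text{cores}"'] &
\end{tikzcd}
\]
the map $\cdot n$ equals $\text{cores} \circ \text{res}$, which factors through the zero group and is therefore the zero map, as claimed. There is no real obstacle here; the whole content is packaged into Proposition \ref{prop: res cor = multiplication by [G:H]}, and the only thing to check separately is the triviality of $H^1$ on the trivial group, which follows from the cocycle identity $\omega(e_G \cdot e_G) = e_G.\omega(e_G) + \omega(e_G)$.
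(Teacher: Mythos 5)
Your proof is correct and follows exactly the paper's own argument: apply the restriction--corestriction triangle with $H = \{e_G\}$ and use that $H^1(\{e_G\},M) = 0$, so that $\cdot n = \text{cores} \circ \text{res}$ factors through the zero group. The only difference is that you also spell out why $H^1$ of the trivial group vanishes, which the paper leaves implicit.
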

\begin{proof}
    Apply Proposition \ref{prop: res cor = multiplication by [G:H]} with $H = \{e_G\}$. Since $H^1( \{e_G\} ,M) = 0$, $\text{res}$ is a zero map, hence $\cdot n = \text{cores} \circ \text{res}$ also is.
\end{proof}

%
%
We call a sequence $A \overset{\varphi}{\rightarrow} B \overset{\psi}{\rightarrow} C$ \textit{exact} if $\text{im}(\varphi) = \text{ker}(\psi)$. We call longer sequences like $A \overset{\varphi}{\rightarrow} B \overset{\psi}{\rightarrow} C \overset{\chi}{\rightarrow} D$ exact when each of sequences $A \overset{\varphi}{\rightarrow} B \overset{\psi}{\rightarrow} C$ and $B \overset{\psi}{\rightarrow} C \overset{\chi}{\rightarrow} D$ are exact. When we have a $G$-module $M$ and $K \unlhd G$. For $\omega \in \text{Cocyc}(K,M)$ and any $g \in G$ by $g.\omega$ we understand the unique function making the following diagram commute.
\[\begin{tikzcd}
	K & M \\
	K & M
	\arrow["\omega", from=1-1, to=1-2]
	\arrow["{\iota(g)}", from=1-1, to=2-1]
	\arrow["{g.}", from=1-2, to=2-2]
	\arrow["{\exists!\text{ } g.\omega}", dotted, from=2-1, to=2-2]
\end{tikzcd}\]
Just to make above definition complete, $\iota(g):G \rightarrow G$ is given by the formula $k \mapsto g k g^{-1}$. In fact it happens that for any $k \in K$ $[k.\omega] = [\omega]$, hence above action of $G$ on $\text{Cocyc}(K,M)$ induces an action of $G / K$ on $H^1(K,M)$.

\begin{prop} \label{prop: inf-res exact sequence}
    (\cite[Chapter VII, Proposition 6]{LocalFields}) Let $G$ be finite and let $K \unlhd G$. Then the following sequence is exact.
\[\begin{tikzcd}
	0 & {H^1(G/K,M^K)} & {H^1(G,M)} & {{H^1(K,M)}^{G/K}}
	\arrow[from=1-1, to=1-2]
	\arrow["{\text{inf}}", from=1-2, to=1-3]
	\arrow["{\text{res}}", from=1-3, to=1-4]
\end{tikzcd}\]
\end{prop}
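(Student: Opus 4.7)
The plan is to establish the four ingredients encoded in the exactness statement: (a) well-definedness of $\text{inf}$; (b) injectivity of $\text{inf}$; (c) that $\text{res}$ factors through $H^1(K,M)^{G/K}$; and (d) $\text{im}(\text{inf}) = \text{ker}(\text{res})$. All four reduce to manipulations of the cocycle identity combined with the normality $K \unlhd G$.

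For (a), given a cocycle $\omega : G/K \to M^K$, its pullback $\tilde\omega = \omega \circ \varphi$ is a $G$-cocycle because $\omega(hK) \in M^K$ lets the $G/K$-action be replaced by the $G$-action; coboundaries on $M^K$ pull back to coboundaries on $M$. For (b), if $\tilde\omega(g) = g.m - m$ for some $m \in M$, evaluating at $k \in K$ gives $0 = \tilde\omega(k) = k.m - m$, so $m \in M^K$ and $\omega$ itself is a coboundary in $H^1(G/K, M^K)$. For (c), a direct manipulation of the cocycle identity applied to $kg = g(g^{-1}kg)$ yields, for any $\omega \in \text{Cocyc}(G,M)$, $k \in K$, and $g \in G$,
$$
(g.(\omega|_K))(k) - \omega(k) = g.\omega(g^{-1}kg) - \omega(k) = \omega(kg) - \omega(g) - \omega(k) = k.\omega(g) - \omega(g),
$$
which is the $K$-coboundary associated to the parameter $\omega(g) \in M$; hence $[\omega|_K]$ lies in $H^1(K,M)^{G/K}$, and $\text{res}$ has the claimed codomain.

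The main content is (d). The inclusion $\text{im}(\text{inf}) \subseteq \text{ker}(\text{res})$ is immediate since $\tilde\omega(k) = \omega(eK) = 0$. Conversely, suppose $[\omega|_K] = 0$, so that $\omega(k) = k.m - m$ for some $m \in M$ and every $k \in K$. Replacing $\omega$ by the cohomologous cocycle $\omega'(g) = \omega(g) - (g.m - m)$, one may assume $\omega$ vanishes on $K$. Then $\omega(gk) = g.\omega(k) + \omega(g) = \omega(g)$ shows $\omega$ is constant on left cosets of $K$, so it factors through $\varphi$. Using $kg = g(g^{-1}kg)$ with $g^{-1}kg \in K$ and $\omega|_K \equiv 0$ gives $\omega(kg) = g.\omega(g^{-1}kg) + \omega(g) = \omega(g)$; comparing with $\omega(kg) = k.\omega(g) + \omega(k) = k.\omega(g)$ forces $\omega(g) \in M^K$. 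The induced $\bar\omega : G/K \to M^K$ is then a cocycle with $\text{inf}([\bar\omega]) = [\omega]$, which closes the argument.

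The only genuine obstacle is step (d): the correct normalization $\omega'(g) = \omega(g) - (g.m - m)$ must be chosen before one can simultaneously verify that $\omega$ descends through $\varphi$ and that its values are $K$-fixed. Once the normalization is in place, both verifications follow at once from the cocycle identity together with $K \unlhd G$, and the remaining steps (a)–(c) are formal.
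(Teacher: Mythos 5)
Your proof is correct and complete: the well-definedness and injectivity of $\text{inf}$, the invariance computation showing $g.(\omega|_K)-\omega|_K$ is the $K$-coboundary attached to $\omega(g)$, and the normalization $\omega'(g)=\omega(g)-(g.m-m)$ in the kernel-equals-image step are all exactly the standard direct verification of the inflation--restriction sequence. The paper itself gives no proof here --- it only cites \cite[Chapter VII, Proposition 6]{LocalFields} --- so there is nothing to compare against; your argument supplies the textbook proof of the cited result.
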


\subsection{Proof of Theorem \ref{Thm: action of a nontrivial cocycle from group G acting primitively on X with additional assumptions}} \hfill \\

Before we prove the main result of this subsection we will prove a couple of lemmas which will make the proof of Theorem \ref{Thm: action of a nontrivial cocycle from group G acting primitively on X with additional assumptions} much clearer.
\begin{lem} \label{lem: H^1({Z}_m,{F}_2[X]) = 0}
    Let $m$ be an odd integer and let $\mathbb{Z}_m$ act on the set $X=\mathbb{Z}_m$ by right addition. Then 
    $$
    H^1(\mathbb{Z}_m,\mathbb{F}_2[X]) = 0.
    $$
\end{lem}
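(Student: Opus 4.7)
The plan is to observe that $H^1(\mathbb{Z}_m,\mathbb{F}_2[X])$ carries an $\mathbb{F}_2$-module structure and then play off the fact that multiplication by $|G|$ annihilates the first cohomology, noting that $m$ is odd.

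First I would verify the module structure. Since $\mathbb{F}_2[X]$ is by construction an $\mathbb{F}_2$-vector space on which the $\mathbb{Z}_m$-action is $\mathbb{F}_2$-linear, the abelian groups $\text{Cocyc}(\mathbb{Z}_m,\mathbb{F}_2[X])$ and $\text{Cobund}(\mathbb{Z}_m,\mathbb{F}_2[X])$ are closed under pointwise scalar multiplication by elements of $\mathbb{F}_2$, so their quotient $H^1(\mathbb{Z}_m,\mathbb{F}_2[X])$ is an $\mathbb{F}_2$-vector space as well. In particular, every element is annihilated by $2$, equivalently fixed by multiplication by any odd integer.

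Next I would invoke Proposition \ref{prop: multiplication by |G| zeros H^1(G,M)}, which tells us that the map given by multiplication by $|G|=m$ on $H^1(\mathbb{Z}_m,\mathbb{F}_2[X])$ is the zero map. On the other hand, since $m$ is odd and the cohomology group is an $\mathbb{F}_2$-vector space, multiplication by $m$ coincides with multiplication by $1$, i.e.\ the identity. Putting the two descriptions of the same endomorphism together forces $\text{id}_{H^1(\mathbb{Z}_m,\mathbb{F}_2[X])}=0$, and therefore $H^1(\mathbb{Z}_m,\mathbb{F}_2[X])=0$.

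There is essentially no obstacle here; the only nontrivial point is the purely formal check that the cohomology of a module over a ring $R$ is itself an $R$-module, which amounts to noting that the $G$-action on $M$ commutes with the $R$-action. The rest is the standard ``coprime orders'' cohomology vanishing argument, specialized to the fact that $\gcd(m,\mathrm{char}\,\mathbb{F}_2)=1$.
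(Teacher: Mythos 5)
Your proof is correct and follows essentially the same route as the paper: both observe that multiplication by $2$ kills $H^1(\mathbb{Z}_m,\mathbb{F}_2[X])$ because the coefficients form an $\mathbb{F}_2$-vector space, that multiplication by $m=|G|$ is zero by the restriction--corestriction argument, and that coprimality of $2$ and $m$ then forces the identity map to vanish. No gaps.
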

\begin{proof}
    Since $\mathbb{F}_2[X]$ is a linear space over $\mathbb{F}_2$, $\cdot 2: H^1(\mathbb{Z}_m,\mathbb{F}_2[X]) \rightarrow H^1(\mathbb{Z}_m,\mathbb{F}_2[X])$ is a zero map. On the other hand by Proposition \ref{prop: multiplication by |G| zeros H^1(G,M)} $\cdot m: H^1(\mathbb{Z}_m,\mathbb{F}_2[X]) \rightarrow H^1(\mathbb{Z}_m,\mathbb{F}_2[X])$ is a zero map. Since $2$ is coprime to $m$, identity map is also a zero map, hence $H^1(\mathbb{Z}_m,\mathbb{F}_2[X]) = 0$.
\end{proof}

Before we prove the next lemma we need to demonstrate that the following holds.

\begin{obs} \label{obs: (x,x+1,x+2) generates A_m}
    Let $m \leq 3$ be any integer and let $S = \{ (x+1,x+2,x+3) \mid x \in \mathbb{Z}_m \}$. Then $\langle S \rangle = A_m$.
\end{obs}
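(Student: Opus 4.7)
The condition ``Let $m \leq 3$'' is almost certainly a typo for $m \geq 3$: otherwise the claim is vacuous (since $A_m$ is trivial for $m < 3$) and $S$ would not even consist of genuine $3$-cycles. With this correction, my plan is as follows.

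The inclusion $\langle S \rangle \leq A_m$ is immediate, since every element of $S$ is a $3$-cycle and hence an even permutation. For the reverse inclusion, I would invoke the classical fact that $A_m$ is generated by the $3$-cycles $\{(1,2,k) : 3 \leq k \leq m\}$ (identifying elements of $\mathbb{Z}_m$ with $\{1, 2, \ldots, m\}$), so it suffices to produce each such $3$-cycle inside $\langle S \rangle$.

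Writing $c_x = (x+1, x+2, x+3) \in S$ for each $x \in \mathbb{Z}_m$, I would prove by induction on $k$ that $(1, 2, k) \in \langle S \rangle$ for all $3 \leq k \leq m$. The base case $k = 3$ is $c_0 = (1,2,3)$. The base case $k = 4$ requires a short ad hoc calculation: conjugating gives $c_1 c_0 c_1^{-1} = (1, 3, 4)$, and then $(1,3,4) \cdot c_1 \cdot (1,3,4)^{-1} = (1, 2, 4)$. For the inductive step with $k \geq 4$, the key observation is that $c_{k-2} = (k-1, k, k+1)$ fixes both $1$ and $2$ (since $k - 1 \geq 3$), so the standard conjugation formula $\sigma(a_1,\dots,a_r)\sigma^{-1} = (\sigma(a_1),\dots,\sigma(a_r))$ yields
$$
c_{k-2}(1, 2, k) c_{k-2}^{-1} = (1, 2, k+1),
$$
completing the induction and thereby the proof.

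The main ``obstacle'' here is essentially negligible: one only has to handle the slightly awkward base case $k = 4$ (for which the uniform inductive conjugation by $c_1$ produces $(1, 3, 4)$ rather than $(1, 2, 4)$, so one extra conjugation is needed). Once past this, the induction runs cleanly on the single conjugation identity displayed above, together with the classical generating set $\{(1,2,k)\}$ of $A_m$.
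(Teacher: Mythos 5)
Your proof is correct, but it takes a genuinely different route from the paper's. You rely on the classical fact that $A_m$ is generated by the $3$-cycles $(1,2,k)$, $3\le k\le m$, and then produce each of these by an induction on $k$ using conjugation by the $c_x$'s (with a separate ad hoc computation for $k=4$, since $c_1$ does not fix $2$). The paper instead works from the parity definition of $A_m$: it notes that the cyclically adjacent transpositions $(x,x+1)$, $x\in\mathbb{Z}_m$, generate $S_m$, so it suffices to express each product $(x,x+1)(y,y+1)$ of two of them in terms of $S$; this is done by the telescoping identity $(x,x+1)(y,y+1)=\sigma(x)\sigma(x+1)\cdots\sigma(x+z-1)$ with $\sigma(x)=(x,x+1)(x+1,x+2)=(x,x+1,x+2)\in S$ and $x+z=y$. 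The paper's argument is uniform (no exceptional base case and no appeal to a prior generating set for $A_m$), while yours isolates the conjugation identity $\sigma(a_1,\dots,a_r)\sigma^{-1}=(\sigma(a_1),\dots,\sigma(a_r))$ as the single engine and is arguably more transparent about why each new point $k+1$ becomes reachable; both are elementary and of comparable length. One small point worth making explicit in your write-up: your induction only ever uses the non-wraparound generators $c_0,\dots,c_{m-3}$, which is legitimate since a subset of $S$ suffices for the inclusion $A_m\le\langle S\rangle$, and the wraparound elements of $S$ are still $3$-cycles, so the reverse inclusion $\langle S\rangle\le A_m$ is unaffected. You are also right that ``$m\le 3$'' is a typo for ``$m\ge 3$''.
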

\begin{proof}
    Let $S_m$ act on the set $\mathbb{Z}_m$. It is known that $S_m$ can be generated by transpositions given by $(x,x+1)$ for some $x \in \mathbb{Z}_m$. By definition $A_m$ is the set of permutations which are composed of an even number of such transpositions. To prove the observation it is enough to show that for any $x,y \in \mathbb{Z}_k$, permutation $(x,x+1)(y,y+1)$ can be generated by elements from the set $S$. Take such positive integer $z$ that $x+z = y$ (mod $k$). If we put $\sigma(x) = (x,x+1)(x+1,x+2)$, then we can write
    $$
    (x,x+1) (y,y+1) = (x,x+1)(x+1,x+2)(x+1,x+2)(y,y+1) =
    $$
    $$
    = \sigma(x) \circ (x+1,x+2)(y,y+1) = \ldots = \sigma(x) \circ \ldots \circ \sigma(x+z-1).
    $$
    Since $\sigma(x) = (x,x+1,x+2) \in S$, we conclude that indeed $\langle S \rangle = A_k$.
\end{proof}

\begin{lem} \label{lem: H^1(A_k,F_2[X]) = 0}
    Let $k \geq 5$ be an odd integer and let $A_k$ act on the set $X$ such that $|X|=k$ in a standard way. Then 
    $$
    H^1(A_k,\mathbb{F}_2[X]) = 0.
    $$
\end{lem}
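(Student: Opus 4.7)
The plan is to realize $\mathbb{F}_2[X]$ as an induced $A_k$-module and then apply Shapiro's lemma to collapse the computation into $H^1(A_{k-1}, \mathbb{F}_2)$, which is controlled entirely by the abelianization of $A_{k-1}$. Fix any point $x_0 \in X$ and let $H = \text{Stab}_{A_k}(x_0) \cong A_{k-1}$. Since $A_k$ acts transitively on $X$, there is a canonical identification of $A_k$-modules $\mathbb{F}_2[X] \cong \mathbb{F}_2[A_k/H] \cong \text{Ind}_H^{A_k} \mathbb{F}_2$, with the trivial $H$-action on the base copy of $\mathbb{F}_2$.

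By Shapiro's lemma (\cite[Chapter VII]{LocalFields}), one gets
$$
H^1(A_k, \mathbb{F}_2[X]) \cong H^1(H, \mathbb{F}_2) = \text{Hom}(H, \mathbb{F}_2) = \text{Hom}(A_{k-1}^{ab}, \mathbb{F}_2),
$$
where the first equality uses that $\mathbb{F}_2$ is a trivial $H$-module. Since $k \geq 5$ is odd, $k - 1 \geq 4$. For $k - 1 \geq 5$, $A_{k-1}$ is simple and non-abelian, hence perfect, so $A_{k-1}^{ab}$ is trivial. For $k - 1 = 4$ (that is, $k = 5$), one has $A_4^{ab} \cong \mathbb{Z}/3\mathbb{Z}$, which admits no nonzero homomorphism to $\mathbb{F}_2$ because $\gcd(2,3) = 1$. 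In either case the target Hom vanishes, concluding the proof.

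The principal obstacle is that Shapiro's lemma is not among the cohomological tools recalled in the preceding subsection, so it must be cited rather than derived from what has been stated. If a self-contained argument is preferred, one can instead begin with the restriction-corestriction identity of Proposition \ref{prop: res cor = multiplication by [G:H]}: because $[A_k : H] = k$ is odd and $H^1(A_k, \mathbb{F}_2[X])$ is an $\mathbb{F}_2$-vector space, multiplication by $k$ acts as the identity on it, forcing the restriction $H^1(A_k, \mathbb{F}_2[X]) \to H^1(H, \mathbb{F}_2[X])$ to be injective. The decomposition $\mathbb{F}_2[X] = \mathbb{F}_2 \vec{e}_{x_0} \oplus \mathbb{F}_2[X \setminus \{x_0\}]$ of $\mathbb{F}_2[X]$ as an $H$-module then handles the first summand via the same abelianization computation, while the second summand is itself induced (from the stabilizer in $H$ of some $x_1 \neq x_0$, which is isomorphic to $A_{k-2}$) and can be treated by a single further application of the same strategy, ultimately reducing to $\text{Hom}(A_{k-2}^{ab}, \mathbb{F}_2) = 0$. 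Either route carries through, but the Shapiro-based proof is substantially cleaner.
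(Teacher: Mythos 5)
Your Shapiro-based argument is correct, and it is a genuinely different (and much shorter) route than the one in the paper. The paper works entirely by hand with explicit cocycles: it first normalizes a cocycle $\omega$ so that it vanishes on the cyclic subgroup generated by the $k$-cycle $\mathfrak{r}\colon x \mapsto x+1$ (using $H^1(\mathbb{Z}_k,\mathbb{F}_2[X])=0$), then exploits the relation $(abc)^3=id$ to show $\omega'((abc)) \in \{\vec{0},\vec{e}_a+\vec{e}_b,\vec{e}_b+\vec{e}_c,\vec{e}_c+\vec{e}_a\}$, runs an induction on the odd cycles $(1,2,\ldots,2i+1)$ to rule out the nonzero possibilities for $\omega'((1,2,3))$, and finally invokes the fact that the consecutive $3$-cycles $(x+1,x+2,x+3)$ generate $A_k$. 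Your proof replaces all of this with the identification $\mathbb{F}_2[X]\cong\text{Ind}_{A_{k-1}}^{A_k}\mathbb{F}_2$ and Shapiro's lemma, reducing everything to $\text{Hom}(A_{k-1}^{ab},\mathbb{F}_2)=0$; the case split between $k=5$ (where $A_4^{ab}\cong\mathbb{Z}/3\mathbb{Z}$) and $k\geq 7$ (where $A_{k-1}$ is perfect) is handled correctly. What the paper's approach buys is self-containedness relative to the tools it has already set up (only restriction--corestriction and inflation--restriction are recalled); what yours buys is brevity and transparency, at the cost of citing one more standard result from the same reference. One small caveat: your fallback "self-contained" route is not actually Shapiro-free, since the second summand $\mathbb{F}_2[X\setminus\{x_0\}]$ has index $k-1$ (even) over the two-point stabilizer, so the restriction--corestriction trick gives no information there and you would still need Shapiro (or an explicit argument) to finish; but this does not affect your primary proof, which stands on its own.
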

\begin{proof}
    We will identify $X$ with $\mathbb{Z}_k$. Let $\omega:A_k \rightarrow \mathbb{F}_2[X]$ be a co-cycle and let $\mathfrak{r}: \mathbb{Z}_k \rightarrow \mathbb{Z}_k$ be a permutation given by $x \mapsto x+1$. Note that $[\omega {|}_{\langle \mathfrak{r} \rangle}] \in H^1(\langle \mathfrak{r} \rangle ,\mathbb{F}_2[X])$ and $\langle \mathfrak{r} \rangle \cong \mathbb{Z}_k$, hence by Lemma \ref{lem: H^1({Z}_m,{F}_2[X]) = 0}, $\omega {|}_{\langle \mathfrak{r} \rangle}$ is a co-boundary, hence there exists $\nu \in \mathbb{F}_2[X]$ such that $\omega(\mathfrak{r}^{\ell}) = \mathfrak{r}^{\ell}.\nu - \nu$. Now let $\omega'$ be given by the formula $\omega'(\sigma) = \omega(\sigma) - \sigma.\nu + \nu$. Since function given by $\sigma \mapsto \sigma.\nu - \nu$ is a co-boundary by definition, we get $[\omega] = [\omega']$. We will now show that $[\omega'] \equiv \vec{0}$.

    Let $a,b,c \in \mathbb{Z}_k$ be a pairwise different triple. Notice that
    $$
    \vec{0} = \omega'(id) = \omega'({(abc)^{3}}) = {(abc)}^{2}.\omega'((abc)) + {(abc)}.\omega'((abc)) + \omega'((abc)).
    $$
    Let $\omega'((abc)) = \sum_{x \in \mathbb{Z}_k} \varepsilon_{x} \vec{e}_x$. Then for each $x \notin \{a,b,c\}$ we get $0 = 3 \varepsilon_x$ by comparing coefficients in front of $\vec{e}_x$, and it implies $\varepsilon_x = 0$. Additionally by comparing coefficients in front of basis vector $\vec{e}_a$ we get $0 = \varepsilon_a + \varepsilon_b + \varepsilon_3$. This shows that
    $$
    \omega'((abc)) \in \{\vec{0},\vec{e}_a+\vec{e}_b,\vec{e}_b+\vec{e}_c,\vec{e}_c+\vec{e}_a\}.
    $$
    Now notice that for any $x \in \mathbb{Z}_k$ following equality occurs
    $$
    \omega'((x+1,x+2,x+3)) = \omega'(\mathfrak{r}^x (1,2,3) \mathfrak{r}^{-x}) = \mathfrak{r}^x.\omega'( (1,2,3) \mathfrak{r}^{-x}) + \omega'(\mathfrak{r}^x) =
    $$
    $$
    = \mathfrak{r}^x.{ \left( (1,2,3).\omega'(\mathfrak{r}^x) + \omega'((1,2,3))  \right)} = \mathfrak{r}^x.\omega'((1,2,3)).
    $$
    If $\omega'((1,2,3)) = \sum_{x \in \mathbb{Z}_k} \varepsilon_{x} \vec{e}_x \neq \vec{0}$, then either $\varepsilon_2 = 1$ or $\varepsilon_3=1$. Now we will eliminate both of these cases. For each $1 \leq i \leq (k-1)/2$ define elements $\delta_x^i \in \mathbb{F}_2$ to satisfy
    $$
    \omega'((1,2,\ldots,2i+1)) = \sum_{x \in \mathbb{Z}_k} \delta_{x}^i \vec{e}_x.
    $$

    \textit{Case 1:} Assume $\varepsilon_2 = 1$. We will prove inductively that for each $i$, $\delta_{2i}^i = 1$ and $\delta_{x}^i=0$ for $x \geq 2i+2$. Case $i=1$ holds by our assumption. Now assume it holds for $i-1$.
    $$
    \omega'((1,2,\ldots,2i+1)) = \omega'((1,2,\ldots,2i-1)(2i-1,2i,2i+1)) = 
    $$
    $$
    = (1,2,\ldots,2i-1).\omega'((2i-1,2i,2i+1)) + \omega'((1,2,\ldots,2i-1)).
    $$
    Since $\omega'((2i-1,2i,2i+1)) = \mathfrak{r}^{2i-2}.\omega'((1,2,3))$ we conclude that $\delta_{2i}^i = 1 + \delta_{2i}^{i-1} = 1$ and $\delta_{x}^i=0$ for $x \geq 2i+2$ as wanted. Now inductive argument is complete.

    Note that $\omega'(\mathfrak{r}) = \omega'(1,2,\ldots,k) = \sum_{x \in \mathbb{Z}_k} \delta_{x}^{(k-1)/2} \vec{e}_x$ and since $\delta_{k-1}^{(k-1)/2}=1$, we get a contradiction with the fact that $\omega'(\mathfrak{r}) = \vec{0}$. \\

    \textit{Case 2:} Assume $\varepsilon_3 = 1$. Proof that it is impossible will be similar to the one in the first case. We will prove inductively that for each $i$, $\delta_{2i+1}^i = 1$ and $\delta_{x}^i=0$ for $x \geq 2i+2$. Case $i=1$ holds by our assumption. Now assume it holds for $i-1$.
    $$
    \omega'((1,2,\ldots,2i+1)) = \omega'((1,2,\ldots,2i-1)(2i-1,2i,2i+1)) = 
    $$
    $$
    = (1,2,\ldots,2i-1).\omega'((2i-1,2i,2i+1)) + \omega'((1,2,\ldots,2i-1)).
    $$
    Since $\omega'((2i-1,2i,2i+1)) = \mathfrak{r}^{2i-2}.\omega'((1,2,3))$ we conclude that $\delta_{2i+1}^i = 1 + \delta_{2i+1}^{i-1} = 1$ and $\delta_{x}^i=0$ for $x \geq 2i+2$ as wanted. Now inductive argument is complete.

    Note that $\omega'(\mathfrak{r}) = \omega'(1,2,\ldots,k) = \sum_{x \in \mathbb{Z}_k} \delta_{x}^{(k-1)/2} \vec{e}_x$ and since $\delta_{k}^{(k-1)/2}= \delta_{(k-1) + 1}^{(k-1)/2} = 1$, we get a contradiction with the fact that $\omega'(\mathfrak{r}) = \vec{0}$. \\

    Above argument shows that $\omega'((1,2,3)) = \vec{0}$. Since for any $x \in \mathbb{Z}_k$ we have equality $\omega'((x+1,x+2,x+3)) = \mathfrak{r}^x.\omega'((1,2,3))$, it follows that $\omega'((x+1,x+2,x+3)) = \vec{0}$. By Observation \ref{obs: (x,x+1,x+2) generates A_m} set $S = \{ (x+1,x+2,x+3) \mid x \in \mathbb{Z}_m \}$ generates $A_m$, hence we obtain $\omega' \cong \vec{0}$, which means $[\omega] = [\omega'] = 0$ for any $[\omega] \in H^1(A_k,\mathbb{F}_2[X])$.
\end{proof}
%
%
Before we state the next lemma we have to define certain subgroups of $\text{PSL}_2(\mathbb{F}_{2^{\ell}})$. At first notice that $a^2 = \text{det}\left(\begin{bmatrix}
a & 0\\
0 & a 
\end{bmatrix} \right)$ equals $1$ if and only if $a=1$ because $\text{char}(\mathbb{F}_{2^{\ell}}) = 2$, hence function $\phi: \mathbb{F}_{2^{\ell}}^{*} \rightarrow \mathbb{F}_{2^{\ell}}^{*}$ given by $a \mapsto a^2$ is the Frobenius automorphism. Above shows that $\text{PSL}_2(\mathbb{F}_{2^{\ell}}) = \text{SL}_2(\mathbb{F}_{2^{\ell}})$ and $\text{SL}_2(\mathbb{F}_{2^{\ell}}) \cong \text{PGL}_2(\mathbb{F}_{2^{\ell}})$. Now we define
$$
T_{\ell} = \Biggl\{ \begin{bmatrix}
1 & a\\
0 & 1 
\end{bmatrix} \mid a \in \mathbb{F}_{2^{\ell}} \Biggr\} \quad \text{and} \quad {U}_{\ell} = \Biggl\{ \begin{bmatrix}
a & b\\
0 & a^{-1} 
\end{bmatrix} \mid a \in \mathbb{F}_{2^{\ell}}^{*}, b \in \mathbb{F}_{2^{\ell}} \Biggr\}.
$$


\begin{lem} \label{lem: S_{ell}/T_{ell} invariants of H^1(T_{ell},F_2[X]) is the trivial group}
    Let $\ell \geq 2$ be an arbitrary integer and let $T_{\ell}$ act on $X = \mathbb{P}^{1} \mathbb{F}_{2^{\ell}}$ in a standrad way. Then
    $$
    H^1(T_{\ell},\mathbb{F}_2[X]) \cong \mathbb{F}_{2^{\ell}} \quad \text{and} \quad {H^1(T_{\ell},\mathbb{F}_2[X])}^{U_{\ell} \text{/} T_{\ell}} = 0.
    $$
\end{lem}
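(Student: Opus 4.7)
The strategy is to decompose $\mathbb{F}_2[X]$ as a $T_\ell$-module and handle each piece separately. First I would describe the $T_\ell$-action on $X = \mathbb{P}^1\mathbb{F}_{2^\ell}$: the matrix $\begin{bmatrix} 1 & a \\ 0 & 1 \end{bmatrix}$ fixes $\infty = [1:0]$ and acts on the affine chart $\mathbb{F}_{2^\ell}$ by $z \mapsto z + a$. Identifying $T_\ell$ with $(\mathbb{F}_{2^\ell},+)$ via $\begin{bmatrix} 1 & a \\ 0 & 1 \end{bmatrix} \leftrightarrow a$, the $T_\ell$-set $X$ is the disjoint union of a fixed point and one regular $T_\ell$-orbit, giving a splitting
$$\mathbb{F}_2[X] \;\cong\; \mathbb{F}_2\,\vec{e}_\infty \;\oplus\; \mathbb{F}_2[T_\ell]$$
of $T_\ell$-modules in which the first summand is trivial and the second is the regular representation.

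Next I would dispose of the regular summand by showing directly that $H^1(T_\ell,\mathbb{F}_2[T_\ell]) = 0$. Writing an arbitrary cocycle as $\omega(t) = \sum_{s} \omega_t(s)\vec{e}_s$, the cocycle relation becomes $\omega_{t_1 t_2}(s) = \omega_{t_2}(t_1^{-1}s) + \omega_{t_1}(s)$; specializing $t_2 = t_1^{-1}s$ and using characteristic $2$ gives $\omega_t(s) = \omega_s(s) + \omega_{t^{-1}s}(t^{-1}s)$, which is precisely the identity certifying that $m = \sum_s \omega_s(s)\,\vec{e}_s$ satisfies $t.m - m = \omega(t)$. Hence $H^1(T_\ell,\mathbb{F}_2[X]) = H^1(T_\ell,\mathbb{F}_2) = \mathrm{Hom}(T_\ell,\mathbb{F}_2)$ since $\mathbb{F}_2$ carries the trivial action, and this is the $\mathbb{F}_2$-dual of an $\ell$-dimensional $\mathbb{F}_2$-space, i.e.\ an elementary abelian $2$-group of order $2^\ell$, hence isomorphic as an abelian group to $\mathbb{F}_{2^\ell}$. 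This establishes the first equality.

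For the second claim I would pin down the conjugation action by a direct matrix computation:
$$\begin{bmatrix} a & 0 \\ 0 & a^{-1} \end{bmatrix} \begin{bmatrix} 1 & c \\ 0 & 1 \end{bmatrix} \begin{bmatrix} a^{-1} & 0 \\ 0 & a \end{bmatrix} \;=\; \begin{bmatrix} 1 & a^2 c \\ 0 & 1 \end{bmatrix},$$
so under the identification $T_\ell \cong \mathbb{F}_{2^\ell}$ the action of $U_\ell/T_\ell \cong \mathbb{F}_{2^\ell}^{*}$ is $c \mapsto a^2 c$. Since the Frobenius $a \mapsto a^2$ is a bijection on $\mathbb{F}_{2^\ell}^{*}$, this is the full multiplicative action of $\mathbb{F}_{2^\ell}^{*}$ on $\mathbb{F}_{2^\ell}$. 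Unwinding the definition of the $G/K$-action on $H^1(K,M)$ for a trivial target, a class $[\omega] \in \mathrm{Hom}(\mathbb{F}_{2^\ell},\mathbb{F}_2)$ is $U_\ell/T_\ell$-invariant precisely when $\omega(bc) = \omega(c)$ for every $b \in \mathbb{F}_{2^\ell}^{*}$ and every $c \in \mathbb{F}_{2^\ell}$.

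It remains to rule out nonzero such $\omega$ when $\ell \geq 2$. Setting $c = 1$ forces $\omega$ to be constant on $\mathbb{F}_{2^\ell}^{*}$; if that constant were $1$, then for any $b \in \mathbb{F}_{2^\ell} \setminus \{0,1\}$ (which exists exactly because $\ell \geq 2$) additivity would give $\omega(1+b) = \omega(1) + \omega(b) = 0$, contradicting $1+b \in \mathbb{F}_{2^\ell}^{*}$ on which $\omega \equiv 1$. Hence $\omega = 0$, completing the proof. The one genuinely delicate step, and the main (mild) obstacle, is exactly this last one: the argument collapses for $\ell = 1$ because then $\mathbb{F}_2 \setminus \{0,1\} = \varnothing$ and the nonzero homomorphism $\mathbb{F}_2 \to \mathbb{F}_2$ would survive as a nontrivial invariant, which is precisely why the hypothesis $\ell \geq 2$ is needed.
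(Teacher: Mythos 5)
Your proof is correct and follows essentially the same route as the paper: the same splitting $\mathbb{F}_2[X] \cong \mathbb{F}_2\vec{e}_\infty \oplus \mathbb{F}_2[T_\ell]$, the same conjugation computation $\eta(a)\tau(c)\eta(a)^{-1} = \tau(a^2c)$, and the same additivity-plus-constancy argument (valid only for $\ell \geq 2$) to kill the invariants. Your treatment of the regular summand via the explicit coboundary element $m = \sum_s \omega_s(s)\vec{e}_s$ is a slightly slicker packaging of the paper's coefficient computation, but it is the same calculation.
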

\begin{proof}
    For any $a \in \mathbb{F}_{2^{\ell}}$ by $\ell(a)$ we denote the line $\mathbb{F}_{2^{\ell}} \cdot {[a,1]}^{T}$. By $\ell(\infty)$ we denote the line $\mathbb{F}_{2^{\ell}} \cdot {[1,0]}^{T}$. Let us define $F_{\ell} = \bigoplus_{a \in \mathbb{F}_{2^{\ell}}} \mathbb{F}_{2^{\ell}} \cdot \vec{e}_{ \ell(a) }$. Notice that action of $T_{\ell}$ fixes both submodules $F_{\ell}$ and $\mathbb{F}_{2^{\ell}} \cdot \vec{e}_{ \ell(\infty)}$ setwise. Therefore $\mathbb{F}_2[X] \cong F_{\ell} \oplus \mathbb{F}_{2^{\ell}} \cdot \vec{e}_{ \ell(\infty)}$ as an $T_{\ell}$-module. Above observation allows us to state that 
    $$
    H^1(T_{\ell},\mathbb{F}_2[X]) \cong H^1(T_{\ell},F_{\ell}) \oplus H^1(T_{\ell},\mathbb{F}_{2^{\ell}} \cdot \vec{e}_{ \ell(\infty)}), \text{ and}
    $$
    $$
    {H^1(T_{\ell},\mathbb{F}_2[X])}^{U_{\ell} \text{/} T_{\ell}} \cong {H^1(T_{\ell},F_{\ell})}^{U_{\ell} \text{/} T_{\ell}} \oplus {H^1(T_{\ell},\mathbb{F}_{2^{\ell}} \cdot \vec{e}_{ \ell(\infty)})}^{U_{\ell} \text{/} T_{\ell}}.
    $$
    Now we will calculate each of the cohomologies over submodules $F_{\ell}$ and $\mathbb{F}_{2^{\ell}} \cdot \vec{e}_{ \ell(\infty)}$. \\
    
    \textit{Calculation of $H^1(T_{\ell},F_{\ell})$:} Observe that $\varphi:\mathbb{F}_{2^{\ell}} \rightarrow T_{\ell}$ given by $a \rightarrow \begin{bmatrix}
1 & a\\
0 & 1
\end{bmatrix}$ is an isomorphism. Moreover, if one define the action of the group $\mathbb{F}_{2^{\ell}}$ on the set $\mathbb{F}_{2^{\ell}}$ by right addition, that is for any $a,b \in \mathbb{F}_{2^{\ell}}$ we put $a.b = b + a$. Define the map $\widetilde{\varphi}: \mathbb{F}_2[\mathbb{F}_{2^{\ell}}] \rightarrow F_{\ell}$ by $\vec{e}_{a} \mapsto \vec{e}_{\ell(a)}$ on the basis ${ \{ \vec{e}_{a} \} }_{a \in \mathbb{F}_{2^{\ell}}}$ and extend linearly. Notice that for any $a \in \mathbb{F}_{2^{\ell}}$ following diagram commutes.
\[\begin{tikzcd}
	{\mathbb{F}_2[\mathbb{F}_{2^{\ell}}]} & {\mathbb{F}_2[\mathbb{F}_{2^{\ell}}]} \\
	{F_{\ell}} & {F_{\ell}}
	\arrow["{a.}", from=1-1, to=1-2]
	\arrow["{\widetilde{\varphi}}", from=1-1, to=2-1]
	\arrow["{\widetilde{\varphi}}", from=1-2, to=2-2]
	\arrow["{\varphi(a).}", from=2-1, to=2-2]
\end{tikzcd}\]
This demonstrates that in fact $H^1(T_{\ell},F_{\ell}) \cong H^1(\mathbb{F}_{2^{\ell}},\mathbb{F}_2[\mathbb{F}_{2^{\ell}}])$. We will show that any co-cycle $\omega:\mathbb{F}_{2^{\ell}} \rightarrow \mathbb{F}_2[\mathbb{F}_{2^{\ell}}]$ is actually a co-boundary. Let
$$
\omega(a) = \sum_{b \in \mathbb{F}_{2^{\ell}}} \delta_a^b \cdot \vec{e}_b,
$$
for arbitrary $a,b \in \mathbb{F}_{2^{\ell}}$ and appropriate $\delta_a^b \in \mathbb{F}_2$. Take any $c \in \mathbb{F}_{2^{\ell}}$. Extracting coefficient in front of $\vec{e}_c$ from the co-cycle equation $\omega(a+b) = a.\omega(b) + \omega(a)$ shows that
$$
\delta_{a+b}^{c} = \delta_{b}^{c+a} + \delta_a^c.
$$
If we put $a=b=c$ and account for the fact that $\omega(0) = \vec{0}$, we get $0 = \delta_0^a = \delta_a^0 + \delta_a^a$, hence $\delta_a^0 = \delta_a^a$. If in the same equation we just put $c=0$ we get $\delta_{a+b}^a = \delta_b^0 + \delta_a^a = \delta_b^0 + \delta_a^0$. If we now substitute $x= a+b$ and $y=a$ we get 
$$
\delta_x^y = \delta_{x+y}^0 + \delta_y^0.
$$
Above equation shows that co-cycle $\omega$ is fully determined by the sequence ${\{\delta_a^0\}}_{a \in \mathbb{F}_{2^{\ell}}}$. Additionally, since $\omega(0) = \vec{0}$, we have $\delta_0^0 = 0$. To show that $\omega$ indeed is a co-boundary we will construct an element $\nu \in \mathbb{F}_2[\mathbb{F}_{2^{\ell}}]$ which fulfill any given sequence ${\{\delta_a^0\}}_{a \in \mathbb{F}_{2^{\ell}}}$ with $\delta_0^0 = 0$.

Put $\nu = \sum_{a \in \mathbb{F}_{2^{\ell}}} \delta_a^0 \cdot \vec{e}_{a}$ and let $\widetilde{\omega}: \mathbb{F}_{2^{\ell}} \rightarrow \mathbb{F}_2[\mathbb{F}_{2^{\ell}}]$ be given by $a \mapsto a.\nu - \nu$. Indeed the coefficient of $\widetilde{\omega}(a)$ next to $\vec{e}_{0}$ equals $\delta_{(-a)}^0 - \delta_0^0 = \delta_a^0$ as intended, which shows that
$$
H^1(T_{\ell},F_{\ell}) \cong H^1(\mathbb{F}_{2^{\ell}},\mathbb{F}_2[\mathbb{F}_{2^{\ell}}]) = 0.
$$
\\
    \text{ }\text{ }\text{ }\textit{Calculation of $H^1(T_{\ell},\mathbb{F}_{2^{\ell}} \cdot \vec{e}_{ \ell(\infty)})$:} Notice that each element of $T_{\ell}$ fixes $\mathbb{F}_{2^{\ell}} \cdot \vec{e}_{ \ell(\infty)}$ point-wise. Since $T_{\ell} \cong \mathbb{F}_{2^{\ell}} \cong \mathbb{F}_{2}^{\ell}$ as established earlier and $\mathbb{F}_{2^{\ell}} \cdot \vec{e}_{ \ell(\infty)} \cong \mathbb{F}_2$, we have an isomorphism $H^1(T_{\ell},\mathbb{F}_{2^{\ell}} \cdot \vec{e}_{ \ell(\infty)}) \cong H^1(\mathbb{F}_{2}^{\ell},\mathbb{F}_2)$ where $\mathbb{F}_2$ is considered as $\mathbb{F}_{2}^{\ell}$-module with trivial action.

    Triviality of action of $\mathbb{F}_{2}^{\ell}$ on $\mathbb{F}_2$ means that co-cycles are just homomorphisms from $\mathbb{F}_{2}^{\ell}$ to $\mathbb{F}_2$ and the only co-boundary is the $0$ function, hence $H^1(T_{\ell},\mathbb{F}_{2^{\ell}} \cdot \vec{e}_{ \ell(\infty)}) \cong {\left( \mathbb{F}_{2}^{\ell} \right)}^{*} \cong \mathbb{F}_{2}^{\ell}$. \\

    \textit{Calculation of ${H^1(T_{\ell},\mathbb{F}_{2^{\ell}} \cdot \vec{e}_{ \ell(\infty)})}^{U_{\ell} \text{/} T_{\ell}}$:} For arbitrary $a \in \mathbb{F}_{2^{\ell}}^{*}$ and $b \in \mathbb{F}_{2^{\ell}}$ take $\eta(a) = \begin{bmatrix}
a & 0\\
0 & a^{-1}
\end{bmatrix}$ and $\tau(b) = \begin{bmatrix}
1 & b\\
0 & 1
\end{bmatrix}$. Then $\eta(a) \tau(b) {\eta(a)}^{-1} = \tau(b\cdot a^2)$. Note that $\mathbb{F}_{2^{\ell}} \cdot \vec{e}_{ \ell(\infty)}$ is fixed point-wise by any element of $U_{\ell}$. Let $\omega: T_{\ell} \rightarrow \mathbb{F}_2 [\mathbb{F}_{2^{\ell}} \cdot \vec{e}_{ \ell(\infty)}]$ be a co-cycle such that $[\omega] \in {H^1(T_{\ell},\mathbb{F}_{2^{\ell}} \cdot \vec{e}_{ \ell(\infty)})}^{U_{\ell} \text{/} T_{\ell}}$.

We already showed that there are no nonzero co-boundaries in $H^1(T_{\ell},\mathbb{F}_{2^{\ell}} \cdot \vec{e}_{ \ell(\infty)})$, hence from $[\tau(a).\omega] = [\omega]$ we deduce $\tau(a).\omega = \omega$. For any $b \in \mathbb{F}_{2^{\ell}}$ by definition we have $$
{\omega}(\eta(b)) = {\left( \tau(a).\omega \right)}(\eta(b)) = \tau(a).{\left( \omega({\tau(a)}^{-1} \eta(b) \tau(a)) \right)} = \omega( \eta(b\cdot a^{-2}) ).
$$
Since field $\mathbb{F}_{2^{\ell}}$ have characteristic $2$, function $\phi: \mathbb{F}_{2^{\ell}}^{*} \rightarrow \mathbb{F}_{2^{\ell}}^{*}$ given by the formula $a \mapsto a^{-2}$ is a bijection, hence we can take arbitrary $\widetilde{a} \in \mathbb{F}_{2^{\ell}}^{*} $ and we can put $a = \phi^{-1}(\widetilde{a})$ to obtain ${\omega}(\eta(b)) = \omega( \eta(b\cdot \widetilde{a}) )$. This shows that $\omega {|}_{\mathbb{F}_{2^{\ell}}^{*}}$ is constant. Since we showed before that $\omega$ is linear, for any two different elements $x,y \in \mathbb{F}_{2^{\ell}}^{*}$ we have $x+y \in \mathbb{F}_{2^{\ell}}^{*}$ and
$$
2\omega(x) = \omega(x) + \omega(y) = \omega(x+y) = \omega(x).
$$
Above equation demonstrates that $\omega {|}_{\mathbb{F}_{2^{\ell}}^{*}} \equiv \vec{0}$, hence $\omega \equiv \vec{0}$ and ${H^1(T_{\ell},\mathbb{F}_{2^{\ell}} \cdot \vec{e}_{ \ell(\infty)})}^{U_{\ell} \text{/} T_{\ell}} = 0$. \\

Combining all of the above results gives us
$$
    H^1(T_{\ell},\mathbb{F}_2[X]) \cong H^1(T_{\ell},F_{\ell}) \oplus H^1(T_{\ell},\mathbb{F}_{2^{\ell}} \cdot \vec{e}_{ \ell(\infty)}) \cong 0 \oplus \mathbb{F}_{2^{\ell}} \cong \mathbb{F}_{2^{\ell}}, \text{ and}
$$
$$
    {H^1(T_{\ell},\mathbb{F}_2[X])}^{U_{\ell} \text{/} T_{\ell}} \cong {H^1(T_{\ell},F_{\ell})}^{U_{\ell} \text{/} T_{\ell}} \oplus {H^1(T_{\ell},\mathbb{F}_{2^{\ell}} \cdot \vec{e}_{ \ell(\infty)})}^{U_{\ell} \text{/} T_{\ell}} \cong 0 \oplus 0 \cong 0.
$$
\end{proof}

Now we are prepared to prove of Theorem \ref{Thm: action of a nontrivial cocycle from group G acting primitively on X with additional assumptions}. \\

\textit{Proof of Theorem \ref{Thm: action of a nontrivial cocycle from group G acting primitively on X with additional assumptions}:} \\
Let us start by recognizing that the group $G$ acting on the set $X$ fulfils all assumptions of Theorem \ref{Thm characterization of primitive group actions with regular cyclic subgroup and additional assumptions}, hence we only have to show the conclusion of Theorem \ref{Thm: action of a nontrivial cocycle from group G acting primitively on X with additional assumptions} for group actions which belong to the families i. - iii. 

Observe that for any co-cycle $\omega': G \rightarrow \mathbb{F}_2[X]$, $\omega' {|}_{ {(\mathbb{Z}_k)}_r }$ is a co-boundary by Lemma \ref{lem: H^1({Z}_m,{F}_2[X]) = 0}. Let $\nu \in \mathbb{F}_2[X]$ be such that $\omega' {|}_{ {(\mathbb{Z}_k)}_r }$ is given by $\sigma \mapsto \sigma.\nu - \nu$. If we define $\omega'': G \rightarrow \mathbb{F}_2[X]$ by the formula $ g \mapsto \omega'(g) - g.\nu + \nu$ then $\omega'' {|}_{ {(\mathbb{Z}_k)}_r } \equiv \vec{0}$. This $\omega''$ is unique. Otherwise there would exists a nonzero co-cycle $\varepsilon: G \rightarrow \mathbb{F}_2[X]$ such that $\varepsilon {|}_{ {(\mathbb{Z}_k)}_r } \equiv \vec{0}$. Let such $\varepsilon$ be given by the formula $g \mapsto g.\upsilon - \upsilon$ for $\upsilon = \sum_{x \in \mathbb{Z}_k } \delta_x \vec{e}_x$. Notice however, that then 
$$
\vec{0} = \varepsilon(1_r) = \sum_{x \in \mathbb{Z}_k } (\delta_{x-1} - \delta_x) \cdot \vec{e}_x,
$$
so it would imply that $\delta_x = \text{const.}$ for all $x \in \mathbb{Z}_k$, however it automatically implies $\varepsilon \equiv \vec{0}$. For convenience from now on by $\mathbbm{1}$ we will denote the vector $\sum_{x \in X} \vec{e}_x$. While keeping all of the above in mind we will now gone through three cases indicated by Theorem \ref{Thm characterization of primitive group actions with regular cyclic subgroup and additional assumptions}. \\

\textit{Case i.} We have $\mathbb{F}_p \leq G \leq \text{Aff}(\mathbb{F}_p)$ and $X = \mathbb{F}_p$ where $k=p$ is an odd prime. By Lemma \ref{lem: H^1({Z}_m,{F}_2[X]) = 0} we get $H^1(\mathbb{Z}_p,\mathbb{F}_2[X]) = 0$. Since $\mathbb{Z}_p \unlhd \text{Aff}(\mathbb{F}_p)$, $\mathbb{Z}_p \unlhd G$ and hence we can apply Proposition \ref{prop: inf-res exact sequence} to obtain that bellow sequence is exact.
\[\begin{tikzcd}
	0 & {H^1(G/\mathbb{Z}_p,{\mathbb{F}_2[X]}^{\mathbb{Z}_p})} & {H^1(G,\mathbb{F}_2[X])} & {H^1(\mathbb{Z}_p,\mathbb{F}_2[X])= 0}
	\arrow[from=1-1, to=1-2]
	\arrow["{\text{inf}}", from=1-2, to=1-3]
	\arrow["{\text{res}}", from=1-3, to=1-4]
\end{tikzcd}\]
This shows that $H^1(G/\mathbb{Z}_p,{\mathbb{F}_2[X]}^{\mathbb{Z}_p}) \overset{\text{inf}}{\cong} H^1(G,\mathbb{F}_2[X])$. Notice that ${\mathbb{F}_2[X]}^{\mathbb{Z}_p} = \mathbb{F}_2 \cdot \mathbbm{1}$, hence $G/\mathbb{Z}_p$ acts trivially on it. This shows that $H^1(G/\mathbb{Z}_p,{\mathbb{F}_2[X]}^{\mathbb{Z}_p}) \cong \text{Hom}(G/\mathbb{Z}_p,\mathbb{F}_2 \cdot \mathbbm{1})$. Note that since $G/\mathbb{Z}_p \leq \mathbb{Z}_{p-1}$, it is cyclic, so only nontrivial automorphism is the one which maps the subgroup $H \leq G/\mathbb{Z}_p$ of index $2$ onto $\vec{0}$ and other elements are mapped to $\mathbbm{1}$. Such automorphism, hence the subgroup $H$ need to exist since we are given a nonzero cocycle $\omega$ which vanishes on ${(\mathbb{Z}_p)}_r$.

Checking definition of inflation map shows that indeed only nontrivial element of $H^1(G,\mathbb{F}_2[X])$ is represented by a co-cycle which maps elements of some subgroup $G_0$ of index two onto $\vec{0}$ and others onto $\mathbbm{1}$. Since subgroup ${(\mathbb{Z}_k)}_r \leq G$ has odd order, we have ${(\mathbb{Z}_k)}_r \leq G_0$ so this co-cycle needs to equal $\omega$, hence conclusion of Theorem \ref{Thm: action of a nontrivial cocycle from group G acting primitively on X with additional assumptions} follows. \\

\textit{Case ii.} We have $A_{k} \leq G \leq S_{k}$ for some odd $k \geq 5$ and $X = \mathbb{Z}_k$. By Lemma \ref{lem: H^1(A_k,F_2[X]) = 0} we get $H^1(A_k,\mathbb{F}_2[X]) = 0$, hence $G = S_k$ because we are given a nontrivial co-cycle $\omega$ with additional properties discussed above. Since $A_k \unlhd S_k$ we can apply Proposition \ref{prop: inf-res exact sequence}.
\[\begin{tikzcd}
	0 & {H^1(S_k/A_k,{\mathbb{F}_2[X]}^{A_k})} & {H^1(S_k,\mathbb{F}_2[X])} & {H^1(A_k,\mathbb{F}_2[X]) = 0}
	\arrow[from=1-1, to=1-2]
	\arrow["{\text{inf}}", from=1-2, to=1-3]
	\arrow["{\text{res}}", from=1-3, to=1-4]
\end{tikzcd}\]
Since ${(\mathbb{Z}_k)}_r \leq A_k$ we obtain ${\mathbb{F}_2[X]}^{A_k} = \mathbb{F}_2 \cdot \mathbbm{1}$, hence $S_k/A_k$ acts trivially on it. We now get
$$
\mathbb{F}_2^{*} \cong \text{Hom}(S_k/A_k,\mathbb{F}_2 \cdot \mathbbm{1}) = H^1(S_k/A_k,{\mathbb{F}_2[X]}^{A_k}) \overset{\text{ind}}{\cong} H^1(S_k,\mathbb{F}_2[X]).
$$
It is now clear that the only nontrivial co-cycle from  $S_k$ to $\mathbb{F}_2[X]$ which vanishes on ${(\mathbb{Z}_k)}_r$ it the one which maps elements of $A_k$ onto $\vec{0}$ and the rest onto $\mathbbm{1}$. $\omega$ satisfies above assumptions, so it has to be equal to the co-cycle described above and conclusion of Theorem \ref{Thm: action of a nontrivial cocycle from group G acting primitively on X with additional assumptions} holds.\\

\textit{Case iii.} Now we have $\text{PGL}_{2}(\mathbb{F}_{2^\ell}) \leq G \leq \text{P}\Gamma\text{L}_{2}(\mathbb{F}_{2^\ell})$ and $X = \mathbb{P}^1\mathbb{F}_{2^\ell}$ for some positive $\ell \geq 2$. Action of $G$ on $X$ is the standard action of projective group on lines. As recognized earlier, $\text{PSL}_{2}(\mathbb{F}_{2^\ell}) \cong 
 \text{SL}_{2}(\mathbb{F}_{2^\ell}) \cong \text{PGL}_{2}(\mathbb{F}_{2^\ell})$. We will start by showing that $H^{1}(\text{SL}_{2}(\mathbb{F}_{2^\ell}),\mathbb{F}_2[X]) = 0$.

 By Lemma \ref{lem: S_{ell}/T_{ell} invariants of H^1(T_{ell},F_2[X]) is the trivial group} we have ${H^1(T_{\ell},\mathbb{F}_2[X])}^{U_{\ell} \text{/} T_{\ell}} = 0$. Since $T_{\ell} \unlhd U_{\ell}$, we can apply Proposition \ref{prop: inf-res exact sequence}. We obtain the following exact sequence
\[\begin{tikzcd}
	0 & {H^1(U_{\ell}/T_{\ell},{\mathbb{F}_2[X]}^{T_{\ell}})} & {H^1(U_{\ell},\mathbb{F}_2[X])} & {{H^1(T_{\ell},\mathbb{F}_2[X])}^{U_{\ell} / T_{\ell}} = 0}
	\arrow[from=1-1, to=1-2]
	\arrow["{\text{inf}}", from=1-2, to=1-3]
	\arrow["{\text{res}}", from=1-3, to=1-4]
\end{tikzcd}\]
It shows that $H^1(U_{\ell}/T_{\ell},{\mathbb{F}_2[X]}^{T_{\ell}}) \overset{\text{inf}}{\cong} H^1(U_{\ell},\mathbb{F}_2[X])$. By Proposition \ref{prop: multiplication by |G| zeros H^1(G,M)},
$$
\cdot (2^\ell - 1): H^1(U_{\ell}/T_{\ell},{\mathbb{F}_2[X]}^{T_{\ell}}) \rightarrow H^1(U_{\ell}/T_{\ell},{\mathbb{F}_2[X]}^{T_{\ell}})
$$
is a zero map. On the other hand ${\mathbb{F}_2[X]}^{T_{\ell}}$ is a linear space over $\mathbb{F}_2$, so 
$$
\cdot 2: H^1(U_{\ell}/T_{\ell},{\mathbb{F}_2[X]}^{T_{\ell}}) \rightarrow H^1(U_{\ell}/T_{\ell},{\mathbb{F}_2[X]}^{T_{\ell}})
$$
also is a zero map. Clearly $2$ is coprime to $2^{\ell} - 1$, so identity on $H^1(U_{\ell}/T_{\ell},{\mathbb{F}_2[X]}^{T_{\ell}})$ also is a zero map, hence $H^1(U_{\ell},\mathbb{F}_2[X]) \cong H^1(U_{\ell}/T_{\ell},{\mathbb{F}_2[X]}^{T_{\ell}}) = 0$. Applying Proposition \ref{prop: res cor = multiplication by [G:H]} for $\text{SL}_{2}(\mathbb{F}_{2^\ell})$ and $U_{\ell}$ gives us a commutative diagram
\[\begin{tikzcd}
	{H^{1}(\text{SL}_{2}(\mathbb{F}_{2^\ell}),\mathbb{F}_2[X])} && {H^{1}(\text{SL}_{2}(\mathbb{F}_{2^\ell}),\mathbb{F}_2[X])} \\
	& {H^{1}(U_{\ell},\mathbb{F}_2[X])}
	\arrow["{\cdot (2^{\ell}+1)}", from=1-1, to=1-3]
	\arrow["{\text{res}}"', from=1-1, to=2-2]
	\arrow["{\text{cores}}"', from=2-2, to=1-3]
\end{tikzcd}\]
Since $\mathbb{F}_2[X]$ is a linear space over $\mathbb{F}_2$, $\cdot (2^{\ell}+1)$ is just an identity. On the other hand since $H^{1}(U_{\ell},\mathbb{F}_2[X]) = 0$, $\text{cores} \circ \text{res}$ is a zero map. Commutativity of the above diagram implies 
$$
H^{1}(\text{SL}_{2}(\mathbb{F}_{2^\ell}),\mathbb{F}_2[X]) = 0.
$$

Now notice that $\text{P}\Gamma\text{L}_{2}(\mathbb{F}_{2^{\ell}}) / \text{PGL}_{2}(\mathbb{F}_{2^{\ell}}) \cong \text{Gal}\left( {\mathbb{F}_{2^{\ell}}} \text{/} {\mathbb{F}_2} \right) \cong \mathbb{Z}_{\ell}$, hence $G / \text{PGL}_{2}(\mathbb{F}_{2^{\ell}})$ as its subgroup also has to be cyclic. Putting $K = \text{PGL}_{2}(\mathbb{F}_{2^{\ell}})$ in Proposition \ref{prop: inf-res exact sequence} yields
\[\begin{tikzcd}
	0 & {H^1 \left( G/ \text{PGL}_{2}(\mathbb{F}_{2^{\ell}}) ,{\mathbb{F}_2[X]}^{  \text{PGL}_{2}(\mathbb{F}_{2^{\ell}}) } \right)} & {H^1(G,\mathbb{F}_2[X])} & 0
	\arrow[from=1-1, to=1-2]
	\arrow["{\text{inf}}", from=1-2, to=1-3]
	\arrow["{\text{res}}", from=1-3, to=1-4]
\end{tikzcd}\]
 since ${H^1( \text{PGL}_{2}(\mathbb{F}_{2^{\ell}}) ,\mathbb{F}_2[X])} = 0$. Since $\text{PGL}_{2}(\mathbb{F}_{2^{\ell}})$ acts transitively on $X = \mathbb{P}^1 \mathbb{F}_{2^{\ell}}$ we conclude that ${\mathbb{F}_2[X]}^{  \text{PGL}_{2}(\mathbb{F}_{2^{\ell}}) } = \mathbb{F}_2 \cdot \mathbbm{1}$. It is easy to notice that $G$ acts trivially on $\mathbb{F}_2 \cdot \mathbbm{1}$, hence it is a $G/ \text{PGL}_{2}(\mathbb{F}_{2^{\ell}})$-module with trivial action, so
 $$
 \text{Hom} \bigl( G/ \text{PGL}_{2}(\mathbb{F}_{2^{\ell}}),\mathbb{F}_2 \cdot \mathbbm{1} \bigr) = H^1 \left( G/ \text{PGL}_{2}(\mathbb{F}_{2^{\ell}}) ,{\mathbb{F}_2[X]}^{  \text{PGL}_{2}(\mathbb{F}_{2^{\ell}}) } \right) \overset{\text{inf}}{\cong} H^1(G,\mathbb{F}_2[X]).
 $$
Since we are given a nontrivial co-cycle $\omega: G \rightarrow \mathbb{F}_2[X]$, $\text{Hom}(G/ \text{PGL}_{2}(\mathbb{F}_{2^{\ell}},\mathbb{F}_2 \cdot \mathbbm{1})$ cannot be trivial, hence there exists a nontrivial homomorphism from $G/ \text{PGL}_{2}(\mathbb{F}_{2^{\ell}})$ to $\mathbb{F}_2 \cdot \mathbbm{1}$. Existence of this homomorphism implies existence of subgroup $H \leq G/ \text{PGL}_{2}(\mathbb{F}_{2^{\ell}})$, which is its kernel. Definition of inflation map shows that there exists $G_0 \leq G$ and a co-cycle which sends elements of $G_0$ onto $\vec{0}$ and others onto $\mathbbm{1}$. Noticing that ${(\mathbb{Z}_k)}_r \leq G$ has odd order implies ${(\mathbb{Z}_k)}_r \leq G_0$. We now see that $\omega$ satisfies all of the above, hence demonstrated uniqueness shows that co-cycle described above is $\omega$, hence conclusion of Theorem \ref{Thm: action of a nontrivial cocycle from group G acting primitively on X with additional assumptions} holds.\\

We now settled last of cases indicated by Theorem \ref{Thm characterization of primitive group actions with regular cyclic subgroup and additional assumptions}, hence completed the proof.
\hfill \qedsymbol{}

%
%

\section{Main results} \label{SECTION: Main results}

In Section \ref{SECTION: Schur Rings and function alpha} a lot of effort was put into understanding the case when $n=2m$ for some $m>1$ which is odd and $\Gamma = \text{Cay}(\mathbb{Z}_n,S)$ is some connected, nonbipartite and unstable Cayley graph for which $ \{a,m + a\} $ is a basic set of $\mathcal{A}(\Gamma \times K_2)$. Recall that if above holds, $\mathcal{B}$ is the thickest $\alpha$-homogeneous partition and by Lemma \ref{lem: mathcal{B} forms a block system made of cosets of B} it forms a block system and is made of cosets of a group $B$. For $\Gamma = \text{Cay}(\mathbb{Z}_n,S)$ we additionally define $\Gamma_{\text{reflective}} = \text{Cay}(\mathbb{Z}_n,S_r)$ (cf. Definition \ref{defi: reflective and areflective edges and colored quotient by L}).
We start this section by solving the case described above completely by the following theorem.

\begin{thm} \label{thm: handeling the hard case for 2m with assumptions about replacement property}
    Assume Hypothesis \ref{HYPOTHESIS which states the hard case}. If additionally there exists such a block system $\widetilde{\mathcal{B}}$ made of cosets of $\widetilde{B} \leq \mathbb{Z}_n$ which is minimal among block systems which are thicker than $\mathcal{B}$ and pairs $(\Gamma_{\text{reflective}},B)$, $(\Gamma_{\text{reflective}},\widetilde{B})$ satisfy replacement property, then
    $$
    \text{Cay}(\mathbb{Z}_{2m},S) \cong \text{Cay}(\mathbb{Z}_{2m},S+m).
    $$
\end{thm}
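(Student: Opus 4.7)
The plan is to exhibit some $\sigma \in \text{Aut}^{\pi}(\Gamma)$ with $\alpha(\sigma)=m_r$, which by Corollary \ref{Cor: (+m)_r in im(alpha) iff. Cay(Z_2m,S) iso Cay(Z_2m,S+m)} yields the claimed isomorphism. The strategy proceeds in three stages: first, repackage $\alpha$ as an $\mathbb{F}_2$-valued cocycle fitting the hypotheses of Section \ref{SECTION: Group theoretical results}; second, apply Theorem \ref{Thm: action of a nontrivial cocycle from group G acting primitively on X with additional assumptions} to the primitive quotient $X=\widetilde{B}/B$ to obtain an element $\sigma_0$ whose $\alpha$-value is $m_r$ on the block $\widetilde{B}$; third, use both replacement-property assumptions to promote this local identity to a global one.

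For the cocycle packaging, note that by Corollary \ref{cor: how alpha works on vertices in our case} the values of $\alpha(\sigma).x-x$ lie in $L\cong \mathbb{F}_2$, and by the $\alpha$-homogeneity of $\mathcal{B}$ (Lemma \ref{lem: mathcal{B} forms a block system made of cosets of B}) are constant on each $B$-coset. This defines a function $\bar{\alpha}\colon\text{Aut}^{\pi}(\Gamma)\to\mathbb{F}_2[\mathcal{B}]$, and Observation \ref{Obs: basic properties of alpha} unpacks (after the conventional passage $\sigma\mapsto\bar{\alpha}(\sigma^{-1})$) into the standard cocycle identity for the permutation action of $\text{Aut}^{\pi}(\Gamma)$ on $\mathbb{F}_2[\mathcal{B}]$. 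Because translations are automorphisms of $\Gamma$, Observation \ref{Obs: basic properties of gamma} gives $\alpha(b_r)=\text{id}$, hence $\bar{\alpha}$ vanishes on ${(\mathbb{Z}_n)}_r$. Now let $G$ be the stabilizer in $\text{Aut}^{\pi}(\Gamma)$ of the block $\widetilde{B}\in\widetilde{\mathcal{B}}$ and $\bar{G}\leq\text{Sym}(X)$ its image on $X=\widetilde{B}/B$. The minimality of $\widetilde{\mathcal{B}}$ above $\mathcal{B}$ forces this action to be primitive; since $L\leq B\leq\widetilde{B}$ and $n=2m$ with $m$ odd, $k=|X|$ is an odd divisor of $m$; ${(\mathbb{Z}_n)}_r$ surjects onto ${(\mathbb{Z}_k)}_r\leq \bar{G}$; and Lemma \ref{Lem: Aut^Ch(Gamma) =  Aut^Ch(Gamma)^{(l)} and iota(pi_l) is an automorphism of it} combined with Proposition \ref{prop: equivalence of both definitions of gamma} yields $\iota(\mathfrak{i})\in\text{Aut}(\text{Aut}^{\pi}(\Gamma))$, which descends to $\bar{G}$ because $B$ and $\widetilde{B}$ are closed under negation.

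Projecting $\bar{\alpha}|_G$ onto $\mathbb{F}_2[X]$ gives a cocycle $\omega\colon\bar{G}\to\mathbb{F}_2[X]$ vanishing on ${(\mathbb{Z}_k)}_r$. It is nonzero: otherwise, combining the vanishing on ${(\mathbb{Z}_n)}_r$ with the cocycle identity and the decomposition $\text{Aut}^{\pi}(\Gamma)=G\cdot{(\mathbb{Z}_n)}_r$ (which follows from the transitivity of ${(\mathbb{Z}_n)}_r$ on $\widetilde{\mathcal{B}}$) shows that $\bar{\alpha}(\sigma)$ is constant on every $\widetilde{B}$-coset for every $\sigma\in\text{Aut}^{\pi}(\Gamma)$, so $\widetilde{\mathcal{B}}$ itself becomes an $\alpha$-homogeneous partition strictly thicker than $\mathcal{B}$---contradicting maximality. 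Theorem \ref{Thm: action of a nontrivial cocycle from group G acting primitively on X with additional assumptions} now pins $\omega$ down: there is an index-two subgroup $G'_0\leq \bar{G}$ with $\omega|_{G'_0}\equiv\vec{0}$ and $\omega(g)=\sum_{x\in X}\vec{e}_x$ for $g\notin G'_0$. Lifting any element outside $G'_0$ to $\sigma_0\in G$ yields $\alpha(\sigma_0)|_{\widetilde{B}}=m_r|_{\widetilde{B}}$.

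The final step, which I expect to be the main obstacle, is globalization. Observation \ref{obs: tf-projections induce automorphisms of Gamma / mathcal{L}} combined with the $L$-invariance of $S_r$ implies that every $\sigma\in\text{Aut}^{\pi}(\Gamma)$ lies in $\text{Aut}(\Gamma_{\text{reflective}})$ and preserves $\widetilde{B}$-cosets. The replacement property for $(\Gamma_{\text{reflective}},\widetilde{B})$ applied to $\sigma_0$ produces a uniform lift $\widetilde{\sigma}_0$ with $\widetilde{\sigma}_0|_{\mathfrak{h}}=f(\mathfrak{h})_r\circ\sigma_0|_{\widetilde{B}}\circ f(\mathfrak{h})_r^{-1}$ on every coset $\mathfrak{h}$; since translation conjugation commutes with $m_r$, such a uniform lift, once realized as an $\alpha$-image, must equal $m_r$ globally. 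The delicate point is verifying that the lift does realize as $\alpha(\sigma)$ for some $\sigma\in\text{Aut}^{\pi}(\Gamma)$ rather than remaining merely an automorphism of $\Gamma_{\text{reflective}}$; this is where the second replacement hypothesis, for $(\Gamma_{\text{reflective}},B)$, enters, providing the finer coset-by-coset compatibility on $B$-cosets needed to reconcile the lift with the antireflective edges of $\Gamma$ (which are nonempty and structurally essential by the reducedness clause of Hypothesis \ref{HYPOTHESIS which states the hard case}, cf.\ the argument of Lemma \ref{lem: alpha |C depends only on action on C}) and upgrades it to a bona fide two-fold automorphism of $\Gamma\times K_2$. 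The resulting $\sigma\in\text{Aut}^{\pi}(\Gamma)$ has $\alpha(\sigma)=m_r$, as required.
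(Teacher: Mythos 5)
Your overall strategy --- repackage $\alpha$ as an $\mathbb{F}_2[X]$-valued cocycle on the primitive quotient $X=\widetilde{B}/B$, apply Theorem \ref{Thm: action of a nontrivial cocycle from group G acting primitively on X with additional assumptions}, and globalize with the replacement property --- is the same as the paper's, but there is a genuine gap at the very first step: your $\omega$ is not well defined. You declare $\omega\colon\bar{G}\to\mathbb{F}_2[X]$ to be the projection of $\bar{\alpha}|_G$, but an element of $\bar{G}$ has many preimages in $G$, and two preimages $\sigma_1,\sigma_2$ inducing the same permutation of $\widetilde{B}/B$ need not satisfy $\alpha(\sigma_1)|_{\widetilde{B}}=\alpha(\sigma_2)|_{\widetilde{B}}$: their quotient lies in the kernel of the action on $\mathcal{B}$-blocks, and $\alpha$ can be nontrivial there. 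The paper resolves this by splitting into two cases. If $\alpha|_{\text{Aut}_{\mathcal{B}}^{\pi}(\Gamma)}\not\equiv\mathrm{id}$, no cohomology is needed at all: $\alpha$-homogeneity of $\mathcal{B}$ gives a $\sigma$ with $\alpha(\sigma).x=x+m$ on an entire $B$-coset, and the replacement property of $(\Gamma_{\text{reflective}},B)$ spreads this action uniformly to every coset, producing $\alpha(\widetilde{\sigma})=m_r$ directly. Only in the complementary case $\alpha|_{\text{Aut}_{\mathcal{B}}^{\pi}(\Gamma)}\equiv\mathrm{id}$ is the cocycle defined --- and even then one must first pass to the subgroup $Q$ of elements fixed by the replacement construction for $(\Gamma_{\text{reflective}},\widetilde{B})$, so that restriction to $\widetilde{B}$ becomes injective; the vanishing of $\alpha$ on $\text{Aut}_{\mathcal{B}}^{\pi}(\Gamma)$ then kills the remaining ambiguity (Lemma \ref{lem: existence of omega}). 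Your proposal uses neither ingredient, so the object you feed to Theorem \ref{Thm: action of a nontrivial cocycle from group G acting primitively on X with additional assumptions} does not exist as stated.

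A second, related problem is that the roles you assign to the two replacement hypotheses are reversed, and the globalization is left unproved. In the paper, $(\Gamma_{\text{reflective}},\widetilde{B})$ both produces the uniform lift in the cohomological case and shows that this lift is a genuine element of $\text{Aut}^{\pi}(\Gamma)$ (Lemma \ref{lem: text{res}_{widetilde{B}} {|}_{Q} is an iso}: edges of $\Gamma\times K_2$ joining distinct cosets of $\widetilde{B}\langle a\rangle$ are reflective because $C\leq B\leq\widetilde{B}$, so they survive any perturbation by $x\mapsto x$ or $x\mapsto x+m$); the pair $(\Gamma_{\text{reflective}},B)$ is consumed entirely by the other case. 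The step you yourself flag as the main obstacle --- upgrading the lift from $\text{Aut}(\Gamma_{\text{reflective}})$ to $\text{Aut}^{\pi}(\Gamma)$, and promoting $\alpha=m_r$ on $\widetilde{B}$ to $\alpha=m_r$ everywhere --- is exactly where most of the work lies (Lemmas \ref{lem: text{res}_{widetilde{B}} {|}_{Q} is an iso} and \ref{lem: 1 in image of omega implies conclusion of thm 6.1}, resting on Lemma \ref{lem: alpha |C depends only on action on C}), and appealing to ``finer coset-by-coset compatibility on $B$-cosets'' does not supply it. The remaining verifications you list (primitivity from minimality of $\widetilde{\mathcal{B}}$, $k$ odd, ${(\mathbb{Z}_k)}_r\leq\bar{G}$, $\iota(\mathfrak{i})\in\text{Aut}(\bar{G})$, nonvanishing of $\omega$ from the failure of $\alpha$-homogeneity of $\widetilde{\mathcal{B}}$) are correctly identified and do match the paper.
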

\begin{proof}
    Notice that by Corollary \ref{Cor: (+m)_r in im(alpha) iff. Cay(Z_2m,S) iso Cay(Z_2m,S+m)} we only have to show that $\sigma \in \text{Aut}^{\pi}(\Gamma)$ such that $\alpha(\sigma) = {m}_{r}$. In the proof we will consider two separate cases depending on the form of function $\alpha$ on the subgroup $\text{Aut}_{\mathcal{B}}^{\pi}(\Gamma)$. Also note that by Observation \ref{obs: tf-projections induce automorphisms of Gamma / mathcal{L}} we have $\text{Aut}^{\pi}(\Gamma) \leq \text{Aut}(\Gamma_{\text{reflective}})$. \\

    \textit{Case 1:} Assume $\alpha {|}_{\text{Aut}_{\mathcal{B}}^{\pi}(\Gamma)}$ does not constantly equal ${id}_{\mathbb{Z}_n}$. Then we have an element $\sigma$ such that $\alpha(\sigma) \neq {id}_{\mathbb{Z}_n}$. Since $\mathcal{B}$ is $\alpha$-homogeneous, there exists ${\mathfrak{b}} \in \mathcal{B}$ such that for any $x \in {\mathfrak{b}}$ one has $\alpha(\sigma).x = x+m$. Notice that $(\sigma,\gamma(\sigma))$ induces an automorphism of $\Gamma \times K_2$ which fixes each coset of $B\langle a \rangle$ setwise. 
    
    Let $f: \mathcal{B} \rightarrow \mathbb{Z}_n$ be the function which existence is provided by replacement property of the pair $(\Gamma_{\text{reflective}},B)$ (cf. Definition \ref{defi: replacement property of a pair (Gamma,H)}). Notice that we can conjugate $\sigma$ by ${( f(\mathfrak{b}) )}_r^{-1}$ if needed and without loss of generality assume that $\alpha(\sigma).b = b+m$ for each $b \in B$. We will now prove that $\widetilde{\sigma} \in \text{Aut}_{\mathcal{B}}(\Gamma_{\text{reflective}})$ which existence is postulated by replacement property of $(\Gamma_{\text{reflective}},B)$ is in fact an element of $\text{Aut}_{\mathcal{B}}^{\pi}(\Gamma)$ which satisfy $\alpha(\widetilde{\sigma}) = m_r$.
    
    Consider a function $\tau: \mathbb{Z}_n \langle a \rangle \rightarrow \mathbb{Z}_n \langle a \rangle$ given by formula bellow.
    $$
    \tau(x) = \begin{cases}
        \widetilde{\sigma}(x) & \text{ if } x \in \mathbb{Z}_n \\
     \widetilde{\sigma}(x+m)+m+a & \text{ otherwise }
    \end{cases}
    $$
    Consider edges inside a fixed coset of $B\langle a \rangle$. Note, that for any $b \in B\langle a \rangle$ we get the following formula.
    $$
    \tau(b) = \begin{cases}
        \sigma(b) & \text{ if } x \in \mathbb{Z}_n \\
     \sigma(x+b)+m+b & \text{ otherwise }
    \end{cases}
    $$
    This formula ensures that each edge of $\Gamma \times K_2$ inside $B\langle a \rangle$ is mapped onto some edge since $\alpha(\sigma).b_0 = b_0+m$ for every $b \in B_0$. Replacement property ensures that action of $\tau$ on other cosets of $B\langle a \rangle$ is the same up to a permutation by a rotation, hence edges inside other cosets are also mapped onto edges by the same argument.

    Notice that the set $S_r$ is such that $S_r + m = S_r$, so indeed $\sigma \in \text{Aut}_{\mathcal{B}}(\Gamma_{\text{reflective}})$ and additional rotation of vertices from $\mathbb{Z}_n\langle a \rangle \backslash \mathbb{Z}_n$ by $m$ does not change the fact that edges between different cosets of $B\langle a \rangle$ are mapped onto edges.

    Combining the conclusions from two of the above paragraphs we get $\widetilde{\sigma} \in \text{Aut}_{\mathcal{B}}^{\pi}(\Gamma)$ and $\alpha(\widetilde{\sigma}) = m_r$ whcich ends the proof in the first case. \\

    \textit{Case 2:} Assume $\alpha {|}_{\text{Aut}_{\mathcal{B}}^{\pi}(\Gamma)} 
\equiv {id}_{\mathbb{Z}_n}$. The proof will be based on the following diagram.
\[\begin{tikzcd}
	{\text{Aut}^{\pi}(\Gamma)} & {\text{Aut}_{\widetilde{\mathcal{B}}}^{\pi}(\Gamma) = P} & {Q = P_{f_{\widetilde{\mathcal{B}}}}} \\
	& R \\
	& G
	\arrow[hook', from=1-2, to=1-1]
	\arrow["{\text{res}_{\widetilde{B}}}"', two heads, from=1-2, to=2-2]
	\arrow[hook', from=1-3, to=1-2]
	\arrow[dashed, hook', two heads, from=1-3, to=2-2]
	\arrow["{\text{ind}_{\widetilde{B} \cap \mathcal{B}}}"', two heads, from=2-2, to=3-2]
\end{tikzcd}\]

We have to start by defining some of maps and groups presented on the above diagram. We define $P = \text{Aut}_{\widetilde{\mathcal{B}}}^{\pi}(\Gamma)$ as seen above. Let $f_{\widetilde{B}}: \widetilde{\mathcal{B}} \rightarrow \mathbb{Z}_n$ be the function which existence is ensured by replacement property of pair $(\Gamma_{\text{reflective}},\widetilde{B})$. By $P_{f_{\widetilde{\mathcal{B}}}}$ we understand the subgroup of $P$ made of such $\sigma \in P$, that $\widetilde{\sigma} = \sigma$, where $\widetilde{\sigma}$ it the the element of $\text{Aut}_{\mathcal{B}}(\Gamma_{\text{reflective}})$ postulated by the replacement property of the pair $(\Gamma_{\text{reflective}},\widetilde{B})$ understood with respect to previously mentioned function $f_{\widetilde{B}}$. 

Function $\text{res}_{\widetilde{B}}: P \rightarrow \text{Sym}(\widetilde{B})$ is given by the formula $\sigma \rightarrow \sigma {|}_{\widetilde{B}}$. Now we can pit $R = \text{im}(\text{res}_{\widetilde{B}})$. Define $\text{ind}_{\widetilde{B} / B}: R \rightarrow \text{Sym}(\widetilde{B} / B)$ to be the unique function such that for every $\sigma \in R$ following diagram commutes.
\[\begin{tikzcd}
	{\widetilde{B}} && {\widetilde{B}} \\
	{\widetilde{B} / B } && {\widetilde{B} / B }
	\arrow["\sigma", from=1-1, to=1-3]
	\arrow["\kappa", from=1-1, to=2-1]
	\arrow["\kappa", from=1-3, to=2-3]
	\arrow["{\exists ! \text{ } \text{ind}_{ \widetilde{B} / B }(\sigma)}", dotted, from=2-1, to=2-3]
\end{tikzcd}\]
By $\kappa: \widetilde{B} \rightarrow \widetilde{B} / B$ we understand the standard quotient map, that is the one given by $\widetilde{b} \mapsto \widetilde{b} + B$. Now we put $G = \text{im}(\text{ind}_{ \widetilde{B} / B })$.

%

Now we will define a couple of functions which will turn out to be co-cycles. At first let us define $\mathbb{A} = \bigl\{ \sigma \in \text{Sym}(\mathbb{Z}_n) \mid \forall x \in \mathbb{Z}_n \text{ }  \sigma.x \in \{x,x+m\} \bigr\}$. Now let $\varphi: \mathbb{A} \rightarrow \mathbb{F}_2[\mathcal{L}]$ (cf. Observation \ref{obs: cosets of L and widetile{L} form block systems} and Definition \ref{defi: G-module A[X]}) be given by $(x,x+m) \mapsto \vec{e}_{L+x}$ and extended in a unique way which makes $\varphi$ a homomorphism of abelian groups. Notice that such $\varphi$ is an isomorphism. Notice that by 
Corollary \ref{cor: how alpha works on vertices in our case} $\text{im}(\alpha) \subseteq \mathbb{A}$. Let us now define $\widetilde{\alpha}: \text{Aut}^{\pi}(\Gamma) \rightarrow {\mathbb{F}_2[\mathcal{L}]}$ to be the unique function making the following diagram commute.
\[\begin{tikzcd}
	{\text{Aut}^{\pi}(\Gamma)} & {\mathbb{A}} \\
	{\text{Aut}^{\pi}(\Gamma)} & {\mathbb{F}_2[\mathcal{L}]}
	\arrow["{\alpha({[\text{ }\cdot\text{ }]}^{-1})}", from=1-1, to=1-2]
	\arrow["id"', from=1-1, to=2-1]
	\arrow["\varphi", from=1-2, to=2-2]
	\arrow["{\exists ! \text{ } \widetilde{\alpha}}", dotted, from=2-1, to=2-2]
\end{tikzcd}\]

Define also $\widetilde{\alpha}_P = \widetilde{\alpha} {|}_{P}$ and $\widetilde{\alpha}_Q = (\widetilde{\alpha}_P) {|}_{Q} = \widetilde{\alpha} {|}_{Q}$. Before we continue we need state a definition of $\pi_{\widetilde{B} / L}$ and prove the following lemma. Function $\pi_{\widetilde{B} / L}: \mathbb{F}_2[\mathcal{L}] \rightarrow \mathbb{F}_2[\widetilde{B} / L]$ is defined on a basis by formula
$$
\vec{e}_{L+x} \mapsto \begin{cases}
    \vec{e}_{L+x} & \text{ if } L+x \in \widetilde{B} / L \\
    \vec{0} & \text{ otherwise }
\end{cases}
$$
and extended linearly.

\begin{lem} \label{lem: text{res}_{widetilde{B}} {|}_{Q} is an iso}
    For each $\sigma \in P$, permutation $\widetilde{\sigma}$ defined by the construction from replacement property of $(\Gamma_{\text{reflective}},\widetilde{B})$ is an element of $Q$. Moreover function $\text{res}_{\widetilde{B}} {|}_{Q}: Q \rightarrow R$ is an isomorphism.
\end{lem}
\textit{Proof of the lemma.} Let us take arbitrary $\sigma \in P$. We will start by showing that the function $\widetilde{\sigma} \in \text{Aut}_{\widetilde{\mathcal{B}}}(\Gamma_{\text{reflective}})$ defined by the construction from replacement property of $(\Gamma_{\text{reflective}},\widetilde{B})$ is actually an element of of $\text{Aut}^{\pi}(\Gamma)$ and hence of its subgroup $Q$. Consider a function $\tau(\sigma): \mathbb{Z}_n \times \langle a \rangle \rightarrow \mathbb{Z}_n \times \langle a \rangle$ defined by the formula
$$
x \mapsto \begin{cases}
    f_{\widetilde{B}}(x + \widetilde{B}) + \sigma.\bigl( x - f_{\widetilde{B}}(x + \widetilde{B}) \bigr) & \text{if } x \in \mathbb{Z}_n \\
    a + f_{\widetilde{B}}(x + a + \widetilde{B}) + \bigl( \sigma\circ \alpha(\sigma) \bigr).\bigl( x + a - f_{\widetilde{B}}(x + a + \widetilde{B}) \bigr)  & \text{otherwise}
\end{cases}
$$
Notice that $\tau(\sigma) {|}_{\mathbb{Z}_n} = \widetilde{\sigma}$. One can also observe, that to copy the action of $\tau(\sigma) {|}_{\widetilde{B} \langle a \rangle}$ onto other cosets of $\widetilde{B} \langle a \rangle$. Since $\tau(\sigma)$ acts on $\widetilde{B} \langle a \rangle$ in the same way as the automorphism of $\Gamma \times K_2$ given by $(\sigma,\gamma(\sigma))$, each edge which have both ends in the same coset of $\widetilde{B} \langle a \rangle$ are mapped onto an edge. Since $C \leq B \leq \widetilde{B}$, any pair $x,y \in \mathbb{Z}_n$ such that $(x,y+a)$ is an edge of $\Gamma \times K_2$ which have ends in two different cosets of $\widetilde{B} \langle a \rangle$, $(x,y)$ is a reflective edge of $\Gamma$. Therefore any permutation from $\mathbb{A}$ maps such edges onto edges. Combining above with the fact that $\widetilde{\sigma} \in \text{Aut}(\Gamma_{\text{reflective}})$ we obtain that $\tau$ indeed is an automorphism of $\Gamma \times K_2$, hence $\widetilde{\sigma} \in \text{Aut}^{\pi}(\Gamma)$.

We are now ready to prove the lemma by firstly showing that $\text{res}_{\widetilde{B}} {|}_{Q}$ is an epimorphism, and then that it is a monomorphism.

Take any $\upsilon \in R$. By definition there exists $\sigma \in P$ such that $\sigma {|}_{\widetilde{B}} = \upsilon$. By the above reasoning, there exists $\widetilde{\sigma} \in Q$ such that $\widetilde{\sigma} {|}_{\widetilde{B}} =\sigma {|}_{\widetilde{B}} = \upsilon$, hence $\text{res}_{\widetilde{B}} {|}_{Q}$ is an epimorphism. 

Let us now consider $\sigma \in Q$ such that $\sigma {|}_{\widetilde{B}} = {id}_{\widetilde{B}}$. Then $\widetilde{\sigma} = {id}_{\mathbb{Z}_n}$ since we define function $\widetilde{\sigma}$ on other cosets of $\widetilde{B}$ as some conjugation of identity. Since $\sigma \in Q$, we obtain $\sigma = \widetilde{\sigma} = {id}_{\mathbb{Z}_n}$, hence $\text{res}_{\widetilde{B}} {|}_{Q}$ is a monomorphism. 

Combining these facts gives us the conclusion of the lemma.
\hfill \qedsymbol{}
\newline

By Lemma \ref{lem: text{res}_{widetilde{B}} {|}_{Q} is an iso} we easily see that there exists unique function $\widetilde{\alpha}_R: R \rightarrow \mathbb{F}_2[\widetilde{B} / L]$ which makes the following diagram commute.
\[\begin{tikzcd}
	Q & {\mathbb{F}_2[\mathcal{L}]} \\
	R & {\mathbb{F}_2[\widetilde{B} / L ]}
	\arrow["{\widetilde{\alpha}_Q}", from=1-1, to=1-2]
	\arrow["{\text{res}_{\widetilde{B}}}"', from=1-1, to=2-1]
	\arrow["{\pi_{\widetilde{B} / L }}", from=1-2, to=2-2]
	\arrow["{\exists ! \text{ } \widetilde{\alpha}_R}", dotted, from=2-1, to=2-2]
\end{tikzcd}\]

Let $B_r \leq \text{Sym}(\mathbb{Z}_n)$ be the group made of permutations which translate elements of $\mathbb{Z}_n$ by some chosen element of $B$. It will also be understood as the permutation group of the subgroup $\widetilde{B} \leq \mathbb{Z}_n$. Before we will be able to define the last and most important function, we have to show the above lemma.

\begin{lem}
    $\text{im}(\widetilde{\alpha}_R) \subseteq {\mathbb{F}_2[\widetilde{B} / L]}^{ B_r }$.
\end{lem}
\textit{Proof of the lemma.} Since $\mathcal{B}$ is by definition $\alpha$-homogeneous partition, for every $\sigma \in {\text{Aut}^{\pi}(\Gamma)}$ and any $x \in \mathbb{Z}_n$ we know that $\alpha(\sigma) {|}_{B+x}$ is constant, hence $\text{im}(\widetilde{\alpha}) \subseteq {\mathbb{F}_2[\mathcal{L}]}^{ B_r }$. If we now chase the diagram which defined $\widetilde{\alpha}_R$, we get conclusion of our lemma.
\hfill \qedsymbol{}
\newline

Now let us put $X = \widetilde{B} / B$. Define $\psi: \mathbb{F}_2[X] \rightarrow {\mathbb{F}_2[\widetilde{B} / L]}^{B_r}$ on the basis by 
$$
\vec{e}_{B+\widetilde{b}} \mapsto \sum_{b \in B \cap 2\mathbb{Z}_n} \vec{e}_{L+b+\widetilde{b}}
$$
and extend linearly. Then $\mathbb{F}_2[X] \overset{\psi}{\cong} {\mathbb{F}_2[\widetilde{B} / L]}^{B_r}$ is an isomorphism of abelian groups. Now we are ready to define the main object of this proof.

\begin{lem} \label{lem: existence of omega}
There exists unique function $\omega: G \rightarrow \mathbb{F}_2[X]$ which makes the following diagram commute.
\[\begin{tikzcd}
	R & {{\mathbb{F}_2[\widetilde{B} / L]}^{B_r}} \\
	G & {\mathbb{F}_2[X]}
	\arrow["{\widetilde{\alpha}_R}", from=1-1, to=1-2]
	\arrow["{\text{ind}_{\widetilde{B} / B}}"', from=1-1, to=2-1]
	\arrow["{\psi^{-1}}", from=1-2, to=2-2]
	\arrow["{\exists ! \text{ } \omega}", dotted, from=2-1, to=2-2]
\end{tikzcd}\]
\end{lem}
\textit{Proof of the lemma.} Take $\tau_1,\tau_2 \in R$ such that $\text{ind}_{\widetilde{B} / B}(\tau_1) = \text{ind}_{\widetilde{B} / B}(\tau_2)$. Let $\sigma_1,\sigma_2 \in Q$ be such that $\text{res}_{ \widetilde{B} }(\sigma_i) = \tau_i$ for $i \in \{ 1,2 \}$. By definition of $Q$ we know that $\widetilde{\sigma}_1 = \sigma_1$ and $\widetilde{\sigma}_2 = \sigma_2$, where $\widetilde{\sigma}_i$ are functions obtained from replacement property of the pair $(\Gamma_{\text{reflective}},\widetilde{B})$ with respect to function $f_{\widetilde{B}}$ described earlier. Therefore, if they give the same permutation of cosets of $B$ which are contained in $\widetilde{B}$, they permute all cosets of $B$ in the same way. This can be stated as $\sigma_1 \sigma_2^{-1} \in \text{Aut}_{\mathcal{B}}^{\pi}(\Gamma)$, hence by our assumption we obtain the information that $\alpha(\sigma_1 \sigma_2^{-1}) = {id}_{\mathbb{Z}_n}$. We can now calculate
$$
\alpha(\sigma_2^{-1}) = \alpha(\sigma_1^{-1} \sigma_1 \sigma_2^{-1}) = \sigma_1^{-1} \alpha(\sigma_1 \sigma_2^{-1}) \sigma_1 \circ \alpha(\sigma_1^{-1}) = \alpha(\sigma_1^{-1}),
$$
hence $\widetilde{\alpha}(\sigma_1) = \widetilde{\alpha}(\sigma_2)$. The last statement ensures us that indeed $\widetilde{\alpha}_{R}(\tau_1) =  \widetilde{\alpha}_{R}( \text{res}_{\widetilde{B}} (\sigma_1)) = \widetilde{\alpha}_Q(\sigma_1)  = \widetilde{\alpha}_Q(\sigma_2) = \widetilde{\alpha}_{R}( \text{res}_{\widetilde{B}} (\sigma_2)) = \widetilde{\alpha}_{R}(\tau_2)$ as wanted.
\hfill \qedsymbol{}
\newline

All of the above definitions and correlations between them can be summarized by the following diagram.
\[\begin{tikzcd}
	{\widetilde{\alpha}} & {\widetilde{\alpha}_P} & {\widetilde{\alpha}_Q} \\
	& {\widetilde{\alpha}_R} \\
	& \omega
	\arrow["{\text{ }\cdot\text{ }{|}_P}", maps to, from=1-1, to=1-2]
	\arrow["{\text{ }\cdot\text{ }{|}_Q}", maps to, from=1-2, to=1-3]
	\arrow["{\text{res}_{\widetilde{B}}}", maps to, from=1-3, to=2-2]
	\arrow["{\text{ind}_{\widetilde{B} /B}}"', maps to, from=2-2, to=3-2]
\end{tikzcd}\]

Now we will state the connection between function $\omega$ and conclusion of the theorem. Just to make it clear, whenever we are working with a module $\mathbb{F}_2[Y]$ for some set $Y$, by $\mathbbm{1}_{Y}$ we denote the vector $\sum_{y \in Y} \vec{e}_y$.

\begin{lem} \label{lem: 1 in image of omega implies conclusion of thm 6.1}
    If for some $g \in G$ we have $\omega(g) = \mathbbm{1}_{X}$, then $\text{Cay}(\mathbb{Z}_{2m},S) \cong \text{Cay}(\mathbb{Z}_{2m},S+m)$.
\end{lem}
\textit{Proof of the lemma.} By the definition of $\omega$, there exists $g \in G$ such that $\omega(g) = \mathbbm{1}_{X}$ if and only if there exists $\tau \in R$ such that $\widetilde{\alpha}_R(\tau) = \psi(\mathbbm{1}_{X}) = \mathbbm{1}_{\widetilde{B} / L}$. Now however, by definition of $\widetilde{\alpha}_R$ there exists $\sigma \in Q$ such that $\pi_{\widetilde{B} / L} \bigl( \widetilde{\alpha}(\sigma) \bigr) = \mathbbm{1}_{\widetilde{B} / L}$. By definition of $\widetilde{\alpha}$ above can be equivalently stated by saying that $\alpha(\sigma^{-1}) {|}_{\widetilde{B}}$ is given by the formula $\widetilde{b} \mapsto \widetilde{b} + m$ for any $\widetilde{b} \in \widetilde{B}$.

Now notice that by the fact that $\widetilde{\sigma^{-1}} = \sigma^{-1} $ we obtain 
$$
\bigl(\sigma^{-1} \bigr) {|}_{ \widetilde{\mathfrak{b}} } = {f(\widetilde{\mathfrak{b}})}_r \circ \bigl( \sigma^{-1} \bigr) {|}_{\widetilde{B}} \circ {f(\widetilde{\mathfrak{b}})}_r^{-1}
$$ 
for arbitrarily chosen $\widetilde{\mathfrak{b}} \in \widetilde{\mathcal{B}}$. Put $\widetilde{\mathfrak{b}} = \widetilde{B} + \widetilde{b}_0$. Now since $C \leq B \leq \widetilde{B}$, Lemma \ref{lem: alpha |C depends only on action on C} shows that
$$
\alpha \bigl( \sigma^{-1} \bigr) {|}_{\widetilde{B}} = \alpha \Bigl(  {f(\widetilde{\mathfrak{b}})}_r \circ \sigma^{-1} \circ {f(\widetilde{\mathfrak{b}})}_r^{-1} \Bigr) {|}_{\widetilde{B}}.
$$
Now we can calculate that 

$$
\alpha \bigl( \sigma^{-1} \bigr) = \alpha \Bigl( {f(\widetilde{\mathfrak{b}})}_r^{-1} {f(\widetilde{\mathfrak{b}})}_r \sigma^{-1} {f(\widetilde{\mathfrak{b}})}_r^{-1} {f(\widetilde{\mathfrak{b}})}_r \Bigr) = {f(\widetilde{\mathfrak{b}})}_r^{-1}
\alpha \Bigl(  {f(\widetilde{\mathfrak{b}})}_r \circ \sigma^{-1} \circ {f(\widetilde{\mathfrak{b}})}_r^{-1} \Bigr)
{f(\widetilde{\mathfrak{b}})}_r,
$$
hence
$$
\alpha \bigl( \sigma^{-1} \bigr) {|}_{\widetilde{B} - \widetilde{b}_0} = {f(\widetilde{\mathfrak{b}})}_r^{-1}
\circ 
\alpha \Bigl(  {f(\widetilde{\mathfrak{b}})}_r \circ \sigma^{-1} \circ {f(\widetilde{\mathfrak{b}})}_r^{-1} \Bigr) {|}_{\widetilde{B}} \circ
{f(\widetilde{\mathfrak{b}})}_r = {f(\widetilde{\mathfrak{b}})}_r^{-1} \circ
\alpha \bigl( \sigma^{-1} \bigr) {|}_{\widetilde{B}} \circ
{f(\widetilde{\mathfrak{b}})}_r.
$$
equality in the last line shows that for any $b_0 \in \widetilde{B} - \widetilde{b}_0$ it holds that $\alpha \bigl( \sigma^{-1} \bigr).b_0 = b_0 + m$. Since $\widetilde{\mathfrak{b}}$ was an arbitrary element of $\widetilde{B}$, it follows that for any $x \in \mathbb{Z}_n$ we obtain
$$
\alpha(\sigma^{-1}).x = x+m.
$$
Above equation for $\alpha \bigl(\sigma^{-1} \bigr)$ combined with Corollary \ref{Cor: (+m)_r in im(alpha) iff. Cay(Z_2m,S) iso Cay(Z_2m,S+m)} ends the proof of this lemma.
\hfill \qedsymbol{}
\newline

We will now prove that all of assumptions of Theorem \ref{Thm: action of a nontrivial cocycle from group G acting primitively on X with additional assumptions} holds, which will imply that the assumptions of Lemma \ref{lem: 1 in image of omega implies conclusion of thm 6.1} holds. Put $|X| = k$, take $\widetilde{b} \in \widetilde{B}$ such that $\langle \widetilde{b} \rangle = \widetilde{B}$ and consider a homomorphism $\chi: X \rightarrow \mathbb{Z}_k$ such that $\widetilde{b} + B \mapsto 1$.

\begin{lem} \label{lem: k odd, (Z_k)_r < G, i(mathfrak i ) in Aut(G)}
    $k$ is an odd integer. Moreover, if we identify $X$ with $\mathbb{Z}_k$ by $\chi$, then ${(\mathbb{Z}_k)}_r \leq G$, $\omega {|}_{{(\mathbb{Z}_k)}_r} \equiv \vec{0}$ and $\iota(\mathfrak{i}) \in \text{Aut}(G)$.
\end{lem}
\textit{Proof of the lemma.} Notice that since $L \leq C \leq B$, order of $B$ is even, hence $[\mathbb{Z}_n,B]$ is odd, hence $k = |\widetilde{B}/B| = [\widetilde{B}/B]$ divides $[\mathbb{Z}_n,B]$, it also is odd. Notice that ${\widetilde{b}}_r \in Q$, hence permutation ${\widetilde{b}}_r {|}_{\widetilde{B}} = \mathfrak{r}: \widetilde{B} \rightarrow \widetilde{B}$ is an element of the group $R$. Function $\text{ind}_{\widetilde{B} / B}(\mathfrak{r})$ is given by the formula $b + B \mapsto b+\widetilde{b} + B$, hence after we identify $X$ with $\mathbb{Z}_k$ via $\chi$, we conclude that permutation $1_r \in {(\mathbb{Z}_k)}_r$ is an element of $G$. Since ${(\mathbb{Z}_k)}_r = \langle 1_r \rangle$, we obtain ${(\mathbb{Z}_k)}_r \leq G$. Additionally, since $\widetilde{\alpha}(\widetilde{b}_r) \vec{0}$, we obtain $\widetilde{\alpha}_Q(\widetilde{b}_r) \vec{0}$, hence $\widetilde{\alpha}_R(\mathfrak{r}) \vec{0}$ and finally $\omega(1_r) = \vec{0}$, which can be checked by following definitions of $\widetilde{\alpha}_Q$, $\widetilde{\alpha}_R$ and $\omega$.

Let $\mathfrak{i}_n: \mathbb{Z}_n \rightarrow \mathbb{Z}_n$ be the function given by $x \mapsto -x$. Then since $\mathfrak{i}_n \in \text{Aut}^{\pi}(\Gamma)$ and $P = \text{Aut}_{\widetilde{B}}^{\pi}(\Gamma) \unlhd \text{Aut}^{\pi}(\Gamma)$ we obtain $\iota(\mathfrak{i}_n) \in \text{Aut}(P)$. This information implies that $\iota(\mathfrak{i}_n {|}_{\widetilde{B}})$ is an automorphism of the group $R$. To simplify the notation let us put $\widetilde{\mathfrak{i}} = \mathfrak{i}_n {|}_{\widetilde{B}}$. Note that permutation $\widetilde{\mathfrak{i}}$ permutes cosets on $B$ contained in $\widetilde{B}$ by $\widetilde{\mathfrak{i}}[B+b] = B-b$. Now we can easily see that if we define $\mathfrak{i}: X \rightarrow X$ by the formula $x \mapsto -x$, then 
$$
\iota(\mathfrak{i}) \in \text{Aut}\bigl( \text{ind}_{\widetilde{B} / B}(R) \bigr) = \text{Aut}(G)
$$
as wanted.
\hfill \qedsymbol{}
\newline

\begin{lem} \label{lem: G acts primitively on X}
    $G$ acts primitively on $X$.
\end{lem}
\textit{Proof of the lemma.} We will prove this lemma by contradiction. Assume that $\mathcal{D}$ is a nontrivial block system of action of $G$ on $X$. Then $\mathcal{E} = \kappa^{-1}[\mathcal{D}] = \{ \kappa^{-1}[\mathfrak{d}] \mid \mathfrak{d} \in \mathcal{D} \}$ is a block system of the group action of $R$ on $\widetilde{B}$. Since ${\widetilde{B}}_r \leq R$, we conclude that $\mathcal{E}$ is made of cosets of some subgroup $E \leq \widetilde{B}$. Since $R = \text{res}_{\widetilde{B}}(P)$, partition $\widetilde{\mathcal{E}} = \{ E+x \mid x \in \mathbb{Z}_n \}$ is an invariant partition for the action of $P = \text{Aut}_{\widetilde{B}}^{\pi}(\Gamma)$ on $\mathbb{Z}_n$ and is such that $\mathcal{B} \prec \widetilde{\mathcal{E}} \prec \widetilde{\mathcal{B}}$. Moreover notice that $B<E<\widetilde{B}$. We will now prove that $\widetilde{\mathcal{E}}$ is a block system with respect to action of $\text{Aut}^{\pi}(\Gamma)$ on $\mathbb{Z}_n$.

Take any $\sigma \in \text{Aut}^{\pi}(\Gamma)$. Notice that since each element of $P$ permutes the elements of the partition $\widetilde{\mathcal{E}}$, elements of $P = \sigma P \sigma^{-1}$ permute elemenets of the partition $\sigma[\widetilde{\mathcal{E}}] = \{ \sigma[{\mathfrak{e}}] \mid {\mathfrak{e}} \in \widetilde{\mathcal{E}} \}$.

Note however, that since $\widetilde{\mathcal{E}} \prec \widetilde{B}$ and $\widetilde{\mathcal{B}}$ is a block system, $\sigma[\widetilde{\mathcal{E}}] \prec \widetilde{B}$. If we additionally observe that $|\sigma[{\mathfrak{e}}]| = |E|$ for all ${\mathfrak{e}} \in \widetilde{\mathcal{E}}$ and ${( \widetilde{B} )}_r \leq P$ we conclude that each element of $\sigma[\widetilde{\mathfrak{e}}]$ is a coset of some subgroup of order $E$, however since $\mathbb{Z}_n$ is cyclic, $E$ is the only such subgroup, hence $\sigma[\widetilde{\mathcal{E}}] = \widetilde{\mathcal{E}}$ so $\sigma$ permutes elements of partition $\widetilde{\mathcal{E}}$. Because $\sigma$ was an arbitrary element of $\text{Aut}^{\pi}(\Gamma)$, we conclude that $\widetilde{\mathcal{E}}$ is a block system as wanted.

To end the proof it is enough to observe that existence of $\widetilde{\mathcal{E}}$ contradicts the fact that $\widetilde{\mathcal{B}}$ is the minimal block system thicker than $\mathcal{B}$, hence we obtained the desired contradiction.
\hfill \qedsymbol{}
\newline

\begin{lem} \label{lem: omega is a nonzero cocycle}
    Function $\omega: G \rightarrow \mathbb{F}_2[X]$ is a nonzero co-cycle.
\end{lem}
\textit{Proof of the lemma.}
    At first notice, that by Observation \ref{Obs: basic properties of gamma} and the fact that $\Gamma$ is unstable, function $\alpha: \text{Aut}^{\pi}(\Gamma) \rightarrow \mathbb{A}$ does not constantly equal identity. Now we will show that $\widetilde{\alpha}$ is a co-cycle. If in Observation \ref{Obs: basic properties of alpha} we substitute $\sigma^{-1}$ and $\tau^{-1}$ for $\sigma$ and $\tau$ respectively, we obtain
    $$
    \alpha(\sigma^{-1} \tau^{-1}) = \tau \alpha(\sigma^{-1}) \tau^{-1} \circ \alpha(\tau^{-1}),
    $$
    which translates to $\widetilde{\alpha}(\tau \sigma) = \tau.\widetilde{\alpha}(\sigma) + \widetilde{\alpha}(\tau)$ as wanted. We will now show that each of $\widetilde{\alpha}_P$, $\widetilde{\alpha}_Q$, $\widetilde{\alpha}_R$ and $\omega$ is a cocycle.

    Since $\widetilde{\alpha}_P$ and $\widetilde{\alpha}_Q$ are just restrictions of $\widetilde{\alpha}$, they obviously are co-cycles. Take now $\tau_1 \tau_2 \in R$ and $\sigma_1,\sigma_2 \in Q$ such that $\text{res}_{\widetilde{B}}(\sigma_i) = \tau_i$ for $i \in \{1,2\}$. Then we get
    $$
    \widetilde{\alpha}_R(\tau_1 \tau_2) = \pi_{\widetilde{B} / L } \bigl( \widetilde{\alpha}_Q(\sigma_1 \sigma_2) \bigr) = \pi_{\widetilde{B} / L } \bigl( \sigma_1.\widetilde{\alpha}_Q(\sigma_2) + \widetilde{\alpha}_Q(\sigma_1) \bigr) = 
    $$
    $$
    = \pi_{\widetilde{B} / L } \bigl(  \sigma_1.\widetilde{\alpha}_Q(\sigma_2) \bigr) +  \pi_{\widetilde{B} / L } \bigl(  \widetilde{\alpha}_Q(\sigma_1) \bigr) = \tau_1.\Bigl(  \pi_{\widetilde{B} / L } \bigl( \widetilde{\alpha}_Q(\sigma_2) \bigr) \Bigl) + \widetilde{\alpha}_R(\tau_1) = \tau_1.  \widetilde{\alpha}_R(\tau_2)  + \widetilde{\alpha}_R(\tau_1)
    $$
    as wanted. Now We will proceed similarly to show that $\omega$ also is a co-cycle. Take $g_1,g_2 \in G$ and let $\tau_1, \tau_2 \in R$ be such that $\text{ind}_{\widetilde{B} / B}(\tau_i) = g_i$ for $i \in \{1,2\}$. Then we get
    $$
    \omega(g_1 g_2) = \psi^{-1} \bigl( \widetilde{\alpha}_R(\tau_1 \tau_2) \bigr) = \psi^{-1} \bigl( \sigma_1.\widetilde{\alpha}_R(\tau_2) + \widetilde{\alpha}_R(\tau_1) \bigr) = 
    $$
    $$
    = \psi^{-1} \bigl(  \tau_1.\widetilde{\alpha}_R(\tau_2) \bigr) +  \psi^{-1} \bigl(  \widetilde{\alpha}_R(\tau_1) \bigr) = g_1.\Bigl(  \psi^{-1} \bigl( \widetilde{\alpha}_R(\tau_2) \bigr) \Bigl) + \omega(g_1) = g_1.  \omega(g_2) + \omega(g_1)
    $$
    as wanted, hence $\omega$ indeed is a co-cycle.
    
    To end the proof of the lemma we need to show that $\omega$ is not a zero function. Partition $\mathcal{B}$ is by definition the thickest $\alpha$-homogeneous partition, hence $\widetilde{\mathcal{B}}$ is not $\alpha$-homogeneous. Let $\sigma^{-1} \in \text{Aut}^{\pi}(\Gamma)$ be such that $\alpha(\sigma^{-1})$ is the counterexample to $\alpha$-homogeneity of $\widetilde{\mathcal{B}}$. This can be equivalently stated as $\widetilde{b}.\widetilde{\alpha}(\sigma) \neq \widetilde{\alpha}(\sigma)$, where $\widetilde{b} \in \widetilde{B}$ is such that $\langle \widetilde{b} \rangle = \widetilde{B}$. Put $\sigma' = \sigma \circ {\widetilde{b}}_r \circ \sigma^{-1}$ and note that since ${\widetilde{b}}_r \in P$ and $P \unlhd \text{Aut}^{\pi}(\Gamma)$, we obtain $\sigma' \in P$. Observe that $\vec{0} = \widetilde{\alpha}(\sigma \circ \sigma^{-1}) = \sigma.\widetilde{\alpha}(\sigma^{-1}) + \widetilde{\alpha}(\sigma)$, hence we see that
$$
\widetilde{\alpha}(\sigma') = \sigma.\widetilde{\alpha}({\widetilde{b}}_r \circ \sigma^{-1}) + \widetilde{\alpha}(\sigma) = 
\sigma. \bigl( {\widetilde{b}}_r.\widetilde{\alpha}(\sigma^{-1}) \bigr) + \sigma.\widetilde{\alpha}({\widetilde{b}}_r) + \widetilde{\alpha}(\sigma) \neq
$$
$$
\neq \sigma. \bigl( {\widetilde{b}}_r.\widetilde{\alpha}(\sigma^{-1}) \bigr) + \sigma.\widetilde{\alpha}({\widetilde{b}}_r) + \sigma.\widetilde{\alpha}(\sigma^{-1}) = \sigma. \bigl( 
{\widetilde{b}}_r.\widetilde{\alpha}(\sigma^{-1}) + \widetilde{\alpha}(\sigma^{-1})
\bigr) = \vec{0},
$$
so indeed $\widetilde{\alpha}_P$ is not a zero function. Now we will show that $\widetilde{\alpha}_R$ also is not a zero function. 

Take $\sigma \in P$ such that $\alpha(\sigma^{-1})$ is not identity. We can additionally assume that $\alpha(\sigma^{-1}) {|}_{\widetilde{B}}$ is nonzero. If not, one only needs to conjugate $\sigma$ by an appropriate rotation. Then function $\widetilde{\sigma^{-1}}$ created from $\sigma^{-1}$ by construction from replacement property of the pair $(\Gamma_{\text{reflective}},\widetilde{B})$ is an element of $Q$ (cf. Lemma \ref{lem: text{res}_{widetilde{B}} {|}_{Q} is an iso}). Since $\sigma^{-1} {|}_{\widetilde{B}} = \widetilde{\sigma^{-1}} {|}_{\widetilde{B}}$ and $C \leq \widetilde{B}$, by Lemma \ref{lem: alpha |C depends only on action on C} we obtain
$$
\alpha(\sigma^{-1}) {|}_{\widetilde{B}} = \alpha(\widetilde{\sigma^{-1}}) {|}_{\widetilde{B}},
$$
hence $\pi_{\widetilde{B} / L} \bigl( \widetilde{\alpha}_Q(\widetilde{\sigma}) \bigr) \neq \vec{0}$. This last equality ensures that both $\widetilde{\alpha}_Q$ and $\widetilde{\alpha}_R$ are nonzero functions. 

Since $\mathbb{F}_2[X] \overset{\psi}{\cong} {\mathbb{F}_2[\widetilde{B} / L]}^{B_r}$ and $\text{ind}_{\widetilde{B} / B}: R \rightarrow G$ is subjective by definition, Lemma \ref{lem: existence of omega} ensures us that $\omega$ also is a nonzero function.
\hfill \qedsymbol{}
\newline

Putting conclusions of Lemma \ref{lem: k odd, (Z_k)_r < G, i(mathfrak i ) in Aut(G)}, Lemma \ref{lem: G acts primitively on X} and Lemma \ref{lem: omega is a nonzero cocycle} we fulfill all assumptions of Theorem \ref{Thm: action of a nontrivial cocycle from group G acting primitively on X with additional assumptions}. Conclusion of this theorem in particular implies the assumptions of Lemma \ref{lem: 1 in image of omega implies conclusion of thm 6.1}, hence the proof of Theorem \ref{thm: handeling the hard case for 2m with assumptions about replacement property} is complete.
\end{proof}

We are now ready to prove the main theorem of this paper. \\

\textit{Proof of Theorem \ref{THM: characterization of unstable circulants of squarefree order}:} \\
The fact that criteria i. and ii. imply instability is widely known (cf. \cite[Theorem 1.4]{HujdurovicMitrovicMorrisOverview}), hence we will focus on proving the contrary. We will divide the proof into two cases. We will start by taking care of non-reduced circulants and then we will deal with reduced ones.\\

\textit{Case 1:} We assume that $\Gamma = \text{Cay}(\mathbb{Z}_n,S)$ is non-reduced, hence there exists nonzero $h \in \mathbb{Z}_n$ such that $S + h = S$. If $h= \frac{n}{2}$, then condition ii. is satisfied for $l=1$. Otherwise $2h \neq 0$ and we get $S + 2h = S$, hence in particular $S \cap 2\mathbb{Z}_n + 2h = S \cap 2\mathbb{Z}_n$ which means that condition i. is satisfied. \\

\textit{Case 2:} We now assume that $\Gamma = \text{Cay}(\mathbb{Z}_n,S)$ is reduced. Since $n$ is square-free, $4$ does not divide $n$, hence $m = \frac{n}{2}$ is an odd integer. By Lemma \ref{Lem: reasons for instability in case 2m, m odd -- messy second condition} we obtain that either
    \begin{enumerate}[i.]
        \item there exists nonzero $h \in 2\mathbb{Z}_{2m}$ such that $S \cap 2\mathbb{Z}_{2m} + h = S \cap 2\mathbb{Z}_{2m}$;
        \item or $\{a,m+a\}$ is a basic set of $\mathcal{A}(\Gamma \times K_2)$.
    \end{enumerate}
First of the above conditions is also a condition i. for the statement of our theorem. Since that case is dealt with, from now on we assume that $\{a,m+a\}$ is a basic set of $\mathcal{A}(\Gamma \times K_2)$, hence $\Gamma$ fulfills Hypothesis \ref{HYPOTHESIS which states the hard case}. By Theorem \ref{Thm: if n is squarefree, all pairs (Gamma,H) satisfy replacement property} pair $(\Gamma_{\text{reflective}},H)$ satisfies replacement property for any subgroup $H \leq \mathbb{Z}_n$, hence we can apply Theorem \ref{thm: handeling the hard case for 2m with assumptions about replacement property} to obtain
$$
\text{Cay}(\mathbb{Z}_{n},S) \cong \text{Cay}(\mathbb{Z}_{n},S+\frac{n}{2}).
$$
From \cite[Theorem 1.1]{MuzychukSolToIsomOfCircul} we deduce that this isomorphism can be given by one of the functions from the set
$$
\{ \cdot l \mid l \in \mathbb{Z} \text{ such that } l \text{ is coprime to } n \},
$$
where $\cdot l: \mathbb{Z}_n \rightarrow \mathbb{Z}_n$ is given by the formula $x \mapsto l \cdot x$. Function $\cdot l$ transforms $\text{Cay}(\mathbb{Z}_{n},S)$ into $\text{Cay}(\mathbb{Z}_{n},l\cdot S)$, hence for some $l$ coprime to $n$ we need to have $l \cdot S = S + \frac{n}{2}$ and therefore condition ii. is satisfied.
\hfill \qedsymbol{}
\newline

\section*{Acknowledgments}
The author is very grateful for help and guidance of Jakub Byszewski, whose experience in the theory of cohomology of group modules significantly contributed to the proof of Theorem \ref{Thm: action of a nontrivial cocycle from group G acting primitively on X with additional assumptions}.

\addtolength{\leftmargin}{0cm}
\setlength{\itemindent}{0cm}
\bibliographystyle{plain}
\bibliography{main.bib}

\end{document}